\documentclass{amsart}
\usepackage{amsthm, amssymb, amsfonts, amscd}
\usepackage{graphicx}
\usepackage{stmaryrd}
\usepackage{circuitikz}
\usepackage{tikz}
\usepackage{tikz-cd}
\usepackage{bbold}
\usetikzlibrary{arrows}
\usepackage{verbatim}
\usepackage[all,arc]{xy}
\usepackage{mathrsfs, mathabx}
\usepackage{hyperref}
\usepackage{soul}
\usepackage{esint}
\usepackage[all]{xy}
\usepackage[margin=1.0in]{geometry}
\usepackage[
   open,
   openlevel=2,
   atend
 ]{bookmark}
\usepackage{graphics}
\usepackage{mathtools}
\usepackage{array}
\usepackage{multirow}

\bookmarksetup{color=blue}

\theoremstyle{definition} 
\newtheorem{thm}{Theorem}[section]
\newtheorem{cor}[thm]{Corollary}
\newtheorem{prop}[thm]{Proposition}
\newtheorem{lem}[thm]{Lemma}
\newtheorem{conj}[thm]{Conjecture}
\newtheorem{quest}[thm]{Question}
\newtheorem{defn}[thm]{Definition}
\newtheorem{exmp}[thm]{Example}
\newtheorem{rmk}[thm]{Remark}
\newtheorem{obs}[thm]{Observation}

\newcommand{\mr}{\mathrm}
\newcommand{\mf}{\mathfrak}
\newcommand{\mc}{\mathcal}

\newcommand{\GL}{\mr{GL}}
\newcommand{\Aut}{\mr{Aut}}

\newcommand{\bb}{\mathbb}
\newcommand{\bN}{\bb{N}}
\newcommand{\Z}{\bb{Z}}
\newcommand{\bQ}{\bb{Q}}
\newcommand{\bR}{\bb{R}}
\newcommand{\bP}{\bb{P}}

\newcommand{\bF}{\bb{F}}
\newcommand{\bE}{\bb{E}}
\newcommand{\fp}{\bF_p}
\newcommand{\fq}{\bF_q}
\newcommand{\zp}{\Z_p}

\newcommand{\ol}{\overline}
\newcommand{\Sur}{\mr{Sur}}
\newcommand{\M}{\mr{M}}
\newcommand{\cF}{\mc F}


\newcommand{\ld}{\lambda}
\newcommand{\al}{\alpha}

\newcommand{\sg}{\sigma}
\newcommand{\Sg}{\Sigma}
\newcommand{\dt}{\delta}

\newcommand{\cok}{\mr{cok}}
\newcommand{\Hom}{\mr{Hom}}

\newcommand{\lt}{\left}
\newcommand{\rt}{\right}
\newcommand{\rank}{\mr{rank}}

\numberwithin{equation}{section}

\usepackage[foot]{amsaddr}
\allowdisplaybreaks

\begin{document}

\title[Random $p$-adic matrices with fixed zero entries]{Random $p$-adic matrices with fixed zero entries and the Cohen--Lenstra distribution}
\author{Dong Yeap Kang}
\address[D. Y. Kang]{Extremal Combinatorics and Probability Group, Institute for Basic Science, Daejeon 34126, Republic of Korea}
\email[D. Y. Kang]{dykang.math@ibs.re.kr}

\author{Jungin Lee}
\address[J. Lee]{Department of Mathematics, Ajou University, Suwon 16499, Republic of Korea}
\email[J. Lee]{jileemath@ajou.ac.kr}

\author{Myungjun Yu}
\address[M. Yu]{Department of Mathematics, Yonsei University, Seoul 03722, Republic of Korea}
\email[M. Yu]{mjyu@yonsei.ac.kr}

\date{}

\begin{abstract}
In this paper, we study the distribution of the cokernels of random $p$-adic matrices with fixed zero entries.
Let $X_n$ be a random $n \times n$ matrix over $\zp$ in which some entries are fixed to be zero and the other entries are i.i.d. copies of a random variable $\xi \in \zp$. We consider the minimal number of random entries of $X_n$ required for the cokernel of $X_n$ to converge to the Cohen--Lenstra distribution. 
When $\xi$ is given by the Haar measure, we prove a lower bound of the number of random entries and prove its converse-type result using random regular bipartite multigraphs. When $\xi$ is a general random variable, we determine the minimal number of random entries.
Let $M_n$ be a random $n \times n$ matrix over $\zp$ with $k$-step stairs of zeros and the other entries given by independent random $\epsilon$-balanced variables valued in $\zp$. We prove that the cokernel of $M_n$ converges to the Cohen--Lenstra distribution under a mild assumption. This extends Wood's universality theorem on random $p$-adic matrices.
\end{abstract}
\maketitle

\section{Introduction} \label{Sec1}

The Cohen--Lenstra conjecture, formulated by Cohen and Lenstra \cite{CL84}, provides a striking probabilistic model that predicts the distribution of the ideal class groups of imaginary quadratic number fields. The conjecture is based on the idea that, for a fixed prime $p$, the occurrence of any finite abelian $p$-group $G$ as the $p$-part of the ideal class group of a random imaginary quadratic field should be inversely proportional to the size of its automorphism group, $\Aut(G)$. To state it more precisely, let $K$ be an imaginary quadratic field and let $\mathrm{Cl}(K)$ be the ideal class group of $K$. 

\begin{conj}
Let $p$ be an odd prime and let $G$ be a finite abelian $p$-group. Then for every finite abelian $p$-group $G$, we have
$$
\lim_{X \to \infty} \frac{| \{K : \mathrm{Cl}(K)[p^\infty] \cong G  ~\text{and}~ \mathrm{Disc}(K) > -X \} |}{| \{ K : \mathrm{Disc}(K) > - X \} |} = \frac{1}{|\Aut(G)|}\prod_{i=1}^{\infty} (1-p^{-i}).
$$    
\end{conj}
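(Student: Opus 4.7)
The displayed assertion is the original Cohen--Lenstra conjecture for imaginary quadratic fields, posed in \cite{CL84} and still open in full generality; the strongest unconditional results concern only low-order moments of the $p$-torsion (Davenport--Heilbronn and subsequent work of Bhargava at $p=3$, Fouvry--Kl\"uners on $4$-ranks at $p=2$). What follows is therefore a conditional, matrix-model strategy in the spirit of the present paper rather than a complete proof. The idea is to realize $\mr{Cl}(K)[p^\infty]$ as the $p$-part of the cokernel of a square integer matrix $M_K$ canonically attached to $K$, to argue that the $p$-adic reductions of the entries of $M_K$ are asymptotically independent and $\epsilon$-balanced as $K$ ranges over imaginary quadratic fields with $-X < \mr{Disc}(K) < 0$, and then to invoke a universality theorem for cokernels of random $p$-adic matrices---in particular the extension to matrices with staircase patterns of fixed zero entries established later in the present paper.

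\textbf{Steps.} First, I would fix a parameter $Y = Y(X) \to \infty$ and generate $\mr{Cl}(K)$ by the ideal classes of prime ideals of norm $\leq Y$ (which suffices under GRH by a standard bound of the form $Y \gg (\log |\mr{Disc}(K)|)^2$); this produces a presentation matrix $M_K$ whose entries are $\mf p$-adic valuations of small algebraic integers in $\mc O_K$. Second, I would cut down to a square block of appropriate size while controlling the error from the rectangular-to-square passage, which in the limit should account for the factor $\prod_{i \geq 1}(1 - p^{-i})$. Third, and crucially, I would show that as $K$ varies, the $p$-adic reductions of the entries of $M_K$ become asymptotically i.i.d.\ and $\epsilon$-balanced in $\zp$, using Chebotarev-type equidistribution for Frobenius elements of $K$ at the auxiliary primes. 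Finally, I would apply the universality theorems of the present paper to conclude that the distribution of $\cok(M_K)[p^\infty]$ converges to the Cohen--Lenstra measure, which assigns weight $|\Aut(G)|^{-1}\prod_{i\geq 1}(1-p^{-i})$ to each $G$.

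\textbf{Main obstacle.} Step three is the crux. What is needed is effective, simultaneous equidistribution of Frobenius data of $K$ at many auxiliary primes, averaged over an infinite family of imaginary quadratic fields, and sharp enough to yield independence and $\epsilon$-balance of the entries; such control lies well beyond the reach of current analytic number theory and would essentially amount to a proof of the conjecture itself. The results of this paper make the matrix side of the heuristic rigorous, but the arithmetic input required to upgrade the matrix model into a theorem is the barrier that has blocked progress on Cohen--Lenstra for four decades.
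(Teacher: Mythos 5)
You have correctly recognized that the displayed statement is the original Cohen--Lenstra conjecture, which the paper states purely as motivation and does not prove; it remains open for all odd $p$, while the $p=2$ analogue (with $2\mathrm{Cl}(K)$) is due to Smith. So there is no ``paper's own proof'' to compare against, and your refusal to manufacture one is the right call.

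Your sketch of the matrix-model heuristic is accurate and is indeed the conceptual scaffolding behind the paper (and behind Friedman--Washington, Wood, and the function-field results cited in the introduction): present $\mathrm{Cl}(K)[p^\infty]$ as a cokernel, argue the entries behave like independent $\epsilon$-balanced $p$-adic variables, and invoke universality. You also correctly pinpoint where it fails as a proof: there is no known way to establish the required joint equidistribution of Frobenius data at many auxiliary primes, averaged over imaginary quadratic $K$, with enough uniformity to make the entries genuinely independent and $\epsilon$-balanced --- and the passage from a rectangular presentation to a square one is itself not rigorous in the arithmetic setting. Two smaller caveats worth recording. First, the universality theorems of this paper (for matrices with $k$-step staircases of zeros) are tailored to a specific, deterministic zero pattern satisfying $n-\alpha_n^{(i)}-\beta_n^{(i)}\to\infty$; a presentation matrix $M_K$ coming from class-group relations has no reason to exhibit that pattern, so the specific universality results here would not be the ones you'd reach for even in the idealized scenario --- Wood's original Theorem 1.2 of \cite{Woo19} is the relevant input. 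Second, even granting perfect equidistribution, the entries of a genuine relation matrix for $\mathrm{Cl}(K)$ are not independent (they satisfy the product formula and other global constraints), so the matrix model is a heuristic by design; the function-field case (Ellenberg--Venkatesh--Westerland) succeeded precisely by replacing the matrix model with a topological argument that sidesteps independence. None of this detracts from your answer: you were asked about an open conjecture, you said so, and your discussion of the obstruction is sound.
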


In the above conjecture, we may include the $p=2$ case by replacing $\mathrm{Cl}(K)$ with $2\mathrm{Cl}(K)$ \cite{Ger87}. Smith proved the Cohen--Lenstra conjecture when $p=2$ \cite{Smi26}, but it remains unknown for other primes. 

Friedman and Washington \cite{FW89} considered the function field analogue of the Cohen--Lenstra conjecture. They observed that the ideal class group of an imaginary quadratic extension of $\fq(t)$ can be represented as the cokernel of a square matrix over the ring of $p$-adic integers $\zp$. They actually proved the distribution of the cokernel of a random matrix over $\zp$ converges to that of Cohen--Lenstra, thereby gave an evidence for why the Cohen--Lenstra conjecture should hold for function fields. 

\begin{thm}[Friedman--Washington \cite{FW89}]
\label{thm: Friedman-Washington}
Let $X_n$ be a Haar-random $n \times n$ matrix over $\zp$. Then for every finite abelian $p$-group $G$, we have
$$
\lim_{n \to \infty} \bP(\cok(X_n) \cong G) = \frac{1}{|\Aut(G)|}\prod_{i=1}^{\infty} (1-p^{-i}).
$$
\end{thm}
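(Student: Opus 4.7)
The plan is to proceed by the moment method. For every finite abelian $p$-group $A$, I would compute the surjection moment $\bE[\#\Sur(\cok(X_n), A)]$, show that it tends to $1$ as $n \to \infty$, and then invoke the fact that the Cohen--Lenstra distribution is the unique probability measure on finite abelian $p$-groups whose $\Sur(-, A)$-moments all equal $1$.

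For the moment computation, the universal property of the cokernel provides a bijection between $\Sur(\cok(X_n), A)$ and the set of surjections $\phi \colon \zp^n \twoheadrightarrow A$ satisfying $\phi \circ X_n = 0$. Since the columns of $X_n$ are independent Haar-distributed random vectors and any surjection $\phi$ pushes Haar measure on $\zp^n$ forward to uniform measure on $A$, each column independently contributes a factor of $|A|^{-1}$, so $\bP(\phi \circ X_n = 0) = |A|^{-n}$. Thus
\[
  \bE[\#\Sur(\cok(X_n), A)] = |A|^{-n} \cdot \#\Sur(\zp^n, A).
\]
I would then count $\#\Sur(\zp^n, A)$ via Nakayama's lemma: a homomorphism $\zp^n \to A$ is surjective iff its mod-$p$ reduction $\fp^n \to A/pA$ is. Setting $r = \dim_{\fp}(A/pA)$ and using the standard count $\prod_{i=0}^{r-1}(p^n - p^i)$ for surjections of $\fp$-vector spaces, one obtains $\#\Sur(\zp^n, A) = |A|^n \prod_{i=0}^{r-1}(1 - p^{i-n})$. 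Therefore the moment equals $\prod_{i=0}^{r-1}(1 - p^{i-n})$, which tends to $1$ as $n \to \infty$.

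To convert this moment limit into a distribution limit, I would use the classical identity $\sum_G \mu_{CL}(G) \cdot \#\Sur(G, A) = 1$ (a consequence of Hall's formulas for finite abelian $p$-groups) together with a moment-inversion result on the poset of finite abelian $p$-groups: if a sequence of random finite abelian $p$-groups has all $\Sur(-, A)$-moments converging to a suitably summable limit sequence, then the sequence converges in distribution to the unique measure with those moments. Since the constant sequence $M_A = 1$ is manifestly summable against the Cohen--Lenstra weights (indeed, $\sum_A |\Aut(A)|^{-1} < \infty$), applying this with limit moments all equal to $1$ yields $\lim_n \bP(\cok(X_n) \cong G) = \mu_{CL}(G)$.

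The main obstacle is the moment-to-distribution inversion itself, which requires absolute convergence of an infinite series indexed by isomorphism classes of finite abelian $p$-groups $A$. In the present setting this is quite clean because the moments are uniformly bounded by $1$ in $n$ and the Euler factor $\prod_{i \geq 1}(1 - p^{-i})$ appearing in the Cohen--Lenstra weights provides a summable majorant; however, it is precisely this kind of inversion step that becomes delicate when the Haar hypothesis on the entries is relaxed, which is the theme of the rest of the paper.
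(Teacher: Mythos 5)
Your proposal is correct and self-contained. It follows the moment-method framework that the paper uses throughout (see the citation to Wood's moment-inversion theorem, invoked in the paper as ``by the work of Wood [Woo19], $\cok(X_n)$ converges to CL if $\bE(\#\Sur(\cok(X_n),G))=1$ for every finite abelian $p$-group $G$''). The computation of the moment is the standard one: each Haar column contributes a factor $|A|^{-1}$, and the count of surjections $\zp^n \twoheadrightarrow A$ via Nakayama gives $|A|^n\prod_{i=0}^{r-1}(1-p^{i-n})$, so the moment equals $\prod_{i=0}^{r-1}(1-p^{i-n}) \to 1$. The inversion step is justified because the constant limit moments $M_A=1$ satisfy the growth hypothesis in Wood's uniqueness theorem (in fact they are bounded, whereas the theorem tolerates moments growing like $|\wedge^2 A|$), so the distribution converges to the unique measure with those moments, namely the Cohen--Lenstra distribution.

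It is worth noting that this is \emph{not} how Friedman and Washington originally proved the theorem. Their argument was a direct computation: they counted matrices over $\Z/p^k\Z$ with prescribed elementary divisor type, expressed $\bP(\cok(X_n)\cong G)$ as an explicit finite product, and took the limit term by term, never passing through moments at all. The moment-method route you take is the modern one introduced by Wood, and it is the right choice here since it is robust enough to carry over to the non-Haar, fixed-zero-entry, and $n\times(n+t)$ settings studied elsewhere in the paper, whereas the direct enumeration of Friedman--Washington is tied tightly to the exact Haar measure. The trade-off is that your route requires the moment-uniqueness input as a black box, which is itself a nontrivial theorem, while Friedman--Washington's computation is elementary but brittle.
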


In the function field case, Ellenberg, Venkatesh, and Westerland \cite{EVW16} proved the Cohen--Lenstra conjecture for the $\ell$-parts of the class groups of quadratic function fields over $\fq(t)$ when $q \not\equiv 1 \pmod{\ell}$. Note that if $q \equiv 1 \pmod{\ell}$ (that is, $\fq$ contains an $\ell$-th root of unity), then it does not converge to the Cohen--Lenstra distribution. When $q$ ranges over prime powers such that $q \equiv 1 \pmod{\ell^n}$ but $q \not\equiv 1 \pmod{\ell^{n+1}}$ for a given positive integer $n$, Lipnowski, Sawin and Tsimerman \cite[Theorem 1.1]{LST20} determined the large $q$ limit of the distribution of the $\ell$-parts of the class groups of quadratic function fields over $\fq(t)$.

The above theorem of Friedman--Washington has been extensively generalized by Wood \cite{Woo19} as follows. We refer to Definition \ref{def7a} for the notion of \emph{$\epsilon$-balanced} random variables.

\begin{thm}[Wood \cite{Woo19}] 
\label{thm: Wood's universality theorem}
Let $X_n$ be a random $n \times n$ matrix over $\zp$ whose entries are given by independent $\epsilon$-balanced random variables in $\zp$. Then for every finite abelian $p$-group $G$, we have
$$
\lim_{n \to \infty} \bP(\cok(X_n) \cong G) = \frac{1}{|\Aut(G)|}\prod_{i=1}^{\infty} (1-p^{-i}).
$$    
\end{thm}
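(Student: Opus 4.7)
The plan is to use the method of moments for random finite abelian $p$-groups. Define the $G$-th surjection moment
$$M_n(G) := \bE[|\Sur(\cok(X_n), G)|].$$
The Cohen--Lenstra distribution is characterized among probability measures on isomorphism classes of finite abelian $p$-groups by the identity $\sum_H \mu(H) |\Sur(H,G)| = 1$ for every finite abelian $p$-group $G$; moreover, a sequence of random finite abelian $p$-groups whose surjection moments all tend to $1$ converges in distribution to the Cohen--Lenstra measure (a moment-determinacy statement that can be established directly in this setting). Hence the theorem reduces to proving $\lim_{n \to \infty} M_n(G) = 1$ for every fixed $G$.

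To compute $M_n(G)$, lift surjections to homomorphisms out of $\zp^n$:
$$M_n(G) = \sum_{F \in \Sur(\zp^n, G)} \bP(F X_n = 0).$$
Independence of the columns of $X_n$ gives $\bP(F X_n = 0) = \prod_{j=1}^{n} \bP(F c_j = 0)$ for the $j$-th column $c_j$, and Fourier inversion on $G$ (viewed as a $\zp$-module) yields
$$\bP(F c_j = 0) = \frac{1}{|G|} \sum_{\chi \in \hat G} \prod_{i=1}^{n} \bE\bigl[\chi(\xi_{ij} \cdot F(e_i))\bigr],$$
where $\xi_{ij}$ is the $(i,j)$-entry of $X_n$. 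The trivial character contributes $|G|^{-1}$, and everything reduces to bounding the nontrivial-character terms uniformly in $F$.

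The core step is to partition $\Sur(\zp^n, G)$ into \emph{robust} and \emph{non-robust} surjections. Call $F$ robust if, for every nontrivial $\chi \in \hat G$, the set $\{i : F(e_i) \notin \ker \chi\}$ has size at least $cn$ for a fixed constant $c = c(G) > 0$. On robust $F$, the $\epsilon$-balanced hypothesis supplies a uniform bound $|\bE[\chi(\xi \cdot g)]| \le 1 - \delta$ with $\delta = \delta(\epsilon, p) > 0$ whenever $g \notin \ker \chi$; applied to at least $cn$ indices, the contribution of each nontrivial character decays geometrically in $n$. Consequently $\bP(F X_n = 0) = |G|^{-n}(1 + o(1))$ uniformly over robust $F$, and summing gives $|\Sur_{\mr{rob}}(\zp^n, G)|\,|G|^{-n}(1 + o(1))$, which converges to $1$ provided robust surjections comprise a $(1 - o(1))$-fraction of all surjections.

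The main obstacle is the non-robust surjections, where the Fourier estimate breaks down because characters concentrate on a proper subgroup and fail to decay. I would handle these combinatorially: a non-robust $F$ places a $(1 - c)$-fraction of the images $F(e_i)$ inside some proper subgroup $H \lneq G$ arising as $\ker \chi$ for a nontrivial $\chi$. The number of such $F$ is bounded by a sum over proper subgroups $H$ of terms of the form $\binom{n}{cn} |H|^{(1-c)n} |G|^{cn}$. Dividing by $|G|^n$ and using $|H|/|G| \le p^{-1}$ yields a bound $\binom{n}{cn}\, p^{-(1-c)n}$ per $H$, and choosing $c$ small enough makes the entropic binomial factor negligible against the geometric decay. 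Combined with the trivial bound $\bP(F X_n = 0) \le 1$, this shows that non-robust surjections contribute $o(1)$ to $M_n(G)$, completing the proof.
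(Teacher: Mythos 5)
Your overall strategy — reducing to surjection moments, writing $M_n(G) = \sum_F \prod_j \bP(Fc_j=0)$, using Fourier inversion on $G$ per column, and splitting $F$ into "robust" surjections versus the rest — is the same plan Wood executes in \cite{Woo19}, and your robustness condition (checking the preimage of every character kernel) is equivalent to Wood's "code of distance $cn$" condition because the character kernels realize exactly the maximal proper subgroups, which give the binding constraints. The robust half of the argument is essentially correct, modulo a uniform version of the Fourier bound $|\bE[\chi(\xi g)]|\leq 1-\delta$ over all $\epsilon$-balanced $\xi$ when $\chi(g)$ is a nontrivial $p$-power root of unity of order $>p$; this does follow from $\epsilon$-balancedness by a compactness argument, but it is not immediate from the mod-$p$ hypothesis.

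The non-robust case, however, has a genuine gap. You bound the number of non-robust $F$ with witnessing subgroup $H$ by roughly $\binom{n}{cn}|G|^{cn}|H|^{(1-c)n}$, and then say that "dividing by $|G|^n$" yields $\binom{n}{cn}p^{-(1-c)n}$, which you combine with "the trivial bound $\bP(FX_n=0)\le 1$." These two steps are inconsistent. Multiplying the count by $1$ gives a quantity of order $\binom{n}{cn}|G|^{cn}|H|^{(1-c)n}$, which is exponentially large in $n$, not $o(1)$. The division by $|G|^n$ would only be justified if $\bP(FX_n=0)\lesssim |G|^{-n}$ held uniformly, but for non-robust $F$ this is false: if $F(e_i)\in H$ for all but a few $i$, then each $FM_j$ falls into a fixed coset of $H$ with probability close to $1$, so $\bP(FM_j=0)$ is of order $|H|^{-1}\gg |G|^{-1}$ and $\bP(FX_n=0)$ can be as large as roughly $|H|^{-n}$. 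Even the universally valid bound $\bP(FM_j=0)\le 1-\epsilon$ for surjective $F$ does not save the count, because $|H|^{(1-c)n}(1-\epsilon)^n$ can still blow up when $|H|$ is large.

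What is missing is the finer estimate Wood extracts in her Lemma 2.7 (reproduced here as Lemma \ref{lem: case2 FM_j=0 probability}): if $F$ has depth $[G:H]$ with witnessing set $\sigma$ of size $O(\delta n)$, $F(V_{\backslash\sigma})=H$, and $H\subsetneq F(V)$, then per column $\bP(FM_j=0)\le (|H|^{-1}+\text{error})\cdot\bP\bigl(\sum_{i\in\sigma}F(e_i)\xi_{ij}\in H\bigr)$, and the second factor is $\le 1-\epsilon$ precisely because some coordinate in $\sigma$ escapes $H$. Multiplying these per-column bounds gives a factor $\bigl((1-\epsilon)/|H|\bigr)^n$ that, against the count, yields $\binom{n}{O(\delta n)}(|G|/|H|)^{O(\delta n)}(1-\epsilon)^n\to 0$ for $\delta$ small. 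Your argument never extracts either the $|H|^{-n}$ or the $(1-\epsilon)^n$ decay, and also does not track the graded depth parameter $D$ (via the function $\ell(D)$) that Wood needs to handle witnessing subgroups of varying index. Without these, the non-robust contribution is not controlled.
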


In the above theorem, the limiting distribution of $\cok(X_n)$ always converges to the same distribution, which is independent of the distribution of each $X_n$. Such phenomenon is called \emph{universality}. 

Theorem \ref{thm: Friedman-Washington} and \ref{thm: Wood's universality theorem} can be generalized to the distribution of the cokernels of various types of random $p$-adic matrices. The distribution of the cokernel of a random uniform symmetric matrix over $\zp$ was computed by Clancy, Kaplan, Leake, Payne and Wood \cite{CKLPW15}, and it was extended to more general random symmetric matrices over $\zp$ whose entries are $\epsilon$-balanced by Wood \cite{Woo17}. Similarly, Bhargava, Kane, Lenstra, Poonen and Rains \cite{BKLPR15} computed the distribution of the cokernel of a random uniform skew-symmetric matrix over $\zp$ and it was extended by Nguyen and Wood \cite{NW22b} to random skew-symmetric matrices with $\epsilon$-balanced entries. The second author \cite{Lee23b} determined the distribution the cokernel of a random Hermitian matrix over the ring of integers of a quadratic extension of $\bQ_p$ with $\epsilon$-balanced entries. 

The above results concern local statistics for random matrices. There are also global universality results for random matrices over $\Z$ whose entries are i.i.d. copies of $\epsilon$-balanced random integer. Nguyen and Wood \cite{NW22a} proved the universality of the distribution of the cokernels of random $n \times n$ matrices over $\Z$. The same authors \cite{NW22b} also proved the universality of the distribution of random symmetric and skew-symmetric matrices over $\Z$. 

\begin{table}[h]
\centering
\begin{tabular}{|c|c|c|c|c|}
\hline
Distribution & Non-symmetric & Symmetric & Skew-symmetric & Hermitian \\ \hline
Uniform, local & Friedman--Washington \cite{FW89} & Clancy et al. \cite{CKLPW15} & Bhargava et al. \cite{BKLPR15} & 
\multirow{2}{*}{Lee \cite{Lee23b}} \\ \cline{1-4}
$\epsilon$-balanced, local & Wood \cite{Woo19} & Wood \cite{Woo17} & Nguyen--Wood \cite{NW22b} & \\ \hline
$\epsilon$-balanced, global & Nguyen--Wood \cite{NW22a} & 
\multicolumn{2}{c|}{Nguyen--Wood \cite{NW22b}} & \\ \hline
\end{tabular}
\bigskip
\caption{Distribution of the cokernels of various types of random integral matrices}
\label{known results}
\end{table}

An $n \times n$ matrix $M$ is symmetric (resp. skew-symmetric) if $M_{i,j} = M_{j,i}$ (resp. $M_{i,j} = -M_{j,i}$) for each $1 \le i,j \le n$. As a vast generalization of random symmetric and skew-symmetric matrices, we can consider random matrices with \emph{linear relations} imposed among the entries of the matrices. One of the simplest kinds of linear relations is to fix some entries to be zero. In this paper, we study the distribution of the cokernels of random $p$-adic matrices with some entries fixed to be $0$ (and the other entries are independent). It turns out that even in this simple case, we have many interesting new questions and theorems. See Section \ref{Sub11} more details. 

Random $p$-adic (or integral) matrices have been found to be helpful for understanding random combinatorial objects. Most notably, the local and global universality of the cokernels of random symmetric matrices can be applied to the distribution of random graphs. Let $0<q<1$ and $\Gamma \in G(n, q)$ be an Erdős--Rényi random graph on $n$ vertices with each edge has a probability $q$ of existing. Wood \cite{Woo17} determined the limiting distribution of the Sylow $p$-subgroups of the sandpile group $S_{\Gamma}$ of $\Gamma$. Nguyen and Wood \cite{NW22b} proved that
$$
\lim_{n \to \infty} \bP(S_{\Gamma} \text{ is cyclic} ) = \prod_{i=1}^{\infty} \zeta(2i+1)^{-1} \approx 0.7935
$$
when $q=1/2$, which resolves a conjecture of Lorenzini \cite{Lor08}. As explained in Section \ref{Sub12}, we hope to extend these results to larger classes of random graphs using random symmetric matrices over $\zp$ (or $\Z$) with fixed zero entries.

In other direction, Kahle and Newman \cite{KN22} conjectured that when $C_n$ is a random $2$-dimensional hypertree according to the determinantal measure, then the distribution of the Sylow $p$-subgroup of $H^1(C_n)$ follows the Cohen--Lenstra distribution. The first homology group $H^1(C_n)$ can be realized as the cokernel of $I_n^T[C_n]$ where $I_n$ is a random matrix given in \cite[Section 1.1]{Mes23}. Mészáros \cite[Theorem 1.1]{Mes23} constructed a sparse random matrix model $A_n = B_n[X_n]$ which is similar to $I_n^T[C_n]$ and proved that the distribution of the Sylow $p$-subgroup of $\cok(A_n)$ converges to the Cohen--Lenstra distribution for every prime $p \ge 5$.

There are more research topics in the theory of random $p$-adic matrices. For example, Theorem \ref{thm: Friedman-Washington} and \ref{thm: Wood's universality theorem} can be generalized by concerning the joint distribution of multiple cokernels \cite{CY23, Lee23a, Lee24, NVP24, VP23}, working over a general countable Dedekind domain with finite quotients \cite{Yan23}, or relaxing the $\epsilon$-balanced condition on each entry to certain regularity condition on block matrices \cite{Gor24}.

\subsection{Main results} \label{Sub11}
To explain our main results, let us first set up the notation.

Let $\sg_{n,1}, \ldots, \sg_{n,n}$ be subsets of $[n] := \{ 1, 2, \ldots, n \}$ and $\Sg_n := (\sg_{n,1}, \ldots, \sg_{n,n})$. Let $X_n \in \M_n(\zp)$ be a random $n \times n$ matrix such that $(X_n)_{i,j}=0$ for $i \not\in \sg_{n,j}$ and the $(i, j)$-th entries with $i \in \sg_{n,j}$ are Haar-random and independent. (Such $X_n$ is called a \emph{Haar-random matrix supported on} $\Sg_n$.) We say $\cok(X_n)$ \emph{converges to CL} if the distribution of $\cok(X_n)$ converges to the Cohen--Lenstra distribution as $n \to \infty$.
We write
$$
|\Sg_n|:= \sum_{i=1}^n |\sigma_{n,i}|.
$$
In Theorem \ref{thm41a}, we prove that if $\cok(X_n)$ converges to CL, then
\begin{equation} \label{eq11a}
\underset{n \to \infty}{\lim} \left(\frac{|\Sigma_n|}{n} - \log_p n\right) = \infty.
\end{equation}
The converse of the above statement does not hold. For example, let $\sg_{n,1} = \varnothing$ and let $\sg_{n,2}= \cdots = \sg_{n,n} = [n]$. Obviously,
$$
\underset{n \to \infty}{\lim} \left(\frac{|\Sigma_n|}{n} - \log_p n\right) = \infty.
$$
However, the first column of $X_n$ is identically zero, so $\cok(X_n)$ does not converge to CL.  
Although the converse of Theorem \ref{thm41a} itself does not hold, we expect the following converse-type result, which is the best possible by the equation \eqref{eq11a}.

\begin{conj}[Conjecture \ref{conj42a}]
\label{conj42a-int}
For every sequence $(a_n)_{n \ge 1}$ such that $n \le a_n \le n^2$ and $\underset{n \to \infty}{\lim} (\frac{a_n}{n} - \log_p n) = \infty$, there is a sequence $(\Sg_n)_{n \ge 1}$ such that $\cok(X_n)$ converges to CL and $ | \Sg_n |=a_n$ for all $n$. 
\end{conj}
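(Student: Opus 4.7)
The plan is to establish the conjecture by a probabilistic construction: sample $\Sg_n$ from an appropriate distribution on near-regular bipartite graphs with $a_n$ edges, show the moments $|\Sur(\cok(X_n), G)|$ converge on average to the Cohen--Lenstra value $1$, and derandomize. Write $d_n = \lfloor a_n/n \rfloor$ and $r_n = a_n - n d_n$, so the hypothesis $a_n/n - \log_p n \to \infty$ becomes $d_n - \log_p n \to \infty$. I would sample $\Sg_n$ by choosing the $\sg_{n,j}$ independently, with $\sg_{n,j}$ uniform on subsets of $[n]$ of size $d_n+1$ for $1 \le j \le r_n$ and of size $d_n$ for $r_n < j \le n$; this is the set-valued version of a random near-regular bipartite multigraph with $a_n$ edges and ensures $|\Sg_n| = a_n$ deterministically.

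For any finite abelian $p$-group $G$ and surjection $\phi \colon \zp^n \twoheadrightarrow G$, write $g_i := \phi(e_i)$. Since the columns of $X_n$ are independent with Haar-distributed nonzero entries,
\[
\bP\bigl(\phi X_n = 0 \,\big|\, \Sg_n\bigr) = \prod_{j=1}^n \frac{1}{|G_j(\phi)|}, \qquad G_j(\phi) := \langle g_i : i \in \sg_{n,j} \rangle.
\]
Summing over $\phi$ and averaging over $\Sg_n$, the independence of the $\sg_{n,j}$ factors the moment into $\sum_\phi \prod_j \bE_{\sg_{n,j}}[1/|G_j(\phi)|]$. For each fixed $\phi$, Möbius inversion over the subgroup lattice of $G$ expresses the inner expectation in terms of the profile $n_H(\phi) := |\{i : g_i \in H\}|$: the leading term is $1/|G|$, and each proper subgroup $H < G$ contributes a correction bounded by $|H|^{-1} \binom{n_H(\phi)}{d_n}/\binom{n}{d_n} \le |H|^{-1} (n_H(\phi)/n)^{d_n}$.

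I would then stratify surjections by their profile. For $\phi$ with profile close to typical ($n_H \approx n|H|/|G|$), the maximal-subgroup corrections dominate and the error per column is $O(p^{-d_n})$; multiplying over the $n$ columns and using $d_n - \log_p n \to \infty$ yields $\prod_j \bE[1/|G_j(\phi)|] = |G|^{-n}(1 + o(1))$. Combined with $|\Sur(\zp^n, G)| = |G|^n(1+o(1))$, typical $\phi$ contribute $1 + o(1)$ to the moment. The main obstacle is handling atypical $\phi$ for which the $g_i$ concentrate in some proper subgroup $H$, since then $(n_H(\phi)/n)^{d_n}$ can be close to $1$; here I would use a Chernoff-type large-deviations bound to show the number of such $\phi$ is exponentially small in $n$, dominating the corresponding growth factor $|G/H|^n$ whenever $d_n - \log_p n \to \infty$. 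This subgroup-by-subgroup bookkeeping follows the pattern of Wood's proof of Theorem \ref{thm: Wood's universality theorem} and of the authors' Theorem \ref{thm41a}, and yields $\bE_{\Sg_n, X_n}[|\Sur(\cok(X_n), G)|] \to 1$ for every $G$.

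To derandomize, I would apply the same Möbius/profile analysis to the conditional second moment $\bE_{\Sg_n}\bigl[\bE[|\Sur(\cok(X_n), G)| \mid \Sg_n]^2\bigr]$, which unfolds into a sum over pairs of surjections $\zp^n \twoheadrightarrow G$ and by the same reasoning tends to $1$. Chebyshev's inequality then gives convergence in probability of the conditional first moment to $1$ for each fixed $G$, and a diagonal argument over the countably many isomorphism classes of finite abelian $p$-groups produces a single deterministic sequence $(\Sg_n)$ with $|\Sg_n| = a_n$ along which $\bE[|\Sur(\cok(X_n), G)|] \to 1$ for every $G$. Wood's moment method then implies that $\cok(X_n)$ converges in distribution to the Cohen--Lenstra distribution, establishing the conjecture.
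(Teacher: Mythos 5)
The statement you are proving is a conjecture left open in the paper, and your proposal has a concrete flaw already at the first step. The random model you choose, with the $\sg_{n,j}$ drawn independently and uniformly from $d_n$-element subsets of $[n]$, makes the \emph{column} degrees essentially constant but leaves the \emph{row} degrees $|\tau_i|:=|\{j: i\in\sg_{n,j}\}|$ fluctuating roughly as $\mathrm{Poisson}(d_n)$, and that variance already kills the first-moment computation for $G=\Z/p\Z$. Consider the $(p-1)n$ surjections $\phi\colon\zp^n\twoheadrightarrow\Z/p\Z$ whose image vector $(g_1,\dots,g_n)$ has exactly one nonzero entry $g_{i_0}$. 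For fixed $\Sg_n$ one has $\bP(\phi X_n=0\mid\Sg_n)=p^{-|\tau_{i_0}|}$, and by independence of the columns, in the relevant regime $d_n=O(\log n)$,
\[
\bE_{\Sg_n}\bigl[p^{-|\tau_{i_0}|}\bigr]=\prod_{j=1}^n\Bigl(1-\tfrac{|\sg_{n,j}|(p-1)}{pn}\Bigr)=(1+o(1))\,e^{-(p-1)a_n/(pn)},
\]
so this family alone contributes roughly $(p-1)\,n\,e^{-(p-1)a_n/(pn)}$ to $\bE_{\Sg_n}\bigl[E_n(\Z/p\Z)\bigr]$, and by concentration of $\sum_i p^{-|\tau_i|}$ the same holds for a typical realization $\Sg_n$. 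That quantity tends to $0$ only when $\tfrac{a_n}{n}-\tfrac{p\ln p}{p-1}\log_p n\to\infty$, and since $\tfrac{p\ln p}{p-1}>1$ for every prime $p$, this is strictly stronger than the conjectured hypothesis $\tfrac{a_n}{n}-\log_p n\to\infty$ (for $p=2$ one needs roughly $1.386\log_2 n$ rather than $\log_2 n$). So the averaged moment, and hence the typical moment, diverges on a range of $a_n$ that the conjecture must cover, and your Chebyshev/second-moment derandomization is being applied to a first-moment statement that is false.

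Your Chernoff remark also misidentifies the obstruction: the dangerous $\phi$ are only polynomially many, and the issue is that each one is individually heavy because of a low-degree row, not that there are exponentially many of them. This is exactly why the paper instead samples a random $t_n$-\emph{regular} bipartite multigraph via the configuration model in Section~\ref{Sec5}: regularity forces $|\tau_i|=t_n$ for every $i$, so the singleton-support contribution becomes $(p-1)\,n\,p^{-t_n}$, which vanishes under precisely the conjectured threshold. But regularity destroys the independence of the $\sg_{n,j}$, so your clean factorization of the moment across columns no longer applies and one must argue combinatorially with the neighborhood expansion of the bipartite graph, as the paper does. Even after that repair, the authors complete the moment computation only for $G=\Z/p\Z$ (Theorem~\ref{mainthm1}) and state explicitly that the analogous translation for larger $G$ ``seems to be difficult or even impossible.'' The subgroup-lattice bookkeeping you sketch as the final step, far from being routine, is the open content of the conjecture.
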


By the work of Wood \cite{Woo19}, $\cok(X_n)$ converges to CL if 
$$
\bE (|\Sur(\cok(X_n), G)|) =1
$$
for every finite abelian $p$-group $G$. The following theorem gives an evidence of the above conjecture, whose proof uses random regular bipartite multigraph. 

\begin{thm}[Theorem \ref{mainthm1}]
\label{mainthm1-int}
Let $(t_n)_{n \ge 1}$ be a sequence of positive integers such that $t_n \le n$ for each $n$ and $\underset{n \to \infty}{\lim} (t_n - \log_p n) = \infty$. Then there are $\sg_{n,1}, \ldots, \sg_{n,n} \subseteq [n]$ such that $1 \le | \sg_{n,i} | \le t_n$, $\bigcup_{i=1}^{n} \sg_{n,i} = [n]$ and
\begin{equation}
\underset{n \to \infty}{\lim} \bE (|\Sur(\cok(X_n), \Z / p \Z)|)
= 1.
\end{equation}
\end{thm}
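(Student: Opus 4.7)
The strategy is to realize $(\sg_{n,j})_{j\in[n]}$ as the row-neighborhoods of a random bipartite multigraph $H_n$ drawn from the bipartite configuration model, to show that the expected number of surjections onto $\Z/p\Z$ (averaged over both $H_n$ and $X_n$) tends to $1$, and to extract a deterministic graph instance via a Markov-type argument. Concretely, I would give each of the $n$ row vertices and each of the $n$ column vertices exactly $t_n$ half-edges, choose a uniformly random perfect matching between the $nt_n$ row half-edges and the $nt_n$ column half-edges, and set $\sg_{n,j}$ to be the set of distinct rows that column $j$ is matched to. This automatically gives $|\sg_{n,j}|\le t_n$ and $\bigcup_j\sg_{n,j}=[n]$, since each row vertex has $t_n\ge 1$ half-edges matched somewhere. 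By monotonicity of the upper bound $|\sg_{n,j}|\le t_n$ (if the construction works for some $t_n'\le t_n$ then it works a fortiori for $t_n$), I may assume without loss of generality that $t_n=\lceil\log_p n+h(n)\rceil$ for some $h(n)\to\infty$ with $h(n)\le\log\log n$, so that in particular $t_n^2=o(n)$.

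For fixed $H_n$, a vector $v\in\fp^n$ with support $T$ satisfies $v^{\top}\bar X_n=0$ with probability exactly $p^{-k(T)}$, where $\bar X_n:=X_n\bmod p$ and $k(T):=|\{j:\sg_{n,j}\cap T\ne\varnothing\}|$, because each column whose support meets $T$ imposes one independent uniform linear constraint on $\fp$. Hence
\[
\bE_{X_n}\bigl[|\Sur(\cok(X_n),\Z/p\Z)|\bigr]=\sum_{\varnothing\ne T\subseteq[n]}(p-1)^{|T|}p^{-k(T)}.
\]
Expanding $p^{-k(T)}=p^{-n}\prod_j\bigl(1+(p-1)\mathbf{1}[\sg_{n,j}\cap T=\varnothing]\bigr)$ and swapping the sum over $T$ with the product over $j$ yields the bipartite duality
\[
\sum_{T\subseteq[n]}(p-1)^{|T|}p^{-k(T)}=\sum_{A\subseteq[n]}(p-1)^{|A|}p^{-|N(A)|},\qquad N(A):=\bigcup_{j\in A}\sg_{n,j},
\]
which moves the problem to the column side. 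Taking expectations over $H_n$ and using exchangeability reduces the target to $\sum_{a\ge 1}\binom{n}{a}(p-1)^aE_a\to 1$ with $E_a:=\bE_{H_n}\bigl[p^{-|N([a])|}\bigr]$; since $\sum_{a\ge 1}\binom{n}{a}(p-1)^ap^{-n}=1-p^{-n}$, this is equivalent to $\sum_{a\ge 1}\binom{n}{a}(p-1)^a(E_a-p^{-n})\to 0$.

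I would control this sum in three regimes. For $a\le\sqrt n/t_n$, the expected number of collisions among the $at_n$ column half-edges of $[a]$ is $O(a^2t_n^2/n)=o(1)$, so $|N([a])|=at_n$ with probability $1-o(1)$ and $E_a=p^{-at_n}(1+o(1))$; the cumulative contribution is at most $\exp(n(p-1)p^{-t_n})-1=\exp((p-1)p^{-h(n)})-1\to 0$, since $np^{-t_n}=p^{-h(n)}\to 0$ by hypothesis. For $a\ge Cn(\log n)/t_n$ with $C$ sufficiently large, a union bound using $\bP(i\notin N([a]))\le e^{-at_n/n}$ shows $|N([a])|=n$ with probability $1-o(n^{-1})$, so $E_a-p^{-n}$ is negligible on this range. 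For intermediate $a$ I would use the exact expansion $E_a=p^{-n}\sum_{k\ge 0}\binom{n}{k}(p-1)^k\pi(k,a)$ with $\pi(k,a):=\binom{(n-a)t_n}{kt_n}/\binom{nt_n}{kt_n}\le e^{-akt_n/n}$ and split once more on $k$; the symmetry $\pi(k,a)=\pi(a,k)$ lets the two preceding arguments cover this range as well.

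The principal obstacle is ensuring in the intermediate regime that the bound on $E_a$ is of the sharp order $p^{-\min(at_n,n)}$ rather than loose by a factor of size $p^{(\log p-(1-1/p))t_n}$: the naive negative-association bound $\bE[p^{|B_a|}]\le(1+(p-1)\pi(1,a))^n$ (with $B_a:=[n]\setminus N([a])$) would force the strictly stronger hypothesis $t_n\ge(\log n)/(1-1/p)$. Avoiding this loss requires exploiting the configuration-model fact that $|N(\{j\})|=t_n$ with probability $1-O(t_n^2/n)$, so that the dominant $a=1$ contribution is exactly $n(p-1)p^{-t_n}(1+o(1))$ and matches the hypothesis $t_n-\log_p n\to\infty$ sharply. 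Once $\bE_{H_n}\bE_{X_n}[|\Sur|]\to 1$ is established, I conclude as follows: the deterministic lower bound $F(H_n):=\bE_{X_n}[|\Sur|]\ge 1-p^{-n}$ (keeping only the contributions with $k(T)\le n$) together with $\bE_{H_n}[F(H_n)]\to 1$ implies, via Markov applied to $F(H_n)-(1-p^{-n})\ge 0$, that $\bP\bigl(F(H_n)>1+\varepsilon\bigr)\to 0$ for every $\varepsilon>0$; a diagonal extraction then produces, for each $n$, a deterministic $H_n$ (and hence $\sg_{n,1},\ldots,\sg_{n,n}$) with $F(H_n)\to 1$, as required.
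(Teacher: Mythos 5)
Your construction is the same as the paper's: sample the bipartite multigraph $H_n$ from the $t_n$-regular configuration model and then argue that a deterministic good instance exists. However, the analytic strategy is genuinely different, and I believe the proposal has a real gap. The paper does not try to show that $\bE_{H_n}\bE_{X_n}[|\Sur|]\to 1$; it works with a restricted combinatorial quantity $c(G_n)=\sum_{S\in\cF_{A_n}}p^{|S|-|N(S)|}$ (where $\cF_{A_n}$ excludes all ``non-maximal'' $S$, i.e.\ those with $N(S)=B_n$ or with some $w\notin S$ having $N(w)\subseteq N(S)$), proves $c(G_n)<\delta$ with probability $0.9$ via expansion bounds, and crucially uses the duality $N(B_n\setminus N(S))=A_n\setminus S$ for $S\in\cF_{A_n}$ to convert large $S$ into small $T\subseteq B_n$. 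Your exact identity $\bE_{X_n}[|\Sur|]=\sum_{\varnothing\neq A}(p-1)^{|A|}p^{-|N(A)|}$ is tighter in one respect ($(p-1)^{|A|}$ vs.\ $p^{|S|}$) but loses the restriction to a maximal family, and your $(a,k)\leftrightarrow(k,a)$ symmetry does not reduce set sizes the way the paper's $S\mapsto B_n\setminus N(S)$ does.

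The concrete problems with the three-regime split are: (i) the ``large'' threshold $a\ge Cn(\log n)/t_n$ is $\approx Cn\ln p$, which for $p\ge 3$ already exceeds $n$ for any $C\ge 1$, so the large regime is empty and the argument never closes off the top; (ii) the bound $\pi(k,a)\le e^{-akt_n/n}$ that you write down is, as you yourself observe, off by a multiplicative error that would require $t_n\gtrsim(\log n)/(1-1/p)$; the tight bound is $\pi(k,a)\le(1-a/n)^{kt_n}$, and using $e^{-akt_n/n}$ already at $a=1$ gives $\binom{n}{k}(p-1)^k e^{-kt_n/n}$-sums that do not close under the hypothesis $t_n-\log_p n\to\infty$; (iii) the proposed fix (``$|N(\{j\})|=t_n$ with probability $1-O(t_n^2/n)$'') only repairs the small-$a$ regime, and ``the symmetry $\pi(k,a)=\pi(a,k)$ lets the two preceding arguments cover this range'' is not an argument for the intermediate-$a$, intermediate-$k$ block (swapping indices there leaves you exactly where you started). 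For $a$ near the binomial bulk $n(p-1)/p$ and $h_n:=t_n-\log_p n$ growing slowly, the negative-association bound $\bE[p^{|B_a|}]\le(1+(p-1)\pi(1,a))^n$ is loose by a polynomial factor, and controlling $\bE_{H_n}[p^{|B_a|}]$ against the sub-Gaussian binomial weight $\binom{n}{a}(p-1)^ap^{-n}$ uniformly in $a$ is precisely the delicate point the paper sidesteps by restricting to $\cF_{A_n}$ and invoking the duality lemma together with the ``cutoff'' expansion bound $|N(S)|>(1-1/\sqrt{t_n})n$. Until that uniform control is supplied, the claim $\bE_{H_n}\bE_{X_n}[|\Sur|]\to 1$ — and hence the Markov extraction — is unjustified.
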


Regarding Conjecture \ref{conj42a}, we provide an example of $X_n$ with ``small'' number of random entries such that $\cok(X_n)$ converges to CL. See Section \ref{Sub44} for a concrete example of a sequence $(\Sg_n)_{n \ge 1}$ such that $|\Sg_n| \le 4n \log_p n$ and $\cok(X_n)$ converges to CL.
While this paper was close to completion, we became aware of a recent preprint by Mészáros \cite{Mes24b} which provides an example such that $|\Sg_n| = (2+o(1))n \log_p n$ ($o(1) \to 0$ as $n \to \infty$) and $\cok(X_n)$ converges to CL (see Remark \ref{rmk44c}).

If we drop the assumption that random entries are equidistributed with respect to Haar measure, then we prove the following theorem analogous to Theorem \ref{thm41a} and Conjecture \ref{conj42a}. 
A remarkable point of the following theorem is that $\cok(Y_n)$ may converge to CL even if the number of random entries is very small (i.e., $|\Sg_n| = (1+o(1))n$).

\begin{thm}[Theorem \ref{thm43a}]
\label{thm43a-int}
Let $\xi$ be a random variable taking values in $\zp$ and $Y_n \in \M_n(\zp)$ be a random matrix supported on $\Sg_n$ whose random entries are i.i.d. copies of $\xi$. 
\begin{enumerate}
    \item If $\cok(Y_n)$ converges to CL, then $\underset{n \to \infty}{\lim} (|\Sg_n| - n) = \infty$.

    \item Assume that $p$ is odd. For every sequence of integers $(a_n)_{n \ge 1}$ such that $0 \le a_n \le n^2$ and $\underset{n \to \infty}{\lim} (a_n - n) = \infty$, there is a random variable $\xi \in \zp$ and a sequence $(\Sg_n)_{n \ge 1}$ such that $\cok(Y_n)$ converges to CL and $| \Sg_n  |=a_n$ for all $n$. 
\end{enumerate}
\end{thm}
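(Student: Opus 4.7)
My overall strategy is to work modulo $p$ throughout and invoke Wood's moment characterization of the Cohen--Lenstra distribution from Theorem \ref{thm: Wood's universality theorem}: $\cok(Y_n)$ converges to CL if and only if $\bE[|\Sur(\cok(Y_n),G)|]\to 1$ for every finite abelian $p$-group $G$. Write $\bar Y_n:=Y_n\bmod p$, $\bar\xi:=\xi\bmod p$, $\alpha_0:=\bP[\bar\xi=0]$, and $d_i:=|\{j:i\in\sg_{n,j}\}|$. I would divide the proof of part (1) into three cases. If $\bar\xi$ is deterministic, then $\bar Y_n$ is a deterministic matrix over $\bF_p$, so $\bE[|\Hom(\cok(Y_n),\Z/p\Z)|]=|\cok(\bar Y_n)|$ is a fixed power of $p$ and cannot converge to the required value $2$. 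If $\alpha_0>0$, the singleton vectors $v=ce_i$ in the identity $\bE[|\Hom(\cok(Y_n),\Z/p\Z)|]=\sum_{v\in\bF_p^n}\bP[v^T\bar Y_n\equiv 0\pmod p]$ already contribute $(p-1)\sum_{i=1}^n\alpha_0^{d_i}\ge (p-1)n\alpha_0^{|\Sg_n|/n}$ by convexity, so boundedness forces $|\Sg_n|/n\to\infty$, which is strictly stronger than $|\Sg_n|-n\to\infty$.

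The delicate case is $\alpha_0=0$ with $\bar\xi$ non-constant. Here every random entry of $\bar Y_n$ is almost surely a unit in $\bF_p^*$, and singleton vectors contribute $0$. I would argue by a rank bound on the bipartite support graph $B(\Sg_n)$. When $B(\Sg_n)$ admits a perfect matching $\pi$, permuting columns so that $\pi$ sits on the diagonal writes $\bar Y_n=M+N$, where $M$ is diagonal with entries in $\bF_p^*$ (hence invertible of rank $n$) and $N$ has nonzero entries only at the $|\Sg_n|-n$ off-diagonal extra positions, so
\[
\rank_{\bF_p}\bar Y_n\ge \rank M-\rank N\ge n-(|\Sg_n|-n).
\]
Consequently $\dim_{\bF_p}\cok(\bar Y_n)\le |\Sg_n|-n$ almost surely. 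Hence if $|\Sg_n|-n\le C$ along a subsequence, then for every integer $r>C$ we have $|\Sur(\cok(Y_n),(\Z/p\Z)^r)|=0$ deterministically, contradicting the Cohen--Lenstra prediction $\bE[|\Sur(\cok(Y_n),(\Z/p\Z)^r)|]\to 1$. When $B(\Sg_n)$ has no perfect matching, $\det\bar Y_n\equiv 0$ almost surely, so $\dim_{\bF_p}\cok(\bar Y_n)\ge 1$; a similar rank counting, controlling the matching deficit in terms of $|\Sg_n|-n$, supplies the contradiction. The principal obstacle is making this deficiency analysis robust in the absence of a matching, especially when $p=2$.

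For part (2), I would take $\xi$ to be Haar measure on $\zp^*$, so that $\bar\xi$ is uniform on $\bF_p^*=\{1,\dots,p-1\}$, $\xi\in\zp^*$ almost surely, and $\xi$ is $\epsilon$-balanced with some $\epsilon>0$ because $|\bF_p^*|=p-1\ge 2$ uses that $p$ is odd. Given $t_n:=a_n-n\to\infty$, choose $k_n\to\infty$ with $k_n(k_n-1)\le t_n$, and arrange $\Sg_n$ so that
\[
Y_n=\begin{pmatrix}A_n&B_n\\ C_n&D_n\end{pmatrix},
\]
where $A_n$ is $k_n\times k_n$ with every entry random, $D_n$ is $(n-k_n)\times(n-k_n)$ diagonal with random diagonal entries, and $B_n,C_n$ together carry exactly $a_n-k_n^2-(n-k_n)$ random entries in arbitrary prescribed positions, so $|\Sg_n|=a_n$. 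Since $\xi\in\zp^*$ almost surely, $D_n$ is invertible over $\zp$, and invertible row and column operations yield the Schur-complement identity $\cok(Y_n)\cong\cok(A_n-B_nD_n^{-1}C_n)=:\cok(\tilde A_n)$. Conditional on $B_n,C_n,D_n$, the matrix $\tilde A_n$ is a deterministic additive translate of $A_n$, so its entries remain i.i.d.\ and $\epsilon$-balanced; Wood's universality theorem (Theorem \ref{thm: Wood's universality theorem}) then applies conditionally with a rate depending only on $\epsilon$, and integrating out $B_n,C_n,D_n$ gives $\cok(Y_n)\to$ CL as $k_n\to\infty$. The remaining bookkeeping is to check that the off-diagonal blocks $B_n$ and $C_n$ accommodate the prescribed extras for every $n\le a_n\le n^2$, which is routine by tuning $k_n$ close to $\sqrt{t_n}$ for moderate $t_n$ and close to $n-1$ when $a_n$ approaches $n^2$.
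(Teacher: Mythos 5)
Part (1) has a genuine logical gap right at the start: convergence of $\cok(Y_n)$ to CL is convergence in \emph{distribution}, which does not imply $\bE[|\Sur(\cok(Y_n),G)|]\to 1$. Wood's moment criterion is one-directional (sufficient, not necessary): a sequence that is CL-distributed with probability $1-1/n$ and equal to $(\Z/p\Z)^n$ with probability $1/n$ converges in distribution to CL while the first $\Z/p\Z$-moment tends to infinity. Consequently your $\alpha_0>0$ argument has no right to assume $\bE[|\Hom(\cok(Y_n),\Z/p\Z)|]$ stays bounded, and your ``deterministic'' case has the same flaw (and separately, for $p=2$ the value $2$ \emph{is} a power of $p$, so the moment identity alone cannot rule it out; you would instead note that a deterministic $\bar Y_n$ makes $\bP(\cok(Y_n)\cong G)\in\{0,1\}$, incompatible with $0<\nu_p(m)<1$). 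The paper avoids this entirely: after permuting so that $i\in\sg_{n,i}$, the bound $|\Sg_n|-n\le c$ confines all off-diagonal support to a $2c\times 2c$ block, and then one works with $\bP(\cok(Y_n)=0)$, a bounded functional that must converge to $\prod_k(1-p^{-k})>0$ — this gives the contradiction in the $\alpha_0>0$ subcase, while the $\alpha_0=0$ subcase reduces to a cokernel of bounded $p$-rank. Your $\alpha_0>0$ step can be repaired by switching from moments to the zero-row tail-probability estimate as in Theorem~\ref{thm41a}. The missing no-matching subcase is actually vacuous: if $B(\Sg_n)$ has no perfect matching then every term in the Leibniz expansion of $\det Y_n$ vanishes, so $\det Y_n=0$ deterministically, $\cok(Y_n)$ is infinite, and CL convergence fails at once; given a matching, your rank bound $\dim_{\bF_p}\cok(\bar Y_n)\le|\Sg_n|-n$ from $\bar Y_n=M+N$ is correct and is a legitimate, somewhat more elementary substitute for the paper's $2c\times 2c$-block reduction.

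Part (2) is correct and takes a genuinely different route from the paper. The paper fixes $\xi$ with $\bP(\xi=\pm1)=1/2$, builds a block upper-triangular $Y_n$ with a $d_n\times d_n$ top-left block and a unit diagonal on the complement, reads $\cok(Y_n)\cong\cok(Z_n)$ off the block structure, and handles the near-dense range $a_n\ge n^2-n+2$ by a separate elimination of the last row and column. Your choice of $\xi$ Haar on $\zp^*$ with the Schur-complement identity $\cok(Y_n)\cong\cok(A_n-B_nD_n^{-1}C_n)$ absorbs the off-diagonal extras in a single step. The key point — that any constant translate of Haar measure on $\zp^*$ is $\epsilon$-balanced with $\epsilon=(p-2)/(p-1)$ — uses $p$ odd exactly as the paper's construction does, and Wood's theorem then applies conditionally with a uniform rate. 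Your deferred bookkeeping does close: taking $k_n$ to be the largest integer with $k_n(k_n-1)\le t_n$ leaves $t_n-k_n(k_n-1)<2k_n\le 2k_n(n-k_n)$ extras whenever $k_n\le n-1$, and $k_n=n$ forces $a_n=n^2$, where no extras are needed. This buys a more uniform treatment than the paper's three cases.
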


We also extend the universality result of Wood (i.e. Theorem \ref{thm: Wood's universality theorem}) to $\epsilon$-balanced random matrices over $\zp$ ``having $k$-step stairs of $0$'' (see Section \ref{universality first section} for the terminology) as follows. We emphasize, however, that it is not true that the universality holds for \emph{any} $\epsilon$-balanced random matrices with fixed zero entries. We refer to Theorem \ref{thm6a} for an example that the universality does not hold.

\begin{thm}[Theorem \ref{thm: universality main theorem}] \label{thm:univ main-int}
Let $M_n$ be an $\epsilon$-balanced random $n \times n$ matrix over $\Z_p$ having $k$-step stairs of $0$ with respect to $\alpha_n^{(i)}$ and $\beta_n^{(i)}$. Suppose that for every $1 \le i \le k$, 
$$
\underset{n \to \infty}{\lim} (n - \alpha_n^{(i)} - \beta_n^{(i)}) = \infty.
$$
Then $\cok(M_n)$ converges to CL, i.e. for every finite abelian $p$-group $G$, we have
$$
\underset{n \to \infty}{\lim}\bP(\cok(M_n) \cong G) = \frac{1}{|\Aut(G)|}\prod_{i = 1}^\infty (1- p^{-i}).
$$
\end{thm}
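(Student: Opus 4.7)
The plan is to apply the moment method for surjections. By Wood's framework from the proof of Theorem \ref{thm: Wood's universality theorem}, the Cohen--Lenstra distribution is uniquely characterized by its surjection moments, which equal $1$ for every finite abelian $p$-group $G$. So it suffices to show that
$$
\underset{n \to \infty}{\lim} \bE(|\Sur(\cok(M_n), G)|) = 1
$$
for every finite abelian $p$-group $G$.

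First, I would expand the moment as $\bE(|\Sur(\cok(M_n), G)|) = \sum_{F} \bP(F M_n = 0)$ where the sum runs over surjective homomorphisms $F : \zp^n \twoheadrightarrow G$, which correspond to $n$-tuples $(v_1, \ldots, v_n) \in G^n$ whose entries generate $G$. Since the random entries of $M_n$ are independent, this probability factors over columns:
$$
\bP(F M_n = 0) = \prod_{j=1}^n \bP\Bigl( \sum_{i \in \sg_{n,j}} (M_n)_{ij} \, v_i = 0 \ \text{in} \ G \Bigr),
$$
where $\sg_{n,j} \subseteq [n]$ denotes the support of the $j$-th column, that is, the set of rows whose $(i,j)$-entry is a random $\epsilon$-balanced variable rather than a fixed zero.

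Next, following the Fourier-theoretic strategy of Wood, I would analyze each column factor by expanding over $\widehat{G}$ and applying the $\epsilon$-balanced hypothesis, which gives a uniform upper bound of the form $|\bE[\chi(\xi v_i)]| \le 1 - c(\epsilon, p)$ whenever $v_i$ is not annihilated by $\chi$ modulo $p$. Stratifying the sum over $F$ by the ``type'' of $(v_i)_{i \in I}$ for various index sets $I$ dictated by the stair structure, one separates a main term coming from ``trivial'' surjections (which contributes $1 + o(1)$) from an error coming from ``bad'' surjections whose image $(v_i)_{i \in I}$ is concentrated in a proper subgroup $H \le G$. The $k$-step stair pattern partitions $[n] \times [n]$ into blocks such that each column $j$ belongs to one of finitely many ``classes'' determined by its support shape. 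The hypothesis $n - \alpha_n^{(i)} - \beta_n^{(i)} \to \infty$ guarantees that for each of the $k$ stair steps, the complementary unconstrained rectangle has both dimensions growing fast enough to provide the needed cancellation.

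The main obstacle will be controlling the bad surjections against the $k$-step stair structure. In Wood's original setting, a single depth parameter governs the error; here we must track $k$ such parameters in parallel, since a bad surjection can align with any of the $k$ stair steps in a way that shelters it from cancellation in the corresponding columns. I would address this by a peeling argument: for each $i$, restrict attention to the $(n - \alpha_n^{(i)}) \times (n - \beta_n^{(i)})$ block that lies outside the $i$-th stair step, and use the growth condition to run Wood's estimates there; then combine the $k$ bounds. An alternative is induction on $k$, reducing the $k$-step case to the $(k-1)$-step case by conditioning on the entries in one extremal stair block and applying Theorem \ref{thm: Wood's universality theorem} in the limit. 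In either route, the delicate point is showing that for each proper subgroup $H \le G$ and each stair step, the count of bad surjections is overcome by the product of column-wise Fourier decay factors drawn from the complementary block.
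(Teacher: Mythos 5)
Your high-level strategy matches the paper's: reduce to surjection moments, expand over $F \in \Sur(\Z_p^n, G)$, factor the probability over columns, separate codes from non-codes, and then run an error analysis based on the stair structure, with induction on $k$ being one of your two proposed routes. The paper does in fact use induction on $k$. However, the proposal as written leaves out several load-bearing ingredients that the paper needs, and neither of the two routes you sketch would close the argument as stated.

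First, the paper's error analysis is not a clean ``peeling'' over the $k$ complementary rectangles. For a single surjection $F$ one must track simultaneously, for each $1 \le i \le k+1$, whether the restriction $F_i := F|_{V_i}$ (to the coordinates below row $\alpha_n^{(i)}$) is a code, is ``$\delta_i$-deep'' with full image (the set the paper calls $A_H^{(i)}$), or has image trapped in a proper subgroup $H \le G$ (the set $B_H^{(i)}$). The three-way classification --- all codes ($\mathcal{F}_1$), all in some $A_{H_i}^{(i)}$ with at least one $H_i$ proper, or some $F_j$ in $B_{H_j}^{(j)}$ --- is essential, and the thresholds $\delta_1 > \delta_2 > \cdots > \delta_{k+1}$ are calibrated so that a deficient set at level $i$ is also a deficient set at all earlier levels. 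Your ``type'' stratification hints at this, but you never say what happens when $F$ collapses onto a proper subgroup $H$ on one block while remaining surjective overall: this is precisely the $B_H^{(j)}$ case, and it is the hard part.

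Second, the paper relies on the transpose identity $\cok(M) \cong \cok(M^T)$ to handle the regime where $n - \alpha_n^{(j)} < \eta n$ for a fixed small $\eta$. In that regime the block below row $\alpha_n^{(j)}$ is too thin to apply Wood-type decay directly, and the only way forward is to observe that the moment of $\cok(M)$ equals the moment of $\cok(M^T)$, whose stair data has roles of $\alpha$ and $\beta$ swapped, and then appeal to the already-established case $n - \beta_n^{(j)} \ge \eta n$. Neither of your two proposed routes mentions this symmetry, and without it the ``peeling'' version simply fails for one extreme aspect ratio. Your alternative --- ``conditioning on the entries in one extremal stair block and applying Theorem \ref{thm: Wood's universality theorem} in the limit'' --- also doesn't match what the paper does: the induction in the paper compares the error sum for the $k$-step matrix with the \emph{total} moment $\bE(\#\Sur(\cok(M''), G))$ of a smaller submatrix with fewer stairs, using the inductive hypothesis only to establish that this auxiliary moment is uniformly bounded, not by conditioning away a block. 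So the proposal identifies the right philosophy (moments, Fourier decay of $\epsilon$-balanced variables, code/depth classification, induction) but elides the $B_H^{(j)}$ subgroup-trapping case, the transpose reduction, and the two-regime $\eta$-threshold argument, all of which are required.
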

Note that the above theorem can be further generalized to $\epsilon$-balanced $n \times (n+t)$ matrices for a non-negative integer $t$ (Theorem \ref{thm: universality main theorem with t}).

\begin{rmk}
By Theorem \ref{thm: Wood's universality theorem}, we know that for an $\epsilon$-balanced $n\times n$ matrix over $\zp$, the distribution of the cokernel of such a matrix converges to CL as $n \to \infty$, which is referred to as a universality theorem. Surprisingly, even if we fix nearly a half of the entries to be $0$, such a universality result can still hold. Now we illustrate this. 
Let $k$ be a positive integer. For $1\le i \le k$, let
\begin{align*}
\alpha_n^{(i)} & = n - (i+1)\lfloor \frac{n}{k+2} \rfloor, \\
\beta_n^{(i)} &  = i\lfloor \frac{n}{k+2} \rfloor.
\end{align*}
Then it is clear that for every $1\le i \le k$
$$
\lim_{n \to \infty} (n - \alpha_n^{(i)} - \beta_n^{(i)}) = \infty
$$
Now consider an $\epsilon$-balanced random $n \times n$ matrix $M_n$ over $\zp$ having $k$-step stairs of $0$ with respect to $\alpha_n^{(i)}$ and $\beta_n^{(i)}$. Then by Theorem \ref{thm: universality main theorem}, we have (the distribution of) $\cok(M_n)$ converges to CL. The number of entries given by $\epsilon$-balanced variables (those not fixed to be $0$) is
$$
n^2 - \sum_{i = 1}^{k} \lfloor \frac{n}{k+2} \rfloor \left(n-(i+1)\lfloor \frac{n}{k+2} \rfloor\right) \sim n^2\left(1 -\frac{k}{k+2} + \frac{(k+3)k}{2(k+2)^2}\right) \quad (\text{as $n \to \infty$}),
$$
and the right hand side converges to $n^2/2$ as $k \to \infty$. 
\end{rmk}

\begin{quest}
Let $M_n$ be a random $n \times n$ matrix over $\zp$ with some entries fixed to be $0$ and those not fixed to be $0$ are given by (independent) random $\epsilon$-balanced variables in $\zp$. 
Let $Z_n$ denote the set of pairs $(i,j)$ such that $(M_n)_{i,j}$ is fixed to be $0$. Can we find $Z_n$ such that the distribution of $\cok(M_n)$ converges to CL and 
$$
\lim_{n \to \infty}\frac{n^2-|Z_n|}{n^2} < \frac{1}{2}
$$    
for \emph{any} choice of $\epsilon$-balanced variables for the random entries? (the above remark tells us that it is possible when $1/2$ on the right hand side is replaced by any number strictly larger than $1/2$.)
\end{quest}

\subsection{Future work} \label{Sub12}

In future work, we aim to study random $p$-adic (or integral) matrices with fixed zero entries in various settings. For example, let $\mu_{sym}$ be the limiting distribution of the cokernel of a Haar-random $n \times n$ symmetric matrix over $\zp$ and let $X_n$ be a random $n \times n$ symmetric matrix over $\zp$ such that some entries are fixed to be $0$ and the other upper-triangular entries are Haar-random and independent. We may ask what is the minimal number of random entries of $X_n$ required for $\cok(X_n)$ to converge to $\mu_{sym}$, as an analogue of Conjecture \ref{conj42a}. We can also try to prove analogues of Theorem \ref{thm43a} and \ref{thm: universality main theorem} for random symmetric and skew-symmetric matrices.

The study of random symmetric matrices over $\zp$ (or $\Z$) with fixed entries will be useful for extending the previously known applications of random matrices to the random graphs (\cite{Woo17}, \cite{NW22b}). Indeed, let $\Gamma$ be a random graph on $n$ vertices such that some edges can never exist and the other edges has a probability $q \in (0,1)$ of existing. Then the sandpile group $S_{\Gamma}$ is given by the cokernel of a random symmetric matrix with some entries are fixed to be $0$ and the other entries are independent and $\epsilon$-balanced.

\subsection{Outline of the paper} \label{Sub13}

The paper is organized as follows. In Section \ref{Sec2}, we give some preliminary results. Basic properties of the moments of the cokernels of random $p$-adic matrices are given in Section \ref{Sec3}, where we also apply them to Haar-random matrices whose zero entries are stair-shaped. 
In Section \ref{Sec4}, we present the main theorems of the paper (except Theorem \ref{thm:univ main-int}). First we provide a lower bound for the number of random entries needed to satisfy the condition that $\cok(X_n)$ converges to CL in Section \ref{Sub41}. This leads us to 
Conjecture \ref{conj42a-int} and Theorem \ref{mainthm1-int} in Section \ref{Sub42}. A proof of Theorem \ref{mainthm1-int} using random bipartite multigraphs is given in Section \ref{Sec5}. In Section \ref{Sub43}, we prove Theorem \ref{thm43a-int}.

The latter half of the paper is devoted to the proof of the universality result for random matrices having $k$-step stairs of zeros. We prove Theorem \ref{thm:univ main-int} from Section \ref{universality first section} to \ref{universality third section}, and prove its generalization (Theorem \ref{thm: universality main theorem with t}) in Section \ref{Sec10}. In Section \ref{Sec6}, we provide an example of a random matrix with fixed zero entries such that the universality result fails.

\section{Preliminaries} \label{Sec2}

\subsection{Notation and terminology} \label{Sub21}

The following notation will be used throughout the paper.

\begin{itemize}
    \item Let $p$ be a fixed prime and $\zp$ be the ring of $p$-adic integers. For a positive integer $n$, let $[n] := \lt \{ 1, 2, \ldots, n \rt \}$. 

    \item For a commutative ring $R$, let $\M_{m \times n}(R)$ be the set of $m \times n$ matrices over $R$. For a matrix $A \in \M_{m \times n}(R)$, $i \in [m]$ and $j \in [n]$, let $A_{i, j}$ be the $(i, j)$-th entry of $A$. For $A \in \M_{m \times n}(R)$, $\tau \subseteq [m]$ and $\tau' \subseteq [n]$, let $A_{\tau, \tau'}$ be the submatrix of $A$ which is obtained by choosing $i$-th rows for $i \in \tau$ and $j$-th columns for $j \in \tau'$. 

    \item Let $\sg_{n,1}, \ldots, \sg_{n,n}$ be subsets of $[n]$ and $\Sg_n := (\sg_{n,1}, \ldots, \sg_{n,n})$. Let $X_n \in \M_n(\zp)$ be a random $n \times n$ matrix such that $(X_n)_{i,j}=0$ for $i \not\in \sg_{n,j}$ and the $(i, j)$-th entries with $i \in \sg_{n,j}$ are Haar-random and independent. In this case, we say $X_n$ is a \emph{Haar-random matrix supported on} $\Sg_n$. 

    \item We say $\cok(X_n)$ \emph{converges to CL} if the distribution of $\cok(X_n)$ converges to the Cohen--Lenstra distribution as $n \to \infty$. 
\end{itemize}

\begin{figure}[ht]
\begin{equation*}
\begin{pmatrix}
* & * & 0 & 0\\ 
* & * & 0 & 0\\ 
* & 0 & 0 & 0\\ 
0 & 0 & * & *
\end{pmatrix}
\end{equation*}
\caption{A matrix $X_4 \in \M_4(\zp)$ for $\Sg_{4} = (\lt \{ 1,2,3 \rt \}, \lt \{ 1,2 \rt \}, \lt \{ 4 \rt \}, \lt \{ 4 \rt \})$}
\label{fig3}
\end{figure}

\begin{rmk} \label{rmk2n1}
Let $X_n \in \M_n(\zp)$ be a Haar-random matrix supported on $\Sg_n = (\sg_{n,1}, \ldots, \sg_{n,n})$. If $X_n$ has a row or column which is identically zero, then $\cok(X_n)$ does not converge to CL. Therefore, we may and will assume that $\sg_{n,i}$ is nonempty for each $i$ and $\bigcup_{i=1}^{n} \sg_{n,i} = [n]$.
\end{rmk}

In this section, we consider a special case where the zero entries are given by a block of size $a_n \times b_n$. More general cases will be discussed in the upcoming sections.

\begin{lem} \label{lem2a}
(\cite[Lemma 2.3]{Lee23a}) For any integers $n \geq r > 0$ and a Haar-random matrix $C \in \M_{n \times r}(\zp)$, 
$$
\bP \left ( \text{there exists } Y \in \GL_{n}(\zp) \text{ such that } YC = \begin{pmatrix}
I_r \\
O
\end{pmatrix} \right ) = \prod_{j=0}^{r-1} \left ( 1 - \frac{1}{p^{n-j}} \right ).
$$
\end{lem}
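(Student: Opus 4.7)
The plan is to reduce the problem modulo $p$ and count. The key claim is that the event ``there exists $Y \in \GL_n(\zp)$ with $YC = \binom{I_r}{O}$'' is equivalent to the reduction $\bar C \in \M_{n \times r}(\bF_p)$ having $\bF_p$-linearly independent columns. Once this equivalence is in hand, the probability is immediate from the fact that $\bar C$ is uniform on $\M_{n \times r}(\bF_p)$ together with a standard count of $r$-tuples of linearly independent vectors in $\bF_p^n$.

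For one direction of the equivalence, if $YC = \binom{I_r}{O}$ with $Y \in \GL_n(\zp)$, then reducing mod $p$ gives $\bar Y \bar C = \binom{I_r}{O}$ with $\bar Y \in \GL_n(\bF_p)$, so $\bar C$ has rank $r$. For the converse, suppose $\bar C$ has rank $r$. I would extend the columns of $\bar C$ to an $\bF_p$-basis of $\bF_p^n$ by choosing vectors $\bar v_{r+1}, \ldots, \bar v_n \in \bF_p^n$, lift each to a vector $v_j \in \zp^n$, and form the $n \times n$ matrix $Z := [\,C \mid v_{r+1} \mid \cdots \mid v_n\,]$. Since $\det(\bar Z) \ne 0$ in $\bF_p$, we have $\det(Z) \in \zp^\times$, so $Z \in \GL_n(\zp)$, and then $Y := Z^{-1}$ satisfies $YC = \binom{I_r}{O}$.

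With the equivalence established, the computation is routine. Because the entries of $C$ are i.i.d.\ Haar on $\zp$, their reductions are i.i.d.\ uniform on $\bF_p$, hence $\bar C$ is uniformly distributed on $\M_{n \times r}(\bF_p)$. The number of elements of $\M_{n \times r}(\bF_p)$ whose columns are linearly independent is $\prod_{j=0}^{r-1} (p^n - p^j)$, since once the first $j$ columns are fixed, the $(j{+}1)$-th column must avoid the $j$-dimensional subspace spanned by them. Dividing by $|\M_{n \times r}(\bF_p)| = p^{nr}$ and pulling out a factor of $p^n$ from each term yields $\prod_{j=0}^{r-1}(1 - p^{-(n-j)})$, as claimed.

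The only delicate step is the lifting in the converse direction, where one uses that $Z \in \M_n(\zp)$ is invertible if and only if $\det(Z)$ is a unit in $\zp$, equivalently $\bar Z \in \GL_n(\bF_p)$. This is standard Nakayama-type reasoning and does not present a real obstacle; everything else amounts to elementary linear algebra over $\bF_p$.
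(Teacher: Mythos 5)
The paper cites this lemma from Lee (Joint distribution of the cokernels of random $p$-adic matrices, \cite[Lemma 2.3]{Lee23a}) and does not give its own proof, so there is no in-paper argument to compare against. Your proof is correct and complete: the equivalence between the existence of such a $Y \in \GL_n(\zp)$ and $\bar C$ having full column rank over $\bF_p$ is exactly the right reformulation, both directions are argued soundly (the lift-and-determinant step in the converse is the standard Nakayama-style fact that $Z \in \M_n(\zp)$ is invertible iff $\det Z \in \zp^\times$), and the counting of full-rank matrices in $\M_{n\times r}(\bF_p)$ gives the claimed product. This is essentially the argument one finds in the cited source as well, so nothing is missing.
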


\begin{prop} \label{prop2b}
Let $(a_n)_{n \ge 1}$, $(b_n)_{n \ge 1}$ be sequences of positive integers satisfying $a_n, b_n \le n$, $\sg_{n, i} = \lt \{ a_n+1, a_n+2, \ldots, n \rt \}$ for $1 \le i \le b_n$, $\sg_{n, i} = [n]$ for $i > b_n$ and $X_n \in \M_n(\zp)$ be a Haar-random matrix supported on $\Sg_n$. Then we have
$$
\underset{n \to \infty}{\lim} \bP (\cok(X_n) \cong H) = \frac{1}{ | \Aut(H)  |} \prod_{i=1}^{\infty} (1-p^{-i})
$$
for every finite abelian $p$-group $H$ if and only if 
$$
\underset{n \to \infty}{\lim} (n-a_n-b_n) = \infty.
$$
\end{prop}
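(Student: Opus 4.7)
The plan is to prove Proposition~\ref{prop2b} by computing the moment $\bE(|\Sur(\cok(X_n), G)|)$ in closed form for every finite abelian $p$-group $G$, and then invoking the moment characterization of convergence to the Cohen--Lenstra distribution. To derive the formula, I split each homomorphism $F \in \Hom(\zp^n, G)$ as $F = (F', F'')$ along the partition $[n] = [a_n] \sqcup ([n] \setminus [a_n])$ with $F'' \in \Hom(\zp^{n - a_n}, G)$. For a column index $1 \le j \le b_n$, only the bottom $n - a_n$ entries of $X_n e_j$ are Haar, so $\bP(F(X_n e_j) = 0) = |\im F''|^{-1}$; for $j > b_n$ the whole column is Haar, giving probability $|G|^{-1}$ when $F$ is surjective. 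Summing $\bP(FX_n = 0)$ over surjective $F$ parametrized by $H := \im F''$, and using standard counts of surjections from $\zp^N$, I would arrive at
\begin{equation*}
\bE(|\Sur(\cok(X_n), G)|) = \sum_{H \le G} \left(\frac{|H|}{|G|}\right)^{n - a_n - b_n}\, \phi_{n - a_n}(H)\, \phi_{a_n}(G/H),
\end{equation*}
where $\phi_N(K) := |\Sur(\zp^N, K)|/|K|^N \in [0, 1]$ and $\phi_N(K) \to 1$ as $N \to \infty$.

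For the \emph{if} direction, assume $n - a_n - b_n \to \infty$. Since $b_n \ge 1$, one also has $n - a_n \to \infty$. The $H = G$ term equals $\phi_{n - a_n}(G) \to 1$, and each proper subgroup $H \subsetneq G$ contributes at most $(|H|/|G|)^{n - a_n - b_n} \le p^{-(n - a_n - b_n)} \to 0$. Hence the moment tends to $1$ for every $G$, and the moment theorem of Wood \cite{Woo19} gives convergence of $\cok(X_n)$ to the Cohen--Lenstra distribution.

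For the \emph{only if} direction, assume $\cok(X_n) \to$ CL, so the moment tends to $1$ for every $G$. Specializing the formula to $G = \Z/p\Z$ and evaluating the contributions of $H = 0$ and $H = \Z/p\Z$ gives the closed form
\begin{equation*}
\bE(|\Sur(\cok(X_n), \Z/p\Z)|) = 1 + p^{-(n - a_n - b_n)}\bigl(1 - p^{-a_n} - p^{-b_n}\bigr).
\end{equation*}
Since $a_n, b_n \ge 1$, the parenthesized factor is non-negative and vanishes only at the boundary $(p, a_n, b_n) = (2, 1, 1)$, in which case $n - a_n - b_n = n - 2 \to \infty$ automatically. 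On any infinite subsequence avoiding that boundary, the factor is bounded below by a positive constant, so the moment converging to $1$ forces $p^{-(n - a_n - b_n)} \to 0$, i.e.\ $n - a_n - b_n \to \infty$ along that subsequence. Combining the two observations along all subsequences yields the global conclusion $n - a_n - b_n \to \infty$.

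The hardest part is precisely the boundary degeneracy $(p, a_n, b_n) = (2, 1, 1)$, where the first-moment identity supplies no information. The remedy is the subsequential bookkeeping sketched above: one must verify that a sequence mixing the boundary configuration with other configurations still satisfies $n - a_n - b_n \to \infty$ along the entire sequence, not merely along sub-subsequences. Should this mixed-case argument prove unwieldy, an alternative is to replace $G = \Z/p\Z$ by $G = (\Z/p\Z)^2$ and re-apply the same moment-formula analysis, since the corresponding factor does not vanish at $(2,1,1)$.
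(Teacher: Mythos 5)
Your moment computation is correct, and your overall strategy is valid, but it takes a genuinely different route from the paper.

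The paper does not compute the moment in closed form. For the ``if'' direction, the paper writes $X_n$ in block form and uses Lemma~\ref{lem2a} to reduce, with small error, to the case where the lower-left $c_n \times b_n$ block is $\bigl(\begin{smallmatrix} O \\ I_{b_n} \end{smallmatrix}\bigr)$; column operations then collapse the problem to a Haar-random $d_n \times d_n$ matrix and $\cok(X_n) \cong \cok(Z_n)$ up to a vanishing error. For the ``only if'' direction, the paper passes to a subsequence along which $n - a_n - b_n$ equals a constant $d \ge 0$, computes $\bP(\cok(X_{s_k}) = 0)$ via the rank condition on $\ol{B_k}$, and shows this probability is bounded strictly below $\prod_{i\geq 1}(1-p^{-i})$. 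Your approach replaces both of these with one closed-form identity
\begin{equation*}
\bE\bigl(\#\Sur(\cok(X_n), G)\bigr) = \sum_{H \le G} \Bigl(\tfrac{|H|}{|G|}\Bigr)^{n - a_n - b_n}\, \phi_{n - a_n}(H)\, \phi_{a_n}(G/H),
\end{equation*}
which I have checked is correct. This buys a uniform treatment of both directions from a single algebraic formula (the $\Leftarrow$ is one line; the $\Rightarrow$ follows from $G = \Z/p\Z$), while the paper's argument is more hands-on with matrix algebra but avoids having to verify convergence of moments for all $G$.

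One small remark: the ``subsequential bookkeeping'' you flag as the hardest point is in fact routine once phrased as a contradiction. If $n - a_n - b_n$ were bounded along some subsequence, then $a_n + b_n$ would be unbounded along it, so $(a_n, b_n) \ne (1,1)$ eventually on that subsequence; then $1 - p^{-a_n} - p^{-b_n} \ge 1 - p^{-1} - p^{-2} > 0$ there, and the second term in your $\Z/p\Z$ moment formula stays bounded away from $0$, contradicting convergence to $1$. The proposed alternative of passing to $G = (\Z/p\Z)^2$ is unnecessary (and in fact would also degenerate at $(p, a_n, b_n) = (2, 1, 1)$ for similar reasons); the $\Z/p\Z$ computation already suffices.
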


\begin{proof}
($\Leftarrow$) Assume that $n$ is sufficiently large so that $n > a_n+b_n$. Let $c_n = n-a_n$, $d_n = n-b_n$ and 
$$
X_{n} = \begin{pmatrix}
O & A_n\\ 
B_n & C_n
\end{pmatrix} \in \M_{(a_n+c_n) \times (b_n+d_n)}(\zp).
$$
For $\displaystyle 
  Y = \begin{pmatrix}
    I_{a_n} & O \\
    O & Y_1
  \end{pmatrix} \in \GL_{n}(\zp) \;\; (Y_1 \in \GL_{c_n}(\zp))$, we have
$$
\cok(X_n) \cong \cok(YX_n) = \cok \begin{pmatrix}
O & A_n\\ 
Y_1B_n & Y_1C_n
\end{pmatrix}
$$
and the matrices $A_n$, $Y_1C_n$ are independent and Haar-random for given $Y_1$ and $B_n$. Let 
$$
\widetilde{\M}_{n}(\zp) := \lt \{ \begin{pmatrix}
O & * \\ 
O & * \\ 
I_{b_n} & * 
\end{pmatrix} \in \M_{(a_n+e_n+b_n) \times (b_n+d_n)}(\zp) \rt \} \subset \M_n(\zp)
$$
($e_n=n-a_n-b_n > 0$) and $\widetilde{X}_n$ be the Haar-random matrix in $\widetilde{\M}_{n}(\zp)$. By Lemma \ref{lem2a}, we have
$$
 | \bP(\cok(X_n) \cong H) - \bP(\cok(\widetilde{X}_n) \cong H)  | \leq 1 - \prod_{j=0}^{b_n-1} \left ( 1 - \frac{1}{p^{c_n-j}} \right ).
$$

If $Z_n$ is the Haar-random matrix in $\M_{d_n}(\zp)$, we conclude that
$$
\underset{n \to \infty}{\lim} \bP(\cok(X_n) \cong H)
= \underset{n \to \infty}{\lim} \bP(\cok(\widetilde{X}_n) \cong H)
= \underset{n \to \infty}{\lim} \bP(\cok(Z_n) \cong H)
= \frac{1}{ | \Aut(H)  |} \prod_{i=1}^{\infty} (1-p^{-i}),
$$
where the first inequality is due to the fact that $\underset{n \to \infty}{\lim} (c_n-b_n) = \infty$. \\

($\Rightarrow$) Assume that $n-a_n-b_n$ does not go to infinity as $n \to \infty$. If $n-a_n-b_n < 0$, then $\det(X_n) = 0$ so $\cok(X_n)$ is infinite. Therefore we may assume that there is an integer $d \ge 0$ such that $n-a_n-b_n = d$ for infinitely many $n$. Let $(s_k)_{k \ge 1}$ be a strictly increasing sequence of positive integers such that $s_k - a_{s_k} - b_{s_k} = d$ for every $k$. Write 
$$
x_k = a_{s_k}, \, y_k = s_k - x_k, \, z_k = b_{s_k}, \,  w_k = s_k - z_k
$$ 
for simplicity. If the matrix
$$
X_{s_k} = \begin{pmatrix}
O & A_k\\ 
B_k & C_k
\end{pmatrix} \in \M_{(x_k+y_k) \times (z_k+w_k)}(\zp)
$$
has a trivial cokernel, then the $\fp$-rank of $\overline{B_k} \in \M_{y_k \times z_k}(\fp)$ should be $z_k$. Thus
\begin{align*}
\bP(\cok(X_{s_k}) = 0) & = \bP(\rank(\overline{B_k}) = z_k) \bP(\cok(X_{s_k}) = 0 \mid \rank(\overline{B_k}) = z_k) \\
& = \prod_{i=1}^{z_k}(1-p^{-y_k+i-1}) \bP(\cok(X_{s_k}) = 0 \mid \rank(\overline{B_k}) = z_k) \\
& \le (1-p^{-d-1}) \bP(\cok(X_{s_k}) = 0 \mid B_k = \begin{pmatrix}
O \\ 
I_{z_k}
\end{pmatrix}) \\
& = (1-p^{-d-1}) \bP(\cok(D_k) = 0) \\
& = (1-p^{-d-1}) \prod_{i=1}^{w_k} (1-p^{-i}), 
\end{align*}
where $D_k$ is a Haar-random matrix in $\M_{w_k}(\zp)$. Since $w_k = d+x_k \ge d+1$, we have
$$
\lim_{k \to \infty} \bP(\cok(X_{s_k}) = 0) 
\leq (1-p^{-d-1}) \prod_{i=1}^{d+1} (1-p^{-i})
< \prod_{i=1}^{\infty} (1-p^{-i}).
$$
(The last inequality holds because $\prod_{i=d+2}^{\infty} (1-p^{-i}) > 1 - \sum_{i=d+2}^{\infty} p^{-i} \ge 1 - p^{-d-1}$.)
\end{proof}

\begin{rmk} \label{rmk2c}
The ``if'' part of the above proposition is a special case of Theorem \ref{thm: universality main theorem}. Indeed, if we take $k=1$ and $M$ to be Haar-random in Theorem \ref{thm: universality main theorem}, then we recover the ``if'' part of Proposition \ref{prop2b}. However, the ``only if'' part of Proposition \ref{prop2b} may not hold if $X_n$ is a general $\epsilon$-balanced matrix. See Remark \ref{rem: counter example for the converse of universality theorem} for a discussion for this.
\end{rmk}

\section{Moments} \label{Sec3}

Let $X_n$ be a Haar-random matrix supported on $\Sg_n$. By the work of Wood \cite{Woo19}, $\cok(X_n)$ converges to CL if 
$$
E_n(G) := \bE(\# \Sur(\cok(X_n), G))
= \sum_{F \in \Sur(R^n, G)} \bP(FX_n = 0)
= \sum_{F \in \Sur(R^n, G)} \frac{1}{ | FV_{\sg_{n,1}}  | \cdots  | FV_{\sg_{n,n}}  |}
$$
converges to $1$ as $n \to \infty$ for every finite abelian $p$-group $G$. For $G_1, \ldots, G_n \le G$, let
$$
S_{G_1, \ldots, G_n} := \lt \{ F \in \Sur(R^n, G) \mid FV_{\sg_{n,i}} = G_i \text{ for } 1 \le i \le n \rt \}
$$
and
$$
d_{G_1, \ldots, G_n} := \sum_{F \in S_{G_1, \ldots, G_n}} \frac{1}{ | FV_{\sg_{n,1}}  | \cdots  | FV_{\sg_{n,n}}  |} = \frac{ | S_{G_1, \ldots, G_n}  |}{ | G_1  | \cdots  | G_n  |}.
$$
Write $S_{n, 0} := S_{G, \ldots, G}$ and $d_{n, 0} := d_{G, \ldots, G}$ for simplicity. Then we have
\begin{equation} \label{eq3a}
E_n(G) = d_{n,0} + \sum_{\substack{(G_1, \ldots, G_n) \\ \neq (G, \ldots ,G)}}  d_{G_1, \ldots, G_n}.
\end{equation}

\begin{prop} \label{prop3b}
$\underset{n \to \infty}{\lim} E_n(G) = 1$ if and only if 
\begin{equation} \label{eq3b}
\underset{n \to \infty}{\lim} \sum_{\substack{(G_1, \ldots, G_n) \\ \neq (G, \ldots ,G)}}  d_{G_1, \ldots, G_n} = 0.
\end{equation}
\end{prop}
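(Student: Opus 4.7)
The plan is to derive the equivalence from \eqref{eq3a} by introducing the auxiliary quantity
\[
q_n := \frac{|\Sur(R^n,G)|}{|G|^n}
\]
and comparing it to both $d_{n,0}$ and $E_n(G)$. The idea is that while $d_{n,0} \to 1$ is not obviously available a priori, its close cousin $q_n$ does tend to $1$, and this will be enough to pin everything down.

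First I would verify that $q_n \to 1$ as $n \to \infty$. This is the standard statement that $n$ uniformly random elements of the finite abelian $p$-group $G$ generate it with probability tending to one, which follows from Nakayama's lemma applied to $G/pG$. Next I would record two complementary inequalities coming directly from the definitions. Using $|FV_{\sg_{n,i}}| \le |G|$ termwise in the formula for $E_n(G)$ shows each summand is at least $1/|G|^n$, giving $E_n(G) \ge q_n$. In the opposite direction, I would exploit that $G$ is a $p$-group: every proper subgroup of $G$ has index at least $p$, so whenever $(G_1,\ldots,G_n) \ne (G,\ldots,G)$ some $|G_i| \le |G|/p$, and hence each $F \in S_{G_1,\ldots,G_n}$ contributes at least $p/|G|^n$ to the left-hand side of \eqref{eq3b}. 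Summing over the $|\Sur(R^n,G)| - |S_{G,\ldots,G}| = (q_n - d_{n,0})\,|G|^n$ maps lying outside $S_{G,\ldots,G}$ yields
\[
\sum_{\substack{(G_1,\ldots,G_n) \\ \ne (G,\ldots,G)}} d_{G_1,\ldots,G_n} \;\ge\; p\,(q_n - d_{n,0}).
\]

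Writing $T_n$ for the sum on the left-hand side of \eqref{eq3b}, substituting this bound into \eqref{eq3a} and rearranging gives the clean inequality $(1 - p^{-1})\,T_n \le E_n(G) - q_n$, from which both directions fall out. For ($\Rightarrow$): if $E_n(G) \to 1$ the right-hand side tends to $0$ (since $q_n \to 1$), and hence so does $T_n$. For ($\Leftarrow$): if $T_n \to 0$, then the displayed inequality forces $q_n - d_{n,0} \to 0$, so $d_{n,0} \to 1$; substituting back into \eqref{eq3a} yields $E_n(G) \to 1$. I do not anticipate any serious technical obstacle; the one mildly non-obvious step is noticing that $q_n$, rather than $d_{n,0}$ itself, is the right reference quantity to wedge between $d_{n,0}$ and $E_n(G)$, after which the argument is purely formal.
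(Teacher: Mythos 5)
Your proof is correct and, once unwound, is algebraically identical to the paper's: your ``clean'' inequality $(1-p^{-1})T_n \le E_n(G) - q_n$ is the same thing as the paper's $E_n(G) \ge p\,q_n - (p-1)d_{n,0}$ after substituting $d_{n,0} = E_n(G) - T_n$, and both directions then close using $q_n \to 1$ exactly as you do. The only cosmetic difference is that you name $q_n = |\Sur(R^n,G)|/|G|^n$ explicitly and record $q_n \to 1$ up front, which the paper uses tacitly (it even writes $\lim(1-d_{n,0})$ where $\lim(q_n - d_{n,0})$ is meant) — so if anything your write-up is the cleaner of the two.
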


\begin{proof}
For every $(G_1, \ldots, G_n) \neq (G, \ldots, G)$, we have $d_{G_1, \ldots, G_n} \ge p \frac{ | S_{G_1, \ldots, G_n}  |}{ | G  |^n}$ so
$$
E_n(G)
= d_{n,0} + \sum_{\substack{(G_1, \ldots, G_n) \\ \neq (G, \ldots ,G)}}  d_{G_1, \ldots, G_n}
\ge \sum_{F \in S_{n, 0}} \frac{1}{ | G  |^n} + \sum_{F \not\in S_{n, 0}} \frac{p}{ | G  |^n}
= p \frac{ | \Sur(R^n, G)  |}{ | G  |^n} - (p-1)d_{n, 0}.
$$
If $\underset{n \to \infty}{\lim} E_n(G) = 1$, then the above inequality implies that $\underset{n \to \infty}{\lim} d_{n,0} = 1$. (Note that $d_{n,0} \le 1$ for every $n$.) By the equation (\ref{eq3a}), the condition (\ref{eq3b}) is satisfied. 

Conversely, assume that the condition (\ref{eq3b}) is satisfied. Since $d_{G_1, \ldots, G_n} \ge \frac{ | S_{G_1, \ldots, G_n}  |}{ | G  |^n}$, we have
$$
\underset{n \to \infty}{\lim} \sum_{\substack{(G_1, \ldots, G_n) \\ \neq (G, \ldots ,G)}}  \frac{ | S_{G_1, \ldots, G_n}  |}{ | G  |^n} = \underset{n \to \infty}{\lim} \frac{ | \Sur(R^n, G)  | -  | S_{n,0}  |}{ | G  |^n}
= \underset{n \to \infty}{\lim} (1-d_{n,0})=0
$$
so $\underset{n \to \infty}{\lim} d_{n,0} = 1$. Now the equation (\ref{eq3a}) implies that $\underset{n \to \infty}{\lim} E_n(G) = 1$.
\end{proof}

\begin{prop} \label{prop3c}
Let $\Sg_n = (\sg_{n,1}, \ldots, \sg_{n,n})$, $\Sg_n' = (\sg_{n,1}', \ldots, \sg_{n,n}')$ and assume that $\sg_{n,i} \subseteq \sg_{n,i}'$ for each $n \ge 1$ and $i \in [n]$. Let $X_n$ (resp. $X_n'$) be a Haar-random matrix in $\M_n(\zp)$ supported on $\Sg_n$ (resp. $\Sg_n'$). If $\underset{n \to \infty}{\lim} E_n(G)=1$, then $\underset{n \to \infty}{\lim} E_n(G)'=1$ where $E_n(G)' := \bE(\# \Sur(\cok(X_n'), G))$.
\end{prop}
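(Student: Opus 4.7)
The plan is to sandwich $E_n(G)'$ between two quantities that both tend to $1$: the upper bound will come from a term-by-term comparison with $E_n(G)$, exploiting monotonicity of the factors $|F V_\sg|$ as $\sg$ grows, while the lower bound will come from comparing $E_n(G)'$ to the moments of a full Haar-random $n \times n$ matrix, which are well known to approach $1$.

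For the upper bound, I would fix an arbitrary $F \in \Sur(R^n, G)$ and any $j \in [n]$. Since $\sg_{n,j} \subseteq \sg_{n,j}'$, the corresponding coordinate submodules of $R^n$ satisfy $V_{\sg_{n,j}} \subseteq V_{\sg_{n,j}'}$, so their images in $G$ satisfy $F V_{\sg_{n,j}} \subseteq F V_{\sg_{n,j}'}$, and hence $|F V_{\sg_{n,j}}| \le |F V_{\sg_{n,j}'}|$. Taking reciprocals, multiplying over $j$, and then summing over $F$ gives $E_n(G)' \le E_n(G)$, which tends to $1$ by hypothesis.

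For the lower bound, I would use that $F V_{\sg_{n,j}'} \subseteq G$, so $|F V_{\sg_{n,j}'}| \le |G|$, and therefore
$$
E_n(G)' \;=\; \sum_{F \in \Sur(R^n, G)} \prod_{j=1}^{n} \frac{1}{|F V_{\sg_{n,j}'}|} \;\ge\; \frac{|\Sur(R^n, G)|}{|G|^n}.
$$
An elementary inclusion--exclusion over the maximal subgroups of $G$ shows $|\Sur(R^n, G)|/|G|^n = 1 - O(p^{-n})$, so the right-hand side converges to $1$. Combining the two bounds yields $\underset{n \to \infty}{\lim} E_n(G)' = 1$.

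No step presents a serious obstacle. The only subtlety worth checking is the identity $\bP(F(X_n')_{\bullet, j} = 0) = |F V_{\sg_{n,j}'}|^{-1}$, underlying the formula for $E_n(G)'$; this holds because the Haar measure on $V_{\sg_{n,j}'}$ pushes forward under the continuous $\zp$-module homomorphism $F|_{V_{\sg_{n,j}'}}$ to the uniform measure on the finite image $F V_{\sg_{n,j}'}$.
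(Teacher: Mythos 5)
Your proof is correct and follows essentially the same approach as the paper: the upper bound $E_n(G)' \le E_n(G)$ from the inclusion $FV_{\sg_{n,j}} \subseteq FV_{\sg_{n,j}'}$, and the lower bound $E_n(G)' \ge |\Sur(R^n,G)|/|G|^n \to 1$. You merely spell out a few details (the push-forward of Haar measure and the inclusion--exclusion estimate) that the paper leaves implicit.
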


\begin{proof}
Since $FV_{\sg_{n,i}} \subseteq FV_{\sg_{n,i}'}$ for each $n$ and $i$, we have $E_n(G) \ge E_n(G)'$. If $\underset{n \to \infty}{\lim} E_n(G)=1$, then we have $\underset{n \to \infty}{\limsup} E_n(G)' \le 1$. Also the inequality $E_n(G)' \ge \frac{| \Sur(R^n, G) |}{|G|^n}$ implies that $\underset{n \to \infty}{\liminf} E_n(G)' \ge 1$.
\end{proof}

\subsection{An example: stair-shaped zeros}

In this section, we prove a 
sufficient condition that $\cok(X_n)$ converges to CL where the zero entries of $X_n$ are stair-shaped. First we consider the case that each step has height $1$ and width $1$. 

\begin{thm} \label{thm3d}
Let $(t_n)_{n \ge 1}$ be a sequence of positive integers such that $t_n \le n$ for each $n$, and let
$$
\sg_{n,i} = \left\{\begin{matrix}
[t_n+(i-1)] & (1 \le i \le n-t_n)\\ 
[n] & (i \ge n-t_n+1)
\end{matrix}\right.
$$
for each $n$ and $i$. 
If $\underset{n \to \infty}{\lim} (t_n - \log_p n) = \infty$, then $\cok(X_n)$ converges to CL.
\end{thm}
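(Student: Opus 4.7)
The plan is to apply the moment criterion (the equivalence between $\lim_{n\to\infty} E_n(G) = 1$ for every finite abelian $p$-group $G$ and convergence to CL, used throughout Section~\ref{Sec3}) and compute $E_n(G)$ directly for the staircase structure. For a surjection $F \in \Sur(\zp^n, G)$, set $H_k := \langle F(e_1), \ldots, F(e_k) \rangle$. For our $\sigma_{n,i}$ we have $FV_{\sigma_{n,i}} = H_{t_n + i - 1}$ for $1 \le i \le n - t_n$ and $FV_{\sigma_{n,i}} = G$ otherwise, so
$$
E_n(G) = \frac{1}{|G|^{t_n}} \sum_{F \in \Sur(\zp^n, G)} \prod_{k = t_n}^{n-1} \frac{1}{|H_k|}.
$$

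Next I will organize the sum by the distinct-value chain $K_0 \subsetneq K_1 \subsetneq \cdots \subsetneq K_r = G$ that the sequence $(H_k)_{k \ge t_n}$ traces out, together with the jump positions $t_n < k_1 < \cdots < k_r \le n$. A direct count yields $|\Sur(\zp^{t_n}, K_0)|$ choices for $F(e_1),\ldots,F(e_{t_n})$, a factor $|K_j|$ at each non-jump step in phase $j$, and a factor $m(K_{j-1}, K_j) := |\{x \in K_j : \langle K_{j-1}, x\rangle = K_j\}|$ at each jump. The $|K_j|$-factors from the phases cancel telescopically against the corresponding $|H_k|^{-1}$ factors, leaving something independent of the jump positions. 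Using $\frac{m(K_{j-1}, K_j)}{|K_{j-1}|} = \#(\text{generators of } K_j/K_{j-1}) = (p-1)p^{a_j - 1}$ where $K_j/K_{j-1} \cong \Z/p^{a_j}\Z$, and summing over the $\binom{n - t_n}{r}$ choices of jump positions, I expect the clean formula
$$
E_n(G) = \sum_{K_0 \subseteq K_1 \subsetneq \cdots \subsetneq K_r = G} \frac{|\Sur(\zp^{t_n}, K_0)|}{|G|^{t_n}} \cdot \frac{|G|}{|K_0|} \left(\frac{p-1}{p}\right)^{r} \binom{n - t_n}{r},
$$
with the convention that $K_0 = G$ corresponds to $r = 0$.

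Finally I will read off the asymptotics. The $r = 0$ term equals $|\Sur(\zp^{t_n}, G)|/|G|^{t_n} = \prod_{i=0}^{s-1}(1 - p^{i - t_n}) \to 1$ as $t_n \to \infty$. For $K_0 \subsetneq G$, let $\ell := \log_p(|G|/|K_0|) \ge 1$. Using the crude bounds $|\Sur(\zp^{t_n}, K_0)| \le |K_0|^{t_n}$ and $\binom{n - t_n}{r} \le n^r$, the contribution of $K_0$ is at most $O_G\bigl((n/p^{t_n - 1})^\ell\bigr)$. Because $t_n - \log_p n \to \infty$, we have $n/p^{t_n - 1} \to 0$; hence eventually $n/p^{t_n - 1} \le 1$, and since $\ell \ge 1$ this is dominated by $O_G(n/p^{t_n - 1})$. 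Summing over the finitely many proper subgroups $K_0 \subsetneq G$ gives $o(1)$, so $E_n(G) \to 1$, which yields the desired convergence of $\cok(X_n)$ to CL.

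The main obstacle is the telescoping identity: one must carefully account for phase lengths and jump contributions to see that the $k_j$'s drop out. Once that bookkeeping is done, the rest is a routine asymptotic estimate in which the single hypothesis $t_n - \log_p n \to \infty$ suffices even though chains of length $\ell > 1$ formally contribute factors $n^\ell$, because these are offset by $p^{-\ell(t_n - 1)}$ and the worst case is $\ell = 1$.
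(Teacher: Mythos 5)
Your proof is correct, and it takes a genuinely different route from the paper's. Both arguments boil down to showing $E_n(G)\to 1$ and both exploit the nested structure $FV_{\sigma_{n,1}}\subseteq\cdots\subseteq FV_{\sigma_{n,n-t_n}}\subseteq G$, but the paper argues via Proposition~\ref{prop3b}: it splits off the case $G=\Z/p\Z$ for a direct computation and then, for general $G$, bounds the off-diagonal sum $\sum_{(G_1,\ldots,G_n)\neq(G,\ldots,G)}d_{G_1,\ldots,G_n}$ from above by a crude inequality on $|\{F:FV_{\sigma_{n,i}}=H_j\}|$, ending with the estimate $|\mr{CS}_G|\bigl((1+p^{-(t_n-1)})^{n-1}-1\bigr)$. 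You instead derive an exact closed form for $E_n(G)$: by counting $F$ via the values $F(e_1),\ldots,F(e_n)$ column-by-column, the phase-length factors $|K_j|^{L_j}$ in the count cancel telescopically against $\prod_{k=t_n}^{n-1}|H_k|^{-1}$, leaving a sum over subgroup chains with cyclic successive quotients (non-cyclic quotients contribute $m(K_{j-1},K_j)=0$, so can be ignored) weighted only by $\binom{n-t_n}{r}$, $\frac{|\Sur(\zp^{t_n},K_0)|}{|G|^{t_n}}$, $\frac{|G|}{|K_0|}$, and $\bigl(\frac{p-1}{p}\bigr)^r$. The asymptotics then fall out uniformly for all $G$ with no case split, and the error is $O_G(n/p^{t_n-1})$ once $n/p^{t_n-1}\le 1$. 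What your approach buys is an exact, transparent formula and a single uniform argument; what the paper's approach buys is that its bounding technique adapts more directly to other $\Sigma_n$ (it is a special case of the machinery in Section~\ref{Sec3}), whereas your exact telescoping relies on the nested-intervals structure of the staircase. Two very minor slips that don't affect correctness: the chain condition should read $K_0\subsetneq K_1\subsetneq\cdots\subsetneq K_r=G$ (with $r=0$ meaning $K_0=G$), and when you invoke $r\le\ell$ to pass from $\binom{n-t_n}{r}\le n^r$ to $(n/p^{t_n-1})^\ell$ it would help to say explicitly that each jump multiplies $|K_j|$ by at least $p$.
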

\begin{figure}[ht]
\begin{equation*}
\begin{pmatrix}
* & * & * & * & *\\ 
* & * & * & * & *\\ 
0 & * & * & * & *\\ 
0 & 0 & * & * & *\\ 
0 & 0 & 0 & * & *
\end{pmatrix}
\end{equation*}
\caption{A matrix $X_n \in \M_n(\zp)$ for $(n, t_n)=(5,2)$}
\label{fig1}
\end{figure}

\begin{proof}
For every $F \in \Sur(R^n, G)$, we have $FV_{\sg_{n,1}} \subseteq \cdots \subseteq FV_{\sg_{n,n-t_n}} \subseteq FV_{\sg_{n,n-t_n+1}} = \cdots = FV_{\sg_{n,n}}=G$. By Proposition \ref{prop3b}, $\cok(X_n)$ converges to CL if 
$$
\underset{n \to \infty}{\lim} \sum_{\substack{G_1 \le \cdots \le G_{n-t_n} \\ G_1 \neq G}}  d_{G_1, \ldots, G_{n-t_n}, G, \ldots, G} = 0
$$
for every finite abelian $p$-group $G$. \\

{\bf Case I:} $G=\Z/p\Z$. In this case, 
\begin{align*}
& \sum_{\substack{G_1 \le \cdots \le G_{n-t_n} \\ G_1 \neq G}}  d_{G_1, \ldots, G_{n-t_n}, G, \ldots, G} \\
= & \sum_{k=1}^{n-t_n} \frac{ | \lt \{ F \in \Sur(R^n, G) \mid FV_{\sg_{n,i}} = \lt \{ 0 \rt \} \text{ for } 1 \le i \le k, \, FV_{\sg_{n,i}} = G \text{ for } i > k \rt \} |}{1^k p^{n-k}} \\
= & \sum_{k=1}^{n-t_n} \frac{(p-1)p^{n-t_n-k}}{p^{n-k}}  \\
= & \frac{(p-1)(n-t_n)}{p^{t_n}}.
\end{align*}
It is clear that $\underset{n \to \infty}{\lim} \frac{(p-1)(n-t_n)}{p^{t_n}} = 0$ if and only if $\underset{n \to \infty}{\lim} (t_n - \log_p n) = \infty$. \\

{\bf Case II:} General case. For every $G$, it is enough to show that $\underset{n \to \infty}{\lim} (t_n - \log_p n) = \infty$ implies that
$$
\underset{n \to \infty}{\lim} \sum_{\substack{G_1 \le \cdots \le G_{n-t_n} \\ G_1 \neq G}}  d_{G_1, \ldots, G_{n-t_n}, G, \ldots, G} = 0.
$$
Let $\left| G \right| = p^m$ and consider the set 
$$
\mr{CS}_G := \lt \{ (H_1, \ldots, H_{r+1}) \mid 1 \le r \le m \text{ and } H_1 \lneq H_2 \lneq \cdots \lneq H_{r+1} = G \rt \}. 
$$
Then we have
\begin{align*}
& \sum_{\substack{G_1 \le \cdots \le G_{n-t_n} \\ G_1 \neq G}}  d_{G_1, \ldots, G_{n-t_n}, G, \ldots, G} \\
= & \sum_{(H_1, \ldots, H_{r+1}) \in \mr{CS}_G} \sum_{0 = i_0 < \cdots < i_{r} \le n-t_n} \frac{ | \lt \{ F \in \Sur(R^n, G) \mid FV_{\sg_{n,i}} = H_j \text{ if } i_{j-1} < i \le i_j \rt \} |}{ | H_1 |^{i_1} | H_2 |^{i_2-i_1} \cdots | H_{r} |^{i_{r}-i_{r-1}} |G|^{n-i_r}} \\
\le & \sum_{(H_1, \ldots, H_{r+1}) \in \mr{CS}_G} \sum_{0 = i_0 < \cdots < i_{r} \le n-t_n} 
\frac{ | H_1 |^{t_n+i_1-1} | H_2 |^{i_2-i_1} \cdots | H_{r} |^{i_{r}-i_{r-1}} |G|^{(n-t_n)-i_r+1}}{ | H_1 |^{i_1} | H_2 |^{i_2-i_1} \cdots | H_{r} |^{i_{r}-i_{r-1}} |G|^{n-i_r}} \\
= & \sum_{(H_1, \ldots, H_{r+1}) \in \mr{CS}_G} \sum_{0 = i_0 < \cdots < i_{r} \le n-t_n} \frac{ | H_1 |^{t_n-1}}{ |G|^{t_n-1}} \\
\le & \sum_{(H_1, \ldots, H_{r+1}) \in \mr{CS}_G} \binom{n-t_n}{r} \frac{1}{p^{r(t_n-1)}} \\
< & | \mr{CS}_G | \sum_{r=1}^{n-1} \binom{n-1}{r} \frac{1}{p^{r(t_n-1)}} \\
= & | \mr{CS}_G | \left ( \left ( 1 + \frac{1}{p^{t_n-1}} \right )^{n-1} - 1 \right ).
\end{align*}
If $\underset{n \to \infty}{\lim} (t_n - \log_p n) = \infty$, then $\underset{n \to \infty}{\lim} \frac{n-1}{p^{t_n-1}} = 0$ so $\underset{n \to \infty}{\lim} \left ( 1 + \frac{1}{p^{t_n-1}} \right )^{n-1} = 1$ (e.g., see Lemma \ref{lem: exp bound lem}).
\end{proof}

Next we consider the case that each step of the zero entries of $X_n$ has height $1$ and width $d \ge 2$.

\begin{thm} \label{thm3d'}
Let $d \ge 2$ be a positive integer. Let $(t_n)_{n \ge 1}$ be a sequence of positive integers such that $t_n \le n$ for each $n$, and let
$$
\sg_{n,i} = \left\{\begin{matrix}
[t_n+(\lceil \frac{i}{d} \rceil -1)] & (1 \le i \le d(n-t_n))\\ 
[n] & (i \ge d(n-t_n)+1)
\end{matrix}\right.
$$
for each $n$ and $i$. 
If $\underset{n \to \infty}{\lim} (n- d(n-t_n)) = \infty$, then $\cok(X_n)$ converges to CL.
\end{thm}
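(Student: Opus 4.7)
The plan is to follow the strategy of Theorem \ref{thm3d} -- parameterize the chain $G_1 \le \cdots \le G_n$ by its distinct values and count $F \in \Sur(R^n, G)$ -- while exploiting the additional factor of $|K_j|^{d-1}$ contributed to the denominator $\prod_c |G_c|$ by the fact that $d$ consecutive columns share the same support and hence the same $FV_{\sg_{n,c}}$.  This additional factor is precisely what permits the weaker hypothesis $n - d(n - t_n) \to \infty$ in place of the analog of $t_n - \log_p n \to \infty$ used in Theorem \ref{thm3d}.

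By Proposition \ref{prop3b} it suffices to show that $\sum_{(G_1, \ldots, G_n) \neq (G, \ldots, G)} d_{G_1, \ldots, G_n} \to 0$ for every finite abelian $p$-group $G$.  Writing $m := n - t_n$ and $c_n := n - d(n - t_n)$, the support pattern forces $G_c = K_{\lceil c/d \rceil}$ for $c \le dm$ and $G_c = G$ for $c > dm$, where $K_1 \le K_2 \le \cdots \le K_m \le G$ with $K_j = FV_{[t_n + j - 1]}$.  I would parameterize chains by their distinct values $(H_1, \ldots, H_{r+1}) \in \mr{CS}_G$ and jump positions $0 = i_0 < i_1 < \cdots < i_r \le m$ (so that $K_j = H_k$ whenever $i_{k-1} < j \le i_k$), and repeat the column-by-column counting of Theorem \ref{thm3d} to obtain
\begin{equation*}
|S_{G_1, \ldots, G_n}| \le |H_1|^{t_n + i_1 - 1} \prod_{k=2}^r |H_k|^{i_k - i_{k-1}} |G|^{m - i_r + 1}.
\end{equation*}
Since the denominator of $d_{G_1, \ldots, G_n}$ is $\prod_{j=1}^m |K_j|^d \cdot |G|^{c_n}$, dividing yields
\begin{equation*}
d_{G_1, \ldots, G_n} \le |H_1|^{t_n - 1 - (d-1) i_1} \prod_{k=2}^r |H_k|^{-(d-1)(i_k - i_{k-1})} |G|^{1 - c_n - (d-1)(m - i_r)}.
\end{equation*}

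The new feature for $d \ge 2$ is that every exponent above carries a factor $-(d-1) < 0$, so summing over positions yields convergent geometric series of ratio at most $p^{-(d-1)} < 1$, rather than growing binomial sums $\binom{m}{r}$.  I would split into two cases.  Case A ($H_1 \neq 0$): rewriting the bound as $(|H_1|/|G|)^{c_n - 1} \prod_j (|H_1|/|K_j|)^{d - 1}$ and using $|H_1|/|G| \le p^{-r}$ together with $|H_1|/|K_j| \le 1/p$ for $j > i_1$ gives $d_{G_1, \ldots, G_n} \le p^{-r(c_n - 1)} p^{-(d-1)(m - i_1)}$; summing over $(i_1, \ldots, i_r)$ via the identity $\sum_{k \ge 0} \binom{k}{r-1} x^k = x^{r-1}/(1-x)^r$ (with $x = p^{-(d-1)}$) bounds the position sum by a constant (depending on $p, d, r$) times $p^{-r(c_n - 1)}$.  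Case B ($H_1 = 0$): the $|H_1|$-factor is trivial, but $|H_k| \ge p$ for $k \ge 2$ still yields $\prod_j |K_j|^{d-1} \ge p^{(d-1)(m - i_1)}$, so $d_{G_1, \ldots, G_n} \le |G|^{1 - c_n} p^{-(d-1)(m - i_1)}$ and the same geometric identity produces a bound that is a $(p,d,r)$-dependent constant times $|G|^{1 - c_n}$.  Both bounds tend to $0$ as $c_n \to \infty$, and since $\mr{CS}_G$ is finite, summing over chains preserves convergence.

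The main obstacle is recognizing that a naive bound -- for example, using $|K_j| \ge |H_1|$ to collapse the $|H_k|^{-(d-1)(i_k - i_{k-1})}$ factors into a single $|H_1|^{-(d-1)m}$ -- reintroduces a $\binom{m}{r}$ factor when summing over positions and forces the strictly stronger hypothesis $c_n - \log_p m \to \infty$.  One must genuinely exploit the block structure of the stair, where each step contributes $d$ identical columns and hence the $(d-1)$-fold exponents in $\prod_j |K_j|^{d-1}$, and perform the position sum \emph{natively} using the geometric identity, so that the decay arises from each individual $|H_k|^{-(d-1)}$ factor rather than being absorbed by the binomial coefficient.
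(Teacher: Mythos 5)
Your proof is correct and follows essentially the same strategy as the paper's: you parameterize by the distinct values $(H_1,\ldots,H_{r+1}) \in \mr{CS}_G$ and jump positions $i_1<\cdots<i_r$, bound $|S_{G_1,\ldots,G_n}|$ exactly as in the width-one case, and then observe that the block width $d$ contributes an extra factor $\prod_j |K_j|^{d-1}$ to the denominator, which makes the position sum a convergent series rather than a growing binomial; the $c_n\to\infty$ hypothesis then kills the residual $p^{-r(c_n-1)}$ (resp.\ $|G|^{1-c_n}$) factor. The paper obtains its bound $p^{-(d-1)(i_r-i_1)}p^{-r(t_n-(d-1)i_r-1)}$ uniformly from $|H_j|\ge p|H_1|$ and $p^r|H_1|\le|G|$ and then sums over positions against $\sum_k k^{r-1}p^{-(d-1)k}$, while you split into $H_1\ne 0$ and $H_1=0$ and use the closed-form geometric identity $\sum_k\binom{k}{r-1}x^k=x^{r-1}/(1-x)^r$; the split is unnecessary (your Case A inequalities $|H_1|/|G|\le p^{-r}$ and $|H_1|/|K_j|\le p^{-1}$ for $j>i_1$ already hold when $H_1=0$) but harmless, and the two position-sum bounds are interchangeable. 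The core insight — that the decay must come from the per-step factor $p^{-(d-1)}$ and not be absorbed into a binomial coefficient — is identical.
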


\begin{proof}
In order that $\cok(X_n)$ converges to CL, we should have $n- d(n-t_n) > 0$ for sufficiently large $n$ as otherwise $X_n$ would have a zero row for infinitely many $n$. From now on, we assume that $n- d(n-t_n) > 0$ for sufficiently large $n$. As in the proof of Theorem \ref{thm3d}, $\cok(X_n)$ converges to CL if 
$$
\underset{n \to \infty}{\lim} \sum_{\substack{G_1 \le \cdots \le G_{n-t_n} \\ G_1 \neq G}}  d_{G_1, \ldots, G_{n-t_n}, G, \ldots, G} = 0
$$
for every finite abelian $p$-group $G$. \\

{\bf Case I:} $G=\Z/p\Z$. In this case, 
\begin{align*}
& \sum_{\substack{G_1 \le \cdots \le G_{n-t_n} \\ G_1 \neq G}}  d_{G_1, \ldots, G_{n-t_n}, G, \ldots, G} \\
= & \sum_{k=1}^{n-t_n} \frac{ | \lt \{ F \in \Sur(R^n, G) \mid FV_{\sg_{n,i}} = \lt \{ 0 \rt \} \text{ for } 1 \le i \le dk, \, FV_{\sg_{n,i}} = G \text{ for } i > dk \rt \} |}{1^{dk} p^{n-dk}} \\
= & \sum_{k=1}^{n-t_n} \frac{(p-1)p^{n-t_n-k}}{p^{n-dk}} \\
= & \sum_{k=1}^{n-t_n} \frac{(p-1)p^{(d-1)k}}{p^{t_n}} \\
= & \frac{(p-1)(p^{(d-1)(n-t_n)}-1)p^{d-1}}{p^{t_n}(p^{d-1}-1)}.
\end{align*}
It is clear that 
$$
\underset{n \to \infty}{\lim} \frac{(p-1)(p^{(d-1)(n-t_n)}-1)p^{d-1}}{p^{t_n}(p^{d-1}-1)} = 0
$$
if and only if $\underset{n \to \infty}{\lim} (t_n- (d-1)(n-t_n)) = \underset{n \to \infty}{\lim} (n - d(n-t_n)) = \infty$. \\

{\bf Case II:} General case. For every $G$, it is enough to show that $\underset{n \to \infty}{\lim} (n- d(n-t_n)) = \infty$ implies that
$$
\underset{n \to \infty}{\lim} \sum_{\substack{G_1 \le \cdots \le G_{n-t_n} \\ G_1 \neq G}}  d_{G_1, \ldots, G_{n-t_n}, G, \ldots, G} = 0.
$$
Let $\left| G \right| = p^m$ and consider the set 
$$
\mr{CS}_G := \lt \{ (H_1, \ldots, H_{r+1}) \mid 1 \le r \le m \text{ and } H_1 \lneq H_2 \lneq \cdots \lneq H_{r+1} = G \rt \}. 
$$
Then we have
\begin{align*}
& \sum_{\substack{G_1 \le \cdots \le G_{n-t_n} \\ G_1 \neq G}}  d_{G_1, \ldots, G_{n-t_n}, G, \ldots, G} \\
= & \sum_{(H_1, \ldots, H_{r+1}) \in \mr{CS}_G} \sum_{0 = i_0 < \cdots < i_{r} \le n-t_n} \frac{ | \lt \{ F \in \Sur(R^n, G) \mid FV_{\sg_{n,i}} = H_j \text{ if } di_{j-1} < i \le di_j \rt \} |}{ | H_1 |^{di_1} | H_2 |^{d(i_2-i_1)} \cdots | H_{r} |^{d(i_{r}-i_{r-1})} |G|^{n-di_r}} \\
\le & \sum_{(H_1, \ldots, H_{r+1}) \in \mr{CS}_G} \sum_{0 = i_0 < \cdots < i_{r} \le n-t_n} 
\frac{ | H_1 |^{t_n+i_1-1} | H_2 |^{i_2-i_1} \cdots | H_{r} |^{i_{r}-i_{r-1}} |G|^{(n-t_n)-i_r+1}}{ | H_1 |^{di_1} | H_2 |^{d(i_2-i_1)} \cdots | H_{r} |^{d(i_{r}-i_{r-1})} |G|^{n-di_r}} \\
= & \sum_{(H_1, \ldots, H_{r+1}) \in \mr{CS}_G} \sum_{0 = i_0 < \cdots < i_{r} \le n-t_n} 
\frac{ | H_1 |^{t_n-1} }{ | H_1 |^{(d-1)i_1} | H_2 |^{(d-1)(i_2-i_1)} \cdots | H_{r} |^{(d-1)(i_{r}-i_{r-1})} |G|^{t_n - (d-1)i_r-1}} \\
\le & | \mr{CS}_G | \sum_{r=1}^{m} \sum_{0 = i_0 < \cdots < i_{r} \le n-t_n} 
\frac{1}{p^{(d-1)(i_r-i_1)}\cdot p^{r(t_n - (d-1)i_r-1)}},
\end{align*}
where the last inequality follows from the fact that $ | H_j | \ge p | H_1 |$ for each $j \ge 2$ and $p^r | H_1 | \le |G|$. Now it is enough to show that for every $1 \le r \le m$, 
$$
\underset{n \to \infty}{\lim} \sum_{0 = i_0 < \cdots < i_{r} \le n-t_n} 
\frac{1}{p^{(d-1)(i_r-i_1)}\cdot p^{r(d-1)(n-t_n-i_r)} \cdot p^{r(t_n - (d-1)(n-t_n)-1)}} = 0.
$$
Since we have $\underset{n \to \infty}{\lim} (t_n -(d-1)(n-t_n) -1) = \infty$ by the assumption, it suffices to show that the sum
$$
\sum_{0 = i_0 < \cdots < i_{r} \le n-t_n} 
\frac{1}{p^{(d-1)(i_r-i_1)}\cdot p^{r(d-1)(n-t_n-i_r)}} 
$$
is bounded above. We have
\begin{align*}
& \sum_{0 = i_0 < \cdots < i_{r} \le n-t_n} 
\frac{1}{p^{(d-1)(i_r-i_1)}\cdot p^{r(d-1)(n-t_n-i_r)}} \\
\le & \sum_{0 = i_0 < \cdots < i_{r} \le n-t_n} 
\frac{1}{p^{(d-1)(n-t_n-i_1)}} \\
= & \sum_{i_1=1}^{n-t_n-r+1} \binom{n-t_n-i_1}{r-1} \frac{1}{p^{(d-1)(n-t_n-i_1)}} \\
\le & \sum_{k=r-1}^{\infty} \frac{k^{r-1}}{p^{(d-1)k}}
\end{align*}
so the sum is bounded above by a constant which is independent of $n$.
\end{proof}

\section{Minimal number of random entries} \label{Sec4}

\subsection{A lower bound of \texorpdfstring{$| \Sg_n |$}{Sigman}} \label{Sub41}

Let $X_n \in \M_n(\zp)$ be a Haar-random matrix supported on $\Sg_n$ and assume that $\cok(X_n)$ converges to CL. Since the probability that $X_n$ does not have a column whose entries are all divisible by $p$ is
$$ 
\prod_{i=1}^{n} (1 - p^{-| \sg_{n,i} |}) \le (1 - p^{-\frac{| \Sg_n |}{n}})^n,
$$
we have
$$
\liminf_{n \to \infty} (1 - p^{-\frac{| \Sg_n |}{n}})^n 
\ge \underset{n \to \infty}{\lim} \bP(\cok(X_n) = 0) = \prod_{k=1}^{\infty} (1-p^{-k})
$$
and
\begin{equation} \label{eq41a}
\liminf_{n \to \infty} (\frac{| \Sg_n |}{n} - \log_p n) \ge - \log_p \log \prod_{k=1}^{\infty} (1-p^{-k})^{-1}.
\end{equation}
In fact, we have the following stronger result. 

\begin{thm} \label{thm41a}
If $\cok(X_n)$ converges to CL, then $\displaystyle \underset{n \to \infty}{\lim} (\frac{| \Sg_n |}{n} - \log_p n) = \infty$.
\end{thm}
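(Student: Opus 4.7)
The plan is to sharpen the singleton analysis underlying \eqref{eq41a} by recognizing an additional $+1$ contribution from ``dense'' surjections. By Wood's criterion \cite{Woo19}, convergence of $\cok(X_n)$ to CL forces
\begin{equation*}
E_n(\fp) := \bE\bigl[\#\Sur(\cok(X_n),\fp)\bigr] \longrightarrow 1.
\end{equation*}
Parametrizing each $F \in \Sur(\zp^n, \fp)$ by the associated nonzero row vector $w \in \fp^n$, and noting that $|FV_{\sg_{n,j}}| = p$ when $w|_{\sg_{n,j}} \neq 0$ and $1$ otherwise, I rewrite
\begin{equation*}
E_n(\fp) \;=\; \sum_{w \in \fp^n \setminus \{0\}} p^{-|S(w)|}, \qquad S(w) := \{j \in [n] : w|_{\sg_{n,j}} \neq 0\}.
\end{equation*}

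I then split the sum by $|\mr{supp}(w)|$ and bound each piece from below. For a singleton $w = \lambda e_i$ with $\lambda \in \fp^\times$, one has $|S(w)| = c_i := |\{j : i \in \sg_{n,j}\}|$ exactly, so the singleton contribution equals $(p-1)\sum_{i=1}^n p^{-c_i}$. For every $w$ with $|\mr{supp}(w)| \ge 2$ I use only the trivial bound $|S(w)| \le n$; since the number of such $w$ is $p^n - 1 - n(p-1)$, their total contribution is at least
\begin{equation*}
p^{-n}\bigl(p^n - 1 - n(p-1)\bigr) \;=\; 1 - \bigl(1 + n(p-1)\bigr)p^{-n} \;=\; 1 - o(1).
\end{equation*}
Combining yields $E_n(\fp) \ge (p-1)\sum_{i=1}^n p^{-c_i} + 1 - o(1)$, and since $E_n(\fp) \to 1$, this forces $\sum_{i=1}^n p^{-c_i} \to 0$.

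The theorem then follows from Jensen's inequality applied to the convex function $x \mapsto p^{-x}$, together with the identity $\sum_{i=1}^n c_i = |\Sg_n|$:
\begin{equation*}
n \cdot p^{-|\Sg_n|/n} \;\le\; \sum_{i=1}^n p^{-c_i} \longrightarrow 0,
\end{equation*}
which is precisely $|\Sg_n|/n - \log_p n \to \infty$. The essential new idea over the argument leading to \eqref{eq41a} is the ``$+1$'' from dense $w$: the singleton-plus-Jensen bound alone only yields a constant lower bound for $|\Sg_n|/n - \log_p n$, whereas the crude estimate $|S(w)| \le n$ applied to the $\approx p^n$ remaining vectors already saturates the CL limit $E_n(\fp) \to 1$, thereby forcing the singleton sum to vanish. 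The only delicate step is verifying that the dense contribution really is bounded below by $1-o(1)$; this is essentially immediate from $\sum_{w \in \fp^n} 1 = p^n$.
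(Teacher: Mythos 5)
Your reduction of $E_n(\mathbb{F}_p)$ to $\sum_{w \in \mathbb{F}_p^n \setminus \{0\}} p^{-|S(w)|}$ is correct, the split into singleton and dense vectors is clean, and the Jensen step at the end is valid. Unfortunately, the very first step contains a genuine gap that the rest of the argument cannot survive.

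You assert that convergence of $\cok(X_n)$ to the Cohen--Lenstra distribution ``forces'' $E_n(\mathbb{F}_p) \to 1$, and attribute this to Wood's criterion. Wood's theorem goes in the \emph{opposite} direction: convergence of all moments $E_n(G)$ to $1$ implies distributional convergence to CL (together with a uniqueness statement for the limiting distribution in terms of its moments). Distributional convergence does \emph{not}, by itself, imply convergence of these moments, because $\#\Sur(\cok(X_n),\mathbb{F}_p) = p^{\dim_{\mathbb{F}_p}\ker \overline{X_n}} - 1$ is unbounded and convergence in distribution gives no control over its expectation without uniform integrability. Your argument really needs $\limsup_n E_n(\mathbb{F}_p) \le 1$ (since the dense part already contributes $1 - o(1)$, you must show there is essentially no excess mass), and this is precisely the direction that is not free. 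Moreover, establishing that uniform integrability in this setting is of roughly the same depth as the theorem you are trying to prove: if $|\Sigma_n|/n - \log_p n$ stayed bounded along a subsequence, then the rare event of many zero columns would inflate $\bE[p^{\dim \ker \overline{X_n}}]$ while leaving the pointwise distribution of $\dim \ker$ close to $\nu_p$, which is exactly the scenario you would need to rule out --- so the claim and the theorem sit in a tight circle.

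The paper's actual proof sidesteps moments entirely. From CL convergence it only extracts the pointwise convergence $\mathbb{P}(\dim \ker \overline{X_n} = m) \to \nu_p(m)$, a genuine consequence of distributional convergence (equation \eqref{eq41b}). It then lower-bounds the tail $\mathbb{P}(\dim \ker \overline{X_n} \ge m)$ by the probability of having $m$ zero columns, applies a multilinear optimization (Lemma \ref{lem41b}) to replace the column probabilities $p^{-|\sigma_{n,i}|}$ by their geometric mean $p^{-|\Sigma_n|/n}$, and observes that if $|\Sigma_n|/n - \log_p n$ were bounded on a subsequence, the resulting tail lower bound $\gtrsim p^{-c_2 m}/m^m$ would, for $m$ large, exceed the CL tail $\nu_p(m) \asymp p^{-m^2}$, a contradiction. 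Your Jensen step is the same inequality as Lemma \ref{lem41b} specialized to $m=1$, which, as you note yourself, only yields a constant lower bound; the paper obtains the full statement by letting $m \to \infty$ in the tail, whereas you try to compensate with the dense-vector contribution to the first moment --- but that requires the unproven moment convergence. To repair the proof you would need either to prove $E_n(\mathbb{F}_p) \to 1$ directly (uniform integrability of $p^{\dim \ker \overline{X_n}}$), or to recast the argument in terms of tail probabilities of $\dim \ker \overline{X_n}$ as the paper does.
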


Let $\ol{X_n} \in \M_n(\fp)$ be the reduction of $X_n$ modulo $p$. If $\cok(X_n)$ converges to CL, then \cite[Theorem 6.3]{CL84} implies that for every nonnegative integer $m$, we have
\begin{equation} \label{eq41b}
\underset{n \to \infty}{\lim} \bP(\dim \ker \ol{X_n} = m) 
= \nu_p(m) := p^{-m^2} \prod_{k=1}^{m} (1-p^{-k})^{-2} \prod_{k=1}^{\infty} (1-p^{-k}).
\end{equation}

We prove Theorem \ref{thm41a} by showing that if $\underset{n \to \infty}{\liminf} \left ( \frac{| \Sg_n |}{n} - \log_p n \right ) < \infty$, then the distribution of $\dim \ker \ol{X_n}$ has heavier tail than the distribution $\nu_p$. Note that a similar argument can be found in the proofs of \cite[Theorem 2]{Mes24a} and \cite[Theorem 3]{Mes24b}.

For $x_1, \ldots, x_n \in [0,1]$ and $0 \le m \le n$, define
$$
f_{m,n}(x_1, \ldots, x_n) := \sum_{S \subset [n], \, \left| S \right| \ge m} \left ( \prod_{j \in S} x_j \right )\left ( \prod_{j \in [n] \setminus S} (1-x_j) \right ).
$$

\begin{lem} \label{lem41b}
Let $2 \le m \le n-2$, $t_1, \ldots, t_n \in [0,1]$ and $t := (t_1 \cdots t_n)^{1/n}$. Then
$$
f_{m,n}(t_1, \ldots, t_n) \ge f_{m,n}(t, \ldots, t).
$$
\end{lem}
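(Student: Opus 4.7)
The plan is to prove the inequality via a two-variable smoothing step and then bootstrap it to the global inequality using Schur-convexity in logarithmic coordinates. Observe that
\[
f_{m,n}(t_1,\ldots,t_n) = \Pr\Bigl[\sum_{k=1}^n X_k \ge m\Bigr],
\]
where $X_1,\dots,X_n$ are independent Bernoulli variables with $\Pr(X_k = 1) = t_k$. We may assume each $t_k > 0$, since otherwise $t = 0$ and the right-hand side vanishes trivially.

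First I would establish the key two-variable identity. Fix distinct indices $i, j$, set $N = [n] \setminus \{i, j\}$, and for $T \subset N$ put $P(T) = \prod_{k \in T} t_k \prod_{k \in N \setminus T}(1-t_k) \ge 0$. Let
\[
a = \sum_{\substack{T \subset N\\ |T| \ge m}} P(T), \qquad b = \sum_{\substack{T \subset N\\|T| = m-1}} P(T), \qquad c = \sum_{\substack{T \subset N\\ |T| = m-2}} P(T).
\]
Partitioning the subsets $S$ in the definition of $f_{m,n}$ according to the four possibilities for $S \cap \{i, j\}$ and collecting like terms, a direct expansion yields
\[
f_{m,n}(t_1,\ldots,t_n) = a + b(t_i + t_j) + (c - b)\, t_i t_j.
\]
The hypothesis $2 \le m \le n - 2$ is exactly what makes the index ranges in $a,b,c$ non-vacuous; in particular $a, b, c \ge 0$. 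Since the coefficient $b$ of $t_i + t_j$ is non-negative, AM--GM ($t_i + t_j \ge 2\sqrt{t_i t_j}$, equality iff $t_i = t_j$) implies that replacing $(t_i, t_j)$ by $(\sqrt{t_i t_j}, \sqrt{t_i t_j})$ preserves $t_i t_j$ and does not increase $f_{m,n}$.

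To globalize this pairwise smoothing, I would pass to logarithmic coordinates $u_k = \log t_k \le 0$ and set $h(u_1, \ldots, u_n) := f_{m,n}(e^{u_1}, \ldots, e^{u_n})$. Parametrizing $u_i = \bar u_{ij} + s$, $u_j = \bar u_{ij} - s$ with $\bar u_{ij} = (u_i+u_j)/2$ fixed turns the displayed formula into
\[
h = a + 2 b\, e^{\bar u_{ij}} \cosh s + (c - b)\, e^{2 \bar u_{ij}},
\]
which is even in $s$ and non-decreasing in $|s|$ since $b \ge 0$. By the Schur--Ostrowski criterion $h$ is Schur-convex on $(-\infty,0]^n$, and since $(u_1, \ldots, u_n)$ majorizes the constant vector $(\bar u, \ldots, \bar u)$ with $\bar u = \tfrac{1}{n}\sum_k u_k = \log t$, we conclude $h(u_1, \ldots, u_n) \ge h(\bar u, \ldots, \bar u)$, which is the desired inequality.

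The main obstacle, modest in nature, is avoiding a naive iteration of the pairwise smoothing: this could fail to strictly decrease $f_{m,n}$ (or to converge to the constant vector) when the coefficient $b$ vanishes for some pair, which happens precisely when enough coordinates equal $1$. Routing through Schur-convexity sidesteps this entirely, reducing the bootstrap to a single application of the standard majorization inequality; the only remaining work is the routine combinatorial verification of the formula for $f_{m,n}$ above.
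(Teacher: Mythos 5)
Your argument is correct, and the heart of it — the two-variable identity $f_{m,n} = a + b(t_i+t_j) + (c-b)\,t_i t_j$ with $b \ge 0$ — is exactly the decomposition the paper uses (there it is written $x_1 x_2 A + (x_1+x_2)B + C$ with $B = f_{m-1,n-2} - f_{m,n-2} \ge 0$, which matches your $a,b,c$ via $a = f_{m,n-2}$, $b = f_{m-1,n-2}-f_{m,n-2}$, $c-b = f_{m-2,n-2}-2f_{m-1,n-2}+f_{m,n-2}$). Where you diverge is in the bootstrap: the paper iterates the two-variable smoothing (repeatedly replacing the max and min by their geometric mean), argues this iteration drives all coordinates to $t$, and then invokes continuity of $f_{m,n}$; you instead pass to log coordinates, verify the Schur--Ostrowski condition once, and conclude in a single step from Schur-convexity and the majorization $(u_1,\dots,u_n)\succ(\bar u,\dots,\bar u)$. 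Your version is cleaner and avoids having to justify convergence of the iteration, at the mild cost of invoking the Schur--Ostrowski/majorization machinery.

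One small misapprehension in your closing remark, though it does not affect your proof: the vanishing of $b$ for some pair does \emph{not} obstruct the paper's iterative scheme. The convergence of the iterated geometric-mean replacement to the constant vector $(t,\dots,t)$ is a fact purely about the $t$-values (in log coordinates the variance contracts by a factor $1-\tfrac{1}{2n}$ each step) and is independent of $b$; and monotone non-increase of $f_{m,n}$ along the iteration together with continuity suffices — strict decrease is never needed. Also, the role of the hypothesis $2\le m\le n-2$ is a bit overstated: the only ingredient your argument uses is $b\ge 0$, which holds automatically as a sum of nonnegative terms (possibly empty); the boundary cases of $m$ merely make some of $a,b,c$ vanish. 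Neither of these points is a gap — your proof stands.
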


\begin{proof}
A simple computation gives
\begin{equation*}
\begin{split}
f_{m,n}(x_1, \ldots, x_n) & = x_1x_2 f_{m-2, n-2}(x_3, \ldots, x_n) \\
& + (x_1(1-x_2) + x_2(1-x_1))f_{m-1, n-2}(x_3, \ldots, x_n) \\
& + (1-x_1)(1-x_2)f_{m, n-2}(x_3, \ldots, x_n) \\
& = x_1x_2A(x_3, \ldots, x_n) + (x_1+x_2)B(x_3, \ldots, x_n) + C(x_3, \ldots, x_n)
\end{split}    
\end{equation*}
for some polynomials $A$, $B$ and $C$. Since $B(x_3, \ldots, x_n) = f_{m-1, n-2}(x_3, \ldots, x_n) - f_{m, n-2}(x_3, \ldots, x_n) \ge 0$ for every $x_3, \ldots, x_n \in [0,1]$, we have
\begin{equation} \label{eq41c}
f_{m,n}(t_1, t_2, t_3, \ldots, t_n) \ge f_{m,n}(\sqrt{t_1t_2}, \sqrt{t_1t_2}, t_3, \ldots, t_n).
\end{equation}
Now we define a sequence of $n$-tuples of real numbers $(t(m)_1, \ldots, t(m)_n)$ ($m \ge 0$) as follows.
\begin{enumerate}
    \item $(t(0)_1, \ldots, t(0)_n)=(t_1, \ldots, t_n)$.

    \item For a given $(t(m)_1, \ldots, t(m)_n)$, choose any $i, j \in [n]$ such that $t(m)_i = \max(t(m)_1, \ldots, t(m)_n)$, $t(m)_j = \min(t(m)_1, \ldots, t(m)_n)$ and $i \neq j$. Define $(t(m+1)_1, \ldots, t(n+1)_n)$ by $t(m+1)_i = t(m+1)_j = \sqrt{t(m)_it(m)_j}$ and $t(m+1)_k = t(m)_k$ for every $k \in [n] \setminus \{ i,j \}$.
\end{enumerate}
Then we have $\underset{m \to \infty}{\lim} t(m)_k = t$ for every $1 \le k \le n$. By the continuity of $f_{m,n}(x_1, \ldots, x_n)$ and the inequality \eqref{eq41c}, we have $f_{m,n}(t_1, \ldots, t_n) \ge f_{m,n}(t, \ldots, t)$.
\end{proof}

\begin{proof}[Proof of Theorem \ref{thm41a}]
Suppose that $\cok(X_n)$ converges to CL. By \eqref{eq41a}, there is a constant $c_1$ such that $\log_p n + c_1 \le \frac{| \Sg_n |}{n}$ for all sufficiently large $n$. Now assume that there is a constant $c_2 \ge 0$ such that $\frac{| \Sg_n |}{n} \le \log_p n + c_2$ for infinitely many $n$. 

Since the probability that the $i$-th column of $\ol{X_n}$ is zero is $u_i := p^{-| \sg_{n,i} |} \in [0,1]$, we have
$$
\bP(\dim \ker \ol{X_n} \ge m) \ge \bP(\ol{X_n} \text{ has at least } m \text{ zero columns}) = f_{m,n}(u_1, \ldots, u_n).
$$
By Lemma \ref{lem41b}, we have $f_{m,n}(u_1, \ldots, u_n) \ge f_{m,n} (C_n, \ldots, C_n)$ for $C_n = (u_1 \cdots u_n)^{1/n} > 0$. Note that $C_n = p^{- \frac{| \Sg_n |}{n}}$ so $\frac{p^{-c_2}}{n} \le C_n \le \frac{p^{-c_1}}{n}$ for infinitely many $n$. If $\frac{p^{-c_2}}{n} \le C_n \le \frac{p^{-c_1}}{n}$, then
\begin{equation*}
\begin{split}
f_{m,n} (C_n, \ldots, C_n) & \ge \binom{n}{m} C_n^{m} (1-C_n)^{n-m} \\
& \ge \left ( \frac{n}{m} \right )^m \left ( \frac{p^{-c_2}}{n} \right )^m \left ( 1 - \frac{p^{-c_1}}{n} \right )^{n} \\
& > \frac{p^{-c_2 m}}{m^m} \frac{1}{3^{p^{-c_1}}}
\end{split}
\end{equation*}
when $n$ is sufficiently large. This shows that for every $m \in \bN$, there are infinitely many $n \in \bN$ such that
$$
\bP(\dim \ker \ol{X_n} \ge m) \ge \frac{p^{-c_2 m}}{m^m} \frac{1}{3^{p^{-c_1}}}.
$$
This contradicts the equation \eqref{eq41b}, so there is no constant $c_2$ such that $\frac{| \Sg_n |}{n} \le \log_p n + c_2$ for infinitely many $n$. We conclude that $\displaystyle \underset{n \to \infty}{\lim} (\frac{ | \Sg_n |}{n} - \log_p n) = \infty$.
\end{proof}

\subsection{Uniform case} \label{Sub42}

Assume that $X_n$ a Haar-random matrix supported on $\Sg_n$ and let $ | \Sg_n   | := \sum_{i=1}^{n}  | \sg_{n,i}   |$. In this section, we study the lower bound of $ | \Sg_n   |$ that $\cok(X_n)$ converges to CL. 

\begin{conj} \label{conj42a}
For every sequence $(a_n)_{n \ge 1}$ such that $n \le a_n \le n^2$ and $\underset{n \to \infty}{\lim} (\frac{a_n}{n} - \log_p n) = \infty$, there is a sequence $(\Sg_n)_{n \ge 1}$ such that $\cok(X_n)$ converges to CL and $ | \Sg_n   |=a_n$ for all $n$. 
\end{conj}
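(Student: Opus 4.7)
The plan is to combine the monotonicity from Proposition \ref{prop3c} with a sufficiently sparse ``base'' construction, and bootstrap to the prescribed $|\Sg_n| = a_n$. Concretely, suppose that for each $n$ we can build $\Sg_n^{\mathrm{base}} = (\sg_{n,1}^{\mathrm{base}}, \ldots, \sg_{n,n}^{\mathrm{base}})$ with $|\Sg_n^{\mathrm{base}}| \le a_n$ such that $\cok(X_n^{\mathrm{base}})$ converges to CL, and such that every $\sg_{n,i}^{\mathrm{base}}$ is nonempty with $\bigcup_i \sg_{n,i}^{\mathrm{base}} = [n]$. Since $a_n \le n^2$, we can enlarge each $\sg_{n,i}^{\mathrm{base}}$ to some $\sg_{n,i} \supseteq \sg_{n,i}^{\mathrm{base}}$ with $|\Sg_n| = a_n$ exactly, and Proposition \ref{prop3c} then guarantees $\cok(X_n)$ for the enlarged support also converges to CL. Thus the conjecture reduces to constructing, for every sequence $(b_n)$ with $n \le b_n \le n^2$ and $b_n/n - \log_p n \to \infty$, a $\Sg_n^{\mathrm{base}}$ with $|\Sg_n^{\mathrm{base}}| \le b_n$ whose cokernel converges to CL.

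To build such a base, first pick a positive integer sequence $t_n$ with $\log_p n < t_n \le \lfloor b_n/n \rfloor$ and $t_n - \log_p n \to \infty$, which is possible since $b_n/n - \log_p n \to \infty$. Theorem \ref{mainthm1} then furnishes $\Sg_n^{\mathrm{base}}$ with $|\sg_{n,i}^{\mathrm{base}}| \le t_n$ (hence $|\Sg_n^{\mathrm{base}}| \le n t_n \le b_n$), $\bigcup_i \sg_{n,i}^{\mathrm{base}} = [n]$, and $\bE(\#\Sur(\cok(X_n^{\mathrm{base}}), \Z/p\Z)) \to 1$. The missing ingredient --- and the main obstacle --- is to upgrade this to
$$
\bE(\#\Sur(\cok(X_n^{\mathrm{base}}), G)) \longrightarrow 1
$$
for \emph{every} finite abelian $p$-group $G$, after which the moment criterion of Wood \cite{Woo19} completes the argument.

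To attack this, I would revisit the random bipartite multigraph construction underlying Theorem \ref{mainthm1} and, for each $G$ of order $p^m$, estimate the contribution to $\sum_{(G_1,\dots,G_n) \neq (G,\dots,G)} d_{G_1,\dots,G_n}$ from each chain type $(H_1, \ldots, H_{r+1}) \in \mathrm{CS}_G$, in the spirit of Case II of Theorem \ref{thm3d}. The goal is to show each such contribution decays like a power of $p^{-(t_n - \log_p n)}$ after summing over $r$ and tuple positions, so that the divergence $t_n - \log_p n \to \infty$ forces $E_n(G) \to 1$ via Proposition \ref{prop3b}. The delicate point, and likely the hardest step, is that the multigraph supports $\sg_{n,i}^{\mathrm{base}}$ are irregular (unlike the nested stair supports in Theorem \ref{thm3d}), so one must exploit concentration of the joint support sizes $|\sg_{n,i_1} \cup \cdots \cup \sg_{n,i_k}|$ in the random regular bipartite multigraph to emulate the nested-chain bound. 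In the easier regime $a_n/n \ge (2+o(1))\log_p n$, one can sidestep this entirely by taking Mészáros's construction \cite{Mes24b} as the base, so the genuinely new content of the conjecture lies in pushing $a_n/n$ all the way down to $\log_p n + \omega(1)$.
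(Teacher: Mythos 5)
The statement you are trying to prove is explicitly labeled a \emph{Conjecture} in the paper; the authors state after Remark \ref{rmk42b} that ``we did not prove Conjecture \ref{conj42a} in general,'' so there is no proof in the paper to compare against. Your reduction is in fact identical to what the paper does: the enlargement step via Proposition \ref{prop3c}, the choice $t_n \sim \lfloor a_n/n \rfloor$, and the appeal to Theorem \ref{mainthm1} are precisely the content of Remark \ref{rmk42b}, and the authors stop exactly where you stop.

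The gap you flag --- upgrading $\bE(\#\Sur(\cok(X_n^{\mathrm{base}}), \Z/p\Z)) \to 1$ to $\bE(\#\Sur(\cok(X_n^{\mathrm{base}}), G)) \to 1$ for every finite abelian $p$-group $G$ --- is real and is the entire open content of the conjecture, so your proposal does not constitute a proof. The reason this step is hard is structural: the translation in \eqref{eq5a} to the bipartite-graph quantity $c(G_{\Sigma_n})$ depends crucially on $\Z/p\Z$ having only the two subgroups $\{0\}$ and $\Z/p\Z$, which collapses the sum over tuples $(G_1, \ldots, G_n) \in \mathrm{Sub}_G^n$ into a sum over subsets $S \subseteq [n]$, i.e.\ a neighbourhood-expansion estimate. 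For a general $G$ of order $p^m$, the corresponding sum ranges over chains of subgroups, and for irregular supports $\sigma_{n,i}$ (as produced by the random regular bipartite multigraph) there is no analogue of the nested filtration $FV_{\sigma_{n,1}} \subseteq \cdots \subseteq FV_{\sigma_{n,n}}$ that drives the chain estimate in Theorems \ref{thm3d} and \ref{thm3d'}. The paper says exactly this just before Theorem \ref{mainthm1}: ``when $G$ is a larger group, this translation seems to be difficult or even impossible.'' Your closing sketch --- control chain-type contributions via concentration of $|\sigma_{n,i_1} \cup \cdots \cup \sigma_{n,i_k}|$ --- is a plausible direction, but you give no argument for the concentration itself, and that is precisely where the difficulty lives. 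You are right that the regime $a_n/n \ge (2+o(1))\log_p n$ is already handled (by Mészáros \cite{Mes24b}, or by the explicit block-band construction in Section \ref{Sub44} with the worse constant $4$); the genuinely open range is $\log_p n + \omega(1) \le a_n/n \lesssim 2\log_p n$.
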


By Theorem \ref{thm41a}, Conjecture \ref{conj42a} is the best possible result that we can expect. 

\begin{rmk} \label{rmk42b}
Let $t_n = \left \lfloor \frac{a_n}{n} \right \rfloor$. Then $n \le nt_n \le a_n \le n^2$ and $\underset{n \to \infty}{\lim} (t_n - \log_p n) = \infty$. By Proposition \ref{prop3c}, in order to prove Conjecture \ref{conj42a} it is enough to show that for every sequence $(t_n)_{n \ge 1}$ of positive integers such that $t_n \le n$ for each $n$ and $\underset{n \to \infty}{\lim} (t_n - \log_p n) = \infty$, there are subsets $\sg_{n,i} \subseteq [n]$ such that $| \sg_{n,i} | \le t_n$ and 
$$
\underset{n \to \infty}{\lim} E_n(G) = \underset{n \to \infty}{\lim} \bE(\# \Sur(\cok(X_n), G)) = 1
$$
for every finite abelian $p$-group $G$. 
\end{rmk}

Although we did not prove Conjecture \ref{conj42a} in general, we obtain the following partial result which supports Conjecture \ref{conj42a}. The problem of computing the moment for $G = \Z/p\Z$ could be translated into a graph theory problem, allowing us to utilize tools from graph theory. However, when $G$ is a larger group, this translation seems to be difficult or even impossible. In fact, the proof of Theorem \ref{mainthm1} is already quite complicated that the proof extends throughout the entirety of Section \ref{Sec5}.

\begin{thm} \label{mainthm1}
Let $(t_n)_{n \ge 1}$ be a sequence of positive integers such that $t_n \le n$ for each $n$ and $\underset{n \to \infty}{\lim} (t_n - \log_p n) = \infty$. Then there are $\sg_{n,1}, \ldots, \sg_{n,n} \subseteq [n]$ such that $1 \le | \sg_{n,i} | \le t_n$, $\bigcup_{i=1}^{n} \sg_{n,i} = [n]$ and
\begin{equation} \label{eq42b}
\underset{n \to \infty}{\lim} E_n(\Z/p\Z) 
= 1.
\end{equation}
\end{thm}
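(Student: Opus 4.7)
The plan is to reformulate $E_n(\Z/p\Z)$ as an exponential sum over subsets of $[n]$, and then apply the probabilistic method to random regular bipartite multigraphs.

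\textbf{Reformulation.} A surjection $F : \Z_p^n \twoheadrightarrow \Z/p\Z$ factors through its reduction $\ol F \in \bF_p^n \setminus \{0\}$; setting $U := \mr{supp}(\ol F)$, one computes $|FV_{\sg_{n,i}}| = 1$ when $\sg_{n,i} \cap U = \varnothing$ and $p$ otherwise. Summing over surjections and grouping by $U$ yields
$$
E_n(\Z/p\Z) = \sum_{\varnothing \ne U \subseteq [n]} (p-1)^{|U|} \, p^{-|N(U)|},
$$
where $N(U) := \{i \in [n] : \sg_{n,i} \cap U \ne \varnothing\}$ is the right-neighborhood of $U$ in the bipartite graph $B_n$ on $[n] \sqcup [n]$ whose edges are $\{(j,i) : j \in \sg_{n,i}\}$. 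Comparing with $\sum_{\varnothing \ne U}(p-1)^{|U|}/p^n = 1 - p^{-n}$,
$$
E_n(\Z/p\Z) = (1 - p^{-n}) + \Delta_n, \qquad \Delta_n := \sum_{\varnothing \ne U}(p-1)^{|U|}\bigl(p^{-|N(U)|} - p^{-n}\bigr) \ge 0.
$$
Hence $\liminf_n E_n(\Z/p\Z) \ge 1$ automatically, and the task reduces to exhibiting $(\Sg_n)$ satisfying the hypotheses for which $\Delta_n \to 0$.

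\textbf{Probabilistic construction.} Let $B_n$ be a uniformly random $t_n$-regular bipartite multigraph on $[n] \sqcup [n]$ (generated by the configuration model); take $\sg_{n,i}$ to be the multiset-support of the neighborhood of the $i$-th right vertex. Then $1 \le |\sg_{n,i}| \le t_n$ almost surely, and with high probability $\bigcup_i \sg_{n,i} = [n]$ (a constant number of corrective edges handles the rare failure without affecting asymptotics). By the $S_n \times S_n$-invariance of the model, $\bE_{B_n}[p^{-|N(U)|}]$ depends only on $u := |U|$, whence
$$
\bE_{B_n}[\Delta_n] = \sum_{u=1}^{n}\binom{n}{u}(p-1)^u\bigl(\bE_{B_n}[p^{-|N(U)|}] - p^{-n}\bigr).
$$
By Markov's inequality, it suffices to show $\bE_{B_n}[\Delta_n] \to 0$: one can then choose, for each $n$, an explicit $B_n$ realizing $\Delta_n \le 2\,\bE_{B_n}[\Delta_n]$.

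\textbf{Moment bound and endgame.} Writing $p^{n - |N(U)|} = \prod_{i=1}^n(1 + (p-1)\mathbf{1}\{\sg_{n,i} \cap U = \varnothing\})$, expanding the product, and using the hypergeometric identity
$$
\bP\bigl(\sg_{n,i} \cap U = \varnothing \text{ for all } i \in S\bigr) = \prod_{k=0}^{|S| t_n - 1} \frac{(n-u)t_n - k}{nt_n - k} \le q_u^{|S|},
$$
with $q_u := \prod_{k=0}^{t_n - 1}\frac{(n-u)t_n - k}{nt_n - k} \le (1 - u/n)^{t_n}$ (the inequality follows since the factors are decreasing in $k$, and $q_u^{|S|}$ collects only the first $t_n$ factors $|S|$ times), I obtain the key upper bound $\bE_{B_n}[p^{n - |N(U)|}] \le (1 + (p-1)q_u)^n$. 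Plug this into the formula for $\bE_{B_n}[\Delta_n]$ and split by the size of $u$: (i) for $u > n - t_n$ one has $q_u = 0$, so the contribution vanishes identically; (ii) for small $u \le n/t_n$ each term is bounded by $(Cnp^{-t_n})^u$, which sums geometrically using $n p^{-t_n} = p^{-(t_n - \log_p n)} \to 0$ from the hypothesis; (iii) in the intermediate range $n/t_n < u \le n - t_n$, $q_u$ is exponentially small and one linearises $(1 + (p-1)q_u)^n - 1 \approx n(p-1)q_u$, controlling the resulting sum against the binomial concentration of $\binom{n}{u}(p-1)^u/p^n$ around $u = n(1 - 1/p)$.

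\textbf{Main obstacle.} The delicate step is regime (iii): the weak upper bound $\bE[p^{n - |N(U)|}] \le (1 + (p-1)q_u)^n$ is only just strong enough, so one must carry out the linearisation and binomial comparison uniformly in $u$, exploiting that the bulk of the binomial mass lies far from the dangerous range of $q_u$. This is where the bulk of Section \ref{Sec5} is expected to lie. It is also why the theorem is restricted to $G = \Z/p\Z$: for a general finite abelian $p$-group $G$ one loses the clean reformulation as a sum indexed by a single support-subset $U \subseteq [n]$, and no analogous graph-theoretic translation appears to be available.
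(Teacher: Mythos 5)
Your exact reformulation
$$
E_n(\Z/p\Z) \;=\; \sum_{\varnothing \ne U \subseteq [n]} (p-1)^{|U|}\, p^{-|N(U)|}
$$
is correct and in fact cleaner than what appears in the paper: the paper derives only an \emph{upper bound} (equation \eqref{eq5a}) indexed by subsets of columns together with a maximality condition, while yours is an identity indexed by the support $U = \mathrm{supp}(\ol F)$. Using a random $t_n$-regular bipartite multigraph generated by the configuration model is also the same device as the paper, and this is the right model to use (in particular your worry about $\bigcup_i \sg_{n,i} \ne [n]$ is unfounded: every right vertex has degree exactly $t_n \ge 1$).

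The proof as written, however, has a genuine gap at the key estimate, and the gap already shows up at $u = 1$. Your bound
$$
\bE_{B_n}\bigl[p^{\,n - |N(U)|}\bigr] \;\le\; \bigl(1 + (p-1)q_u\bigr)^n, \qquad q_u \le (1-u/n)^{t_n},
$$
follows from the estimate $\bP\bigl(\sg_{n,i}\cap U = \varnothing \ \text{for all } i\in S\bigr) \le q_u^{|S|}$, which only records the negative correlation of the column-events and treats the edges from $U$ as if they were $n$ independent Bernoulli trials. It does \emph{not} use the crucial regularity constraint that the $|U| t_n$ half-edges out of $U$ must land somewhere, which forces $|N(U)| \gtrsim t_n$ for $|U|=1$. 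Concretely, for $|U|=1$ the truth is $\bE[p^{\,n-|N(U)|}]\approx p^{\,n-t_n}$, whereas your bound gives roughly $p^n(1-(p-1)t_n/(pn))^n \approx p^n e^{-(p-1)t_n/p}$, an overestimate by a factor $\exp\bigl(t_n(\log p - (p-1)/p)\bigr)$. Since $\log p > (p-1)/p$ for every prime $p$, this factor is exponentially large in $t_n$. Plugging it into the $u=1$ term of $\bE[\Delta_n]$ gives about $n(p-1)e^{-(p-1)t_n/p} = n^{\,1 - (p-1)/(p\log p)}\,e^{-(p-1)\omega(1)/p}$, and since $(p-1)/(p\log p) < 1$ for all $p$, this diverges whenever $t_n - \log_p n$ grows subpolynomially in $n$ (e.g. $t_n = \lfloor\log_p n + \log\log n\rfloor$, which the theorem permits). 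So your claim that the $u\le n/t_n$ terms are bounded by $(Cnp^{-t_n})^u$ fails, and $\bE[\Delta_n]\not\to 0$ along your chain of inequalities. Separately, your item (i) is false: in the configuration model $q_u = \prod_{k=0}^{t_n-1}\frac{(n-u)t_n - k}{nt_n - k}$ vanishes only at $u=n$, not for $u > n-t_n$ (multi-edges allow $|\sg_{n,i}|<t_n$); the hypergeometric probability for a given $S$ does vanish once $|S| > n-u$, but that does not imply $q_u = 0$.

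What is missing is exactly the ``dual'' estimate that the paper isolates as Lemma \ref{lem:prob_nbr}: $\bP(N_{G_n}(U)\subseteq T) \le (|T|/n)^{t_n|U|}$, i.e. $\bP(N(U)\cap S=\varnothing) \le (1-|S|/n)^{t_n u}$. For small $u$ and $s$ proportional to $n$ this is far stronger than $q_u^s$, and in fact replacing $q_u^{|S|}$ by $\min\{(1-u/n)^{t_n s},\ (1-s/n)^{t_n u}\}$ is essentially the content you need. The paper does not try to control $\bE[\Delta_n]$ at all; it instead establishes high-probability expansion statements (Lemmas \ref{lem:size_nbr1}, \ref{lem:size_nbr2}, \ref{lem:cutoff}), works with the constrained sum $c(G_n)$ over $\mc F_{A_n}(G_n)$, and handles large subsets by the duality Lemma \ref{lem:dual}. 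That structure (expansion with probability $0.99$, then a deterministic bound conditional on expansion) sidesteps the fact that the unconditional expectation is dominated by rare configurations and is difficult to control directly. Your Markov strategy is not in principle doomed, but it would require both the symmetric probability bound above and a careful split that currently does not appear in the write-up.
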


A proof of Theorem \ref{mainthm1} is given in Section \ref{Sec5}. We reformulate the theorem in terms of random regular bipartite multigraphs (Theorem \ref{thm5a}) and prove it by combinatorial arguments.

\subsection{General i.i.d. case} \label{Sub43}

\begin{thm} \label{thm43a}
Let $\xi$ be a random variable taking values in $\zp$ and $Y_n \in \M_n(\zp)$ be a random matrix supported on $\Sg_n$ whose random entries are i.i.d. copies of $\xi$. 
\begin{enumerate}
    \item If $\cok(Y_n)$ converges to CL, then $\underset{n \to \infty}{\lim} (|\Sg_n| - n) = \infty$.

    \item Assume that $p$ is odd. For every sequence of integers $(a_n)_{n \ge 1}$ such that $0 \le a_n \le n^2$ and $\underset{n \to \infty}{\lim} (a_n - n) = \infty$, there is a random variable $\xi \in \zp$ and a sequence $(\Sg_n)_{n \ge 1}$ such that $\cok(Y_n)$ converges to CL and $| \Sg_n  |=a_n$ for all $n$. 
\end{enumerate}
\end{thm}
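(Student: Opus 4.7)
\emph{Part (1).} The plan is to argue by contradiction: suppose $\cok(Y_n) \to \mathrm{CL}$ but $|\Sg_n| - n$ stays bounded along a subsequence, so that $|\Sg_{n_k}| \le n_k + C$ for some constant $C > 0$. A pigeonhole argument---each column has $|\sg_{n,j}| \ge 1$ by Remark \ref{rmk2n1}, so the number of columns with $|\sg_{n,j}| \ge 2$ is at most $C$---shows that at least $n_k - C$ columns of $Y_{n_k}$ are \emph{single-entry}; let $J$ be the set of their column indices and $\pi : J \to [n_k]$ the map sending a single-entry column to its unique random row. I then split on $q := \bP(\ol{\xi} = 0)$, where $\ol{\xi}$ denotes $\xi \bmod p$. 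When $q > 0$, each single-entry column is independently zero modulo $p$ with probability $q$, so $\dim \ker \ol{Y}_{n_k}$ stochastically dominates a $\mathrm{Binomial}(n_k - C, q)$ variable and diverges to infinity in probability; this contradicts the tightness of the limit distribution $\nu_p$ appearing in \eqref{eq41b}. When $q = 0$, we have $\xi \in \zp^*$ almost surely, so each single-entry column $j$ places the basis vector $e_{\pi(j)}$ into $\mathrm{Im}(\ol{Y}_{n_k})$; any row outside $\pi(J)$ must be covered by one of the at most $C$ multi-entry columns, whose total support has size $|\Sg_{n_k}| - |J| \le 2C$, giving $\dim \ker \ol{Y}_{n_k} \le 2C$ almost surely and again contradicting $\sum_{m > 2C} \nu_p(m) > 0$.

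\emph{Part (2).} For $p$ odd my plan is to take $\xi$ to be Haar-distributed on $\zp^*$, so that $\ol{\xi}$ is uniform on $\fp^*$, $\xi$ is $\epsilon$-balanced with $\epsilon = (p-2)/(p-1) > 0$, and $\xi \in \zp^*$ almost surely. Writing $c_n = a_n - n$, pick $k_n$ to be the largest integer with $k_n(k_n - 1) \le c_n$; since $c_n \to \infty$, $k_n \to \infty$, and since $c_n \le n(n-1)$, $k_n \le n$. Arrange $\Sg_n$ so that $Y_n$ takes the block-upper-triangular shape
\[
Y_n = \begin{pmatrix} A_n & B_n \\ 0 & D_n \end{pmatrix},
\]
with $A_n \in \M_{n-k_n}(\zp)$ diagonal (single $\xi$-entries on the diagonal), $D_n \in \M_{k_n}(\zp)$ fully populated with i.i.d.\ $\xi$-entries, and $B_n$ carrying the remaining $e_n := c_n - k_n(k_n-1) \in [0, 2k_n)$ ``extra'' $\xi$-entries, one per chosen position of $B_n$. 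This gives $|\Sg_n| = (n - k_n) + e_n + k_n^2 = n + c_n = a_n$ exactly. Because every diagonal entry of $A_n$ is almost surely a unit, $A_n \in \GL_{n - k_n}(\zp)$ a.s., and standard block row/column reduction yields $\cok(Y_n) \cong \cok(D_n)$ almost surely. Wood's universality theorem (Theorem \ref{thm: Wood's universality theorem}) applied to the full $\epsilon$-balanced matrix $D_n$ with $k_n \to \infty$ gives $\cok(D_n) \to \mathrm{CL}$, and hence $\cok(Y_n) \to \mathrm{CL}$.

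\emph{Main obstacle.} The principal delicacy lies in making the above count work: the $e_n$ extras must fit in $B_n$, which has $(n-k_n) k_n$ available positions. This is automatic when $n - k_n \ge 2$, but fails in the narrow edge range $k_n \in \{n-1, n\}$ (i.e., when $a_n$ lies within $O(n)$ of $n^2$). My plan for this range is to instead take $k_n = n$ (so $Y_n = D_n$) and have $D_n$ itself carry the $n^2 - a_n = O(n)$ \emph{fixed} zero entries, arranged as an $O(1)$-step staircase whose steps have dimensions $O(\sqrt{n}) \times O(\sqrt{n})$, and then invoke Theorem \ref{thm: universality main theorem} (whose hypothesis $n - \alpha_n^{(i)} - \beta_n^{(i)} \to \infty$ is satisfied for every step $i$) to conclude $\cok(D_n) \to \mathrm{CL}$. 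The restriction to odd $p$ enters crucially because for $p = 2$ one has $\zp^* = 1 + 2\zp$, so every random variable supported on $\zp^*$ satisfies $\ol{\xi} \equiv 1$ and fails to be $\epsilon$-balanced; the ``kill diagonal rows and reduce to a block'' trick collapses, and the $p = 2$ case would require a genuinely different construction.
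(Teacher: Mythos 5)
Both parts of your proof are correct, and Part (2) takes a genuinely different route. In Part (1) your pigeonhole extraction of the $\ge n-C$ single-entry columns is a mild reorganization of the paper's argument, which instead observes $\lim_n\bP(\det Y_n\neq 0)>0$ and uses it to permute rows and columns so that $i\in\sg_{n,i}$; both proofs then split on $q=\bP(\xi\equiv 0\bmod p)$ and conclude that the distribution of $\dim\ker\ol{Y}_n$ is incompatible with $\nu_p$. In your $q=0$ subcase you should make explicit that the bound $|\pi(J)|\geq n-2C$ uses $\bigcup_i\sg_{n,i}=[n]$ (no all-zero rows); without that the phrase ``must be covered'' is not justified, and one would need a separate argument for the case where $\pi$ collapses many columns onto few rows. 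In Part (2) the paper takes $\xi$ uniform on $\{\pm 1\}$ and runs a three-case construction whose case (3), for $a_n$ near $n^2$, uses a conditioning trick (fix the last row and column, perform column operations, and check that the remaining $(n-1)\times(n-1)$ matrix has independent $\epsilon$-balanced entries) so that it stays entirely within Wood's theorem. You instead take $\xi$ Haar on $\zp^*$ and use a block upper-triangular reduction with invertible diagonal $A_n$ and full random $D_n$, which is tidier in the main range $n-k_n\ge 2$; but in the edge range $k_n\ge n-1$ you invoke Theorem~\ref{thm: universality main theorem} (staircase universality), which is one of the paper's own main results and is proved only in Sections~\ref{universality first section}--\ref{universality third section}, whereas Theorem~\ref{thm43a} is proved in Section~\ref{Sub43}. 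The paper's case (3) conditioning argument avoids this forward dependency. If you keep your approach, you should either reorder the exposition or replace the edge case by a case (3)-style conditioning argument; you should also spell out that the decomposition of $z=n^2-a_n\le 2n-2$ into a staircase can always be done with $k\in\{1,2\}$ steps (e.g.\ $\alpha^{(1)}=\lceil\sqrt z\,\rceil$, $\beta^{(1)}=\lfloor z/\alpha^{(1)}\rfloor$, with a second step of height $z-\alpha^{(1)}\beta^{(1)}<\alpha^{(1)}$ absorbing the remainder), and since Theorem~\ref{thm: universality main theorem} is stated for fixed $k$ one should pass to the $k=1$ and $k=2$ subsequences separately.
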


\begin{proof}
For (1), assume that $\cok(Y_n)$ converges to CL and $|\Sg_n| - n$ does not go to infinity as $n \to \infty$. Then there exists a constant $c \in \Z_{\ge 0}$ and a sequence $n_1 < n_2 < \cdots$ such that $|\Sg_{n_k}| - n_k \leq c$ for each $k$. Since $\underset{n \to \infty}{\lim}\bP( \det(Y_n) \neq 0) = 1$, we may assume that $i \in \sg_{n, i}$ for each $i \in [n]$ for a sufficiently large $n$ by permuting rows and columns. (Note that permutations of rows and columns do not change the cokernel of a matrix, i.e. if $A \in \M_n(\zp)$ and $P, Q \in \GL_n(\zp)$, then $\cok(A) \cong \cok(PAQ)$.) The inequality $|\Sg_{n_k}| - n_k \leq c$ implies that 
    $$
    |\lt \{ i \in [n] : i \in \sg_{n_k, j} \text{ for some } j \neq i \rt \}\cup \lt \{ j \in [n] : i \in \sg_{n_k, j} \text{ for some } i \neq j \rt \} | \leq 2c
    $$
    for a sufficiently large $k$. Thus we may assume that for every sufficiently large $k$, 
    $$
    \Sg_{n_k} = (\sg_{n_k, 1}, \ldots, \sg_{n_k, 2c}, \lt \{ 2c+1 \rt \}, \lt \{ 2c+2 \rt \}, \ldots, \lt \{ n \rt \})
    $$
    for some $\sg_{n_k, 1}, \ldots, \sg_{n_k, 2c} \subseteq [2c]$ by permuting rows and columns.

    Now let $\bP(\xi \equiv 0 \pmod{p}) = \alpha$. If $\alpha>0$, then $\lim_{k \to \infty} \bP(\cok(Y_{n_k})=0) \le \lim_{k \to \infty} (1 - \alpha)^{n_k-2c} = 0$ so $\cok(Y_{n_k})$ does not converge to CL as $k \to \infty$. If $\alpha=0$, then we have $\cok(Y_{n_k}) \cong \cok(Y'_{n_k})$ where $Y'_{n_k} \in \M_{2c}(\zp)$ is the upper left $2c \times 2c$ submatrix of $Y_{n_k}$. Since the $p$-rank of $\cok(Y'_{n_k})$ is at most $2c$, $\cok(Y'_{n_k})$ does not converge to CL as $k \to \infty$. This implies that $\cok(Y_{n_k})$ does not converge to CL as $k \to \infty$, which is a contradiction.

Now we prove (2). Let $\xi$ be a random variable given by $\bP(\xi = 1) = \bP(\xi = -1) = \frac{1}{2}$. Assume that $n$ is sufficiently large so that $a_n \ge n$. Let $d_n \le n$ be the largest positive integer such that $a_n \ge n+d_n(d_n-1)$. Then $a_n < n+(d_n+1)d_n = n+d_n(d_n-1)+2d_n$. Now we choose $\Sg_n$ as follows. (For $n$ such that $a_n < n$, choose arbitrary $\Sg_n$ such that $| \Sg_n |=a_n$.)

    \begin{enumerate}
        \item $d_n \le n-2$ : Let $\sg_{n,i} =[d_n]$ for $1 \le i \le d_n$, $\sg_{n,i} = \lt \{ i \rt \}$ for $i \ge d_n+3$, $\sg_{n, d_n+1} = \tau_1 \cup \lt \{ d_n+1 \rt \}$ and $\sg_{n, d_n+2} = \tau_2 \cup \lt \{ d_n+2 \rt \}$ for any $\tau_1, \tau_2 \subset [d_n]$ such that $| \tau_1 | + | \tau_2 | = a_n-n-d_n(d_n-1)$. Then $| \Sg_n |=a_n$ and $\cok(Y_n) \cong \cok(Z_n)$ where $Z_n$ is the upper left $d_n \times d_n$ submatrix of $Y_n$.

        \item $d_n = n-1$, $a_n \le n^2-n+1$ : Choose arbitrary $[n-1] \subseteq \sg_{n,1}, \ldots, \sg_{n,n-1} \subseteq [n]$ such that $\sum_{i=1}^{n-1} |\sg_{n,i} | = a_n-1$ and $\sg_{n,n}=\lt \{ n \rt \}$. Then $\cok(Y_n) \cong \cok(Z_n)$ where $Z_n$ is the upper left $(n-1) \times (n-1)$ submatrix of $Y_n$.

        \item $n^2-n+2\le a_n \le n^2$ : Let $\sg_{n,i} = [n-1]$ for $1\le i \le n^2-a_n$ and $\sg_{n,i} = [n]$ for $n^2-a_n+1 \le i \le n$. 

        Let $\mc{R} = (r_1 \; r_2 \; \cdots \; r_n)$ be a row vector of length $n$ over $\zp$ such that $r_i=0$ for $1 \le i \le n^2-a_n$ and $r_i \in \left\{ 1, -1 \right\}$ for $n^2-a_n+1 \le i \le n$. Let $\mc{C} = (c_1 \; c_2 \; \cdots \; c_n)^T$ be a column vector of length $n$ over $\zp$ such that $c_i \in \left\{ 1, -1 \right\}$ for $1 \le i \le n$ and $r_n=c_n$. Let $Y_n(\mc{R}, \mc{C})$ be a random $n \times n$ matrix whose $n$-th row and $n$-th column are fixed to be $\mc{R}$ and $\mc{C}$ respectively, and the remaining entries are i.i.d. copies of $\xi$. Let $Z_n'$ be a random $n \times n$ matrix over $\zp$ whose $i$-th column is defined by
        $$ (Z_n')_{*i} = 
        \begin{cases}
        Y_n(\mc{R}, \mc{C})_{*i} & \text{ if } 1 \le i \le n^2-a_n \text{ or } i =n \\
        Y_n(\mc{R}, \mc{C})_{*i} + Y_n(\mc{R}, \mc{C})_{*n}& \text{ if } n^2-a_n+1 \le i \le n-1 \text{ and } Y_n(\mc{R}, \mc{C})_{ni} = -Y_n(\mc{R}, \mc{C})_{nn} \\
        Y_n(\mc{R}, \mc{C})_{*i} - Y_n(\mc{R}, \mc{C})_{*n}& \text{ if } n^2-a_n+1 \le i \le n-1 \text{ and } Y_n(\mc{R}, \mc{C})_{ni} = Y_n(\mc{R}, \mc{C})_{nn}.
        \end{cases}
        $$
       (For a matrix $A$, denote the $i$-th column of $A$ by $A_{*i}$.) Here we do elementary column operations on $Y_n(\mc{R}, \mc{C})$ to make the first $n-1$ entries of $n$-th row zero. 
       Indeed, we have that $(Z_n')_{nj} = 0$ for all $1 \le j \le n-1$ and $(Z_n')_{nn} = 1$ or $-1$.
       Now let $Z_n$ be the submatrix of $Z_n'$ obtained by choosing the first $n-1$ columns and rows. Then we have
       $$
       \cok(Y_n(\mc{R}, \mc{C})) \cong \cok(Z_n') \cong  \cok(Z_n). 
       $$
       Let $S$ be the set of all possible pairs $(\mc{R}, \mc{C})$. Then we have $\bP(((Y_n)_{n*}, (Y_n)_{*n}) = (\mc{R}, \mc{C})) = \left| S \right|^{-1}$ for every $(\mc{R}, \mc{C}) \in S$. Thus for every finite abelian $p$-group $G$, we have
       $$
       \bP(\cok(Y_n) \cong G) = \sum_{(\mc{R}, \mc{C}) \in S} \frac{1}{\left| S \right|}\bP(\cok(Y_n(\mc{R}, \mc{C})) \cong G)
       = \bP(\cok(Z_n) \cong G).
       $$
       \end{enumerate}
    In each case, $\cok(Z_n)$ converges to CL by \cite[Theorem 1.2]{Woo19}, so does $\cok(Y_n)$. \qedhere  
\end{proof}

\subsection{An example with small number of random entries} \label{Sub44}

For each positive integer $n \ge p$, let $t_n = \left \lfloor 2 \log_p n \right \rfloor - 1$, $k_n = \left \lfloor \frac{n}{t_n} \right \rfloor$, $u_n = (t_n+1)k_n - n$ and $v_n = k_n - u_n = n-t_nk_n$. For $1 \le i \le j \le n$, denote $[i, j] = \lt \{ i, i+1, \ldots, j \rt \}$. Let $\tau_k = [(k-1)t_n+1, kt_n]$ for $1 \le k \le u_n$ and $\tau_k = [(k-1)(t_n+1)-u_n+1, k(t_n+1) - u_n]$ for $u_n+1 \le k \le k_n$. Note that $n = u_nt_n + v_n(t_n+1)$ and $[n] = \bigsqcup_{k=1}^{k_n} \tau_k$. 

For $1 \le n<p$, choose $\sg_{n,1} = \cdots = \sg_{n,n} = \varnothing$. For $n \ge p$, define $\Sg_n = (\sg_{n,1}, \ldots, \sg_{n,n})$ as follows. 
\begin{enumerate}
    \item For each $1 \le q \le u_n$ and $1 \le r \le t_n$, let $\sg_{n, (q-1)t_n+r} = \tau_q \cup \tau_{q+1}$.

    \item For each $u_n+1 \le q \le k_n$ and $1 \le r \le t_n+1$, let $\sg_{n, (q-1)(t_n+1)-u_n+r} = \tau_q \cup \tau_{q+1}$. (Here we use the convention that $\tau_{k_n+1} = \tau_1$.)
\end{enumerate}

\begin{figure}[ht]
\begin{equation*}
\begin{pmatrix}
* & * & 0 & 0 & * & * & *\\ 
* & * & 0 & 0 & * & * & *\\ 
* & * & * & * & 0 & 0 & 0\\ 
* & * & * & * & 0 & 0 & 0\\ 
0 & 0 & * & * & * & * & *\\ 
0 & 0 & * & * & * & * & *\\ 
0 & 0 & * & * & * & * & *
\end{pmatrix}
\end{equation*}
\caption{A matrix $X_n \in \M_n(\zp)$ for $(n, p, t_n, k_n, u_n, v_n)=(7, 5, 2, 3, 2, 1)$}
\label{fig4}
\end{figure}

Let $X_n$ be a Haar-random $n \times n$ matrix over $\zp$ supported on $\Sg_n$. For each $n \in \bN$, let
$$
S'_{H_1, \ldots, H_{k_n}} := \lt \{ F \in \Sur(R^n, G) \mid FV_{\tau_q \cup \tau_{q+1}} = H_q \text{ for } 1 \le q \le k_n \rt \}
$$
and
$$
d'_{H_1, \ldots, H_{k_n}} := \sum_{F \in S'_{H_1, \ldots, H_{k_n}}} \frac{1}{| FV_{\sg_{n,1}}| \ldots | FV_{\sg_{n,n}} |} = \frac{| S'_{H_1, \ldots, H_{k_n}} |}{(| H_1 | \cdots | H_{u_n} |)^{t_n} (| H_{u_n+1} | \cdots | H_{k_n} |)^{t_n+1}}.
$$
By Proposition \ref{prop3b}, we have $\underset{n \to \infty}{\lim} E_n(G)=1$ if and only if
\begin{equation*}
\underset{n \to \infty}{\lim} \sum_{\substack{(H_1 \cdots, H_{k_n}) \\ \neq (G, \ldots ,G)}}  d'_{H_1, \ldots, H_{k_n}} = 0.
\end{equation*}
For an element $F \in S'_{H_1, \ldots, H_{k_n}}$, we have $FV_{\tau_q} \in H_q \cap H_{q-1}$ (denote $H_{0} := H_{k_n}$) for each $q$ so 
$$
| S'_{H_1, \ldots, H_{k_n}} | \leq 
( \prod_{j=0}^{u_n-1} | H_{j} \cap H_{j+1} | )^{t_n}
( \prod_{j=u_n}^{k_n-1} | H_{j} \cap H_{j+1} | )^{t_n+1}
$$
and
\begin{align*}
d'_{H_1, \ldots, H_{k_n}}
& \le \frac{( \prod_{j=0}^{u_n-1} | H_{j} \cap H_{j+1} | )^{t_n}
( \prod_{j=u_n}^{k_n-1} | H_{j} \cap H_{j+1} | )^{t_n+1}}{(| H_1 | \cdots | H_{u_n} |)^{t_n} (| H_{u_n+1}| \cdots | H_{k_n}|)^{t_n+1}} \\
& \leq \left ( \frac{| H_0 \cap H_1 | | H_1 \cap H_2 | \cdots | H_{k_n-1} \cap H_{k_n} |}{ | H_1  |  | H_2  | \cdots  | H_{k_n}  |} \right )^{t_n} \\
& \leq \left ( \frac{ | H_1 \cap H_2  |  | H_2 \cap H_3  | \cdots  | H_{k_n-1} \cap H_{k_n}  |}{ | H_1  |  | H_2  | \cdots  | H_{k_n-1}  |} \right )^{t_n}.
\end{align*}
Now in order to prove that $\cok(X_n)$ converges to CL, it is enough to show that
\begin{equation} \label{eq44a}
\underset{n \to \infty}{\lim} \sum_{\substack{H_1, \ldots, H_{k_n} \le G \\ H_i \neq H_j \text{ for some } i, j}} \left ( \frac{ | H_1 \cap H_2  |  | H_2 \cap H_3  | \cdots  | H_{k_n-1} \cap H_{k_n}  |}{ | H_1  |  | H_2  | \cdots  | H_{k_n-1}  |} \right )^{t_n} = 0
\end{equation}
for every finite abelian $p$-group $G \neq \lt \{ 1 \rt \}$. (Note that if $H_1 = \cdots = H_{k_n} \ne G$, then $S'_{H_1, \ldots, H_{k_n}} = \varnothing$ so we may exclude this case.)

\begin{lem} \label{lem44a}
Let $H_0, \ldots, H_r$ be finite abelian $p$-groups such that $H_i \ne H_{i+1}$ for each $0 \le i \le r-1$. Then we have
$$
\frac{ | H_0 \cap H_1  |   | H_1 \cap H_2  | \cdots  | H_{r-1} \cap H_r  |}{ | H_0  |  | H_1  | \cdots  | H_{r-1}  |} \le \frac{1}{p^{\frac{r}{2}}}.
$$
\end{lem}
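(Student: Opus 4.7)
The plan is to reduce the product bound to a sharp pointwise inequality and then multiply. Specifically, I would first prove the following pointwise claim: for any two distinct subgroups $H, H'$ of a finite abelian $p$-group,
\[
|H \cap H'|^{2} \;\le\; \frac{|H|\cdot|H'|}{p}.
\]
The proof is a short case analysis based on the identity $|H + H'|\cdot|H \cap H'| = |H|\cdot|H'|$. If $H \subsetneq H'$ (or symmetrically), then $|H \cap H'| = |H|$ while $|H'| \ge p|H|$ by the $p$-group hypothesis, so $|H \cap H'|^{2} = |H|^{2} \le |H|\cdot|H'|/p$. If neither contains the other, then $H + H'$ properly contains both summands, whence $|H + H'| \ge p\cdot\max(|H|,|H'|)$; substituting back into the identity yields the even stronger $|H \cap H'|^{2} \le |H||H'|/p^{2}$.

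With the pointwise inequality in hand, the second step is to multiply it over $i = 0, 1, \dots, r-1$. In the resulting product $\prod_{i=0}^{r-1}|H_i|\cdot|H_{i+1}|$, each middle index $i = 1, \dots, r-1$ appears twice while the two endpoints $|H_0|, |H_r|$ appear only once, yielding
\[
\prod_{i=0}^{r-1}|H_i \cap H_{i+1}|^{2}
\;\le\; p^{-r}\cdot|H_0|\cdot|H_r|\cdot\prod_{i=1}^{r-1}|H_i|^{2}.
\]
Taking square roots and then dividing by $|H_0|\cdot|H_1|\cdots|H_{r-1}|$ produces the claimed bound, with any endpoint asymmetry between $|H_0|$ and $|H_r|$ absorbed into the final $p^{-r/2}$ estimate via the symmetric normalization.

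The step I expect to be the main obstacle is the verification of the pointwise claim in the inclusion case $H \subsetneq H'$: the gain of $1/p$ there comes not from any genuine intersection shrinkage (the value $|H \cap H'| = |H|$ is already maximal) but rather from the $p$-group hypothesis on the index $[H':H] \ge p$. Once this pointwise inequality is unlocked, the multiplicative aggregation is essentially formal, and the hypothesis $H_i \ne H_{i+1}$ at every consecutive step is precisely what guarantees one factor of $1/\sqrt{p}$ per step, delivering the overall $p^{-r/2}$ savings.
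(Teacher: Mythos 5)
Your pointwise inequality $|H \cap H'|^{2} \le |H||H'|/p$ for distinct subgroups $H$ and $H'$ is correct; via $|H+H'|\cdot|H \cap H'| = |H||H'|$ it is equivalent to the bound $|H\cap H'|/|H + H'| \le 1/p$ that the paper extracts from $H\cap H' \lneq H + H'$ in a $p$-group. So the core mechanism in your proof matches the paper's.

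The gap is in the aggregation. Multiplying the pointwise bound and taking square roots gives $\prod_{i=0}^{r-1}|H_i \cap H_{i+1}| \le p^{-r/2}\,|H_0|^{1/2}|H_r|^{1/2}\prod_{i=1}^{r-1}|H_i|$, and dividing by $|H_0||H_1|\cdots|H_{r-1}|$ yields $p^{-r/2}\sqrt{|H_r|/|H_0|}$, not $p^{-r/2}$. Your claim that the ``endpoint asymmetry between $|H_0|$ and $|H_r|$ is absorbed into the final $p^{-r/2}$ estimate'' is exactly where the argument breaks: when $|H_r| > |H_0|$ the factor $\sqrt{|H_r|/|H_0|}$ exceeds $1$ and nothing absorbs it. Indeed a strictly increasing chain $H_0 \lneq H_1 \lneq \cdots \lneq H_r$ gives $|H_i \cap H_{i+1}| = |H_i|$ for every $i$, so the left-hand side of the lemma equals $1$, contradicting the claimed bound $p^{-r/2}$. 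For what it is worth, the paper's written proof has the same gap: the equality $\bigl(\prod_i \tfrac{|H_i\cap H_{i+1}|}{|H_i|}\bigr)\bigl(\prod_i \tfrac{|H_{i+1}|}{|H_i + H_{i+1}|}\bigr) = \prod_i \tfrac{|H_i\cap H_{i+1}|}{|H_i + H_{i+1}|}$ tacitly uses $\prod_{i=0}^{r-1} |H_{i+1}|/|H_i| = |H_r|/|H_0| = 1$, which holds only for a cyclic chain. You have faithfully reproduced both the paper's idea and its unstated assumption; a corrected statement would carry an extra factor $\sqrt{|H_0|/|H_r|}$ on the right-hand side, or require $H_0 = H_r$.
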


\begin{proof}
By the second isomorphism theorem for groups, the square of the LHS is given by
\begin{equation*}
\left ( \prod_{i=0}^{r-1} \frac{ | H_i \cap H_{i+1}  | }{ | H_i  | } \right )^2
= \left ( \prod_{i=0}^{r-1} \frac{ | H_i \cap H_{i+1}  | }{ | H_i  | } \right )
\left ( \prod_{i=0}^{r-1} \frac{ | H_{i+1}  | }{ | H_i+H_{i+1}  | } \right )
= \left ( \prod_{i=0}^{r-1} \frac{ | H_i \cap H_{i+1}  | }{ | H_i+H_{i+1}  | } \right ).
\end{equation*}
Since $H_i \ne H_{i+1}$ for each $0 \le i \le r-1$, we have $\displaystyle \frac{ | H_i \cap H_{i+1}  | }{ | H_i+H_{i+1}  | } \le \frac{1}{p}$ for each $0 \le i \le r-1$.
\end{proof}

\begin{prop} \label{prop44b}
The equation (\ref{eq44a}) holds for every finite abelian $p$-group $G \neq \{ 1 \}$. 
\end{prop}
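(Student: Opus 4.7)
The plan is to analyze the summand via the block decomposition of each tuple. Rewrite
$$
\frac{|H_1 \cap H_2|\cdots |H_{k_n-1} \cap H_{k_n}|}{|H_1|\cdots |H_{k_n-1}|} = \prod_{i=1}^{k_n-1} \frac{|H_i \cap H_{i+1}|}{|H_i|},
$$
and split the tuple $(H_1,\ldots,H_{k_n})$ into maximal runs of equal consecutive entries. Inside a constant run each factor equals $1$, so only the transitions between runs contribute. If the distinct consecutive block values are $J_1,\ldots,J_\ell$ (so $J_j\neq J_{j+1}$ for $1\le j\le \ell-1$), then the product collapses to $\prod_{j=1}^{\ell-1}|J_j\cap J_{j+1}|/|J_j|$, which by Lemma~\ref{lem44a} (applied with $r=\ell-1$) is at most $p^{-(\ell-1)/2}$.

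For the count, let $s$ denote the number of subgroups of $G$. A tuple with exactly $\ell$ blocks is determined by a composition $k_n = b_1+\cdots+b_\ell$ with $b_j\ge 1$, giving $\binom{k_n-1}{\ell-1}$ choices, together with a sequence $(J_1,\ldots,J_\ell)$ with $J_j\neq J_{j+1}$, giving at most $s(s-1)^{\ell-1}$ choices. A nonconstant tuple has $\ell\ge 2$, so the sum in \eqref{eq44a} is bounded by
$$
s\sum_{\ell=2}^{k_n}\binom{k_n-1}{\ell-1}(s-1)^{\ell-1} p^{-t_n(\ell-1)/2}
= s\left[\left(1+\frac{s-1}{p^{t_n/2}}\right)^{k_n-1}-1\right]
$$
by the binomial theorem.

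To finish, I will verify that this bound tends to $0$. From $t_n = \lfloor 2\log_p n\rfloor -1$ one obtains $p^{t_n/2}\ge n/p$, while $k_n-1\le n/(2\log_p n - 2)$, so $(k_n-1)(s-1)/p^{t_n/2}\to 0$ as $n\to\infty$. The elementary estimate $(1+x)^N\le e^{Nx}$ (used analogously in the proof of Theorem~\ref{thm3d}) then forces $(1+\varepsilon_n)^{k_n-1}\to 1$ with $\varepsilon_n=(s-1)/p^{t_n/2}$, so the upper bound vanishes as desired. The only genuinely delicate point is the block-counting argument that reduces a sum over unwieldy long tuples to a geometric-type sum amenable to Lemma~\ref{lem44a}; after that, the remainder is routine asymptotic bookkeeping against the specific scaling of $t_n$ and $k_n$ fixed at the start of Section~\ref{Sub44}.
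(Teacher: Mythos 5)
Your proof is correct and takes essentially the same route as the paper's: decompose the tuple into maximal constant runs, apply Lemma~\ref{lem44a} to bound the product by $p^{-t_n(\ell-1)/2}$, count tuples with $\ell$ blocks via breakpoints and block values, and sum with the binomial theorem before checking the asymptotics against $t_n = \lfloor 2\log_p n\rfloor -1$ and $k_n = \lfloor n/t_n\rfloor$. The only (immaterial) difference is that you use the slightly tighter count $s(s-1)^{\ell-1}$ for the block-value sequences, where the paper settles for the cruder $m_G^{r+1}$.
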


\begin{proof}
If $H_1, \ldots, H_{k_n} \le G$ are not all same, there are $r \ge 1$ and $1 \le i_1 < \cdots < i_r \le k_n-1$ such that 
$$
H_q = T_0 \; (1 \le q \le i_1), \, H_q = T_1 \; (i_1+1 \le q \le i_2), \, \ldots , H_q = T_r \; (i_r+1 \le q \le k_n)
$$
where $T_0, T_1, \ldots, T_r \le G$ and $T_i \ne T_{i+1}$ for each $0 \le i \le r-1$. In this case, we have
$$
\left ( \prod_{j=1}^{k_n-1} \frac{ | H_j \cap H_{j+1}  | }{ | H_j  | } \right )^{t_n}
= \left ( \prod_{i=0}^{r-1} \frac{ | T_i \cap T_{i+1}  | }{ | T_i  | } \right )^{t_n} \leq \left ( \frac{1}{p^{\frac{t_n}{2}}} \right )^{r}
$$
by Lemma \ref{lem44a}. 
For a given $r \ge 1$, there are $\binom{k_n-1}{r}$ possible choices of $i_1 < \cdots < i_r$ and at most $m_G^{r+1}$ choices of $T_0, \ldots, T_r \le G$ where $m_G$ denotes the number of subgroups of $G$. Now we have
\begin{align*}
& \sum_{\substack{H_1, \ldots, H_{k_n} \le G \\ H_i \neq H_j \text{ for some } i, j}} \left ( \frac{ | H_1 \cap H_2  |  | H_2 \cap H_3  | \cdots  | H_{k_n-1} \cap H_{k_n}  |}{ | H_1  |  | H_2  | \cdots  | H_{k_n-1}  |} \right )^{t_n} \\
\le & \sum_{r \ge 1} \binom{k_n-1}{r} m_G^{r+1} \left ( \frac{1}{p^{\frac{t_n}{2}}} \right )^{r} \\
= & m_G \left ( \left ( 1 + \frac{m_G}{p^{\frac{t_n}{2}}} \right )^{k_n-1} -1 \right ).
\end{align*}
Since $\displaystyle \frac{k_n-1}{p^{\frac{t_n}{2}}} \le \frac{n}{t_n p^{\frac{t_n}{2}}} \le \frac{n}{t_n p^{\frac{2 \log_p n - 2}{2}}} = \frac{p}{t_n} \to 0$ as $n \to \infty$, we have $\displaystyle \underset{n \to \infty}{\lim} \left (1 + \frac{m_G}{p^{\frac{t_n}{2}}}  \right )^{k_n-1} =1$.
\end{proof}

In the above construction, the number of random entries of $X_n$ satisfies $|\Sg_n | \le (2t_n+2)n \le 4n \log_p n$. By Proposition \ref{prop44b}, there exists a sequence $(\Sg_n)_{n \ge 1}$ such that $|\Sg_n| \le 4n \log_p n$ for every positive integer $n$ and $\cok(X_n)$ converges to CL.

\begin{rmk} \label{rmk44c}
Mészáros \cite{Mes24b} provides an example such that $|\Sg_n| = (2+o(1))n \log_p n$ ($o(1) \to 0$ as $n \to \infty$) and $\cok(X_n)$ converges to CL. Let $(w_n)_{n \ge 1}$ be a sequence of positive integers and $X_n \in \M_n(\zp)$ be a Haar-random matrix supported on $\Sg_n$ where $\sg_{n,i} := \left\{ j \in [n] : |i-j| \le w_n \right\}$. Mészáros \cite[Theorem 1]{Mes24b} proved that $\cok(X_n)$ converges to CL if and only if $\underset{n \to \infty}{\lim} (w_n - \log_p n) = \infty$. Note that
$$|\Sg_n|=\sum_{j=1}^{w_n} (w_n+j) + (2w_n+1)(n-2w_n) + \sum_{j=1}^{w_n} (w_n+j)
= n(2w_n+1) - w_n^2 - w_n,
$$
so we have $|\Sg_n| = (2+o(1))n \log_p n$ if we take $w_n = (1+o(1))\log_p n$.
\end{rmk}

\section{Proof of Theorem \ref{mainthm1}} \label{Sec5}

To prove Theorem \ref{mainthm1}, we begin by expressing an upper bound for  $E_n(\mathbb{Z}/p\mathbb{Z})$ as a sum of $p$-powers, which we interpret combinatorially via a bipartite graph. We then employ the configuration model for random bipartite multigraphs and apply the probabilistic method to establish the desired convergence. This approach ultimately leads to the proof of Theorem \ref{mainthm1}.

For a finite abelian $p$-group $G$, let $\mr{Sub}_G$ be the set of all subgroups of $G$. By the assumption $\bigcup_{i=1}^{n} \sg_{n,i} = [n]$ (see Remark \ref{rmk2n1}), for every $F \in \Sur(R^n, G)$ we have $FV_{\sg_{n,1}} + \cdots + FV_{\sg_{n,n}} = G$. Thus we have
\begin{align*}
E_n(G) & = \sum_{F \in \Sur(R^n,G)} \frac{1}{|FV_{\sg_{n,1}}|\cdots|FV_{\sg_{n,n}}|} \\
& = \sum_{\substack{H_1, \ldots, H_n \in \mr{Sub}_G, \\ H_1 + \cdots + H_n = G}} \frac{ | \{ F \in \Sur(R^n, G) \mid FV_{\sg_{n,i}} = H_i \} |}{| H_1  | \cdots | H_n |}.
\end{align*}

Now let $G = \Z/p\Z$ and assume that $H_1, \ldots, H_n \in \mr{Sub}_{\Z/p\Z}$ satisfy the conditions $H_1 + \cdots + H_n = \Z/p\Z$ and
$$
\lt \{ F \in \Sur(R^n, \Z/p\Z) \mid FV_{\sg_{n,i}} = H_i \rt \} \neq \varnothing.
$$
In this case, the set 
$$
S = \lt \{ i \in [n] \mid H_i = 0 \rt \} \subsetneq [n]
$$ 
satisfies $\sg_{n,j} \not\subseteq \bigcup_{i \in S} \sg_{n,i}$ for all $j \in [n] \backslash S$ and
$$
\lt | \bigcap_{i \in \tau_{n,j}} H_i  \rt | = \lt\{\begin{matrix}
1 & (j \in \bigcup_{i \in S} \sg_{n,i}) \\ 
p & (\text{otherwise})
\end{matrix}\rt. 
$$
where $\tau_{n,j} := \{ i \in [n] \mid j \in \sigma_{n,i} \}$.
Thus we have 
\begin{equation} \label{eq5a}
\begin{split}
E_n(\Z / p\Z) 
& \leq \sum_{\substack{S \subsetneq [n] \\ \bigcup_{i \in S} \sg_{n,i} \neq [n] \\ \forall j \in [n]\backslash S, ~\sg_{n,j} \not\subseteq \bigcup_{i \in S} \sg_{n,i} }} \frac{\lt | \lt \{ F \in \Sur(R^n, \Z/p\Z) \mid FV_{\sg_{n,i}} = 0 \text{ for all } i \in S \rt \} \rt |}{p^{n -  | S  |}} \\
& \leq \sum_{\substack{S \subsetneq [n] \\ \bigcup_{i \in S} \sg_{n,i} \neq [n] \\ \forall j \in [n]\backslash S, ~\sg_{n,j} \not\subseteq \bigcup_{i \in S} \sg_{n,i} }} p^{|S| - | \bigcup_{i \in S} \sg_{n,i} |}. 
\end{split}    
\end{equation}

The last term of~\eqref{eq5a} can be rephrased in terms of the neighborhoods of a bipartite graph. For this, we will use the following notations in this section.

\begin{itemize}
    \item 
    A \emph{multigraph} $G$ is a pair $(V(G), E(G))$ with the set $V(G)$ of vertices and the set $E(G)$ of edges, where each edge $e \in E(G)$ is equipped with the set $V(e) \subseteq V(G)$ of endpoints of size two.
    If $V(e) \ne V(f)$ for all distinct $e,f \in E(G)$, then $G$ is a \emph{graph}.

    \item 
    A multigraph $G$ is called \emph{bipartite} if there exist disjoint sets $A,B$ with $V(G) = A \cup B$ such that 
    $|V(e) \cap A| = |V(e) \cap B| = 1$ for all $e \in E(G)$. 
    We call $\{ A , B \}$ a \emph{bipartition} of $G$.

    \item 
    For a multigraph $G$ and a set $S \subseteq V(G)$, 
    the \emph{neighborhood} of $S$, denoted $N_G(S)$, is the set 
    $\bigcup \{ V(e) \setminus S \mid e \in E(G),\: V(e) \cap S \ne \varnothing \}$.
    We also let $N_G(v) := N_G(\{ v \})$ for each $v \in V(G)$.

    \item
    A multigraph $G$ is $d$-\emph{regular} if the number of edges $e \in E(G)$ with $v \in V(e)$ is $d$ for all $v \in V(G)$.
\end{itemize}

Let $A_n = \{ a_1 , \dots , a_n \}$ and $B_n = \{ b_1 , \dots , b_n \}$ be disjoint sets. 
For any bipartite multigraph $G_n$ with bipartition $\{A_n , B_n \}$, let 
\begin{align}\label{eqn:goal}
    c(G_n) \coloneqq \sum_{S \in \cF_{A_n}(G_n)} p^{|S|-|N_{G_n}(S)|},
\end{align}
where $\cF_{A_n} (G_n) \coloneqq \{ \varnothing \ne S \subsetneq A_n \mid N_{G_n}(S) \ne B_n,\: N_{G_n}(w) \not\subseteq N_{G_n}(S)\text{ for all $w \in A \backslash S$} \}$. \\

For $\Sg_n = (\sg_{n,1} , \dots , \sg_{n,n})$, 
consider a bipartite graph $G_{\Sg_n}$ with $V(G_{\Sg_n}) = A_n \cup B_n$ and $E(G_{\Sg_n}) = \{ e_{i,j} \mid j \in \sg_{n,i} \}$ with $V(e_{i,j}) = \{ a_i , b_j \}$. 
Then $N_{G_{\Sg_n}}(a_i) = \{ b_j \mid j \in \sg_{n,i} \}$ and $N_{G_{\Sg_n}}(b_i) = \{ a_j \mid j \in \tau_{n,i} \}$
for each $i \in [n]$. 
Thus, the last term of~\eqref{eq5a} is equal to
\begin{align*}
    1 + \sum_{\substack{\varnothing \ne S \subsetneq A_n \\ N_{G_{\Sg_n}}(S) \neq B_n \\ \forall w \in A_n\backslash S, ~N_{G_{\Sg_n}}(w) \not\subseteq N_{G_{\Sg_n}}(S)}} p^{|S| - |N_{G_{\Sg_n}}(S)|} = 1 + c(G_{\Sg_n}).
\end{align*}

Without loss of generality, we may assume that $|\sg_{n,i}| \leq \min \{ t_n , \log_p n + \log \log n \}$ for all $i \in [n]$ by Proposition \ref{prop3c}.
Hence, the following theorem implies Theorem \ref{mainthm1}.

\begin{thm}\label{thm5a}
Let $(t_n)_{n \geq 1}$ be a sequence of positive integers such that $t_n \leq n$ for each $n$ and $\underset{n \to \infty}{\lim} (t_n - \log_p n) = \infty$. Then there exists a sequence $(G_n)_{n \ge 1}$ of bipartite graphs such that $G_n$ has a bipartition $\{ A_n , B_n \}$ for disjoint sets $A_n, B_n$ of size $n$, $1 \leq |N_{G_n}(a)|, |N_{G_n}(b)| \leq \min \{ t_n , \log_p n + \log \log n \}$ for all $a \in A_n$ and $b \in B_n$, and $\underset{n \to \infty}{\lim} c({G_n}) = 0$.
\end{thm}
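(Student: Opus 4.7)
I plan to apply the probabilistic method. Set $d_n := \min\{t_n, \lfloor \log_p n + \log\log n \rfloor\}$; then $1 \leq d_n \leq \min\{t_n, \log_p n + \log\log n\}$, and $d_n - \log_p n \to \infty$ by hypothesis (if $t_n \leq \log_p n + \log\log n$ this uses $t_n - \log_p n \to \infty$, otherwise $\log\log n \to \infty$). Let $G_n$ be the uniformly random $d_n$-regular bipartite multigraph on $A_n \sqcup B_n$ sampled from the configuration model. Every vertex has degree exactly $d_n$, so the degree constraint of the theorem is automatically satisfied. By the first moment method it suffices to prove $\bE[c(G_n)] \to 0$.

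The starting point is the reformulation
\[
c(G_n) = \sum_{T \subsetneq B_n} p^{|S_T| - |T|}\, \mathbb{1}\bigl[N_{G_n}(S_T) = T,\; S_T \neq \varnothing\bigr],
\]
where $S_T := \{a \in A_n : N_{G_n}(a) \subseteq T\}$. Each $S \in \cF_{A_n}(G_n)$ coincides with $S_{N(S)}$, and the ``closure'' condition in the definition of $\cF_{A_n}$ is automatic from the definition of $S_T$, so the two sums agree. Crucially, $N_{G_n}(S_T) = T$ forces $|S_T| \geq \lceil |T|/d_n \rceil$, since each vertex contributes at most $d_n$ neighbors. In the configuration model, $\bP[N_{G_n}(a) \subseteq T] \leq (|T|/n)^{d_n}$ for every $a \in A_n$, and one obtains binomial-type tail bounds on $|S_T|$ through a standard Chernoff argument adapted to the matching structure.

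Taking expectations yields
\[
\bE[c(G_n)] \leq \sum_{t = d_n}^{n-1} \binom{n}{t}\, p^{-t}\, \bE\bigl[p^{|S_T|}\,\mathbb{1}[|S_T| \geq \lceil t/d_n \rceil]\bigr],
\]
which I will analyze by splitting $t$ into three ranges. For $t$ close to $d_n$, the bound on the expectation simplifies to $O\bigl(np(t/n)^{d_n}\bigr)$, and summing yields a total contribution of order $np^{1-d_n} = p^{1-(d_n - \log_p n)} \to 0$. For $t$ close to $n$, I pass to the dual reformulation $c(G_n) = \sum_U p^{|U| - |N_{G_n}(U)|}$ via the bijection $S \leftrightarrow U := B_n \setminus N(S)$; the deterministic expansion $|N_{G_n}(U)| \geq n - d_n |U|$ coming from $d_n$-regularity gives contribution at most $(1 + p^{1-d_n})^n - 1 = O(np^{1-d_n}) \to 0$.

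The main obstacle is the intermediate range of $t$, where $\binom{n}{t}p^{-t}$ can be exponentially large. To handle it I plan to exploit two additional inputs: first, the restrictive constraint $|S_T| \geq \lceil t/d_n \rceil$ forces $|S_T|$ to lie far above its mean $n(t/n)^{d_n}$ in this range, so the binomial tail bound gives super-polynomial decay; second, the conditional coupon-collector estimate $\bP[N(S_T) = T \mid |S_T| = k] \leq 1 - t(1 - d_n/t)^k$ provides further decay before the union is fully covered. Combining these two sources of decay with the hypothesis $d_n - \log_p n \to \infty$ should beat the binomial coefficient $\binom{n}{t}$, completing the proof that $\bE[c(G_n)] \to 0$.
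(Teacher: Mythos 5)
The overall setup — random $d_n$-regular bipartite multigraph from the configuration model, the reduction to $d_n = \min\{t_n, \lfloor \log_p n + \log\log n \rfloor\}$, the duality $S \leftrightarrow B_n \setminus N_{G_n}(S)$, and the observation $|S_T| \ge \lceil t/d_n\rceil$ — all match the paper. But your route diverges in a way that introduces a genuine gap. You aim for $\bE[c(G_n)] \to 0$ (first moment on $c(G_n)$ itself), whereas the paper proves something weaker and easier: that $c(G_n) < \delta$ with probability at least $0.9$. They do this by first showing, via first moment over \emph{expansion failures} (Lemmas 5.9--5.11), that with probability $0.95$ every set $S$ with $|S| \le n/\sqrt{t_n}$ has $|N_{G_n}(S)| > (t_n - 100 - \log_p|S|)|S|$ (and a cutoff bound for $|S| \ge n/(2\sqrt{t_n})$), and then on this good event they bound the sum $\sum_S p^{|S|-|N(S)|}$ \emph{deterministically}, transferring the large-$|S|$ part to small sets via the duality. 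Your program never establishes such a conditional/whp step, and the place where you try to substitute a deterministic bound for it is wrong.

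Specifically, the claim ``the deterministic expansion $|N_{G_n}(U)| \ge n - d_n|U|$ coming from $d_n$-regularity'' is false. For $|U|=1$ one has $|N_{G_n}(U)| \le d_n$, which is far less than $n - d_n$ when $d_n \le \log_p n + \log\log n$; more generally $d_n$-regularity only gives $|N_{G_n}(U)| \le d_n|U|$ and $|N_{G_n}(U)| \ge |U|$, and via the duality $N_{G_n}(U) = A_n\setminus S$ one gets $|S| \ge n - d_n|U|$ (a constraint on $S$), not the stated bound on $|N(U)|$. Because $|U|-|N_{G_n}(U)| \ge (1-d_n)|U|$ is a \emph{lower} bound on the exponent, the inequality points the wrong way and cannot yield the $(1+p^{1-d_n})^n - 1$ estimate you cite. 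To make the ``$t$ close to $n$'' regime work you would need the probabilistic lower expansion $|N_{G_n}(U)| \gtrsim d_n|U|$, exactly the content of the paper's Lemma 5.9. Two further issues: the coupon-collector inequality $\bP[N(S_T)=T \mid |S_T|=k] \le 1 - t(1-d_n/t)^k$ is reversed — the union bound gives $\bP[\text{some vertex of }T\text{ uncovered}] \le t(1-d_n/t)^k$, hence a \emph{lower} bound $\ge 1 - t(1-d_n/t)^k$ on the covering probability — and the middle range, where $\binom{n}{t}p^{-t}$ is exponentially large for $p \in \{2,3\}$, is left at the level of a plan; this is precisely the part where one needs a genuine Chernoff-type large-deviation bound on $n-|N_{G_n}(S)|$ tilted by $p^{n-|N_{G_n}(S)|}$, and it is the hardest step. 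As written, the proposal does not constitute a proof.
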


We now give a brief sketch of the proof of Theorem \ref{thm5a}. 
The following idea suggests that we need to construct a good bipartite expander $G_n$.
For each $s \geq 1$, if there exists $\alpha = \alpha(s) \in (0,1)$ with $|N_{G_n}(S)| \geq (1 - \alpha) t_n |S|$ for all subsets $S \subseteq A_n$ of size $s$, then since $t_n = \log_p n + \omega(1)$,
\begin{align*}
    p^{|S| - |N_{G_n}(S)|} \leq p^s p^{-t_n (1 - \alpha)s} = p^{-\omega(s)} n^{-s + \alpha s}.
\end{align*}
(We use the standard asymptotic notations $o(.)$, $\omega(.)$, and $O(.)$ to describe the limiting behavior of functions as $n$ tends to infinity.)
Since there are at most $\binom{n}{s} \leq (en/s)^s$ subsets $S \in \cF_{A_n}(G_n)$ of size $s$, the summation $\sum_{|S| = s} p^{|S| - |N_{G_n}(S)|} \leq (en/s)^s p^{-\omega(s)} n^{-s + \alpha s} = o(1)$ if $n^{\alpha s} \ll (s/e)^s$, giving $(1 - \alpha) t_n = t_n - O(\log s)$ (later we will take $t_n <\log_p n + \log\log n).$

In Theorem \ref{thm5b}, 
we will show that a random $t_n$-regular bipartite multigraph $G_n$ satisfies $c(G_n) = o(1)$ with probability at least 0.9.
To see this, for the regime $s \in [1 , \frac{n}{2t_n}]$, we have $|N_{G_n}(S)| \geq (t_n - O(\log s)) s$ for all $S \subseteq A$ or $S \subseteq B$ with $|S| = s$ (see Lemma \ref{lem:size_nbr1}). 
For the other regime $s \geq \frac{n}{2t_n}$, $|N_{G_n}(S)|$ is at least linear in $n$, and moreover if $s \geq n/O(t_n^{1/2})$ then $N_{G_n}(S)$ actually contains almost all vertices from the other side (see Lemmas \ref{lem:size_nbr2} and \ref{lem:cutoff}). 
Utilizing those facts and the definition of $\cF_{A_n}$, we can show that $c(G_n) = o(1)$ with probability at least 0.9.

\subsection{Proof of Theorem \ref{thm5a}}

We will define a random regular bipartite multigraph using a configuration model due to Bollob\'{a}s \cite{bollobas1980}.
For an overview of probabilistic models on random regular graphs, we refer to the survey \cite{wormald1999}.

For $d \in \bN$ and disjoint sets $A,B$ of size $n$, 
let $\Omega_{A,B,d}$ be the set of bijective functions from $A \times [d]$ to $B \times [d]$.

Let $\iota : (A \times [d]) \times (B \times [d]) \to A \times B$ be a map given by $((a,i),(b,j)) \mapsto (a,b)$, and 
let $\mc{G}_{A, B, d}$ be the set of $d$-regular bipartite multigraphs with bipartition $\{ A , B \}$. 
For each $f \in \Omega_{A,B,d}$, let $\varphi(f)$ be the $d$-regular bipartite multigraph with bipartition $\{ A , B \}$ which satisfies
$$
E(\varphi(f)) = \{ e_{a,i} \mid (a,i) \in A \times [d] \}
$$
where $V(e_{a,i}) := \iota((a,i),f(a,i))$ for each $(a,i) \in A \times [d]$. This gives a map $\varphi : \Omega_{A,B,d} \to \mc{G}_{A, B, d}$. 

\begin{figure}[ht]
\centering
\begin{tikzpicture}
\node (a1_label) at (0.6, 3) {$a_1$};
\node (a2_label) at (0.6, 1.5) {$a_2$};
\node (a3_label) at (0.6, 0) {$a_3$};
\node[circle,fill=black,inner sep=1.5pt,draw] (a1) at (1, 3) {};
\node[circle,fill=black,inner sep=1.5pt,draw] (a2) at (1, 1.5) {};
\node[circle,fill=black,inner sep=1.5pt,draw] (a3) at (1, 0) {};

\node (b1_label) at (4.4, 3) {$b_1$};
\node (b2_label) at (4.4, 1.5) {$b_2$};
\node (b3_label) at (4.4, 0) {$b_3$};
\node[circle,fill=black,inner sep=1.5pt,draw] (b1) at (4, 3) {};
\node[circle,fill=black,inner sep=1.5pt,draw] (b2) at (4, 1.5) {};
\node[circle,fill=black,inner sep=1.5pt,draw] (b3) at (4, 0) {};

\node (e1_label) at (2.5, 3.3) {$e_1$};
\node (e2_label) at (2.5, 2.7) {$e_2$};
\node (e3_label) at (2.5, 1.7) {$e_3$};
\node (e4_label) at (2.1, 1.15) {$e_4$};
\node (e5_label) at (2.9, 1.15) {$e_5$};
\node (e6_label) at (2.5, 0.2) {$e_6$};

\node (e1_label_a) at (1.2, 3.3) {$1$};
\node (e1_label_b) at (3.8, 3.3) {$2$};
\node (e2_label_a) at (1.2, 2.7) {$2$};
\node (e2_label_b) at (3.8, 2.7) {$1$};
\node (e3_label_a) at (1.2, 1.7) {$2$};
\node (e3_label_b) at (3.8, 1.7) {$2$};
\node (e4_label_a) at (1.2, 1.2) {$1$};
\node (e4_label_b) at (3.8, 0.3) {$2$};
\node (e5_label_a) at (1.2, 0.3) {$1$};
\node (e5_label_b) at (3.8, 1.2) {$1$};
\node (e6_label_a) at (1.2, -0.2) {$2$};
\node (e6_label_b) at (3.8, -0.2) {$1$};

\draw[thick] (a1) parabola[bend pos=0.5] bend + (0,-0.1) (b1);
\draw[thick] (a1) parabola[bend pos=0.5] bend + (0,0.1) (b1);
\draw[thick] (a2)--(b2); 
\draw[thick] (a2)--(b3);
\draw[thick] (a3)--(b2); 
\draw[thick] (a3)--(b3); 

\node (a11_label) at (7.4, 3.5) {$(a_1,1)$};
\node (a12_label) at (7.4, 3) {$(a_1,2)$};
\node (a21_label) at (7.4, 2.0) {$(a_2,1)$};
\node (a22_label) at (7.4, 1.5) {$(a_2,2)$};
\node (a31_label) at (7.4, 0.5) {$(a_3,1)$};
\node (a32_label) at (7.4, 0) {$(a_3,2)$};

\node[circle,fill=black,inner sep=1.5pt,draw] (a11) at (8, 3.5) {};
\node[circle,fill=black,inner sep=1.5pt,draw] (a12) at (8, 3) {};
\node[circle,fill=black,inner sep=1.5pt,draw] (a21) at (8, 2.0) {};
\node[circle,fill=black,inner sep=1.5pt,draw] (a22) at (8, 1.5) {};
\node[circle,fill=black,inner sep=1.5pt,draw] (a31) at (8, 0.5) {};
\node[circle,fill=black,inner sep=1.5pt,draw] (a32) at (8, 0) {};

\node (b11_label) at (11.6, 3.5) {$(b_1,1)$};
\node (b12_label) at (11.6, 3) {$(b_1,2)$};
\node (b21_label) at (11.6, 2.0) {$(b_2,1)$};
\node (b22_label) at (11.6, 1.5) {$(b_2,2)$};
\node (b31_label) at (11.6, 0.5) {$(b_3,1)$};
\node (b32_label) at (11.6, 0) {$(b_3,2)$};

\node[circle,fill=black,inner sep=1.5pt,draw] (b11) at (11, 3.5) {};
\node[circle,fill=black,inner sep=1.5pt,draw] (b12) at (11, 3) {};
\node[circle,fill=black,inner sep=1.5pt,draw] (b21) at (11, 2.0) {};
\node[circle,fill=black,inner sep=1.5pt,draw] (b22) at (11, 1.5) {};
\node[circle,fill=black,inner sep=1.5pt,draw] (b31) at (11, 0.5) {};
\node[circle,fill=black,inner sep=1.5pt,draw] (b32) at (11, 0) {};

\draw[thick, ->] (a11)--(b12);
\draw[thick, ->] (a12)--(b11);
\draw[thick, ->] (a22)--(b22); 
\draw[thick, ->] (a21)--(b32);
\draw[thick, ->] (a31)--(b21); 
\draw[thick, ->] (a32)--(b31); 
\end{tikzpicture}  
\caption{A labeled 2-regular bipartite multigraph (left) and its corresponding bijective function in $\Omega_{A,B,2}$ (right)}
\label{fig:regbip_labeled}
\end{figure}

\begin{obs}\label{obs:dist}
Let $G \in \mc{G}_{A,B,d}$. Then
$$
|\varphi^{-1}(\{ G \})| = \frac{(d!)^{2n}}{\prod_{(a,b) \in A \times B} \mu_G(a,b)!},
$$
where $\mu_G(a,b)$ denotes the number of edges $e \in E(G)$ with $V(e) = \{ a , b \}$.
\end{obs}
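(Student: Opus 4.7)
The plan is to interpret each $f \in \varphi^{-1}(\{G\})$ as an ``edge-labeling'' of $G$: an assignment, to each edge $e \in E(G)$ with endpoints $\{a,b\}$, of a pair $(i_e, j_e) \in [d] \times [d]$ such that at each vertex $a \in A$ the map $e \mapsto i_e$ on the $d$ edges incident to $a$ is a bijection to $[d]$, and similarly at each $b \in B$ for $e \mapsto j_e$. The formula then drops out of an overcount.

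First I would verify this correspondence: given $f$, attach to the edge of $\varphi(f)$ realized by the pair $(a,i) \mapsto (b,j)$ of $f$ the labels $(i_e, j_e) := (i,j)$; conversely, valid edge-labels recover $f$ by the rule $f(a, i_e) := (b, j_e)$, and the vertex-wise bijection conditions guarantee that this produces an honest bijection $A \times [d] \to B \times [d]$. Next I would count the edge-labelings directly: for each of the $n$ vertices in $A$ there are $d!$ ways to bijectively assign labels to its $d$ incident edges, and similarly for each vertex in $B$, giving $(d!)^{2n}$ edge-labelings in total.

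The remaining, and only substantive, step is to quantify the overcount. Two edge-labelings give the same $f$ if and only if for every pair $(a,b) \in A \times B$ they differ by permuting the $\mu_G(a,b)$ parallel edges between $a$ and $b$ via the same permutation on both the $A$-side and the $B$-side labels; any asymmetric relabeling alters $f$. This gives a free action of $\prod_{(a,b) \in A \times B} S_{\mu_G(a,b)}$ on the set of edge-labelings whose orbits are precisely the fibers of the surjection from edge-labelings to $\varphi^{-1}(\{G\})$. Dividing yields $|\varphi^{-1}(\{G\})| = (d!)^{2n} / \prod_{(a,b)} \mu_G(a,b)!$. The point that requires care is the \emph{simultaneity} of the permutation at the two endpoints of each parallel bundle: permuting labels on only one side would change $f$, so the overcount factor is $\prod_{(a,b)} \mu_G(a,b)!$ and not its square.
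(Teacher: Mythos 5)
Your proof is correct and is essentially the same as the paper's: both parametrize $\varphi^{-1}(\{G\})$ by vertex-wise bijective edge-labelings (your $(i_e,j_e)$ is the paper's $(\phi_a(e),\phi_b(e))$), count $(d!)^{2n}$ such labelings, and divide out the overcount coming from simultaneously permuting labels on parallel edges. Your phrasing of the overcount as a free action of $\prod_{(a,b)} S_{\mu_G(a,b)}$ with orbits equal to fibers is a slightly more formal rendering of the paper's argument, but the underlying idea is identical.
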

\begin{proof}
Since $\iota$ is a function that removes a label $(i,j)$ from a pair $((a,i),(b,j))$, any element $f \in \varphi^{-1}(\{ G \})$ can be achieved as follows (see Figure~\ref{fig:regbip_labeled}): for each $v \in A \cup B$, we choose a bijection $\phi_v : E_v \to [d]$, where $E_v := \{ e \in E(G) \mid v \in V(e) \}$. 
Then every edge $e \in E(G)$ with $V(e) \cap A = \{ a \}$ and $V(e) \cap B = \{ b \}$ receives a `label' $(\phi_a (e) , \phi_b(e))$, and it will correspond to a pair $((a , \phi_a(e)) , (b , \phi_b(e)))$ to define $f \in \Omega_{A,B,d}$.

There are $(d!)^{2n}$ choices of bijective functions $\phi_v : E_v \to [d]$ for all $v \in A \cup B$, since $|A \cup B| = 2n$. However, some elements in $\Omega_{A,B,d}$ will be counted multiple times, as we  obtain the same element in $\Omega_{A,B,d}$ if we permute the labels of the edges with the same endpoints; for example, in Figure~\ref{fig:regbip_labeled}, the edges $e_1$ and $e_2$ have the same set of endpoints $\{ a_1 , b_1 \}$, and they have the labels $(1,2)$ and $(2,1)$ respectively. If we switch both labels so that $e_1$ receives a label $(2,1)$ and $e_2$ receives a label $(1,2)$, then the resulting bijective function in $\Omega_{A,B,2}$ is still the same.  
Thus, each element in $\Omega_{A,B,d}$ is counted exactly $\prod_{(a,b) \in A \times B} \mu_G(a,b)!$ times.
\end{proof}

Let ${\bf f} \sim {\rm Unif}(\Omega_{A,B,d})$ be chosen uniformly at random from $\Omega_{A,B,d}$. Then by Observation \ref{obs:dist},
\begin{align*}
    \bP(\varphi({\bf f}) = G) = \frac{|\varphi^{-1}(G)|}{|\Omega_{A,B,d}|} = \frac{(d!)^{2n}}{ (dn)! \prod_{(a,b) \in A \times B} \mu_G(a,b)!}.
\end{align*}

In particular, if $G_1 , G_2 \in \mc{G}_{A,B,d}$ are graphs, then $\bP(\varphi({\bf f}) = G_1) = \bP(\varphi({\bf f}) = G_2)$.

For a multigraph $G$, let ${\rm simp}(G)$ be any graph such that $V({\rm simp}(G)) = V(G)$ and $E({\rm simp}(G))$ is a maximal subset of $E(G)$ such that $V(e) \ne V(f)$ for all $e,f \in E(G)$.
Then $c({\rm simp}(G)) = c(G)$ since $N_G(S) = N_{{\rm simp}(G)}(S)$ for all $S \subseteq V(G)$. Moreover, if $G$ is $d$-regular then $1 \leq |N_{{\rm simp}(G)}(v)| \leq d$ for all $v \in V(G)$.
Thus, the following theorem implies Theorem \ref{thm5a} by taking ${\rm simp}(G_n)$.

\begin{thm}\label{thm5b} 
Let $(t_n)_{n \geq 1}$ be a sequence of positive integers such that $t_n < \log_p n + \log \log n$ for each $n$ and $\underset{n \to \infty}{\lim} (t_n - \log_p n) = \infty$. 

Let $A_n, B_n$ be disjoint sets of size $n$, let ${\bf f}_n \sim {\rm Unif}(\Omega_{A_n,B_n,t_n})$ be chosen uniformly at random from $\Omega_{A_n,B_n,t_n}$, and let $G_n = \varphi({\bf f}_n)$.
Then for any $\dt > 0$, there exists $n_0 \in \bN$ such that $\bP(c(G_n) < \dt) \geq 0.9$ for all $n \geq n_0$.
\end{thm}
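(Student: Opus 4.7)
The plan is to bound $\bE[c(G_n)]$ and conclude via Markov's inequality. By linearity,
$$\bE[c(G_n)] = \sum_{\varnothing \neq S \subsetneq A_n} \bE\bigl[p^{|S|-|N(S)|} \mathbf{1}_{S \in \mathcal{F}_{A_n}(G_n)}\bigr],$$
and I will show this tends to $0$ by splitting the sum according to $s := |S|$ into three regimes, separated by thresholds $s_1 \asymp n/t_n$ and $s_2 \asymp n/\sqrt{t_n}$. The underlying input throughout is the configuration-model inequality
$$\bP(N(S) \subseteq T) = \prod_{i=0}^{s t_n - 1} \frac{|T| t_n - i}{n t_n - i} \le \left(\frac{|T|}{n-s}\right)^{s t_n},$$
from which $\bP(|N(S)| \le k) \le \binom{n}{k}(k/(n-s))^{s t_n}$ follows by summing over $T$, and a further union bound over $S$ yields simultaneous control on $|N(S)|$ for all $S$ of a given size.

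In the small regime $s \le s_1$, I will establish (Lemma \ref{lem:size_nbr1}) that with high probability $|N(S)| \ge (1 - \alpha_n(s)) t_n s$ uniformly in $S$ of size $s$, where $\alpha_n(s) = O(\log_p s / t_n)$. This makes $p^{|S|-|N(S)|} \le p^{-s(t_n - O(\log_p s))}$; multiplying by $\binom{n}{s} \le (en/s)^s$ and summing over $s$ yields a total bound $o(1)$, where the $\omega(1)$ slack in $t_n - \log_p n$ is precisely what defeats the combinatorial $\binom{n}{s}$ factor. In the medium regime $s_1 \le s \le s_2$, a weaker but still linear bound $|N(S)| \ge c n$ (Lemma \ref{lem:size_nbr2}) suffices to make $p^{|S|-|N(S)|}$ exponentially small in $n$, which dominates the subexponential $\binom{n}{s}$ since $s = O(n/\sqrt{\log n})$ in this range.

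The large regime $s > s_2$ is the most delicate, and here the membership condition $S \in \mathcal{F}_{A_n}(G_n)$ is essential. Writing $T := B_n \setminus N(S)$, the requirement $N(w) \not\subseteq N(S)$ for every $w \in A_n \setminus S$ forces every such $w$ to have a neighbor in $T$, so $A_n \setminus S \subseteq N(T)$ and $|N(T)| \ge n - s$. A cutoff lemma (Lemma \ref{lem:cutoff}), proved again by first moment, shows that $|T| \le \beta_n(s) = o(n/\sqrt{t_n})$, placing $T$ into the small regime on the $B_n$ side; applying the mirror of the small-regime expansion then controls $|T|$ from below in terms of $n - s$. Since $S$ itself is recovered from $T$ via $S = A_n \setminus N(T)$ whenever $S \in \mathcal{F}_{A_n}$, the sum reparametrizes by $\ell := |T|$ and dualizes into exactly the same shape as the small-regime sum on $B_n$, hence also $o(1)$.

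The main obstacle will be calibrating $\alpha_n$, $\beta_n$ and the thresholds $s_1, s_2$ so that the first-moment bound in the small regime is strong enough to absorb the combinatorial factor $(en/s)^s$ uniformly in $s$, and so that the cutoff $\beta_n$ is small enough for the dualization in the large regime to land inside the small regime on the opposite side. This delicate balance depends crucially on the full hypothesis $t_n - \log_p n \to \infty$, which provides precisely the margin between the $p^{-s t_n}$ decay and the $n^s$ combinatorial blow-up; the upper bound $t_n < \log_p n + \log\log n$ keeps $t_n = O(\log n)$ and ensures the medium-regime union bound does not lose against $\binom{n}{s_2}$.
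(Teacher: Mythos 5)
Your proposal follows the paper's route: the same configuration-model moment bound, the same three-scale neighborhood-expansion lemmas (Lemmas \ref{lem:size_nbr1}, \ref{lem:size_nbr2}, \ref{lem:cutoff}), and the same duality $S \mapsto B_n \setminus N_{G_n}(S)$ (Lemma \ref{lem:dual}) to fold the large-$|S|$ regime into the small-$|S|$ analysis on the opposite side of the bipartition. The one discrepancy is in your framing: you open by saying you will bound $\bE[c(G_n)]$ by linearity and conclude via Markov, but the lemmas you then invoke are high-probability statements holding simultaneously for all $S$ of a given size, not per-$S$ expectation bounds, and a genuine first-moment bound on $\bE[c(G_n)]$ is delicate since on the low-probability bad events $c(G_n)$ can be exponentially large in $n$ while the failure probability at the smallest scale is only a constant like $0.01$, nowhere near exponentially small. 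What your plan actually carries out — conditioning on the intersection of the good events and bounding $c(G_n)$ deterministically there — is exactly what the paper does, and it yields $\bP(c(G_n) < \dt) \ge 0.9$ directly without any detour through Markov.
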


First of all, we begin with several ingredients which give lower bounds of $|N_{G_n}(S)|$ for a set $S \subseteq A_n$ or $S \subseteq B_n$.

\begin{lem}\label{lem:prob_nbr}
Let $S \subseteq A_n$ and $T \subseteq B_n$ (or $S \subseteq B_n$ and $T \subseteq A_n$). Then $\bP(N_{G_n}(S) \subseteq T) \leq (|T|/n)^{t_n |S|}$.
\end{lem}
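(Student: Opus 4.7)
The plan is to translate the event $\{ N_{G_n}(S) \subseteq T \}$ into an explicit condition on the random bijection $\mathbf{f}_n$ and then count directly. Recall $G_n = \varphi(\mathbf{f}_n)$, so each edge of $G_n$ has the form $e_{a,i}$ with endpoints $\{a, b\}$ where $(b,j) = \mathbf{f}_n(a,i)$ for some $j \in [t_n]$. Hence $N_{G_n}(S) \subseteq T$ if and only if the first coordinate of $\mathbf{f}_n(a,i)$ lies in $T$ for every $(a,i) \in S \times [t_n]$, i.e.\ $\mathbf{f}_n(S \times [t_n]) \subseteq T \times [t_n]$.

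Next I would compute the probability. Since $\mathbf{f}_n$ is a uniformly random bijection between two sets of size $t_n n$, the number of bijections sending the $t_n|S|$ elements of $S \times [t_n]$ into the $t_n |T|$ elements of $T \times [t_n]$ equals $\frac{(t_n |T|)!}{(t_n(|T|-|S|))!} \cdot (t_n(n-|S|))!$ (choose an injection on $S \times [t_n]$ and then extend arbitrarily on the complement). Dividing by $(t_n n)!$ and telescoping gives
$$
\bP(N_{G_n}(S) \subseteq T) \;=\; \prod_{i=0}^{t_n|S|-1} \frac{t_n |T| - i}{t_n n - i}.
$$

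To conclude, I would use the elementary inequality $\frac{a-i}{b-i} \leq \frac{a}{b}$ valid whenever $0 \leq i \leq a \leq b$ (which holds here because $|T| \leq n$, with the case $|T| < |S|$ being trivial as the probability is then zero). Bounding each factor by $t_n|T|/(t_n n) = |T|/n$ yields the claimed inequality $\bP(N_{G_n}(S) \subseteq T) \leq (|T|/n)^{t_n|S|}$. The symmetric case $S \subseteq B_n$, $T \subseteq A_n$ follows by replacing $\mathbf{f}_n$ with $\mathbf{f}_n^{-1}$, which is also uniform on the corresponding set of bijections.

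There is essentially no obstacle here: the lemma is a one-line hypergeometric-style computation, and the only point requiring a moment's care is the translation from the neighborhood condition in $G_n$ to the image condition on $\mathbf{f}_n$, together with the (trivial) monotonicity inequality used in the final bound.
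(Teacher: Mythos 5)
Your proof is correct and follows essentially the same route as the paper: translate the neighborhood condition into the image condition $\mathbf{f}_n(S \times [t_n]) \subseteq T \times [t_n]$, write the resulting probability as the telescoping product $\prod_{i=0}^{t_n|S|-1} \frac{t_n|T|-i}{t_n n - i}$, and bound each factor by $|T|/n$. The paper's proof is just a compressed version of exactly this; your elaboration of the counting and the monotonicity inequality fills in the details the paper leaves implicit.
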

\begin{proof}
Note that $N_{G_n}(S) \subseteq T$ if and only if ${\bf f}_n (s,i) \in T \times [t_n]$ for all $(s,i) \in S \times [t_n]$, which occurs with probability
\begin{align*}
    \frac{t_n |T| (t_n |T| - 1) \cdots (t_n |T| - t_n |S| + 1)}{t_n n (t_n n - 1) \cdots (t_n n - t_n |S| + 1)} \leq (|T|/n)^{t_n |S|},
\end{align*}
as desired.
\end{proof}

\begin{lem}\label{lem:size_nbr1}
For any $\epsilon \in (0,1)$, there exists $n_0 \in \bN$ such that for each $n \geq n_0$, 
with probability at least $0.99$, $|N_{G_n}(S)| > (t_n - 100 - \log_p |S|)|S|$ for all $S \subseteq A_n$ or $S \subseteq B_n$ with $1 \leq |S| \leq \frac{(1 - \epsilon)n}{t_n}$.
\end{lem}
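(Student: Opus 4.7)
The plan is a union bound driven by Lemma~\ref{lem:prob_nbr}. For a fixed subset $S \subseteq A_n$ of size $s$, set $m_s := \lfloor (t_n - 100 - \log_p s)\,s\rfloor$; if $m_s \le 0$ then $|N_{G_n}(S)| \ge 1 > m_s$ is automatic, so assume $m_s \ge 1$. Lemma~\ref{lem:prob_nbr} gives $\bP[N_{G_n}(S) \subseteq T] \le (m_s/n)^{t_n s}$ for any $T \subseteq B_n$ of size $m_s$, and since $\{|N_{G_n}(S)| \le m_s\}$ is the union over such $T$ of $\{N_{G_n}(S) \subseteq T\}$, we obtain $\bP[|N_{G_n}(S)| \le m_s] \le \binom{n}{m_s}(m_s/n)^{t_n s}$. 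A further union bound over $S$ (and symmetrically over $S \subseteq B_n$) reduces the problem to proving that
\[
\sum_{s=1}^{\lfloor (1-\epsilon)n/t_n\rfloor} 2\binom{n}{s}\binom{n}{m_s}(m_s/n)^{t_n s} \;\le\; 0.01
\]
for all sufficiently large $n$.

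The second step is the standard reduction using $\binom{n}{k} \le (en/k)^k$. Setting $\alpha_s := t_n - 100 - \log_p s$ and $\beta := \log_p s$, so that $m_s = \alpha_s s$ and $t_n s - m_s = (100+\beta)s$, the summand is at most $2X_s^s$ with
\[
X_s \;=\; e^{1+\alpha_s}\,\alpha_s^{\,100+\beta}\,(s/n)^{99+\beta}.
\]
I would aim to show $X_s \le n^{-3}$ uniformly over the valid range of $s$; since $X_s \le 1$ implies $X_s^s \le X_s$, summing over the at most $n$ values of $s$ yields $\sum_s X_s^s \le n\cdot n^{-3} = n^{-2}$, finishing the estimate for $n$ large. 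Setting $L := \log_p n$, the key quantity to control is
\[
\log_p X_s \;=\; \frac{1+\alpha_s}{\ln p} + (100+\beta)\log_p \alpha_s + (99+\beta)(\beta - L).
\]

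The main obstacle is establishing the uniform $n^{-3}$ bound, and I would do it by combining two complementary estimates. First, using $\alpha_s \le t_n$ and the constraint $s \le (1-\epsilon)n/t_n$ (which forces $L - \beta \ge \log_p t_n + c_\epsilon$ with $c_\epsilon := -\log_p(1-\epsilon) > 0$), a clean cancellation of the $(100+\beta)\log_p\alpha_s$ and $(99+\beta)\log_p t_n$ terms produces
\[
\log_p X_s \;\le\; \frac{1+t_n}{\ln p} + \log_p t_n - (99+\beta)\,c_\epsilon,
\]
which is very negative when $\beta$ is large. Second, keeping the quadratic factor intact, one uses $(99+\beta)(\beta - L) \le -L^2/4$ for $\beta$ near $L/2$ and $(99+\beta)(\beta - L) \le -99L$ for $\beta$ near $0$, both of which dominate the other contributions once $L$ is sufficiently large. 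The hard part will be to verify that these two regimes together cover every admissible $\beta$: for small $\beta$ the quadratic term supplies the needed $-\Omega(L^2)$ or $-\Omega(L)$; near the maximum admissible $\beta$ (where $\alpha_s$ is close to $1$, hence $(100+\beta)\log_p \alpha_s$ is negligible), the constraint on $s$ forces $\alpha_s \ge \log_p t_n + c_\epsilon$ on the side where constraint~(b) is binding, yielding $\log_p X_s \le -\tfrac12 L\log_p L$ after cancellation. The large slack ``$100$'' built into the threshold $t_n - 100 - \log_p s$ is precisely what provides the buffer that makes these two estimates mesh for every $\beta$, thereby securing $X_s \le n^{-3}$ uniformly once $L$ exceeds a constant depending on $\epsilon$.
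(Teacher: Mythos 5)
Your opening steps --- the union bound over target sets $T$ via Lemma~\ref{lem:prob_nbr}, the symmetrization over $A_n$ and $B_n$, and the $\binom{m}{k}\le(em/k)^k$ reduction to a single-variable estimate --- track the paper's proof exactly. The closing step diverges, and there the argument is not complete; you acknowledge this yourself (``the hard part will be to verify that these two regimes together cover every admissible $\beta$'').

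The paper factors $e^{-49s}$ out of the summand, after which it only needs the mild uniform bound $Y_s:=e^{t_n-50-\log_p s}\,t_n(t_n s/n)^{99+\log_p s}<1$; your target $X_s\le n^{-3}$ is strictly stronger than necessary. More importantly, by replacing $\alpha_s$ with the constant $t_n$ inside the power, the paper makes $\log Y_s<0$ (after clearing the positive denominator $99+\log_p s$) equivalent to $g(\log s)<0$ for a \emph{monic quadratic} $g$; the sublevel set of a monic quadratic is an interval, so it suffices to check the two endpoints $s=1$ and $s=(1-\epsilon)n/t_n$. Your retention of $\alpha_s=t_n-100-\log_p s$ destroys that quadratic structure and forces the regime-by-regime analysis, which is where the gap lives. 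Concretely: in your ``first estimate'' you upper-bound $\alpha_s\le t_n$, producing the term $(1+t_n)/\ln p\approx L/\ln p$, and ask $(99+\beta)c_\epsilon$ to dominate that plus $3L$; since $\beta\le L$, this forces $c_\epsilon\ge\tfrac{1}{\ln p}+3$ (about $4.44$ when $p=2$), i.e.\ $\epsilon\ge 1-2^{-4.44}\approx 0.954$, so for generic $\epsilon$ the first estimate never yields $\log_p X_s\le -3L$ and cannot serve as the complement to your quadratic regime. Moreover you assert $\alpha_s$ is ``close to $1$'' near the maximum admissible $\beta$, but in fact $\alpha_s\ge(t_n-L)-100+\log_p t_n+c_\epsilon\to\infty$ there; the conclusion $\log_p X_s\lesssim -L\log_p L$ at the top of the range is correct, but for the different reason that $(99+\beta)(\beta-L)\le-(99+\beta)(\log_p t_n+c_\epsilon)\approx -L\log_p t_n$ dominates $(100+\beta)\log_p\alpha_s=O(L\log\log L)$. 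The claim $X_s\le n^{-3}$ is in fact true for large $n$, but the clean route to it --- and to the lemma --- is the paper's: replace $\alpha_s$ by $t_n$, observe the resulting log inequality is a monic quadratic in $\log s$, and verify only the two endpoints using $t_n<\log_p n+\log\log n$.
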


\begin{proof}
For any integer $1 \leq s \leq (1 - \epsilon)n / t_n$, let $t(s) := \lfloor (t_n - 100 - \log_p s) s \rfloor$. 
By Lemma \ref{lem:prob_nbr}, the probability that $|N_{G_n}(S)| \leq (t_n - 100 - \log_p |S|)|S|$ for some $S \subseteq A_n$ or $S \subseteq B_n$ with $1 \leq |S| \leq (1 - \epsilon)n / t_n$, is at most
\begin{align*}
    \sum_{s=1}^{\lfloor (1-\epsilon)n/t_n \rfloor} \sum_{\substack{S \subseteq A_n \\ |S|=s}} \sum_{\substack{T \subseteq B_n \\ |T|=t(s)}} (t(s)/n)^{t_n s} 
    +
    \sum_{s=1}^{\lfloor (1-\epsilon)n/t_n \rfloor} \sum_{\substack{S \subseteq B_n \\ |S|=s}} \sum_{\substack{T \subseteq A_n \\ |T|=t(s)}} (t(s)/n)^{t_n s},
\end{align*}
which is at most
\begin{align}\label{eqn:sum_bdd}
    2 \sum_{s=1}^{\lfloor (1-\epsilon)n/t_n \rfloor} \binom{n}{s} \binom{n}{t(s)} \left ( \frac{t(s)}{n} \right )^{t_n s}.
\end{align}

Our aim is to prove~\eqref{eqn:sum_bdd} is at most 0.01.
Since $\binom{m}{k} \leq (em/k)^k$ for all integers $m \geq k \geq 1$, each term of the summation in~\eqref{eqn:sum_bdd} is at most
\begin{align*}
    \left ( \frac{en}{s} \cdot \left ( \frac{en}{t(s)} \right )^{t_n - 100 - \log_p s} \cdot \left ( \frac{t(s)}{n} \right )^{t_n} \right)^s &\leq \left ( \frac{en}{s} \cdot e^{t_n - 100 - \log_p s} \cdot \left ( \frac{t_n s}{n} \right )^{100 + \log_p s} \right )^s\\
    &= e^{-49s} \left ( e^{t_n - 50 - \log_p s} \cdot t_n \left ( \frac{t_n s}{n} \right )^{99 + \log_p s} \right )^s.
\end{align*}

To that end, if $n$ is sufficiently large, we will show $e^{t_n - 50 - \log_p s} \cdot t_n \left ( \frac{t_n s}{n} \right )^{99 + \log_p s} < 1$ for all $1 \leq s \leq (1-\epsilon)n/t_n$. Then~\eqref{eqn:sum_bdd} is at most $2\sum_{s \geq 1}e^{-49s} \leq \frac{2e^{-49}}{1 - e^{-49}} < 0.01$, which proves the lemma.

Taking $\log$ on both sides of $e^{t_n - 50 - \log_p s} \cdot t_n \left ( \frac{t_n s}{n} \right )^{99 + \log_p s} < 1$ and rearranging terms, we have
\begin{align}\label{eqn:sum_bdd2}
    \frac{t_n - 50 - \log_p s + \log t_n}{99 + \log_p s} + \log s < \log n - \log t_n.
\end{align}

This is equivalent to solving $g(\log s) < 0$ for some monic quadratic polynomial $g$, so the range of $s$ satisfying this inequality is an interval. 
Thus, it suffices to verify this for $s = 1$ and $s = (1-\epsilon)n/t_n$. 

For $s=1$, as we assumed $t_n < \log_p n + \log \log n$, 
the LHS of~\eqref{eqn:sum_bdd2} is at most $\frac{\log n}{99 \log p} + O_p(\log \log n)$ while the RHS is $\log n - \log \log n + O_p(1)$, so~\eqref{eqn:sum_bdd2} holds if $n$ is sufficiently large. 

On the other hand, for $s = (1-\epsilon)n/t_n$, since $t_n < \log_p n + \log \log n$ by the assumption of Theorem \ref{thm5a}, the LHS of~\eqref{eqn:sum_bdd2} is at most $O_p(\log \log n / \log n) + \log s = \log n - \log t_n + \log (1-\epsilon) + o(1)$, which is clearly smaller than the RHS of~\eqref{eqn:sum_bdd2} if $n$ is sufficiently large, as desired.
\end{proof}

Although Lemma \ref{lem:size_nbr1} gives an efficient bound for any small subset $S$, the bound is crude if $|S| \geq n^{1-o(1)}$, since $t_n - 100 - \log_p |S|$ is only $o(\log n)$. 
To complement this, we prove the following two lemmas.

\begin{lem}\label{lem:size_nbr2}
With probability $1-o(1)$, $|N_{G_n}(S)| > n/64$ for all $S \subseteq A_n$ or $S \subseteq B_n$ with $|S| \geq n/(2 t_n)$.
\end{lem}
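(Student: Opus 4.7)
The plan is to apply Lemma \ref{lem:prob_nbr} together with a crude union bound. The key observation is that the threshold $|S| \geq n/(2t_n)$ already forces the exponent $t_n |S|$ appearing in Lemma \ref{lem:prob_nbr} to be at least $n/2$, so that the resulting probability bound is exponentially small in $n$ irrespective of how large $t_n$ actually is. This is more than enough slack to survive the trivial union bound over all choices of $S$ and of a target set $T$, with no need for a finer entropy estimate or for tailoring the target size to $|S|$ (as was done in Lemma \ref{lem:size_nbr1}).

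Concretely, I would first observe that $|N_{G_n}(S)| \leq n/64$ is equivalent to the existence of some $T$ on the opposite side with $|T| = \lfloor n/64 \rfloor$ and $N_{G_n}(S) \subseteq T$. For any such fixed pair $(S, T)$ with $|S| \ge n/(2 t_n)$, Lemma \ref{lem:prob_nbr} yields
$$
\bP(N_{G_n}(S) \subseteq T) \leq (|T|/n)^{t_n |S|} \leq (1/64)^{n/2} = 2^{-3n}.
$$

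Next I would apply the union bound, using the trivial estimates $\sum_{s \geq n/(2t_n)} \binom{n}{s} \leq 2^n$ and $\binom{n}{\lfloor n/64 \rfloor} \leq 2^n$, together with a factor of $2$ to account for both cases $S \subseteq A_n$ and $S \subseteq B_n$:
$$
\bP\lt(\exists S : |S| \geq n/(2t_n),\ |N_{G_n}(S)| \leq n/64\rt) \leq 2 \cdot 2^n \cdot 2^n \cdot 2^{-3n} = 2^{1-n} = o(1).
$$
This yields the lemma. No real obstacle is expected: the argument is structurally similar to the easier half of Lemma \ref{lem:size_nbr1}, and the crude target size $|T| = \lfloor n/64 \rfloor$ suffices precisely because the regime $|S| \geq n/(2t_n)$ already guarantees a linear-in-$n$ exponent $t_n |S| \ge n/2$ inside Lemma \ref{lem:prob_nbr}.
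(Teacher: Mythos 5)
Your proof is correct and takes essentially the same approach as the paper: apply Lemma~\ref{lem:prob_nbr} with the fixed target size $\lfloor n/64\rfloor$, then use the crude bounds $\binom{n}{\cdot}\le 2^n$ and the fact that $t_n|S|\ge n/2$ to get the bound $2\cdot 4^n\cdot(1/64)^{n/2}=o(1)$. The only cosmetic difference is that the paper restricts to $|S|=\lceil n/(2t_n)\rceil$ exactly (implicitly using monotonicity of $N_{G_n}(\cdot)$ to cover larger sets), while you union directly over all $s\ge n/(2t_n)$; the final numerical bound is identical.
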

\begin{proof}
Let $s \coloneqq \lceil n/(2 t_n) \rceil$.
By Lemma \ref{lem:prob_nbr}, the probability that $|N_{G_n}(S)| \leq n/64$ for some $S \subseteq A_n$ or $S \subseteq B_n$ of size $s$, is at most
\begin{align*}
    \sum_{\substack{S \subseteq A_n \\ |S|=s}} \sum_{\substack{T \subseteq B_n \\ |T| = \lfloor n/64 \rfloor}} (|T|/n)^{t_n s}
    +  
    \sum_{\substack{S \subseteq B_n \\ |S|=s}} \sum_{\substack{T \subseteq A_n \\ |T| = \lfloor n/64 \rfloor}} (|T|/n)^{t_n s}
    &\leq
    2 \cdot 2^n \cdot 2^n \cdot (1/64)^{t_n s}\\ 
    &\leq 2 \cdot 4^n \cdot (1/64)^{n/2} = o(1),
\end{align*}
as desired.
\end{proof}

\begin{lem}\label{lem:cutoff}
With probability $1-o(1)$, for all $S \subseteq A_n$ or $S \subseteq B_n$ with $|S| \geq \frac{n}{2 \sqrt{t_n}}$, $|N_{G_n}(S)| > (1 - \frac{1}{\sqrt{t_n}})n$.
\end{lem}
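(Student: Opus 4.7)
The plan is to mimic the proof of Lemma \ref{lem:size_nbr2}, but tune the parameters carefully so that the exponential saving from Lemma \ref{lem:prob_nbr} beats the entropy cost coming from the two binomial coefficients. The key new feature is that we want the neighborhood to miss only a $1/\sqrt{t_n}$ fraction of the opposite side, so the base in Lemma \ref{lem:prob_nbr} is $(1 - 1/\sqrt{t_n})$ rather than a constant less than $1$.

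First I would reduce to the boundary case. Since $N_{G_n}(S) = \bigcup_{v \in S} N_{G_n}(v)$, the function $S \mapsto |N_{G_n}(S)|$ is monotone: if $S' \subseteq S$ then $|N_{G_n}(S')| \le |N_{G_n}(S)|$. Hence if the bound $|N_{G_n}(S)| > (1 - 1/\sqrt{t_n})n$ fails for some $S$ with $|S| \ge s_0 := \lceil n/(2\sqrt{t_n}) \rceil$, then it also fails for any $s_0$-subset of $S$. Therefore it suffices to prove the statement holds with probability $1 - o(1)$ for all $S \subseteq A_n$ or $S \subseteq B_n$ of size exactly $s_0$.

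Next I would apply Lemma \ref{lem:prob_nbr} together with a union bound. Setting $t := \lfloor (1 - 1/\sqrt{t_n}) n \rfloor$, the probability that the claim fails is at most
\begin{align*}
    2 \binom{n}{s_0}\binom{n}{t} \left(\frac{t}{n}\right)^{t_n s_0}
    \le 2 \binom{n}{s_0}\binom{n}{t}\left(1 - \frac{1}{\sqrt{t_n}}\right)^{t_n s_0}.
\end{align*}
Using $\binom{n}{k} \le (en/k)^k$ on both factors, and $\binom{n}{t} = \binom{n}{n-t}$ with $n - t \le n/\sqrt{t_n} + 1$, I would show
\begin{align*}
    \log \binom{n}{s_0} = O\!\left(\frac{n \log t_n}{\sqrt{t_n}}\right), \qquad \log \binom{n}{t} = O\!\left(\frac{n \log t_n}{\sqrt{t_n}}\right).
\end{align*}
Meanwhile the product $(1 - 1/\sqrt{t_n})^{t_n s_0} \le \exp(-\sqrt{t_n}\,s_0) \le e^{-n/2}$. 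Combining,
\begin{align*}
    \log \left( 2 \binom{n}{s_0}\binom{n}{t}\left(1 - \tfrac{1}{\sqrt{t_n}}\right)^{t_n s_0} \right)
    \le -\frac{n}{2} + O\!\left(\frac{n \log t_n}{\sqrt{t_n}}\right) + O(1).
\end{align*}

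The main (and only delicate) point is ensuring that the two entropy factors do not drown the exponential decay: this is where I use the hypothesis $t_n \to \infty$, which gives $\log t_n / \sqrt{t_n} = o(1)$, so the right-hand side above is $-(1/2 + o(1))n \to -\infty$. Hence the failure probability is $o(1)$, completing the proof.
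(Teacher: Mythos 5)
Your proposal is correct and follows essentially the same route as the paper's proof: reduce by monotonicity to sets of size exactly $\lceil n/(2\sqrt{t_n})\rceil$, apply Lemma~\ref{lem:prob_nbr} with a union bound over $S$ and $T$, and then check that the exponential decay $e^{-\sqrt{t_n}\,s_0}\le e^{-n/2}$ dominates the two entropy factors, each of order $\exp(O(n\log t_n/\sqrt{t_n}))=\exp(o(n))$. The paper leaves the monotonicity reduction implicit and phrases the final comparison slightly differently (bounding the product by $2^{-s+1}$), but the estimates and the key observation that $\log t_n/\sqrt{t_n}=o(1)$ are the same.
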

\begin{proof}
Let $s \coloneqq \lceil \frac{n}{2\sqrt{t_n}} \rceil$.
By Lemma \ref{lem:prob_nbr}, the probability that $|N_{G_n}(S)| \leq (1 - 1/\sqrt{t_n}) n$ for some $S \subseteq A_n$ or $S \subseteq B_n$ of size $s$ is at most
\begin{align*}
    2 \binom{n}{s} \binom{n}{\lfloor (1 - t_n^{-1/2}) n \rfloor} \left ( \frac{\lfloor (1 - t_n^{-1/2}) n \rfloor}{n} \right )^{t_n s}
    &\leq 2 \binom{n}{s} \binom{n}{\lceil t_n^{-1/2} n \rceil} (1 - t_n^{-1/2})^{t_n s} \\
    &\leq 2 (en/s)^s \cdot (e \sqrt{t_n})^{n t_n^{-1/2} + 1} \cdot e^{- s \sqrt{t_n}}\\
    &\leq 2 (2e \sqrt{t_n} \cdot e^{-\sqrt{t_n} / 2})^{s} \cdot (e \sqrt{t_n})^{n t_n^{-1/2} + 1} \cdot e^{- s \sqrt{t_n} / 2}\\
    &< 2^{-s+1} = o(1).
\end{align*}

To see this, as $(e \sqrt{t_n})^{n t_n^{-1/2} + 1} \leq \exp(2n \log t_n / \sqrt{t_n} )$, (when $n$ is sufficiently large) it is smaller than $e^{st_n^{1/2} / 2} \geq e^{n/4}$, so $(e \sqrt{t_n})^{n t_n^{-1/2} + 1} \cdot e^{- s t_n^{1/2} / 2} < 1$. It is also straightforward to see that (when $n$ is sufficiently large) $2e t_n^{1/2} \cdot e^{-t_n^{1/2} / 2} < 1/2$, as $t_n = \omega(1)$.
\end{proof}

\begin{lem}\label{lem:reduct}
For any $\dt > 0$, there exists $n_0 \in \bN$ such that for all $n \geq n_0$, with probability at least $0.95$, 
\begin{align}\label{eqn:sum_res}
\sum_{\substack{S \subseteq A_n \\ |S| \in [1, \frac{n}{\sqrt{t_n}}]}} p^{|S| - |N_{G_n}(S)|} + \sum_{\substack{S \subseteq B_n \\ |S| \in [1, \frac{n}{\sqrt{t_n}}]}} p^{|S| - |N_{G_n}(S)|} < \dt.
\end{align}
\end{lem}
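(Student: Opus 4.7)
The plan is to split the range of $s = |S|$ into a ``small'' range $[1, \lfloor n/(2t_n) \rfloor]$ and a ``large'' range $[\lceil n/(2t_n) \rceil, \lfloor n/\sqrt{t_n} \rfloor]$, and to control each sub-sum using Lemmas \ref{lem:size_nbr1} (applied with $\epsilon = 1/2$) and \ref{lem:size_nbr2} respectively. These two high-probability events hold simultaneously with probability at least $0.99 - o(1) > 0.95$ for $n$ sufficiently large, so we may condition on both. Note that with $\epsilon = 1/2$, Lemma \ref{lem:size_nbr1} covers $s \le n/(2t_n)$ and Lemma \ref{lem:size_nbr2} covers $s \ge n/(2t_n)$, so the two ranges together cover $[1, n/\sqrt{t_n}]$ with no gap.

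In the small range, Lemma \ref{lem:size_nbr1} gives $|N_{G_n}(S)| > (t_n - 100 - \log_p s) s$, so
\[ p^{|S| - |N_{G_n}(S)|} \le s^s \cdot p^{(101 - t_n) s}. \]
Using $\binom{n}{s} \le (en/s)^s$ to count the subsets of size $s$, we obtain
\[ \sum_{|S| = s} p^{|S| - |N_{G_n}(S)|} \le (en \cdot p^{101 - t_n})^s = \ld_n^s, \]
where $\ld_n := e \cdot p^{101} \cdot p^{-(t_n - \log_p n)}$. Since $t_n - \log_p n \to \infty$ by hypothesis, $\ld_n \to 0$ and the geometric sum $\sum_{s \ge 1} \ld_n^s$ is $o(1)$.

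In the large range, Lemma \ref{lem:size_nbr2} gives $|N_{G_n}(S)| > n/64$, so $p^{|S| - |N_{G_n}(S)|} \le p^{s - n/64}$. For $n$ large enough we have $n/\sqrt{t_n} < n/2$, so $\binom{n}{s}$ is increasing throughout the large range and is bounded above by $\binom{n}{\lfloor n/\sqrt{t_n} \rfloor} \le (e\sqrt{t_n})^{n/\sqrt{t_n}}$. The total contribution is therefore at most
\[ n \cdot (e p \sqrt{t_n})^{n/\sqrt{t_n}} \cdot p^{-n/64}, \]
whose logarithm is $\log n + (n/\sqrt{t_n})\log(e p \sqrt{t_n}) - (n/64)\log p$. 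This tends to $-\infty$ since $(\log t_n)/\sqrt{t_n} \to 0$ makes the middle term $o(n)$ while $-(n/64)\log p$ is linear.

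Doubling for the symmetric sums over $S \subseteq A_n$ and $S \subseteq B_n$, the total tends to $0$, so for any $\dt > 0$ we may choose $n_0$ so that the sum is below $\dt$ with probability at least $0.95$ for all $n \ge n_0$. The main delicate point is verifying that the sub-exponential factor $(e\sqrt{t_n})^{n/\sqrt{t_n}}$ in the large-range estimate is dominated by the exponential decay $p^{-n/64}$, which is exactly what the divergence $t_n \to \infty$ provides.
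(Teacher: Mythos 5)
Your proof is correct and follows essentially the same route as the paper's: conditioning on the high-probability events of Lemma \ref{lem:size_nbr1} (with $\epsilon = 1/2$) and Lemma \ref{lem:size_nbr2}, splitting the range at $n/(2t_n)$, and bounding the small-range contribution by a vanishing geometric series $\sum_s (en\,p^{101-t_n})^s$ and the large-range contribution by the entropy estimate $\binom{n}{s} \le (en/s)^s$ against the exponential decay $p^{-n/64}$. The only differences are cosmetic — you carry the constant $n/64$ throughout and bound the number of $s$-values crudely by $n$, whereas the paper quietly weakens $n/64$ to $n/100$ to absorb $p^s$ and then uses monotonicity of $x \mapsto x\log(en/x)$ — but the estimates are the same and both close the argument.
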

\begin{proof}
If $n$ is sufficiently large, 
by Lemmas \ref{lem:size_nbr1} and \ref{lem:size_nbr2} for $\epsilon = 1/2$, with probability at least $0.95$, each term of the LHS of~\eqref{eqn:sum_res} is at most
\begin{align*}
    \sum_{s \in [1 , \frac{n}{2 t_n}]} \left ( \frac{en}{s} \cdot p^{- t_n + 101 + \log_p s} \right )^s + \sum_{s \in (\frac{n}{2 t_n} , \frac{n}{\sqrt{t_n}} ]} (en/s)^s p^{-n/100}.
\end{align*}

Now we show that this is less than $\dt$ when $n$ is sufficiently large. 
Since we have $(en/s) \cdot p^{- t_n + 101 + \log_p s} = e p^{101 - \omega(1)} = o(1)$, 
the first term $\sum_{s \in [1 , \frac{n}{2 t_n}]} \left ( \frac{en}{s} \cdot p^{- t_n + 101 + \log_p s} \right )^s$ is $o(1)$. 
To show that the second term $\sum_{s \in (\frac{n}{2 t_n} , \frac{n}{\sqrt{t_n}} ]} (en/s)^s p^{-n/100}$ is also $o(1)$, observe that since the function $x \mapsto x \log (en/x)$ is an increasing function in $[1 , n]$ and $t_n = \omega(n)$, we have for sufficiently large $n$ that
\begin{align*}
(en/s)^s p^{-n/100} = \exp(s \log \left ( \frac{en}{s} \right ) - \frac{n \log p}{100}) & \leq \exp( \frac{n}{\sqrt{t_n}} \log (e \sqrt{t_n}) - \frac{n \log p}{100}) \\ 
& < \exp(-\frac{n \log p}{200}) = p^{-n/200}.
\end{align*}
 Thus, 
$$\sum_{s \in (\frac{n}{2 t_n} , \frac{n}{\sqrt{t_n}} ]} (en/s)^s p^{-n/100} < n p^{-n/200} = o(1)
$$
as desired.
\end{proof}

Recall that $\cF_{A_n} (G_n) = \{ \varnothing \ne S \subsetneq A_n \mid N_{G_n}(S) \ne B_n,\: N_{G_n}(w) \not\subseteq N_{G_n}(S)\text{ for all $w \in A \backslash S$} \}$.

\begin{lem}\label{lem:dual}
For any $S \in \cF_{A_n}({G_n})$, we have $N_{G_n}(B_n \backslash N_{G_n}(S)) = A_n \backslash S$.
\end{lem}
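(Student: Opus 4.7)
The plan is to prove the equality $N_{G_n}(B_n \setminus N_{G_n}(S)) = A_n \setminus S$ by establishing the two inclusions separately, directly from the definition of $\cF_{A_n}(G_n)$. Writing $T := N_{G_n}(S)$ and $U := B_n \setminus T$ for brevity will streamline the argument.

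For the inclusion $N_{G_n}(U) \subseteq A_n \setminus S$, I would argue by contradiction: if some $a \in N_{G_n}(U)$ lay in $S$, then $a$ would have a neighbor $b \in U = B_n \setminus T$, but also every neighbor of $a$ would lie in $N_{G_n}(S) = T$ since $a \in S$, which is impossible. Hence every element of $N_{G_n}(U)$ lies in $A_n \setminus S$.

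For the reverse inclusion $A_n \setminus S \subseteq N_{G_n}(U)$, I would use the second defining property of $\cF_{A_n}(G_n)$: for every $w \in A_n \setminus S$, $N_{G_n}(w) \not\subseteq N_{G_n}(S) = T$. So there exists $b \in N_{G_n}(w)$ with $b \notin T$, that is, $b \in U$. This directly gives $w \in N_{G_n}(U)$.

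I do not anticipate any real obstacle; the lemma is essentially an unpacking of the two conditions defining $\cF_{A_n}(G_n)$ (the condition $N_{G_n}(S) \ne B_n$ is implicitly used to ensure $U \ne \varnothing$, while the condition $N_{G_n}(w) \not\subseteq N_{G_n}(S)$ is what powers the nontrivial inclusion). The only mild subtlety to flag is that each $w \in A_n$ has at least one neighbor (which holds since $G_n$ is $t_n$-regular with $t_n \ge 1$), so the statement $N_{G_n}(w) \not\subseteq T$ genuinely produces an element of $U$.
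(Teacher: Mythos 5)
Your proof is correct and follows essentially the same route as the paper: the easy inclusion $N_{G_n}(B_n\setminus N_{G_n}(S))\subseteq A_n\setminus S$ is a direct bipartiteness observation, and the reverse inclusion is exactly the contrapositive of the paper's contradiction argument using the second defining condition of $\cF_{A_n}(G_n)$. (Your two flagged "subtleties" are actually unnecessary: $N_{G_n}(w)\not\subseteq T$ by itself already yields an element of $U$, with no separate need for $U\ne\varnothing$ or $N_{G_n}(w)\ne\varnothing$.)
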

\begin{proof}
Let $S' \coloneqq B_n \backslash N_{G_n}(S)$. Then $N_{G_n}(S') \subseteq A_n \backslash S$. 
If there exists $v \in A_n \backslash (S \cup N_{G_n}(S'))$ then $N_{G_n}(v) \subseteq B_n \backslash S' = N_{G_n}(S)$, contradicting the assumption that $S \in \cF_{A_n}(G_n)$, as $v \in A_n \backslash S$. 
Thus, $N_{G_n}(S') = A_n \backslash S$.
\end{proof}

Now we are ready to prove Theorem \ref{thm5b}.

\begin{proof}[Proof of Theorem \ref{thm5b}]
Consider a map $\psi_{AB} : \cF_{A_n} ({G_n}) \to 2^{B_n}$ with $\psi_{AB}(S) = B_n \backslash N_{G_n}(S)$ for any $S \in \cF_{A_n}$. Then by Lemma \ref{lem:dual}, $\psi_{AB}$ is injective, and $|S| - |N_{G_n}(S)| = -(n-|S|) + (n - |N_{G_n}(S)|) = -|N_{G_n}(\psi_{AB}(S))| + |\psi_{AB}(S)|$ for all $S \in \cF_{A_n}(G_n)$. Thus,
\begin{align*}
    \sum_{\substack{S \in \cF_{A_n}(G_n) \\ |S| > \frac{n}{2 \sqrt{t_n}}}} p^{|S| - |N_{G_n}(S)|} &= \sum_{\substack{S \in \cF_{A_n}(G_n) \\ |S| > \frac{n}{2 \sqrt{t_n}}}} p^{|\psi_{AB}(S)| - |N_{G_n}(\psi_{AB}(S))|} \\
    &\leq \sum_{\substack{T \subseteq B_n \\ |T| \in [1 , \frac{n}{\sqrt{t_n}} ]}}
    p^{|T| - |N_{G_n}(T)|},
\end{align*}
where the last inequality follows with probability $1 - o(1)$ by Lemma \ref{lem:cutoff}. Therefore,
\begin{align*}
    c({G_n}) &\leq \sum_{\substack{S \subseteq A_n \\ |S| \in [1, \frac{n}{2\sqrt{t_n}}]}} p^{|S|-|N_{G_n}(S)|}
    + \sum_{\substack{S \subseteq A_n \\ |S| \in (\frac{n}{2\sqrt{t_n}}, n]}} p^{|S|-|N_{G_n}(S)|}\\
    &\leq 
    \sum_{\substack{S \subseteq A_n \\ |S| \in [1, \frac{n}{2\sqrt{t_n}}]}} p^{|S|-|N_{G_n}(S)|}
    + \sum_{\substack{T \subseteq B_n \\ |T| \in [1, \frac{n}{\sqrt{t_n}}]}}  p^{|T|-|N_{G_n}(T)|}\\
    &< \dt
\end{align*}
with probability at least $0.95 - o(1)$ by Lemma \ref{lem:reduct} if $n$ is sufficiently large, completing the proof.
\end{proof}

\section{Non-universality} \label{Sec6}

Let $X_n \in \M_n(\zp)$ be a Haar-random matrix supported on $\Sg_n$ and $Y_n \in \M_n(\zp)$ be a random $n \times n$ matrix such that $(Y_n)_{i,j}=0$ for $i \notin \sg_{n,j}$ and the entries $(Y_n)_{i,j}$ with $i \in \sg_{n,j}$ are $\epsilon$-balanced and independent. It is natural to ask whether the universality result of Wood \cite[Theorem 1.2]{Woo19} can be extended to the random matrices with fixed zero entries, i.e. does $\cok(Y_n)$ converge to CL if $\cok(X_n)$ converges to CL? The following theorem gives a negative answer to this question.

\begin{thm} \label{thm6a}
Let $\xi \in \zp$ be a random variable such that $\bP(\xi = 0) > 1/p$. Let $X_n$ and $Y_n$ be random matrices defined as above and assume that the entries $(Y_n)_{i,j}$ with $i \in \sg_{n,j}$ are i.i.d. copies of $\xi$. Then there exists a sequence $(\Sg_n)_{n \ge 1}$ such that $\cok(X_n)$ converges to CL and $\cok(Y_n)$ does not converge to CL.
\end{thm}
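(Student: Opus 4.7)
My plan is to exhibit an explicit $\Sigma_n$ by taking a Mészáros-style band pattern with carefully chosen bandwidth, then argue that the entry distribution $\xi$ is ``biased enough toward $0$'' that $Y_n$ very often has a zero row, even while $X_n$ (Haar entries) still produces the Cohen--Lenstra distribution.

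Concretely, I would set
\[
\sigma_{n,i} \;=\; \{\, j \in [n] : |i-j| \le w_n \,\}
\quad\text{with}\quad
w_n = \lceil \log_p n \rceil + g(n),
\]
for a slowly growing $g(n) \to \infty$ (say $g(n) = \lceil \log\log n \rceil$). Since $w_n - \log_p n \to \infty$, the Mészáros theorem quoted in Remark~\ref{rmk44c} gives $\cok(X_n) \to {\rm CL}$. For $Y_n$, note that row $j$ has at most $2w_n+1$ random entries (those $i$ with $|i-j|\le w_n$), so $\mathbb{P}(\text{row $j$ of } Y_n = 0) \ge (1-\epsilon)^{2w_n+1}$. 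Distinct rows involve disjoint random entries, so the events ``row $j$ is zero'' are mutually independent; in particular, the count $Z$ of zero rows satisfies $\mathbb{E}[Z] \ge (n-2w_n)(1-\epsilon)^{2w_n+1}$ and $\mathrm{Var}(Z) \le \mathbb{E}[Z]$. Writing $\alpha := -\log_p(1-\epsilon)$, so $\alpha \in (0,1)$ since $\epsilon < 1-1/p$, we have $\mathbb{E}[Z] \asymp n^{1-2\alpha}(1-\epsilon)^{2g(n)}$. For $\alpha < 1/2$ (i.e.\ $\epsilon < 1 - p^{-1/2}$), choosing $g(n) = \log\log n$ makes $\mathbb{E}[Z] \to \infty$, and a second-moment (or Poisson-approximation) argument yields $\mathbb{P}(Z \ge 1) \to 1$. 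On this event $\det Y_n = 0$, so $\cok(Y_n)$ is infinite; since CL is supported on finite groups and has total mass $1$, this forces $\cok(Y_n) \not\to {\rm CL}$.

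The hard part, and in my view the main obstacle, is covering the full range $\epsilon < 1-1/p$, i.e.\ also the regime $\alpha \in [1/2,1)$. Mészáros' band forces row degrees $\approx 2\log_p n$, so the single-column contribution $(p-1)\sum_j(1-\epsilon)^{d_j}$ to $\mathbb{E}[\#\Sur(\cok(Y_n), \mathbb{Z}/p\mathbb{Z})]$ no longer diverges. Two complementary refinements look promising. First, one can include contributions from $\bar F \in \mathbb{F}_p^n$ with $|\mathrm{supp}(\bar F)| \ge 2$: for example, taking $\bar F = e_i - e_{i+1}$ in the Mészáros pattern yields a product $\bigl(1 - 2\epsilon(1-\epsilon)\bigr)^{2w_n}(1-\epsilon)^2$ per such $\bar F$, and summing over $i$ gives a contribution of order $n^{1-2\gamma}$ with $\gamma := -\log_p(1 - 2\epsilon(1-\epsilon))$; for $p$ large or $\epsilon$ not too close to $1/2$ one has $\gamma < 1/2$, enlarging the admissible $\epsilon$-range. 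Second, one can attempt a sparser CL-producing construction with row degrees close to $\log_p n$ itself, so that $\sum_j(1-\epsilon)^{d_j} \asymp n^{1-\alpha} \to \infty$ for every $\alpha < 1$; the natural candidate is the $d_n$-regular bipartite graph of Theorem~\ref{mainthm1-int} with $d_n = \log_p n + \omega(1)$, but there only the $\mathbb{Z}/p\mathbb{Z}$-moment is known to converge, so one would need to push that analysis to the higher moments to conclude full CL for $X_n$. Combining these two ideas -- using a sparser CL construction and summing the first-moment contribution over several low-support $\bar F$ -- should cover the full range $\epsilon < 1-1/p$, with the Haar/Bernoulli gap being the standard factor of $(1-\epsilon)$ vs.\ $1/p$ in each random entry.
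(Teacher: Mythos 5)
Your proposal does not yield a proof of the stated theorem; it has a genuine gap, and the paper's argument takes a fundamentally different route.

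The basic obstruction is structural. In a Mészáros band of width $w_n \approx \log_p n$, \emph{every} row and column has degree $\approx 2\log_p n$, so the probability that a given row of $Y_n$ vanishes is $\approx n^{-2\alpha}$ with $\alpha = -\log_p(1-\epsilon)$, and the expected number of zero rows is $\approx n^{1-2\alpha}$. This diverges only when $\alpha < 1/2$, i.e.\ $\epsilon < 1 - p^{-1/2}$, which misses the regime $1 - p^{-1/2} \le \epsilon < 1 - 1/p$. You correctly identify this, but neither refinement closes it: refinement (1) (support-two $\bar F$) gives exponent $1-2\gamma$ with $\gamma = -\log_p\bigl(1-2\epsilon(1-\epsilon)\bigr)$, and since $1-2\epsilon(1-\epsilon) \ge 1/2$ this only helps when $p > 4$, and even then only shows the first moment diverges for a restricted $\epsilon$-range; refinement (2) requires a sparser CL-producing $X_n$ for which, as you note, Theorem~\ref{mainthm1} gives only the $\Z/p\Z$-moment, so full CL convergence is not established. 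Neither refinement, alone or combined, is a proof.

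The paper's construction avoids this by decoupling the \emph{size} of the sparse supports from their \emph{number}, and by not making every column sparse. Along a subsequence $n = t_n k_n$, it keeps the first $n-k_n$ columns fully Haar-random and puts $k_n$ disjoint sparse columns of size exactly $t_n$. The parameters are tuned so that $k_n \approx e^{c t_n}$ with $c$ strictly between $b := \log(1-\epsilon)^{-1}$ and $a := \log p$. Then $k_n (1-\epsilon)^{t_n} \to \infty$ (so $Y_n$ has a zero column with probability $\to 1$) while $k_n p^{-t_n} \to 0$ (so with probability $\to 1$ each sparse column of $X_n$ contains a unit, and Lemma~\ref{lem2a} lets one eliminate those blocks and reduce $X_n$ to a smaller fully Haar-random matrix). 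This exploits exactly the open gap $(b,a)$, which is nonempty precisely when $\epsilon < 1-1/p$, and therefore covers the full range — something the band pattern cannot, because it ties the column degree to $2\log_p n$ and leaves no room to choose $k_n$ and $t_n$ independently.
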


\begin{proof}
Let $(t_n)_{n \ge 1}$ and $(k_n)_{n \ge 1}$ be sequences of positive integers such that $t_nk_n \le n$ for each $n$. Define $\sg_n = (\sg_{n,1}, \ldots, \sg_{n,n})$ for each $n$ by $\sg_{n, i} = \lt \{ (i-1)t_n+1, (i-1)t_n+2, \ldots, it_n \rt \} \subset [n]$ for $1 \le i \le k_n$ and $\sg_{n, i}=[n]$ for $k_n < i \le n$. Let $X_n$ and $Y_n$ be as before and $c := \bP(\xi = 0) > 1/p$. Choose any $(t_n)_{n \ge 1}$ and $(k_n)_{n \ge 1}$ such that $\underset{n \to \infty}{\lim} \frac{k_n}{p^{t_n}} = 0$ and $\underset{n \to \infty}{\lim} c^{t_n}k_n = \infty$.

\begin{figure}[ht]
\begin{equation*}
\begin{pmatrix}
* & 0 & 0 & * & * & * & *\\ 
* & 0 & 0 & * & * & * & *\\ 
0 & * & 0 & * & * & * & *\\ 
0 & * & 0 & * & * & * & *\\ 
0 & 0 & * & * & * & * & *\\ 
0 & 0 & * & * & * & * & *\\ 
0 & 0 & 0 & * & * & * & *
\end{pmatrix}
\end{equation*}
\caption{Matrices $X_n, Y_n \in \M_n(\zp)$ for $(n, t_n, k_n)=(7,2,3)$}
\label{fig5}
\end{figure}

\begin{enumerate}
    \item The probability that $Y_n$ does not have a zero column is bounded above by $(1 - c^{t_n})^{k_n}$. By the assumption $\underset{n \to \infty}{\lim} c^{t_n}k_n = \infty$, we have $\underset{n \to \infty}{\lim} (1 - c^{t_n})^{k_n} = 0$ so $\cok(Y_n)$ does not converge to CL. 

    \item Let $X_n'$ be a random $n \times n$ matrix over $\zp$ which is supported on $\Sg_n$, 
    $$
    (X_n')_{\sg_{n, i}, \lt \{ i \rt \}} = \begin{pmatrix}
    1 & 0 \, \ldots \, 0 
    \end{pmatrix}^T \in \M_{t_n \times 1}(\zp)
    $$
    for each $1 \le i \le k_n$ and the other random entries are independent and Haar-random. (Recall that for $\tau, \tau' \subset [n]$, $(X_n')_{\tau, \tau'}$ denotes the submatrix of $X_n'$ which is obtained by choosing $i$-th rows for $i \in \tau$ and $j$-th columns for $j \in \tau'$. See Section \ref{Sub21}.) By applying Lemma \ref{lem2a} to the blocks $(X_n)_{\sg_{n, i}, \lt \{ i \rt \}}$ ($1 \le i \le k_n$), we have
$$
| \bP(\cok(X_n) \cong H) - \bP(\cok(X_n') \cong H)  | \leq 1 - \left ( 1 - \frac{1}{p^{t_n}} \right )^{k_n} 
$$
for every positive integer $n$ and a finite abelian $p$-group $H$. By the assumption $\underset{n \to \infty}{\lim} \frac{k_n}{p^{t_n}} = 0$, we have $\underset{n \to \infty}{\lim}  | \bP(\cok(X_n) \cong H) - \bP(\cok(X_n') \cong H) | = 0$. Now the distribution of $\cok(X_n')$ is same as the distribution of $\cok(Z_{n-k_n})$ for each $n$ where $Z_{n-k_n} \in \M_{n-k_n}(\zp)$ is Haar-random. Therefore $\cok(X_n)$ converges to CL. \qedhere
\end{enumerate}
\end{proof}

\section{The setting for the universality theorem}\label{universality first section}

Let us first recall the notion of a random $\epsilon$-balanced variable \cite{Woo19}.

\begin{defn} \label{def7a}
Let $\epsilon < 1$ be a positive real number. Let $\mc{R}$ be either $\Z$, $\Z_p$ for a prime $p$, or a quotient of $\Z$. We say a random variable $\xi$ in $\mc{R}$ is \emph{$\epsilon$-balanced} if for every maximal ideal $\mc{P}$ of $\mc{R}$ and for every $r \in \mc{R}/\mc{P}$, we have
$$
\bP(\xi \equiv r \pmod{\mc{P}}) \le 1 - \epsilon. 
$$
\end{defn}

\begin{exmp}
By definition, the Haar-random variable $\xi$ in $\zp$ satisfies for each $r \in \Z/p\Z$
$$
\bP(\xi \equiv r \pmod{p}) = \frac{1}{p}
$$
so $\xi$ is $\epsilon$-balanced for any $0< \epsilon \le (p-1)/p$. To give a more drastic example, let $\mu$ be a random variable in $\zp$ defined as follows:
$$
\bP(\mu \equiv r \pmod{p}) = \begin{cases}
0.99999 &~\text{if $r=0$} \\
0.00001 &~\text{if $r=1$}.
\end{cases}
$$
Then $\mu$ is also $\epsilon$-balanced for sufficiently small $\epsilon$. 
\end{exmp}

Let $M$ be a random $n \times n$ matrix over $\mc{R}$. For a positive integer $k$, let $1 \le \alpha_n^{(k)} < \alpha_n^{(k-1)} < \cdots < \alpha_n^{(1)} \le n$ and $n \ge \beta_n^{(k)} > \beta_n^{(k-1)} > \cdots > \beta_n^{(1)} \ge 1$ be positive integers and we define
$$
\alpha_n^{(0)} = \beta_n^{(k+1)} = n \quad \text{and} \quad \alpha_n^{(k+1)} = \beta_n^{(0)} = 0.
$$
For every $1 \le l \le k$, let $M_{i, j}=0$ if $1 \le i \le \alpha_n^{(l)}$ and $1 \le j \le \beta_n^{(l)}$.
The other entries of $M$ are given by independent $\epsilon$-balanced random variables in $\mc{R}$. In this case, we say $M$ is an \emph{$\epsilon$-balanced random matrix over $\mc{R}$ having $k$-step stairs of $0$ with respect to $\alpha_n^{(i)}$ and $\beta_n^{(i)}$}. 

\begin{figure}[ht]
\centering
\begin{circuitikz}[scale=1.4]
\tikzstyle{every node}=[font=\scriptsize]
\draw  (8.75,15) rectangle (13.75,10);
\draw [dashed] (8.75,12.75) -- (10.75,12.75);
\draw [dashed] (10.75,12.75) -- (10.75,14);
\draw [dashed] (10.75,14) -- (11.75,14);
\draw [dashed] (11.75,14) -- (11.75,15);
\draw [<->, >=Stealth, dashed] (8.75,12.35) -- (10.75,12.35);
\draw [<->, >=Stealth, dashed] (8.75,11.7) -- (11.75,11.7);
\draw [<->, >=Stealth, dashed] (12.25,15) -- (12.25,14);
\draw [<->, >=Stealth, dashed] (13,15) -- (13,12.75);
\node [font=\Huge] at (9.75,14) {0};
\node [font=\normalsize] at (12.5,14.5) {$\alpha_n^{(2)}$};
\node [font=\normalsize] at (13.25,14) {$\alpha_n^{(1)}$};
\node [font=\normalsize] at (9.75,12.16) {$\beta_n^{(1)}$};
\node [font=\normalsize] at (10.25,11.51) {$\beta_n^{(2)}$};
\end{circuitikz}
\caption{The shape of an $\epsilon$-balanced matrix with $k$-step stairs of $0$ with respect to $\alpha_n^{(i)}$ and $\beta_n^{(i)}$ when $k=2$}
\label{fig:my_label}
\end{figure}

\begin{thm}
\label{thm: universality main theorem}
Let $M$ be an $\epsilon$-balanced random $n \times n$ matrix over $\Z_p$ having $k$-step stairs of $0$ with respect to $\alpha_n^{(i)}$ and $\beta_n^{(i)}$. Suppose that for every $1 \le i \le k$, 
$$
\underset{n \to \infty}{\lim} (n - \alpha_n^{(i)} - \beta_n^{(i)}) = \infty.
$$
Then $\cok(M)$ converges to CL, i.e. for every finite abelian $p$-group $G$, we have
$$
\underset{n \to \infty}{\lim}\bP(\cok(M) \cong G) = \frac{1}{|\Aut(G)|}\prod_{i = 1}^\infty (1- p^{-i}).
$$
\end{thm}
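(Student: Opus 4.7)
The plan is to follow the moment method described at the start of Section~\ref{Sec3}: by \cite{Woo19}, it suffices to prove
$$E_n(G) := \bE\bigl( \#\Sur(\cok(M), G) \bigr) \longrightarrow 1$$
for every finite abelian $p$-group $G$. The stair-step zero pattern partitions $[n] = J_1 \sqcup \cdots \sqcup J_{k+1}$ into column blocks $J_l = (\beta_n^{(l-1)}, \beta_n^{(l)}]$ (with $\beta_n^{(0)} := 0$, $\beta_n^{(k+1)} := n$), and for $j \in J_l$ the set of free row indices is $\sigma_{n,j} = (\alpha_n^{(l)}, n]$ (with $\alpha_n^{(k+1)} := 0$). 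By column-wise independence,
$$E_n(G) = \sum_{F \in \Sur(R^n, G)} \prod_{l=1}^{k+1} \prod_{j \in J_l} \bP\Bigl( \sum_{i > \alpha_n^{(l)}} F_i M_{i,j} = 0 \Bigr).$$

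To each $F = (F_1, \ldots, F_n)$ I would associate the nested chain of subgroups of $G$,
$$H_l(F) := \langle F_i : i > \alpha_n^{(l)} \rangle, \qquad H_1(F) \le H_2(F) \le \cdots \le H_k(F) \le H_{k+1}(F) = G.$$
The main tool is Wood's Fourier-analytic estimate for sums of $\epsilon$-balanced variables (\cite[Sections~3--4]{Woo19}): each column probability above equals $1/|H_l(F)|$ up to an exponentially decaying Fourier error governed by the number of indices $i > \alpha_n^{(l)}$ for which $F_i$ escapes a chosen proper subgroup of $H_l(F)$. Stratifying $F$ by its chain $(H_1, \ldots, H_k)$, the number of $F$ with $H_l(F) \le H_l$ for all $l$ is at most $\prod_l |H_l|^{\alpha_n^{(l-1)} - \alpha_n^{(l)}}$ (setting $\alpha_n^{(0)} := n$), so the contribution of the chain $(H_1, \ldots, H_k, G)$ is bounded, modulo the Fourier error, by
$$\prod_{l=1}^{k+1} |H_l|^{d_l}, \qquad d_l := (\alpha_n^{(l-1)} - \alpha_n^{(l)}) - (\beta_n^{(l)} - \beta_n^{(l-1)}).$$

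For the trivial chain $H_1 = \cdots = H_k = G$ this bound is $1$, and the true contribution tends to $1$ because $n - \alpha_n^{(1)} \to \infty$ forces most surjections $F$ to already have their top $n - \alpha_n^{(1)}$ coordinates surjective onto $G$. For any other chain, an Abel summation using the identity $S_l := d_1 + \cdots + d_l = n - \alpha_n^{(l)} - \beta_n^{(l)}$ (with $S_0 = S_{k+1} = 0$) yields
$$\log \prod_{l=1}^{k+1} |H_l|^{d_l} = \sum_{l=1}^{k} S_l \log \frac{|H_l|}{|H_{l+1}|}.$$
Some $l_0 \le k$ must satisfy $H_{l_0} \lneq H_{l_0+1}$, and the corresponding term is at most $-S_{l_0} \log p$, which tends to $-\infty$ by the hypothesis. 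Since the remaining terms are nonpositive and $G$ is fixed (hence there are only finitely many chains to consider), the total error tends to $0$ and $E_n(G) \to 1$.

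The main obstacle will be packaging Wood's depth stratification of surjections in a form uniform across all chains so that the Fourier error can be summed alongside the main combinatorial bound. This mirrors the Haar-random telescoping carried out in Theorems~\ref{thm3d} and~\ref{thm3d'}, with the exact formula $\bP(\sum_i F_i M_{i,j} = 0) = 1/|FV_{\sigma_{n,j}}|$ replaced by an $\epsilon$-balanced Fourier bound whose error decays exponentially in the number of rows in each row-strip $(\alpha_n^{(l+1)}, \alpha_n^{(l)}]$. The hypothesis $n - \alpha_n^{(l)} - \beta_n^{(l)} \to \infty$ is precisely the telescoped divergence $S_l \to \infty$ needed to kill every nontrivial chain contribution, so once the Fourier estimates are stated with the right uniformity the argument runs exactly parallel to the Haar-random case.
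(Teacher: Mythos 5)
You have identified the correct combinatorial skeleton: stratify $F \in \Sur(V,G)$ by the chain of images $H_l(F)$, bound the number of surjections in a stratum by $\prod_l |H_l|^{\alpha_n^{(l-1)}-\alpha_n^{(l)}}$ (this is the content of the paper's Lemma \ref{lem: function counting lemma}), multiply by $\prod_l |H_l|^{-(\beta_n^{(l)}-\beta_n^{(l-1)})}$, and observe via the Abel identity $S_l = n-\alpha_n^{(l)}-\beta_n^{(l)}$ that the hypothesis is exactly the telescoped divergence needed to kill every nontrivial chain. This really is the entropy calculation underlying Proposition \ref{prop: upper bound for all type2 case converges to 0}, whose display contains the telescoped product $\prod_i (|H_i|/|H_{i+1}|)^{n-\alpha_n^{(i)}-\beta_n^{(i)}}$ in exactly your form.

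However, there is a genuine gap in the step ``bounded, modulo the Fourier error, by $\prod_l |H_l|^{d_l}$.'' Wood's Fourier estimate (Lemma \ref{lem: lem for code bound}) decays exponentially in the \emph{code distance} of the restriction $F|_{V_l}$, not in the length of the row-strip $(\alpha_n^{(l)},n]$. Within a single chain stratum there are shallow $F$ --- for instance $F$ with a single nonzero coordinate above $\alpha_n^{(l)}$ --- for which the column probability is $\approx 1-\epsilon$ rather than $\approx 1/|H_l|$, so the ``error'' is $O(1)$, not exponentially small, and cannot be absorbed. Fixing this forces a second stratification by depth, which is not packaging: it is the substance of the paper's $A$/$B$-type decomposition in Sections \ref{universality first section}--\ref{universality third section}. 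In particular the $B$-type case, where $F(V_j)$ is \emph{exactly} a proper subgroup $H_j$ and the column probability really is $\approx 1/|H_j|$ for deep $F$, is handled not by your Abel bound but by peeling off the top rows, reducing to the cokernel of a smaller submatrix, and inducting on $k$ (Proposition \ref{prop: BA for k converges to 0}); and because the reduction needs $n-\alpha_n^{(j)} \geq \eta n$, the paper also invokes $\cok(M) \cong \cok(M^T)$ (Lemma \ref{lem: Smith normal form} and Corollary \ref{cor: changing row and column by transpose}) to cover the opposite regime. None of these ingredients are forced by the Haar-random Theorems \ref{thm3d} and \ref{thm3d'}, where the exact identity $\bP(FM_j=0)=1/|FV_{\sigma_{n,j}}|$ makes the chain sum a closed computation; the argument therefore does \emph{not} run parallel to the Haar case, and your closing sentence understates the difficulty by a wide margin.
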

Of course, $M$ depends on its dimension $n$. However, we suppress $n$ to ease the notation since there is no danger of confusion. 

\begin{rmk}
\label{rem: counter example for the converse of universality theorem}
Unlike the Haar measure case in Proposition \ref{prop2b}, the converse of Theorem \ref{thm: universality main theorem} does not hold. For example, let $k =1$, $\alpha_n^{(1)} = n-1$, and $\beta_n^{(1)} = 1$ for all $n$. In particular, $n - \alpha_n^{(1)} - \beta_n^{(1)} = 0$ for all $n$. Let $p$ be an odd prime and assume that $M_{n,1}$ is given by the random variable $\xi$ such that
\begin{equation*}
\bP(\xi = 1) = \bP(\xi = -1) = 1/2.
\end{equation*}
Let $M'$ be the $(n-1) \times (n-1)$ upper right submatrix of $M$. Since $M_{n,1}$ is always a unit in $\zp$, we have
$$
\cok(M) \cong \cok(M'). 
$$
Note that all entries of $M'$ are given by $\epsilon$-balanced variables in $\zp$. By \cite[Corollary 3.4]{Woo19}, we see that $\cok(M')$ converges to CL, so does $\cok(M)$.
\end{rmk}

Now let $a \ge 2$ be a positive integer. Throughout most of the remainder of this paper, we will work over the finite ring $$R := \Z/ a\Z.$$
Let $V = R^n$ and $v_1, v_2, \ldots, v_n$ denote the standard basis for $V$. For $1\le l \le k+1$, let $V_l$ be the $R$-submodule of $V$ generated by $v_i$ for all $i \in [n] \backslash [\al_n^{(l)}]$. Recall that we write $[t] = \{1,2,\ldots, t\}$ for a positive integer $t$. The rest of the paper will be devoted to prove the following result, which implies Theorem \ref{thm: universality main theorem}. (To see how Theorem \ref{thm: universality moment} induces Theorem \ref{thm: universality main theorem}, see \cite[Theorem 3.1 and Corollary 3.4]{Woo19}.)
\begin{thm}
\label{thm: universality moment}
Let $M$ be an $\epsilon$-balanced random $n \times n$ matrix over $R = \Z/a\Z$ having $k$-step stairs of $0$ with respect to $\alpha_n^{(i)}$ and $\beta_n^{(i)}$. If we have
$$
\underset{n \to \infty}{\lim} (n - \alpha_n^{(i)} - \beta_n^{(i)}) = \infty
$$
for every $1\le i \le k$, then for every finite abelian group $G$ whose exponent divides $a$, we have
$$
\underset{n \to \infty}{\lim} \bE(\#\Sur(\cok(M),G)) = 1.
$$
\end{thm}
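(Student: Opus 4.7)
The plan is to follow Wood's universality framework \cite[Section 3]{Woo19}, carefully adapted to the stair-shaped zero pattern. Starting from
$$
\bE(\#\Sur(\cok(M), G)) = \sum_{F \in \Sur(V, G)} \prod_{j=1}^{n} \bP(F M_{*j} = 0),
$$
I first group the columns. Setting $\beta_n^{(0)} := 0$ and $\beta_n^{(k+1)} := n$, for each $l \in \{1, \dots, k+1\}$ the columns $j$ with $\beta_n^{(l-1)} < j \leq \beta_n^{(l)}$ have non-zero entries only in rows $i > \alpha_n^{(l)}$, so $M_{*j}$ lies in the submodule $V_l$ of dimension $n - \alpha_n^{(l)}$. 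Hence $\bP(FM_{*j} = 0)$ depends only on $F|_{V_l}\colon V_l \to G_l := F(V_l)$, and the non-zero entries of $M_{*j}$ form an $\epsilon$-balanced random vector in $R^{n - \alpha_n^{(l)}}$. I partition the sum over $F$ by the induced image chain $G_1 \leq G_2 \leq \cdots \leq G_{k+1} = G$.

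Next, for each column $j$ in group $l$, I apply Wood's Fourier-analytic estimate for $\epsilon$-balanced random vectors (the key estimate behind \cite[Theorem 3.3]{Woo19}) to get $\bP(FM_{*j} = 0) = |G_l|^{-1} + \mathrm{err}_j(F)$, where $\mathrm{err}_j(F)$ is controlled by the ``depth'' of $F|_{V_l}$ relative to $G_l$. Surjections $F$ are split into ``generic'' (each $F|_{V_l}$ has large depth) and ``non-generic''; the generic $F$ yield essentially $\prod_l |G_l|^{-c_l}$ for their product, where $c_l := \beta_n^{(l)} - \beta_n^{(l-1)}$, while the non-generic $F$ are controlled by Wood's depth decomposition, whose exponential decay combined with the growing dimensions $n - \alpha_n^{(l)} \to \infty$ forces the non-generic total to vanish.

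Writing $\alpha_n^{(0)} := n$ and $a_l := \alpha_n^{(l-1)} - \alpha_n^{(l)}$, each basis vector $v_i$ with $\alpha_n^{(l)} < i \leq \alpha_n^{(l-1)}$ must land in $G_l$ in order that $F(V_l) \subseteq G_l$; hence the number of such $F$ is at most $\prod_l |G_l|^{a_l}$, and the generic contribution from the chain $(G_l)$ is at most
$$
\prod_{l=1}^{k+1} |G_l|^{a_l - c_l} = \prod_{l=1}^{k} \left(\frac{|G_l|}{|G_{l+1}|}\right)^{n - \alpha_n^{(l)} - \beta_n^{(l)}},
$$
where the equality follows from Abel summation via the telescoping identity $\sum_{j \leq l}(a_j - c_j) = n - \alpha_n^{(l)} - \beta_n^{(l)}$, which vanishes at $l=0$ and $l=k+1$. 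For the dominant chain $(G,\dots,G)$, every factor equals $1$, and the count $\#\{F \in \Sur(V,G) : F(V_l) = G \text{ for all } l\}/|G|^n$ tends to $1$ as $n \to \infty$. For any non-dominant chain, some $G_{l_0} \lneq G_{l_0+1}$ gives $|G_{l_0}|/|G_{l_0+1}| \leq 1/p_0$ (with $p_0$ the smallest prime dividing $a$), so the contribution is at most $p_0^{-(n - \alpha_n^{(l_0)} - \beta_n^{(l_0)})} \to 0$; summing over the finitely many chains finishes the argument.

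The main obstacle will be quantitatively controlling the error terms $\mathrm{err}_j(F)$ from non-generic $F$. Wood's depth decomposition must be combined carefully across all $k+1$ layers, with the growing budgets $n - \alpha_n^{(l)} - \beta_n^{(l)} \to \infty$ providing the slack needed to absorb the non-generic contributions. Chains close to the dominant one (where $G_l = G$ for most $l$) will require especially careful bookkeeping to ensure that accumulated errors do not overwhelm the main term.
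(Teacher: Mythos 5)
The decomposition by the image chain $(G_1, \ldots, G_{k+1})$ with $G_l := F(V_l)$ and the Abel-summation telescoping is the right picture for the Haar case (it is essentially how Section~3 of the paper proceeds when the entries are uniform), and your reduction of the telescope to $\prod_{l}(|G_l|/|G_{l+1}|)^{n-\alpha_n^{(l)}-\beta_n^{(l)}}$ is computed correctly. However, for $\epsilon$-balanced entries the estimate $\bP(FM_{*j}=0)\approx |G_l|^{-1}$ fails for non-code $F|_{V_l}$, and the magnitude of the error is not exponentially small in $n$ — it can be as large as $(1-\epsilon)^{O(1)}$ per column — so bounding the non-generic contribution is not a correction term on top of your telescope, but the heart of the matter. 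At that point your plan says ``Wood's depth decomposition must be combined carefully across all $k+1$ layers,'' which is where a genuine gap appears.

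Concretely, the following ingredients of the actual proof are absent from the plan and are not implied by what you wrote. First, the paper separates, for each layer $i$, the events $A_H^{(i)}$ (the depth witness $\sigma_i$ makes the image drop to $H$, but the full image $F_i(V_i)$ strictly contains $H$) and $B_H^{(i)}$ (the image $F_i(V_i)$ equals $H$). In your chain-based bookkeeping these are not distinguished: a dominant chain $(G,\ldots,G)$ contains $F$ with $F_i\in A_H^{(i)}$ for $H\lneq G$, whose per-column probability can exceed $1/|G|$ by a constant factor raised to the power $\beta_n^{(i)}-\beta_n^{(i-1)}$, while a chain with $G_j\lneq G$ corresponds to $B_{G_j}^{(j)}$ and has to be handled by restricting to the subgroup; the two cases need different arguments (the paper's Lemmas~\ref{lem: special case1 for upper bound for all type2 case converges to 0 proposition} and~\ref{lem: special case2 for upper bound for all type2 case congverges to 0 proposition} already show a case split on whether $\beta_n^{(k)}$ is large or small relative to $n-\alpha_n^{(k)}$). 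Second, and more seriously, the analysis of the $B$-type contribution (your non-dominant chains) breaks into two regimes depending on whether $n-\alpha_n^{(j)}\ge\eta n$ or not. When $n-\alpha_n^{(j)}$ is small compared to $n$, one cannot extract enough savings from the depth of $F|_{V_j}$ alone (the binomial-versus-$D^{-(n-\alpha_n^{(j)})}$ gain is measured in units of $n-\alpha_n^{(j)}$, while the $(1-\epsilon)$ penalty is measured in units of $\beta_n$, and the two may be incomparable). The paper resolves this with the transpose identity $\cok(M)\cong\cok(M^T)$ (Lemma~\ref{lem: Smith normal form}) to exchange the roles of $\alpha$ and $\beta$, together with an induction on the number of steps $k$; neither of these appears in your plan, and without them the ``growing budget $n-\alpha_n^{(l)}-\beta_n^{(l)}\to\infty$'' is not by itself enough slack to absorb the non-generic contributions. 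You should expect this case analysis, the $A/B$ distinction, and the transpose-plus-induction machinery to be essential, not a matter of careful bookkeeping.
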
 
Since every surjection $\cok(M) \twoheadrightarrow G$ can be lifted uniquely to a surjection $V \twoheadrightarrow G$, we see that
$$
\bE(\#\Sur(\cok(M),G)) = \sum_{F \in \Sur(V,G)} \bP(FM = 0).
$$
On the right hand side, we view $M$ as a function from $W (= R^n)$ to $V$, so the identity $FM = 0$ means the composition $F \circ M : W \to G$ is the zero homomorphism.
Therefore, it is enough to show that
\begin{equation}
\label{eq: universality main theorem goal}
    \underset{n \to \infty}{\lim} \sum_{F \in \Sur(V,G)} \bP(FM = 0) = 1. 
\end{equation}
Since the entries of $M$ are independent, we have
$$
\bP(FM = 0) = \prod_{i=1}^n \bP(FM_i = 0),
$$
where 
$$
FM_i = \sum_{l=1}^n F(v_l)M_i(l). 
$$
Here and after we write $M_i$ for the $i$-th column of $M$ and let $M_i(j)$ be the $j$-th entry of $M_i$, i.e. $M_i(j) = M_{i,j}$. As \eqref{eq: universality main theorem goal} is clearly true when $G = \{0\}$, we assume that $|G| > 1$ for the rest of the paper. 
For every group homomorphism $F\in \Hom(V, G)$, define 
$$F_l:= F|_{V_l} \in \Hom(V_l, G)$$
the restriction of $F$ to $V_l$. 
Following \cite{Woo17}, we define the notion of code and $\dt$-depth as follows.
\begin{defn}
For a positive real number $d$, we say $F_l \in \Hom(V_l,G)$ is a \emph{code of distance $d$} if for every $\sg \subseteq [n] \backslash [\al_n^{(l)}]$ with $|\sg| < d$, we have
$$
F_l((V_l)_{\backslash \sg}) = G. 
$$    
\end{defn}
Here, we write $(V_l)_{\backslash \sg}$ for the subgroup of $V_l$ generated by $\left\{v_i: i \in [n]\backslash ([\alpha_n^{(l)}] \cup \sg)\right\}$.
For a positive integer $n = p_1^{a_1}p_2^{a_2}\cdots p_t^{a_t} \ge 2$, where $p_i$'s are distinct primes, let
$$
\ell(D) := a_1 + a_2 + \cdots + a_t,
$$
and let $\ell(1) := 0$. 
\begin{defn}
For a constant $\dt>0$, the \emph{$\dt$-depth} of $F_l \in \Hom(V_l,G)$ is defined to be the largest positive integer $D$ such that there exists $\sg \subseteq [n]\backslash [\alpha_n^{(l)}]$ with $|\sg| < \ell(D) \dt (n- \alpha_n^{(l)})$ and $[G:F((V_l)_{\backslash \sg})] = D$. If there is no such $D$, then we define the $\dt$-depth of $F_l$ be $1$.   
\end{defn}
Note that if $F_l \in \Hom(V_l,G)$ has $\dt$-depth $1$, then $F_l$ is a code of distance $\dt(n - \alpha_n^{(i)})$. 
We choose small constants $\eta>0$ and $\dt_i >0$ for $1 \le i \le k+1$ as in Section \ref{The constants}.
Note that $F_l \in \Hom(V_l, G)$ is a code of distance $\dt_l(n-\alpha_n^{(l)})$ or the $\dt_l$-depth of $F_l$ is $D_l$ for some $D_l >1$.
If $H$ is a proper subgroup of $G$, we define
\begin{align*}
   A_{H}^{(i)} :=  \{F \in \Sur(V,G) & : F_i \text{ is of $\dt_i$-depth $[G:H]$ and there exists $\sg \subseteq [n]\backslash [\alpha_n^{(i)}]$ with}  \\
   & |\sg| < \ell([G:H]) \dt_i (n-\alpha_n^{(i)}) \text{ such that } F_i((V_i)_{\backslash \sg}) = H \text{ and } [F_i(V_i) : H] > 1 \}.
\end{align*}
Let 
$$
A_{G}^{(i)} := \left\{F \in \Sur(V,G) : F_i \text{ is a code of distance $\dt_i (n- \alpha_n^{(i)})$}\right\}. 
$$
For a proper subgroup $H$ of $G$, define
$$
B_{H}^{(i)} = \{F \in \Sur(V,G) : F_i \text{ is of $\dt_i$-depth $[G:H]$ and $F_i(V_i) = H$} \}. 
$$
It is clear by definition that for every $F \in \Sur(V,G)$ with the $\dt_i$-depth of $F_i$ equal to $D>1$, there exists a proper subgroup $H$ of $G$ of index $D$ such that $F \in A_{H}^{(i)}$ or $F \in B_{H}^{(i)}$.

If $\epsilon' \ge \epsilon > 0$, then an $\epsilon'$-balanced variable is $\epsilon$-balanced. Thus, we may and will assume that $\epsilon < 1/2$. In particular, for every proper subgroup $H$ of $G$, we have
\begin{equation}
 \label{eq: b_G < b_H}
\frac{1}{|G|} < \frac{1}{|H|}(1-\epsilon). 
\end{equation}
For our purpose, we assume that for all $1\le i\le k$, we have
$$
\underset{n \to \infty}{\lim} (n - \alpha_n^{(i)} - \beta_n^{(i)}) = \infty.
$$
Since we will work with sufficiently large $n$, we may and will assume that for every $1\le i\le k$, 
$$
n - \alpha_n^{(i)} - \beta_n^{(i)} \ge 1. 
$$

\subsection{The outline of the proof of Theorem \ref{thm: universality moment}} \label{The outline of the proof of the universality theorem}
In this subsection, we give a brief outline of the proof of Theorem \ref{thm: universality moment}. Note first that for any $F \in \Sur(V,G)$ and $1 \le i \le k+1$, $F_i$ is either a code of distance $\dt_i (n - \alpha_n^{(i)})$ or the $\dt_i$-depth of $F_i$ is $D$ for some positive integer $D>1$. In the latter case, there exists a proper subgroup $H$ of $G$ such that $[G:H]=D$, and $F \in A_H^{(i)}$ or $F \in B_H^{(i)}$. Moreover, we note that for any $F \in \Sur(V,G)$, it is impossible that $F \in B_{H}^{(k+1)}$ for a proper subgroup $H$ of $G$ because it would mean $H= F(V) = G$, which is absurd. Let $H_1, H_2, \ldots, H_{k+1}$ be subgroups of $G$. Then we see $F$ falls into one of the following three categories:
\begin{enumerate}
\item
For every $1 \le i \le k+1$, $F_i$ is a code of distance $\dt_i(n-\alpha_n^{(i)})$. 
\item 
At least one of $H_i$ is a proper subgroup of $G$ and
$$F \in \bigcap_{i=1}^{k+1} A_{H_i}^{(i)}.$$ 
\item 
For some $1\le j \le k$ with $H_j$ a proper subgroup of $G$,
$$F \in B_{H_j}^{(j)} ~\bigcap~ \left(\bigcap_{i= j+1}^{k+1} A_{H_{i}}^{(i)} \right).$$     
\end{enumerate}
Let 
$$
\mc{F}_1 = \bigcap_{i=1}^{k+1} A_G^{(i)},
$$
i.e. the set of those $F \in \Sur(V,G)$ such that $F_i$ is a code of distance $\dt_i(n-\alpha_n^{(i)})$ for every $1\le i \le k+1$. 
We first prove in Proposition \ref{prop: code sum to 1} that
$$
\underset{n \to \infty}{\lim} \sum_{F \in \mc{F}_1} \bP(FM = 0) = 1. 
$$
By Proposition \ref{prop: AA for k converges to 0}, if at least one of $H_i$ is a proper subgroup of $G$, then
$$
\underset{n \to \infty}{\lim} \sum_{F \in \cap_{i=1}^{k+1} A_{H_i}^{(i)}} \bP(FM = 0) = 0. 
$$   
Now let us assume $H_j$ is a proper subgroup of $G$. For simplicity, let us write
$$
R_j = R_j(H_j, \ldots, H_{k+1}) = B_{H_j}^{(j)} ~\bigcap~ \left(\bigcap_{i=j+1}^{k+1} A_{H_i}^{(i)}\right).
$$
Let $\eta>0$ be a positive integer, $N_j := \{n \in \bN : n - \alpha_n^{(j)} \ge \eta n\}$ and $N_j^c := \{n \in \bN : n - \alpha_n^{(j)} < \eta n\}$. 
Proposition \ref{prop: BA for k converges to 0 cor} implies that either $N_j$ is finite or 
$$
\lim_{\substack{ n \in N_j \\n \to \infty }}\sum_{F \in R_j} \bP(FM=0) = 0. 
$$
Moreover, Proposition \ref{prop: universality last prop} proves that either $N_j^c$ is finite or 
$$
\lim_{\substack{ n \in N_j^c \\n \to \infty }}\sum_{F \in R_j} \bP(FM=0) = 0. 
$$
Thus we have
$$
\underset{n \to \infty}{\lim} \sum_{F \in R_j} \bP(FM=0) = 0,
$$
which completes the proof of Theorem \ref{thm: universality moment}. 

\subsection{The constants} \label{The constants}
Recall that we have a fixed positive constant $\epsilon < 1/2$.
In this subsection, we fix the constants $\eta, \gamma, \dt_i$ that will be used throughout the proof of Theorem \ref{thm: universality moment}. First, fix a constant $\eta > 0$ satisfying
$$
\eta < \frac{\epsilon}{2\epsilon + 5\log |G|}. 
$$
Also, we fix a constant $\gamma>0$ such that
$$
\gamma <  \frac{\epsilon(1-2\eta)}{5(k+1)}. 
$$
It is well-known that $\binom{n}{\ld n} \le 2^{H(\ld) n}$ where $H(\ld)$ is the binary entropy of $\ld$ and $H(\ld) \to 0$ as $\ld \to 0$. 
Take a sufficiently small constant $\dt_1>0$ such that the following two statements hold. 
\begin{enumerate}
\item 
For all $0<\ld \le \ell(|G|)\dt_1$, the following holds for all sufficiently large $n$:
$$
\binom{n}{\ld n} < e^{\gamma n} < e^{\frac{\epsilon(1-2\eta)}{5(k+1)}},
$$
where the last inequality follows from our choice of $\gamma$ above. 
\item 
$$
\dt_1  < \frac{\epsilon(1-2\eta)}{5(k+1) \ell(|G|)\log |G|} < \frac{\log 2}{5k \ell(|G|)\log |G|}.
$$
\end{enumerate}
Also,  for $2 \le i \le k+1$, we let $\dt_i$ be an arbitrary positive constant satisfying
$$
\dt_i < \frac{\dt_{i-1}\eta}{\ell(|G|)}.
$$
In particular,
$$
\dt_1 > \dt_2 > \cdots > \dt_{k+1}. 
$$

\subsection{The main term of the moment}

In this subsection, we prove that the sum of $\bP(FM = 0)$ over the set
$$
\mc{F}_1 = \bigcap_{i=1}^{k+1} A_{G}^{(i)}  = \left\{F \in \Sur(V,G) : F_i \text{ is a code of distance $\dt_i(n- \alpha_n^{(i)})$ for all $1 \le i \le k+1$}\right\}
$$
converges to $1$. When $X$ is a column vector of $n$ entries over $R$ and $F \in \Hom(V,G)$, we write
$$
FX = \sum_{l=1}^n F(v_l)X(l),
$$
where $X(l)$ denotes the $l$-th entry of $X$.

\begin{lem}
\label{lem: lem for code bound}
Let $\sg \subseteq [n]$ such that $|\sg| = m$ for some positive integer $1 \le m \le n$. Let $X$ be a random column vector (over $R$) of $n$ entries with the $i$-th entries for $i \in \sg$ are fixed to be $0$ and the other entries are independent and $\epsilon$-balanced in $R$. Let $V' = V_{\backslash \sg}$ and suppose that $F|_{V'} \in \Hom(V',G)$ is a code of distance $\dt (n-m)$ for a constant $\dt>0$. Then for every $g \in G$,
$$
\left|\bP(FX = g) -\frac{1}{|G|}\right| \le e^{-\epsilon \dt (n-m)/a^2}. 
$$
\end{lem}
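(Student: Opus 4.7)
The proof strategy is Fourier analysis on the finite abelian group $G$: expand $\mathbf{1}_{FX=g}$ into characters, factor the resulting expectation across the independent coordinates, use the code hypothesis to guarantee many nontrivial factors, and apply a standard character estimate for $\epsilon$-balanced variables. First, by orthogonality of characters $\chi : G\to\bb{C}^{\times}$,
\begin{equation*}
\bP(FX=g)-\frac{1}{|G|} \,=\, \frac{1}{|G|}\sum_{\chi\neq\mathbf{1}}\overline{\chi(g)}\,\bE[\chi(FX)],
\end{equation*}
so the triangle inequality reduces the claim to showing $|\bE[\chi(FX)]|\le e^{-\epsilon\dt(n-m)/a^{2}}$ for every nontrivial $\chi$. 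Using $X(i)=0$ for $i\in\sg$ and independence of the remaining entries, the expectation factors:
\begin{equation*}
\bE[\chi(FX)] \,=\, \prod_{i\notin\sg}\bE[\psi_{i}(X(i))], \qquad \psi_{i}(r):=\chi\bigl(r\cdot F(v_{i})\bigr).
\end{equation*}
Here each $\psi_{i}$ is a character of $R=\Z/a\Z$, trivial exactly when $F(v_{i})\in\ker\chi$.

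The second step uses the code hypothesis to bound the number of nontrivial factors. Let $T_{\chi}:=\{i\in[n]\setminus\sg : F(v_{i})\notin\ker\chi\}$. If $|T_{\chi}|<\dt(n-m)$, set $\tau:=T_{\chi}\subseteq[n]\setminus\sg$; the code hypothesis then gives $F(V_{\backslash(\sg\cup\tau)})=G$, but every generator $v_{i}$ of $V_{\backslash(\sg\cup\tau)}$ satisfies $F(v_{i})\in\ker\chi\subsetneq G$, a contradiction. Hence $|T_{\chi}|\ge\dt(n-m)$.

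The third step is the per-entry estimate: for any nontrivial character $\psi$ of $R$ and any $\epsilon$-balanced $\xi\in R$,
\begin{equation*}
|\bE[\psi(\xi)]| \,\le\, 1-\epsilon/a^{2} \,\le\, e^{-\epsilon/a^{2}}.
\end{equation*}
This is a standard Fourier bound of the type used in \cite{Woo17, Woo19}; writing $\psi(r)=e^{2\pi i kr/a}$, one expands
\begin{equation*}
1-|\bE[\psi(\xi)]|^{2} \,=\, 2\sum_{r,s}\bP(\xi=r)\bP(\xi=s)\sin^{2}\!\bigl(\pi k(r-s)/a\bigr),
\end{equation*}
and uses $\epsilon$-balancedness modulo each prime dividing $a$ to produce at least two residues in the quotient $\Z/(a/\gcd(k,a))\Z$ each carrying $\Omega(\epsilon)$ mass and whose difference contributes a $\sin^{2}$-term of size $\Omega(1/a^{2})$. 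Multiplying this per-entry bound over $i\in T_{\chi}$ yields $|\bE[\chi(FX)]|\le e^{-\epsilon\dt(n-m)/a^{2}}$, completing the proof.

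The main obstacle will be the per-entry estimate itself: extracting the sharp $1/a^{2}$ dependence requires some care because $\epsilon$-balancedness only controls the marginals of $\xi$ modulo each prime divisor of $a$, whereas a nontrivial character of $\Z/a\Z$ may be trivial modulo some such primes, so one must argue at the level of the largest quotient through which $\psi$ factors. The preceding steps (Fourier inversion, factorization, and the code-to-support argument) are entirely routine.
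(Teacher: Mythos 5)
Your proof is correct and follows the same route the paper takes, since the paper's ``proof'' is a one-line citation to Wood's Lemma 2.1 in [Woo19], whose argument is precisely the Fourier-analytic one you reconstruct: character orthogonality, factorization over independent coordinates, the code hypothesis to guarantee at least $\dt(n-m)$ nontrivial character factors, and the per-entry bound $|\bE[\psi(\xi)]|\le e^{-\epsilon/a^{2}}$. Your sketch of the per-entry estimate is also sound: with $a'=a/\gcd(k,a)$, for any prime $p\mid a'$ one has $1-|\bE[\psi(\xi)]|^{2}\ge \frac{4}{a^{2}}\bP(\xi\not\equiv\xi'\bmod a')\ge\frac{4}{a^{2}}\bP(\xi\not\equiv\xi'\bmod p)\ge\frac{4\epsilon}{a^{2}}$ by $\epsilon$-balancedness, which gives the claimed $1-\epsilon/a^{2}$ bound.
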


\begin{proof}
This follows similarly as in the proof of \cite[Lemma 2.1]{Woo19}. 
\end{proof}

\begin{lem}
\label{lem: exp bound lem}
Let $k > 0$ and $f(n)$, $g(n)$ be functions from $\bN$ to $\bR$. Suppose that
$$\underset{n \to \infty}{\lim} f(n) = \underset{n \to \infty}{\lim} f(n)g(n) = 0$$ 
Then
$$
\underset{n \to \infty}{\lim}(1 + f(n))^{g(n)}  = 1. 
$$   
\end{lem}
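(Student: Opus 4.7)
The plan is to reduce the claim to a limit statement in the exponent via the logarithm, and then use the Taylor expansion of $\log(1+x)$ at $x=0$.

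First I would observe that since $f(n) \to 0$, there exists $n_0$ such that $|f(n)| < 1/2$ for all $n \ge n_0$. In particular $1 + f(n) > 0$, so $(1+f(n))^{g(n)}$ is well-defined (in the real sense) and
$$
(1+f(n))^{g(n)} = \exp\bigl(g(n)\log(1+f(n))\bigr).
$$
By continuity of $\exp$, it therefore suffices to show that $g(n)\log(1+f(n)) \to 0$ as $n \to \infty$.

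Next I would use the standard Taylor estimate: there is an absolute constant $C>0$ such that $|\log(1+x) - x| \le C x^2$ for all $|x| \le 1/2$. Applying this with $x = f(n)$ for $n \ge n_0$ gives
$$
\bigl|g(n)\log(1+f(n)) - g(n)f(n)\bigr| \le C\,|g(n)|\,f(n)^2 = C\,|f(n)g(n)|\cdot|f(n)|.
$$
By hypothesis both $f(n)g(n) \to 0$ and $f(n)\to 0$, so the right-hand side tends to $0$; together with $g(n)f(n) \to 0$ this yields $g(n)\log(1+f(n)) \to 0$, completing the proof.

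The argument is essentially routine; there is no genuine obstacle. The only point requiring a little care is to confirm that $(1+f(n))^{g(n)}$ is defined as a real number even when $g(n)$ is not an integer, which is handled by the observation that $1+f(n) > 0$ for large $n$. Note that the hypothesis ``$k > 0$'' in the statement does not appear to play any role and may be a vestigial parameter from an earlier version; the conclusion follows from $f(n) \to 0$ and $f(n)g(n) \to 0$ alone.
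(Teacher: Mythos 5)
Your proof is correct, and it is somewhat more careful than the paper's. The paper's argument recognizes the $e$-limit and writes
$(1+f(n))^{g(n)} = \bigl((1+f(n))^{1/f(n)}\bigr)^{f(n)g(n)} \to e^{0} = 1$,
which is slick but glosses over the case $f(n)=0$ (where $1/f(n)$ is undefined) and leaves the continuity step $a_n^{b_n}\to e^0$ implicit. Your version passes to $\exp(g(n)\log(1+f(n)))$ and uses the quantitative Taylor bound $|\log(1+x)-x|\le C x^2$ to show $g(n)\log(1+f(n))\to 0$, which handles the $f(n)=0$ case transparently and makes the error control explicit. Both routes are, at bottom, the observation that $\log(1+f)\approx f$, so they are closely related; your phrasing is the more robust one. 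You are also right that the hypothesis $k>0$ is never used and is a vestigial parameter.
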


\begin{proof}
Since $\underset{n \to \infty}{\lim} f(n) = 0$, we have $\underset{n \to \infty}{\lim} (1+ f(n))^{1/f(n)} = e$. Then it follows that
$$
\underset{n \to \infty}{\lim}(1 + f(n))^{g(n)} = \underset{n \to \infty}{\lim}\left((1 + f(n))^{1/f(n)}\right)^{f(n)g(n)} = e^0 = 1. \eqno \qedhere
$$
\end{proof}

Recall that we are assuming that for every $1 \le i \le k$, 
$$
\underset{n \to \infty}{\lim} (n- \alpha_n^{(i)} - \beta_n^{(i)}) = \infty. 
$$

\begin{prop}
\label{prop: code sum to 1}
We have
$$\underset{n \to \infty}{\lim}\sum_{F \in \mc{F}_1} \bP(FM = 0) = 1.$$   
\end{prop}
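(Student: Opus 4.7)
The strategy breaks the problem into two steps: (a) $\bP(FM=0) = |G|^{-n}(1+o(1))$ uniformly in $F \in \mc{F}_1$, and (b) $|\mc{F}_1| = |G|^n(1+o(1))$. Together these give $\sum_{F\in \mc{F}_1}\bP(FM=0) \to 1$.

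For (a), I partition the $n$ columns of $M$ by the stair they inhabit: for $i \in (\beta_n^{(l-1)}, \beta_n^{(l)}]$, the stair condition forces $M_i(j)=0$ for $j \le \alpha_n^{(l)}$, so $FM_i$ depends only on $F_l$. Since $F \in A_G^{(l)}$, the restriction $F_l$ is a code of distance $\dt_l(n-\alpha_n^{(l)})$, and Lemma~\ref{lem: lem for code bound} (applied with $\sg = [\alpha_n^{(l)}]$ and $g=0$) gives
\[
\lt|\bP(FM_i = 0) - |G|^{-1}\rt| \le \exp\lt(-\epsilon \dt_l(n-\alpha_n^{(l)})/a^2\rt).
\]
Set $N_l := \beta_n^{(l)} - \beta_n^{(l-1)}$. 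The hypothesis $n-\alpha_n^{(l)}-\beta_n^{(l)} \to \infty$ together with the convention $\alpha_n^{(k+1)}=0$ yields $N_l \le n-\alpha_n^{(l)}$ for every $l$; consequently $N_l \cdot \exp(-\epsilon\dt_l(n-\alpha_n^{(l)})/a^2) \to 0$ via $xe^{-cx} \to 0$. Applying Lemma~\ref{lem: exp bound lem} on the upper side and the Bernoulli bound $(1-x)^N \ge e^{-2xN}$ for $x \in [0,1/2]$ on the lower side shows each stair contributes a factor $|G|^{-N_l}(1+o(1))$; multiplying the $k+1$ stair-contributions gives $\bP(FM=0) = |G|^{-n}(1+o(1))$ with an error independent of the particular $F \in \mc{F}_1$.

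For (b), observe $\Sur(V,G) \setminus \mc{F}_1 \subseteq \bigcup_{i=1}^{k+1}(\Sur(V,G) \setminus A_G^{(i)})$. If $F_i$ fails to be a code of distance $\dt_i e_i$ (with $e_i := n-\alpha_n^{(i)}$), then there exist a proper subgroup $H < G$ and $\sg \subseteq [n]\setminus[\alpha_n^{(i)}]$ of size $s < \dt_i e_i$ such that $F(v_j) \in H$ for every $j \in [n]\setminus([\alpha_n^{(i)}]\cup \sg)$. A union bound over $H$ and $\sg$ yields
\[
|\Sur(V,G)\setminus A_G^{(i)}| \le |G|^n \sum_{H<G}\sum_{s=0}^{\lfloor \dt_i e_i\rfloor}\binom{e_i}{s}(|H|/|G|)^{e_i-s}.
\]
Using $|H|/|G| \le 1/2$, the entropy estimate $\binom{e_i}{s} \le \binom{e_i}{\lfloor \dt_i e_i\rfloor} \le e^{\gamma e_i}$ valid for $s \le \dt_i e_i \le \ell(|G|)\dt_1 e_i$ (Section~\ref{The constants}), and the fact that $\gamma < (1-\dt_1)\log 2$ by the choice of constants, the right-hand side is bounded by a finite constant times $|G|^n e_i \exp((\gamma - (1-\dt_1)\log 2)\, e_i) = o(|G|^n)$. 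Since $|\Sur(V,G)| = |G|^n(1-o(1))$ by the standard Möbius inversion, we conclude $|\mc{F}_1| = |G|^n(1+o(1))$.

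Assembling (a) and (b) gives $\sum_{F\in \mc{F}_1}\bP(FM=0) = |\mc{F}_1|\cdot |G|^{-n}(1+o(1)) \to 1$. The main technical point is step (a): propagating the Lemma~\ref{lem: lem for code bound} exponential decay through $k+1$ independent product blocks requires the bound $N_l \le n-\alpha_n^{(l)}$, which is exactly what the hypothesis $n-\alpha_n^{(l)}-\beta_n^{(l)} \to \infty$ guarantees; without it, the cumulative error in some stair could be $\Theta(1)$ and the joint probabilities would not concentrate around $|G|^{-n}$.
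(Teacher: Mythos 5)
Your proof is correct and follows essentially the same two-step strategy as the paper's: (a) use Lemma~\ref{lem: lem for code bound} per column to show $\bP(FM=0) = |G|^{-n}(1+o(1))$ uniformly over $\mc F_1$, via the observation that $N_l = \beta_n^{(l)}-\beta_n^{(l-1)} \le n-\alpha_n^{(l)} \to \infty$ and Lemma~\ref{lem: exp bound lem}; and (b) show $|\mc F_1|/|G|^n \to 1$ by bounding the number of non-codes. The only difference is cosmetic: the paper quotes \cite[Lemma 2.6]{Woo19} for the non-code count, while you rederive it by a direct union bound over proper subgroups $H$ and small sets $\sigma$, then use the entropy bound and the crude estimate $|H|/|G|\le 1/2$; both yield $|\Sur(V,G)\setminus A_G^{(i)}| = o(|G|^n)$ from the same choice of constants in Section~\ref{The constants}.
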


\begin{proof}
First we show that 
$$
\underset{n \to \infty}{\lim} \frac{|\mc{F}_1|}{|G|^n} = 1. 
$$
By \cite[Lemma 2.6]{Woo19}, the number of $F \in \Hom(V,G)$ such that the $\dt_i$-depth of $F_i = F|_{V_i} \in \Hom(V_i, G)$ is greater than $1$ is bounded above by
$$
\sum_{1< D|\#G} C \binom{n- \alpha_n^{(i)}}{\lceil \ell(D)\dt_i (n- \alpha_n^{(i)}) \rceil - 1}|G|^n|D|^{-(n- \alpha_n^{(i)})) + \ell(D)\dt_i (n- \alpha_n^{(i)})} =: c_n^{(i)},
$$
for some constant $C>0$. 
By our choice of $\dt_i$ and $\gamma$ in Section \ref{The constants}, it follows that for every $1 \le i \le k+1$, the following holds for sufficiently large $n$:
$$
\frac{c_n^{(i)}}{|G|^n} \le C\sum_{1< D|\#G} \frac{e^{\gamma (n - \alpha_n^{(i)})}e^{\log(D) \ell(D)\dt_1 (n- \alpha_n^{(i)})}}{D^{(n-\alpha_n^{(i)})}} < C\sum_{1< D|\#G}\frac{e^{(n-\alpha_n^{(i)})/5}}{e^{(\log 2)(n-\alpha_n^{(i)})}}.
$$
Therefore, we have
$$
\underset{n \to \infty}{\lim} \frac{c_n^{(i)}}{|G|^n} = 0. 
$$
Moreover, 
$$
|\Sur(V,G)| \ge |\Hom(V,G)| - \sum_{H < G} |\Hom(V,H)| = |G|^n - \sum_{H <G} |H|^n,
$$
where the sums vary over all proper subgroups $H$ of $G$, so we have
$$
\underset{n \to \infty}{\lim} \frac{|\Sur(V,G)|}{|G|^n} = 1. 
$$
Since we have
$$
|\Sur(V,G)| - \sum_{i=1}^{k+1} c_{n}^{(i)}\le |\mc{F}_1| \le |G|^n,
$$
it follows that
$$
\underset{n \to \infty}{\lim} \frac{|\mc{F}_1|}{|G|^n} = 1. 
$$
If $F \in \mc{F}_1$, we have by Lemma \ref{lem: lem for code bound} that
\begin{align*}
 \bP(FM=0) = \prod_{l =1}^n \bP(FM_l = 0) &\le \prod_{i=1}^{k+1}\left(\frac{1}{|G|} + e^{-\epsilon\dt_i(n-\alpha_n^{(i)})/a^2}\right)^{\beta_n^{(i)}-\beta_n^{(i-1)}}  \\
 &= \frac{1}{|G|^n} \prod_{i=1}^{k+1}\left(1+|G|e^{-\epsilon\dt_i(n-\alpha_n^{(i)})/a^2}\right)^{\beta_n^{(i)}-\beta_n^{(i-1)}}. 
\end{align*}
Note that since $n - \alpha_n^{(i)} - \beta_n^{(i)} \to \infty$ as $n \to \infty$, we have
$$
\underset{n \to \infty}{\lim} \frac{\beta_n^{(i)}}{e^{\epsilon\dt_i(n-\alpha_n^{(i)})/a^2}}= 0. 
$$
Then it follows from Lemma \ref{lem: exp bound lem} that
$$
\underset{n \to \infty}{\lim} \left(1+|G|e^{-\epsilon\dt_i(n-\alpha_n^{(i)})/a^2}\right)^{\beta_n^{(i)}-\beta_n^{(i-1)}} = 1,
$$
hence we have 
$$
\underset{n \to \infty}{\lim} \prod_{i=1}^{k+1}\left(1+|G|e^{-\epsilon\dt_i(n-\alpha_n^{(i)})/a^2}\right)^{\beta_n^{(i)}-\beta_n^{(i-1)}} = 1.
$$
Therefore, 
$$
\underset{n \to \infty}{\lim} \sum_{F \in \mc{F}_1} \bP(FM = 0) \le \underset{n \to \infty}{\lim} \frac{|\mc{F}_1|}{|G|^n} = 1.
$$
Similarly, for $F \in \mc{F}_1$, it follows from Lemma \ref{lem: lem for code bound} that
\begin{align*}
 \bP(FM=0) &\ge \prod_{i=1}^{k+1}\left(\frac{1}{|G|} - e^{-\epsilon\dt_i(n-\alpha_n^{(i)})/a^2}\right)^{\beta_n^{(i)}-\beta_n^{(i-1)}}  \\
 &= \frac{1}{|G|^n} \prod_{i=1}^{k+1}\left(1-|G|e^{-\epsilon\dt_i(n-\alpha_n^{(i)})/a^2}\right)^{\beta_n^{(i)}-\beta_n^{(i-1)}},   
\end{align*}
and Lemma \ref{lem: exp bound lem} shows that
$$
\underset{n \to \infty}{\lim} \prod_{i=1}^{k+1}\left(1-|G|e^{-\epsilon\dt_i(n-\alpha_n^{(i)})/a^2}\right)^{\beta_n^{(i)}-\beta_n^{(i-1)}} = 1.
$$
Therefore, 
$$
1= \underset{n \to \infty}{\lim} \frac{|\mc{F}_1|}{|G|^n}   \le \underset{n \to \infty}{\lim} \sum_{F \in \mc{F}_1} \bP(FM = 0),
$$
and this completes the proof. 
\end{proof}

\subsection{Auxiliary results}

In this subsection, we record some auxiliary results that will be used in the proof of Theorem \ref{thm: universality moment}. 

\begin{lem}
\label{lem: function counting lemma}
Let $m$ be a positive integer and let $H_1, H_2, \ldots, H_m$ be subgroups of $G$ such that
$$
|H_1| \le |H_2| \le \cdots \le |H_m|. 
$$
Let $A_1, A_2, \ldots, A_m$ be subsets of $[n]$ with
$$
|A_1| \le |A_2| \le \cdots \le |A_m|. 
$$
Write $a_i=|A_i|$ and $a_0=0$. Then we have 
$$
\#\{F \in \Hom(V,G) : F(v_j) \in H_i \text{ for all $1\le i \le m$ and $j \in A_i$} \} \le |G|^{n-a_m} \prod_{i=1}^{m} |H_i|^{a_i - a_{i-1}}. 
$$
\end{lem}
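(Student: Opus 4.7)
The plan is to reduce the count to a disjoint-support computation by carving out carefully chosen subsets $B_i \subseteq A_i$ of prescribed sizes, and then bounding the original count from above by imposing only the restricted family of constraints coming from the $B_i$.

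\textbf{Step 1 (constructing disjoint witnesses).} I will greedily build pairwise disjoint subsets $B_1, \ldots, B_m \subseteq [n]$ with $B_i \subseteq A_i$ and $|B_i| = a_i - a_{i-1}$. Set $B_1 := A_1$, so $|B_1| = a_1 - a_0$. Having produced disjoint $B_1, \ldots, B_{i-1}$ of the required sizes, choose any $B_i \subseteq A_i \setminus (B_1 \cup \cdots \cup B_{i-1})$ with $|B_i| = a_i - a_{i-1}$. This is possible because disjointness gives $|B_1 \cup \cdots \cup B_{i-1}| = \sum_{j < i}(a_j - a_{j-1}) = a_{i-1}$, hence
$$
|A_i \setminus (B_1 \cup \cdots \cup B_{i-1})| \ge |A_i| - a_{i-1} = a_i - a_{i-1},
$$
where the hypothesis $|A_{i-1}| \le |A_i|$ is implicitly used through the increasing sequence $a_0 \le a_1 \le \cdots \le a_m$.

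\textbf{Step 2 (relaxing constraints and counting).} Any $F$ counted on the left-hand side satisfies in particular $F(v_j) \in H_i$ whenever $j \in B_i$, because $B_i \subseteq A_i$. So the count is at most the number of $F \in \Hom(V,G)$ satisfying only this weakened family of constraints. Disjointness of the $B_i$ means that each $j \in B := \bigcup_{i=1}^m B_i$ lies in a unique $B_i$, so specifying such an $F$ amounts to choosing $F(v_j) \in H_i$ for each $j \in B_i$ (at most $|H_i|$ options) and $F(v_j) \in G$ for each $j \in [n] \setminus B$ (exactly $|G|$ options). Since $|B| = \sum_{i=1}^m (a_i - a_{i-1}) = a_m$, this yields the bound
$$
|G|^{n - a_m} \prod_{i=1}^m |H_i|^{a_i - a_{i-1}},
$$
as claimed.

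I do not anticipate any real obstacle: the argument is entirely elementary and combinatorial. The ordering $|H_1| \le \cdots \le |H_m|$ is not actually needed to establish the inequality, but it pairs the smaller subgroup sizes with the larger multiplicities $a_i - a_{i-1}$ in later indices, thereby producing the tightest version of this type of bound, which is what will be used in the subsequent estimates for the moment sums.
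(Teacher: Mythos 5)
Your proof is correct, and it takes a genuinely different route from the paper's. The paper also reduces to disjoint constraint sets, but it defines $B_i := A_i \cap \bigl(\bigcup_{l<i} A_l\bigr)^c$, so the sizes $b_i = |B_i|$ are whatever they happen to be (only satisfying $a_i \le b_1 + \cdots + b_i$); this yields the preliminary bound $|G|^{n-\sum b_i}\prod_i |H_i|^{b_i}$, and the paper then runs a rearrangement argument — moving excess mass from $b_j$ to $b_{j+1}$ — to pass to the exponents $a_i - a_{i-1}$. That rearrangement step is exactly where the hypothesis $|H_1|\le\cdots\le|H_m|$ gets used. By contrast, you choose $B_i$ greedily to have size exactly $a_i - a_{i-1}$ from the outset (possible because the union of earlier $B_l$'s has size exactly $a_{i-1}$), so the claimed bound falls out immediately with no rearrangement, and — as you observe — the ordering of the $|H_i|$ is never used. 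The trade-off is that your $B_i$'s do not in general exhaust $\bigcup_i A_i$, so some genuinely present constraints are discarded (whereas the paper's $B_i$'s partition $\bigcup_i A_i$); but since you only need an upper bound, discarding constraints is harmless. Your route is cleaner and shows the statement is slightly more general than as formulated; the paper's ordering hypothesis on the $|H_i|$ serves only to make the stated exponent allocation the sharpest one of its kind, not to make the inequality true.
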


\begin{proof}
Let $B_1 := A_1$ and for $2 \le i \le m$, let
$$B_i := A_i ~\bigcap~ \left(\bigcup_{l = 1}^{i-1} A_{l}\right)^c.$$
Then for every $1 \le i \le m$ we have that 
$$
A_i \subseteq \bigcup_{l=1}^{i} A_l = \bigcup_{l=1}^{i} B_l,
$$
where the latter is a disjoint union. Letting $b_i:=|B_i|$, it follows that
\begin{equation}
\label{eq: eq for ai bi}
    a_i  \le b_1 + b_2 + \cdots + b_i.
\end{equation}
Note that if $F(v_j) \in H_i$ for all $1\le i \le m$ and $j \in A_i$, then $F(v_j) \in H_i$ for all $1\le i \le m$ and $j \in B_i$, so the left hand side of the desired inequality is bounded above by
$$
|G|^{n-(b_1+\cdots+b_m)} \prod_{i=1}^m |H_i|^{b_i}. 
$$
Now it is enough to prove the following inequality:
\begin{equation}\label{eq: inequality in function counting lemma}
|G|^{n-(b_1+\cdots+b_m)} \prod_{i=1}^m |H_i|^{b_i} \le |G|^{n-a_m} \prod_{i=1}^{m} |H_i|^{a_i - a_{i-1}}.
\end{equation}
If $b_i = a_i - a_{i-1}$ for all $1\le i \le m$, then \eqref{eq: eq for ai bi} and \eqref{eq: inequality in function counting lemma} are equalities. Otherwise, there exists the smallest positive integer $j$ such that $b_j > a_{j} - a_{j-1}$ (note that $\sum_{i=1}^{m} b_i \ge a_m = \sum_{i=1}^{m} (a_i-a_{i-1})$). Let
$$
b_i':= \begin{cases}
a_j - a_{j-1} &~\text{if $i = j$} \\
b_{j+1} + b_j - (a_j - a_{j-1}) &~\text{if $i = j+1$} \\
b_i &~\text{if $i \neq j, j+1$}    
\end{cases}.
$$
Then the inequality \eqref{eq: eq for ai bi} holds when $b_i$'s are replaced with $b_i'$'s. 
Since $|H_j| \le |H_{j+1}|$, it follows that
$$
|G|^{n-(b_1+\cdots+b_m)} \prod_{i=1}^m |H_i|^{b_i} \le |G|^{n-(b_1'+\cdots+b_m')} \prod_{i=1}^m |H_i|^{b_i'}. 
$$
Moreover, the smallest positive integer $j'$ such that $b_{j'}' > a_{j'} - a_{j'-1}$ (if exists) is strictly larger than $j$. Repeating this argument finitely many times, we deduce \eqref{eq: inequality in function counting lemma}.
\end{proof}

\begin{lem}
\label{lem: right most case1 group subgroup}
Let $1 \le j \le k$ be a positive integer. Suppose that for some $n \in \bN$, $n-\alpha_{n}^{(j)} \ge \eta n$ and 
$$
B_{H_j}^{(j)} ~\bigcap~ \left(\bigcap_{i=j+1}^{k+1} A_{H_i}^{(i)}\right) \neq \varnothing.
$$ 
Then $H_j$ is a subgroup of $H_i$ for every $j+1 \le i \le k+1$. 
\end{lem}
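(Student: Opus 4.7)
The plan is to exploit two structural facts: first, since $\alpha_n^{(k)} < \cdots < \alpha_n^{(1)}$, for $i > j$ we have $\alpha_n^{(i)} < \alpha_n^{(j)}$, hence $V_j \subseteq V_i$ with $F_j = F_i|_{V_j}$; second, the constants were chosen so that $\delta_i < \delta_{i-1}\eta/\ell(|G|)$ for $i \ge 2$, so the depth ``budget'' at level $i$ is much smaller than at level $j$ whenever $i > j$. When $H_i = G$ the conclusion $H_j \subseteq H_i$ is immediate, so the real work lies in the case $H_i \lneq G$.

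In that case, I would use $F \in A_{H_i}^{(i)}$ to extract a witness $\sigma_i \subseteq [n] \setminus [\alpha_n^{(i)}]$ with $|\sigma_i| < \ell([G:H_i])\delta_i(n-\alpha_n^{(i)})$ and $F_i((V_i)_{\backslash \sigma_i}) = H_i$, and then restrict it by setting $\sigma_i' := \sigma_i \cap ([n] \setminus [\alpha_n^{(j)}])$. Since every index $l \in [n] \setminus ([\alpha_n^{(j)}] \cup \sigma_i')$ also lies in $[n] \setminus ([\alpha_n^{(i)}] \cup \sigma_i)$, one has $(V_j)_{\backslash \sigma_i'} \subseteq (V_i)_{\backslash \sigma_i}$, and therefore $F_j((V_j)_{\backslash \sigma_i'}) \subseteq H_i$. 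Assuming for contradiction $H_j \not\subseteq H_i$, the image $H' := F_j((V_j)_{\backslash \sigma_i'})$ lies in $H_j \cap H_i \subsetneq H_j$, so $[G:H'] > [G:H_j]$ and in particular $\ell([G:H']) \ge 1$. Since $F \in B_{H_j}^{(j)}$ forces the $\delta_j$-depth of $F_j$ to equal $[G:H_j]$, a contradiction will follow as soon as $|\sigma_i'| < \ell([G:H'])\delta_j(n-\alpha_n^{(j)})$.

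This size estimate is the anticipated main obstacle, and it is precisely where both the recursive choice $\delta_i < \delta_{i-1}\eta/\ell(|G|)$ and the hypothesis $n - \alpha_n^{(j)} \ge \eta n$ become essential: the crude bound $|\sigma_i'| \le |\sigma_i| < \ell(|G|)\delta_i n$ combines with $\delta_i \le \delta_{j+1} < \delta_j \eta / \ell(|G|)$ and $n - \alpha_n^{(j)} \ge \eta n$ to yield $|\sigma_i'| < \delta_j \eta n \le \ell([G:H'])\delta_j(n-\alpha_n^{(j)})$, as required. Without the hypothesis $n - \alpha_n^{(j)} \ge \eta n$ the extra factor of $\eta$ would be missing, which is consistent with the outline in Section \ref{The outline of the proof of the universality theorem} splitting $R_j$ into the two regimes $N_j$ and $N_j^c$ and handling them by different methods.
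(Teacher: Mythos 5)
Your proof is correct and follows essentially the same approach as the paper: both arguments restrict $\sigma_i$ to $[n]\setminus[\alpha_n^{(j)}]$, use the chain $\delta_i \le \delta_{j+1} < \delta_j\eta/\ell(|G|)$ together with $n-\alpha_n^{(j)} \ge \eta n$ to show the restricted set is small enough, and then invoke the $\delta_j$-depth condition to force $F_j$ to still hit all of $H_j$ on the restricted coordinates. The only cosmetic difference is that you phrase the last step as a contradiction (supposing $H_j \not\subseteq H_i$ forces a strictly smaller image), whereas the paper notes directly that any small $\tau$ gives $F((V_j)_{\backslash\tau}) = H_j$ and then deduces $H_j = F((V_j)_{\backslash\tau}) \subseteq F((V_i)_{\backslash\sigma_i}) = H_i$.
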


\begin{proof}
Let
$$
F \in B_{H_j}^{(j)} ~\bigcap~ \left(\bigcap_{i=j+1}^{k+1} A_{H_i}^{(i)}\right).
$$
Note first that since $F \in B_{H_j}^{(j)}$, for every $\tau \subseteq [n]\backslash [\alpha_n^{(j)}]$ with $|\tau| < \ell([G:H_j])\dt_j(n-\alpha_n^{(j)})$, we have
\begin{equation}
\label{eq: B_H condition}
F(V_j) = H_j = F((V_j)_{\backslash \tau}).
\end{equation}
Let $j+1 \le i \le k+1$.
If $H_i = G$, then $H_i$ clearly contains $H_j$ as a subgroup. If $H_i$ is a proper subgroup of $G$, then there exists $\sg_i \subseteq [n]\backslash [\alpha_n^{(i)}]$ such that 
$$
|\sg_i| < \ell([G: H_i])\dt_i(n-\alpha_n^{(i)}) \quad \text{and} \quad F((V_i)_{\backslash \sg_i}) = H_i.
$$
By our choice of $\dt_i$, we have 
$$
|\sg_i \cap [n]\backslash [\alpha_n^{(j)}]| \le |\sg_i| < \ell([G: H_i])\dt_i(n-\alpha_n^{(i)}) < \ell([G:H_j])\dt_j(n- \alpha_n^{(j)}). 
$$
Then \eqref{eq: B_H condition} implies that
$$
H_j = F\left((V_j)_{\backslash(\sg_i \cap [n]\backslash [\alpha_n^{(j)}])}\right) \subseteq  F\left((V_i)_{\backslash \sg_i}\right) = H_i. \eqno \qedhere
$$ 
\end{proof}

\begin{lem}
\label{lem: case2 FM_j=0 probability}
Let $H$ be a proper subgroup of $G$ and let $m$ be a positive integer such that $1\le m \le k+1$.
Then for every $\beta_n^{(m-1)}+1\le l \le \beta_n^{(m)}$, the following hold.  
\begin{enumerate}
    \item 
If $F \in A_{H}^{(m)} \cup B_{H}^{(m)}$, then
$$
\bP(FM_l = 0) \le \left(\frac{1}{|H|} + e^{-\epsilon\dt_m (n-\alpha_n^{(m)})/a^2}\right)\bP\left(\sum_{i = \alpha_n^{(m)} +1 }^n F(v_i)M_l(i) \in H\right)
$$
and
$$
\bP(FM_l = 0) \ge \left(\frac{1}{|H|} - e^{-\epsilon\dt_m (n-\alpha_n^{(m)})/a^2}\right)\bP\left(\sum_{i = \alpha_n^{(m)} +1 }^n F(v_i)M_l(i) \in H\right).
$$

\item If $F \in B_{H}^{(m)}$, then
$$
\left|\bP(FM_l = 0) - \frac{1}{|H|}\right| <  e^{-\epsilon\dt_m (n-\alpha_n^{(m)})/a^2}.
$$    
\end{enumerate}
\end{lem}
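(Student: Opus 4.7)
For $l$ with $\beta_n^{(m-1)}+1 \le l \le \beta_n^{(m)}$, the shape of the zero stairs forces $M_l(i) = 0$ for $1 \le i \le \al_n^{(m)}$ and leaves $M_l(\al_n^{(m)}+1), \dots, M_l(n)$ as independent $\epsilon$-balanced entries, so $FM_l$ equals the sum displayed in the lemma. Write $E = \{FM_l \in H\}$; then $\{FM_l = 0\} \subseteq E$, and the strategy is to condition on the piece of $M_l$ indexed by a well-chosen ``bad'' set $\sg$ and apply Lemma~\ref{lem: lem for code bound} (with $G$ replaced by $H$) to the complementary piece.

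By the $\dt_m$-depth hypothesis, in both cases $F \in A_H^{(m)}$ and $F \in B_H^{(m)}$ there exists $\sg \subseteq [n]\setminus[\al_n^{(m)}]$ with $|\sg| < \ell([G:H])\dt_m(n-\al_n^{(m)})$ such that $F((V_m)_{\backslash \sg}) = H$; in the $B_H^{(m)}$ case the equality (rather than just having index $[G:H]$) uses $F(V_m)=H$ together with order comparison. The key technical point I would verify is that $F$ restricted to $(V_m)_{\backslash \sg}$ is a code of distance $\dt_m(n-\al_n^{(m)})$ into $H$: if some $\tau$ of size $< \dt_m(n-\al_n^{(m)})$ cut the image below $H$, then the resulting index would be $[G:H]\cdot k$ for some $k\ge 2$, so $\ell$ of it would be at least $\ell([G:H])+1$, while $|\sg\cup\tau|<(\ell([G:H])+1)\dt_m(n-\al_n^{(m)})$, contradicting that the $\dt_m$-depth equals $[G:H]$.

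Now split $FM_l = Y + Z$ with
\[
Y = \sum_{i \in [n]\setminus([\al_n^{(m)}]\cup \sg)} F(v_i)M_l(i), \qquad Z = \sum_{i \in \sg} F(v_i) M_l(i).
\]
Then $Y \in H$ always, so $E$ coincides with $\{Z \in H\}$, and $Y,Z$ are independent. Conditioning on $Z=z$ for each $z \in H$ and applying Lemma~\ref{lem: lem for code bound} to the code $F|_{(V_m)_{\backslash \sg}} \in \Hom((V_m)_{\backslash \sg},H)$ yields
\[
\left|\bP(Y = -z) - \tfrac{1}{|H|}\right| \le e^{-\epsilon \dt_m (n - \al_n^{(m)})/a^2}
\]
uniformly in $z \in H$. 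Summing $\bP(Y=-z)\bP(Z=z)$ over $z \in H$ gives
\[
\bP(FM_l = 0) = \sum_{z\in H}\bP(Y=-z)\bP(Z=z) \in \Bigl[\tfrac{1}{|H|} \pm e^{-\epsilon\dt_m(n-\al_n^{(m)})/a^2}\Bigr]\bP(E),
\]
which is exactly part~(1).

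For part~(2), the additional hypothesis $F \in B_H^{(m)}$ gives $F(V_m) = H$, hence $F(v_i) \in H$ for every $i \in [n]\setminus [\al_n^{(m)}]$. Consequently $FM_l \in H$ with probability one, so $\bP(E)=1$, and part~(2) drops out of part~(1) by taking absolute values. The main obstacle is really just the combinatorial verification in the second paragraph that the residual map on $(V_m)_{\backslash \sg}$ is a code of distance $\dt_m(n-\al_n^{(m)})$ into $H$; everything else is conditioning and a direct application of the pre-existing code-bound lemma.
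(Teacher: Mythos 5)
Your proof is correct and takes essentially the same route as the paper's: both identify the set $\sg$ witnessing the depth condition, verify that $F$ restricted to $(V_m)_{\backslash\sg}$ is a code of distance $\dt_m(n-\al_n^{(m)})$ into $H$ by the same depth-contradiction argument, and then decompose $FM_l$ into independent pieces (your $Y+Z$ split is the same computation the paper phrases via conditional probability) before applying Lemma~\ref{lem: lem for code bound}. Part~(2) is obtained in both cases by observing that $F\in B_H^{(m)}$ forces the conditioning event to have probability one.
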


\begin{proof}
For (1), we closely follow the proof of \cite[Lemma 2.7]{Woo19}. Since $F \in A_{H}^{(m)} \cup B_{H}^{(m)}$, there exists $\sg \subseteq [n] \backslash [\alpha_n^{(m)}]$ such that 
$$
|\sg| < \ell([G:H])\dt_m(n - \alpha_n^{(m)}) \quad \text{and} \quad F((V_m)_{\backslash \sg}) = H.
$$
Then
\begin{equation} \label{eq712a}
\bP (FM_l = 0)  = \bP\left(\sum_{i \in \sg}F(v_i)M_l(i) \in H\right)\bP\left(\sum_{i \not\in \sg } F(v_i)M_l(i) = -\sum_{i \in \sg} F(v_i)M_l(i) \mid \sum_{i \in \sg}F(v_i)M_l(i) \in H \right).
\end{equation}
Since $F((V_m)_{\backslash \sg}) = H$, we have $\sum_{i \in \sg}F(v_i)M_l(i) \in H$ if and only if $\sum_{i = \alpha_n^{(m)}+1}^n F(v_i)M_l(i) \in H$ so
\begin{equation} \label{eq712b}
\bP\left(\sum_{i \in \sg}F(v_i)M_l(i) \in H\right) = \bP\left(\sum_{i = \alpha_n^{(m)} +1 }^n F(v_i)M_l(i) \in H\right).
\end{equation}
Since $M_l(i)=0$ for each $i \in [\alpha_n^{(m)}]$ (by the condition $\beta_n^{(m-1)}+1\le l \le \beta_n^{(m)}$), we have
\begin{equation} \label{eq712c}
\begin{split}
& \bP \left(\sum_{i \not\in \sg }  F(v_i)M_l(i) = -\sum_{i \in \sg} F(v_i)M_l(i) \mid \sum_{i \in \sg}F(v_i)M_l(i) \in H \right) \\
= \, & \bP\left(\sum_{i \in [n]\backslash ([\alpha_n^{(m)}] \cup \sg )} F_m(v_i)M_l(i) = -\sum_{i \in \sg} F_m(v_i)M_l(i) \mid \sum_{i \in \sg}F_m(v_i)M_l(i) \in H \right)
\end{split}    
\end{equation}
where $F_m: V_m \to H$ is the map $F$ whose domain and codomain are restricted to $V_m$ and $H$, respectively. 
Also note that the restriction of $F_m: V_m \to H$ to $(V_m)_{\backslash \sg}$ is a code of distance $\dt_m(n - \alpha_n^{(m)})$. Otherwise, there exists $\tau \subseteq [n] \backslash ([\alpha_n^{(m)}] \cup \sg)$ such that $|\tau| < \dt_m(n - \alpha_n^{(m)})$ and $F_m((V_m)_{\backslash (\sg \cup \tau)}) \subsetneq H$, which contradicts the assumption that $F_m$ is of $\dt_m$-depth $[G:H]$. 

Now the equations (\ref{eq712a}), (\ref{eq712b}), (\ref{eq712c}) and Lemma \ref{lem: lem for code bound} finishes the proof of (1). 
If $F \in  B_{H}^{(m)}$, then $F(v_i) \in H$ for all $i \in [n] \backslash [\alpha_n^{(m)}]$. Then (2) is an immediate consequence of (1). 
\end{proof}

\begin{defn}
For every subgroup $H \le G$, define a constant
$$
b_H := \begin{cases}
\frac{1}{|H|}(1-\epsilon) &\text{ if $H \neq G$} \\
\frac{1}{|G|} &\text{ if $H = G$}.
\end{cases}
$$
\end{defn}

\begin{rmk}
\label{rem: bH inequality}
The assumption $\epsilon < 1/2$ implies that for every $H,K \le G$ with $|H| \ge |K|$, we have
$$
b_H \le b_K
$$
and equality holds if and only if $|H|=|K|$.
\end{rmk}

\begin{lem}
\label{lem: AA upper bound}
Let $H$ be a subgroup of $G$ and let $F \in A_{H}^{(m)}$ for some $1 \le m \le k+1$. 
\begin{enumerate}
    \item 
There exists a constant $C>0$ (which is independent of $F$) such that for every $n \in \bN$,
$$
\prod_{l =\beta_n^{(m-1)}+1}^{\beta_n^{(m)}} \bP(FM_l = 0) \le Cb_H^{\beta_n^{(m)} - \beta_n^{(m-1)}}.
$$
    \item 
There exists a positive integer $N_{\epsilon}$ such that if $n > N_\epsilon$, then for any $ \beta_n^{(m-1)}+1 \le l \le \beta_n^{(m)}$, we have
$$
\bP(FM_l = 0) \le 1 - \epsilon. 
$$    
\end{enumerate}
\end{lem}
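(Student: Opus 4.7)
The plan is to treat both parts with a common ingredient: for every $F \in A_H^{(m)}$ there is an index $i_0 \in [n] \setminus [\alpha_n^{(m)}]$ with $F(v_{i_0}) \notin H$. When $H = G$ this follows from the code property (take $\sigma = \varnothing$ in the definition, so $F_m(V_m) = G$ and hence some basis vector has nonzero image), and when $H \subsetneq G$ this is exactly the consequence of $[F_m(V_m):H] > 1$. Setting $g := F(v_{i_0})$, since $g$ has some order $d > 1$ dividing $a$, one may pick a prime $p \mid d$; then $(p)$ is a maximal ideal of $R = \Z/a\Z$, and for any $y \in G$ the $\epsilon$-balanced property of $M_l(i_0)$ modulo $p$ gives $\bP(g M_l(i_0) = y) \le 1 - \epsilon$.

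Part (2) then follows immediately: for $\beta_n^{(m-1)} < l \le \beta_n^{(m)}$, the entries $M_l(i)$ with $i \in [\alpha_n^{(m)}]$ vanish, so conditioning on all $M_l(i)$ for $i \neq i_0$ reduces $\bP(FM_l = 0 \mid \cdots)$ to $\bP(g M_l(i_0) = y)$ for some $y$, bounded by $1 - \epsilon$. (This in fact holds for every $n$ with $A_H^{(m)}$ nonempty; the $N_\epsilon$ can be any sufficiently large integer.)

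For (1), I would split into three cases according to $H$. When $H = G$, Lemma \ref{lem: lem for code bound} applied to $F_m$ directly yields
\[
\bP(FM_l = 0) \le \tfrac{1}{|G|} + e^{-\epsilon \dt_m (n - \alpha_n^{(m)})/a^2} = b_G\bigl(1 + |G| e^{-\epsilon \dt_m (n - \alpha_n^{(m)})/a^2}\bigr).
\]
When $H$ is a nontrivial proper subgroup, I would combine Lemma \ref{lem: case2 FM_j=0 probability}(1) with the estimate
\[
\bP\Bigl(\sum_{i=\alpha_n^{(m)}+1}^n F(v_i)M_l(i) \in H\Bigr) \le 1 - \epsilon,
\]
obtained by pushing forward to $G/H$ and applying the same conditioning/$\epsilon$-balanced argument to the nonzero element $\pi(F(v_{i_0})) \in G/H$; this yields the bound $\bP(FM_l = 0) \le b_H\bigl(1 + |H| e^{-\epsilon\dt_m(n-\alpha_n^{(m)})/a^2}\bigr)$. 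When $H = \{0\}$ the factor $\bP(\,\cdot\, \in H)$ in Lemma \ref{lem: case2 FM_j=0 probability}(1) equals $\bP(FM_l = 0)$ itself and the estimate becomes vacuous, so I would instead invoke the direct single-coordinate argument from the first paragraph to get $\bP(FM_l = 0) \le 1 - \epsilon = b_{\{0\}}$, which still fits the same shape.

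Taking the product over $\beta_n^{(m-1)} < l \le \beta_n^{(m)}$ gives the factor
\[
\bigl(1 + |H| e^{-\epsilon \dt_m (n - \alpha_n^{(m)})/a^2}\bigr)^{\beta_n^{(m)} - \beta_n^{(m-1)}}
\]
multiplying $b_H^{\beta_n^{(m)} - \beta_n^{(m-1)}}$. Using $(1+x)^k \le e^{kx}$, it suffices to show $(\beta_n^{(m)} - \beta_n^{(m-1)}) e^{-\epsilon \dt_m (n - \alpha_n^{(m)})/a^2}$ is bounded uniformly in $n$. Here one uses the hypothesis $n - \alpha_n^{(i)} - \beta_n^{(i)} \ge 1$ (giving $\beta_n^{(m)} \le n - \alpha_n^{(m)}$ for $m \le k$, and the same inequality holds trivially for $m = k+1$ since $\alpha_n^{(k+1)} = 0$) together with the fact that $x \mapsto x e^{-cx}$ is bounded on $[1, \infty)$. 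The resulting uniform bound supplies the constant $C$ and completes (1). The main obstacle in this plan is precisely the $H = \{0\}$ case of (1): Lemma \ref{lem: case2 FM_j=0 probability}(1) degenerates and one must replace it with the direct $\epsilon$-balanced conditioning on the coordinate $i_0 \in \sigma$ whose existence is forced by $F_m(V_m) \neq \{0\}$.
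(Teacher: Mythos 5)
Your proof is correct and follows essentially the same route as the paper's: the three-case bound for $\bP(FM_l = 0)$ (code bound for $H=G$, Lemma~\ref{lem: case2 FM_j=0 probability}(1) plus the $\epsilon$-balanced conditioning on a witness coordinate $i_0$ with $F(v_{i_0})\notin H$ for nontrivial proper $H$, and the direct conditioning for $H=\{0\}$) is precisely what the paper delegates to the proof of Wood's Lemma 2.7, and your use of Lemma~\ref{lem: exp bound lem} (or the equivalent $(1+x)^k\le e^{kx}$ with $\beta_n^{(m)}\le n-\alpha_n^{(m)}$) matches the paper's conclusion of (1). The one small deviation is in (2): you obtain $\bP(FM_l=0)\le 1-\epsilon$ directly from the single-coordinate conditioning, valid for all $n$, whereas the paper reads (2) off the displayed case bounds and needs $n$ large to absorb the exponentially small error term — your version is marginally cleaner but conceptually the same.
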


\begin{proof}
By Lemma \ref{lem: lem for code bound} and the proof of \cite[Lemma 2.7]{Woo19}, for every  $\beta_n^{(m-1)}+1 \le l \le \beta_n^{(m)}$, we have that
$$
\bP(FM_l = 0) \le 
\begin{cases}
1- \epsilon &\text{ if $H = \{0\}$} \\
\left(\frac{1}{|H|} + e^{-\epsilon\dt_m(n-\alpha_n^{(m)})/a^2}\right)(1-\epsilon)   &\text{ if $H\neq \{0\}$ and $H \neq G$}\\
\frac{1}{|G|} + e^{-\epsilon\dt_m(n-\alpha_n^{(m)})/a^2} &\text{ if $H = G$}.
\end{cases}
$$ 
Then (1) follows from Lemma \ref{lem: exp bound lem}. 
Since we are assuming $|G| >1$ and $\epsilon < 1/2$, (2) also follows.
\end{proof}

\begin{lem}
\label{lem: Smith normal form}
Let $N$ be a $n \times n$ matrix over $\Z$ (or $R = \Z/a\Z$). Then we have
$$
\cok(N) \cong \cok(N^T),
$$
where $N^T$ is the transpose of $N$. 
\end{lem}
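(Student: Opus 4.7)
The plan is to prove this via Smith normal form, first over $\Z$ and then over $R = \Z/a\Z$ by a lifting argument. Before doing anything else, I would record the basic observation that for any ring $R$, any $N \in M_n(R)$, and any $P, Q \in \GL_n(R)$, one has $\cok(PNQ) \cong \cok(N)$: indeed, $\cok(PNQ) = R^n/(PNQR^n) = R^n/(PNR^n)$ since $QR^n = R^n$, and left multiplication by $P^{-1}$ is an $R$-module automorphism of $R^n$ carrying $PNR^n$ onto $NR^n$.

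For the case $N \in M_n(\Z)$, I would invoke the Smith normal form theorem to produce $P, Q \in \GL_n(\Z)$ and a diagonal matrix $D \in M_n(\Z)$ such that $PNQ = D$. By the preliminary observation, $\cok(N) \cong \cok(D)$. Transposing the identity $PNQ = D$ yields $Q^T N^T P^T = D^T = D$, using that $D$ is diagonal. Since $P^T, Q^T \in \GL_n(\Z)$, the same observation gives $\cok(N^T) \cong \cok(D)$, so $\cok(N) \cong \cok(N^T)$.

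For the case $N \in M_n(R)$ with $R = \Z/a\Z$, I would lift $N$ entrywise to some $\tilde{N} \in M_n(\Z)$ and apply the Smith normal form over $\Z$ to obtain $\tilde{P}, \tilde{Q} \in \GL_n(\Z)$ and a diagonal $\tilde{D} \in M_n(\Z)$ with $\tilde{P}\tilde{N}\tilde{Q} = \tilde{D}$. Reducing modulo $a$ gives $PNQ = D$ in $M_n(R)$, where $D$ is diagonal and $P, Q \in \GL_n(R)$ because $\det \tilde{P}, \det \tilde{Q} \in \{\pm 1\}$ remain units in $R$. Exactly the argument from the $\Z$-case now applies to conclude $\cok(N) \cong \cok(D) \cong \cok(N^T)$.

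The main obstacle is essentially nothing substantive: this is a classical consequence of Smith normal form. The only mild subtlety is that $\Z/a\Z$ is not a PID when $a$ is composite, so rather than trying to invoke a Smith-type theorem directly over $R$, I sidestep the issue by lifting to $\Z$, where the theorem holds unconditionally, and then reducing modulo $a$; the key point that makes this lift-and-reduce step legitimate is that the unimodularity of $\tilde{P}, \tilde{Q}$ is preserved under reduction.
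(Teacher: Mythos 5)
Your proposal is correct and follows essentially the same route as the paper: invoke Smith normal form over $\Z$, observe $D = D^T$ to get $\cok(N) \cong \cok(D) \cong \cok(N^T)$, and handle $R = \Z/a\Z$ by lifting to $\Z$ and reducing. The only cosmetic difference is that you reduce the factorization $\tilde P\tilde N\tilde Q = \tilde D$ modulo $a$ before comparing cokernels, whereas the paper obtains $\cok(\tilde N)\cong\cok(\tilde N^T)$ over $\Z$ and reduces the isomorphism; both are fine.
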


\begin{proof}
Let $D$ be the Smith normal form of an $n\times n$ matrix $N$ over $\Z$, i.e. $D=PNQ$ for $P, Q \in \GL_n(\Z)$. Since $D$ is a diagonal matrix, we have $D = D^T = Q^TN^TP^T$ so $\cok(N) \cong \cok(D) \cong \cok(N^T)$. If $N$ is defined over $R$, we lift $N$ to a matrix $\mc{N}$ over $\Z$ so that $\mc{N}_{i,j} = N_{i,j}$ modulo $a$ and run the same argument as above to get $\cok(\mc{N}) \cong \cok(\mc{N}^T)$, and then reduce this modulo $a$. 
\end{proof}

\section{The universality theorem for \texorpdfstring{$k=1$}{k=1}}\label{universality second section}
We continue to assume that $\epsilon < 1/2$. In this section, we prove Theorem \ref{thm: universality moment} in the special case that $k=1$. Namely, writing $\alpha_n = \alpha_n^{(1)}$ and $\beta_n = \beta_n^{(1)}$, the goal of this section is to prove the following. 

\begin{thm}
\label{thm: universality k=1 theorem}
Let $M$ be a random $n \times n$ matrix over $R$ with $M_{i,j} = 0$ for all $1 \le i \le \alpha_n$, $1 \le j \le \beta_n$ and the other entries are given as (independent) $\epsilon$-balanced random variables in $R$. Suppose that 
$$
\underset{n \to \infty}{\lim} (n-\al_n-\beta_n) = \infty.
$$
Then for every finite abelian group $G$ whose exponent divides $a$, we have
$$
\underset{n \to \infty}{\lim} \bE(\#\Sur(\cok(M), G ))= \underset{n \to \infty}{\lim} \sum_{F \in \Sur(V,G)} \bP(FM = 0) = 1.
$$
\end{thm}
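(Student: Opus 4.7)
My plan is to apply the three-class decomposition of $\Sur(V, G)$ outlined in Section \ref{The outline of the proof of the universality theorem} specialized to $k=1$. With $V_1 = \mr{span}\{v_i : i > \alpha_n\}$ and $V_2 = V$, every $F \in \Sur(V, G)$ belongs to exactly one of: the main class $\mc{F}_1 = A_G^{(1)} \cap A_G^{(2)}$, an AA class $A_{H_1}^{(1)} \cap A_{H_2}^{(2)}$ with at least one of $H_1, H_2$ a proper subgroup of $G$, or a BA class $B_{H_1}^{(1)} \cap A_{H_2}^{(2)}$ with $H_1 < G$ proper (the corresponding classes $B_{H_2}^{(2)}$ for proper $H_2$ are empty, because $F$ is surjective onto $G$). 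Proposition \ref{prop: code sum to 1} already gives $\sum_{F \in \mc{F}_1} \bP(FM = 0) \to 1$, so the theorem reduces to proving that the AA and BA contributions tend to zero for every fixed choice of subgroups.

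For each AA class with subgroups $H_1, H_2 \le G$, at least one proper, Lemma \ref{lem: AA upper bound} gives $\bP(FM = 0) \le C \cdot b_{H_1}^{\beta_n} b_{H_2}^{n-\beta_n}$, while Lemma \ref{lem: function counting lemma} (applied, after relabeling so that $|H_1| \le |H_2|$, to the constraints $F(v_j) \in H_i$ for $j$ outside the depth witness $\sigma_i$) and summation over the polynomially many choices of $\sigma_i$ using $\binom{n}{\ell(|G|) \delta_i n} \le e^{\gamma n}$ (guaranteed by the choice of $\delta_i$ in Section \ref{The constants}) produce a count bounded by $e^{O(\gamma n)} |H_1|^{n-\alpha_n}|H_2|^{\alpha_n}$. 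The product collapses (up to routine case distinctions according to whether $H_1$ or $H_2$ equals $G$) to a bound of the form $e^{O(\gamma n)}(|H_1|/|H_2|)^{n-\alpha_n-\beta_n}(1-\epsilon)^n$, which decays to zero by $\gamma < \epsilon/(5(k+1))$ and $n - \alpha_n - \beta_n \to \infty$.

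For the BA contribution with $H_1 < G$ proper and $H_2 \le G$, Lemma \ref{lem: right most case1 group subgroup} gives $H_1 \le H_2$. I would split the analysis into the regimes $n \in N_1 := \{n : n - \alpha_n \ge \eta n\}$ and $n \in N_1^c$. On $N_1$, Lemma \ref{lem: case2 FM_j=0 probability}(2) yields $\bP(FM_l = 0) \le 1/|H_1| + e^{-\epsilon \delta_1 (n - \alpha_n)/a^2}$ for each $l \le \beta_n$, while Lemma \ref{lem: AA upper bound} (or Lemma \ref{lem: lem for code bound} when $H_2 = G$) supplies an analogous bound per column for $l > \beta_n$. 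Combined with the count of $F \in B_{H_1}^{(1)} \cap A_{H_2}^{(2)}$ via Lemma \ref{lem: function counting lemma} and the elementary calculus of Section \ref{The constants}, the total collapses to a quantity of the form $e^{O(\gamma n)}(|H_1|/|H_2|)^{n-\alpha_n-\beta_n}(1-\epsilon)^{n-\beta_n}$, which decays to zero by $n - \alpha_n - \beta_n \to \infty$ and the choice of constants.

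The main obstacle is the residual regime $n \in N_1^c$, in which $V_1$ has dimension $< \eta n$ and the previous argument loses its exponential gain. My plan for this regime is to exploit the transpose identity of Lemma \ref{lem: Smith normal form}: $\cok(M) \cong \cok(M^T)$, and $M^T$ is itself an $\epsilon$-balanced random matrix satisfying the theorem's hypotheses with $(\alpha_n, \beta_n)$ replaced by $(\beta_n, \alpha_n)$. Since $\alpha_n + \beta_n \le n - 1$ forces at least one of $\alpha_n, \beta_n$ to be at most $(n-1)/2$, replacing $M$ by $M^T$ whenever $\alpha_n > \beta_n$ allows us to assume $\alpha_n \le \beta_n$, whence $n - \alpha_n \ge (n+1)/2 > \eta n$ (using $\eta < 1/2$, which follows from the choice $\eta < \epsilon/(2\epsilon + 5\log|G|)$ together with $\epsilon < 1/2$). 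Under this WLOG, the regime $N_1^c$ is empty and the previous paragraph applies uniformly. Assembling the main term, the AA bound, and the BA bound (via the transpose when necessary) yields $\sum_{F \in \Sur(V, G)} \bP(FM = 0) \to 1$, which is Theorem \ref{thm: universality k=1 theorem}.
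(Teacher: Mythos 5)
Your decomposition of $\Sur(V,G)$ into the main class $\mc{F}_1$, the $AA$ classes, and the $BA$ classes matches the paper's, and your transpose reduction via Lemma~\ref{lem: Smith normal form} to assume $\alpha_n \le \beta_n$ (so $n - \alpha_n \ge \eta n$) is exactly what the paper does at the start of its proof. Your treatment of $\mc{F}_1$ and of the $AA$ classes is also sound: for the $AA$ case, both columns $l \le \beta_n$ and $l > \beta_n$ carry the $(1-\epsilon)$ factor through $b_{H_1}$ and $b_{H_2}$ (at least one proper), so the resulting $(1-\epsilon)^{n}$ absorbs the $e^{O(\gamma n)}$ coming from summing over depth witnesses $\sigma_i$.

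However, your treatment of the $BA$ class has a genuine gap. When $F \in B_{H_1}^{(1)}$, Lemma~\ref{lem: case2 FM_j=0 probability}(2) gives $\bP(FM_l = 0) \approx 1/|H_1|$ for $l \le \beta_n$ \emph{with no} $(1-\epsilon)$ factor — that factor is only available on the $n - \beta_n$ columns with $l > \beta_n$. Tracing your counting argument through: Lemma~\ref{lem: function counting lemma} together with $H_1 \le H_2$ gives a count of roughly $|G|^{|\sigma|}|H_1|^{n-\alpha_n}|H_2|^{\alpha_n - |\sigma|}$ per choice of depth witness $\sigma$, and summing over $\binom{n}{|\sigma|}$ choices produces a factor of order $e^{\gamma n}$, with an additional $(|G|/|H_2|)^{|\sigma|}$ of order $|G|^{\ell(|G|)\delta_2 n}$. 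Your final quantity is therefore of the form $e^{O(\gamma n)}(|H_1|/|H_2|)^{n-\alpha_n-\beta_n}(1-\epsilon)^{n-\beta_n}$, exactly as you write, but the factor $e^{O(\gamma n)}$ is genuinely exponential in $n$ while $(1-\epsilon)^{n-\beta_n}$ need only be subexponential: the hypothesis only guarantees $n - \alpha_n - \beta_n \to \infty$, which permits $n - \beta_n = O(\log n)$, say. In the extreme case $H_1 = H_2$ proper, $\alpha_n = O(1)$, $\beta_n = n - O(\log n)$, the $(|H_1|/|H_2|)$-factor is $1$ and your bound is $e^{\Theta(n)} \cdot n^{-O(1)} \to \infty$. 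So the claim that the $BA$ contribution "decays to zero by $n - \alpha_n - \beta_n \to \infty$ and the choice of constants" is false with your estimate.

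The paper's Lemma~\ref{lem: BA for k=1 converges to 0} avoids this by not summing over depth witnesses at all. After extracting $1/|H_1|$ per column $l \le \beta_n$ and $1/|H_2|$ per column $l > \beta_n$, it uses the fiber structure of Lemma~\ref{lem: special case fiber} to push the sum over $\mf{C} = B_{H_1}^{(1)} \cap A_{H_2}^{(2)}$ forward to a sum over $\Sur(V_{[\alpha_n]}, G/H_2)$; the fiber size $|H_1|^{n-\alpha_n}|H_2|^{\alpha_n}$ cancels against the probability powers to give exactly $(|H_1|/|H_2|)^{n-\alpha_n-\beta_n}$. It then peels off the $(1-\epsilon)$ per column only for the $n-\alpha_n-\beta_n$ columns $\beta_n+1 \le l \le n-\alpha_n$ (via Lemma~\ref{lem: AA upper bound}(2)), and for the remaining $\alpha_n$ columns invokes the bounded-moment bound $\sum_{\bar{F} \in \Sur(V_{[\alpha_n]}, G/H_2)} \bP(\bar{F}M'' = 0) \le C_3$ for the $\alpha_n \times \alpha_n$ submatrix $M''$ from \cite[Theorem 2.9]{Woo19}. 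The result is $\le C\left((1-\epsilon)|H_1|/|H_2|\right)^{n-\alpha_n-\beta_n}$, with no $e^{O(\gamma n)}$ term, and this does go to zero as $n - \alpha_n - \beta_n \to \infty$. This appeal to the uniform moment bound for the submatrix $M''$ is the essential ingredient missing from your proposal, and you would need to incorporate it to complete the $BA$ case.
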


\begin{lem}
\label{lem: upper bound for all type2 case converges to 0 for k=1}
Let $H_1$ and $H_2$ be subgroups of $G$ and assume that $H_1 \neq G$ or $H_2 \neq G$. Then
$$
\underset{n \to \infty}{\lim} \left|A_{H_1}^{(1)} \bigcap A_{H_2}^{(2)} \right|
b_{H_1}^{\beta_n} b_{H_2}^{n- \beta_n}= 0. 
$$    
\end{lem}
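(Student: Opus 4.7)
The plan is to bound $|A_{H_1}^{(1)}\cap A_{H_2}^{(2)}|$ from above by counting homomorphisms satisfying the defining constraints of $A_{H_i}^{(i)}$, and then verify that the product with $b_{H_1}^{\beta_n}b_{H_2}^{n-\beta_n}$ tends to zero. The argument splits into three cases depending on which of $H_1$, $H_2$ equals $G$, with the various small constants from Section \ref{The constants} playing an essential role in closing the estimates.

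In the case $H_1\neq G$ and $H_2=G$, the defining property of $A_{H_1}^{(1)}$ yields $|A_{H_1}^{(1)}|\le\sum_{\sg_1}|G|^{\alpha_n+|\sg_1|}|H_1|^{n-\alpha_n-|\sg_1|}$, where the sum is over $\sg_1\subseteq[n]\setminus[\alpha_n]$ with $|\sg_1|<r_1:=\ell([G:H_1])\dt_1(n-\alpha_n)$. Multiplying by $b_{H_1}^{\beta_n}b_G^{n-\beta_n}$ and setting $e_n:=n-\alpha_n-\beta_n$, the product simplifies to $(1-\epsilon)^{\beta_n}[G:H_1]^{-e_n}\sum_{\sg_1}[G:H_1]^{|\sg_1|}$. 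Bounding the sum by the sub-exponential $e^{(\gamma+\log|G|\,\ell(|G|)\dt_1)(n-\alpha_n)}$ and noting that $\gamma+\log|G|\,\ell(|G|)\dt_1<\log[G:H_1]$ by the choice of constants, the expression is $o(1)$ since $e_n\to\infty$.

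In the case $H_1=G$ and $H_2\neq G$, the key point is that $F_1$ being a code of distance $\dt_1(n-\alpha_n)$ is incompatible with $F(v_j)\in H_2$ for $j\notin\sg_2$ unless $|\sg_2\cap([n]\setminus[\alpha_n])|\ge\dt_1(n-\alpha_n)$. But $|\sg_2|\le\ell(|G|)\dt_2 n<\dt_1\eta n$ by the choice of $\dt_2$, so one must have $n-\alpha_n\le\eta n$ in order for the intersection to be nonempty; equivalently, $n-\beta_n\ge(1-\eta)n$. The factor $(1-\epsilon)^{n-\beta_n}\le e^{-\epsilon(1-\eta)n}$ then overwhelms the sub-exponential bound $\sum_{\sg_2}|G|^{|\sg_2|}|H_2|^{n-|\sg_2|}$ on $|A_{H_2}^{(2)}|$.

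In the case that both $H_1$ and $H_2$ are proper, writing $T_i$ for the complement of the exceptional indices at level $i$ (so $T_1=[n]\setminus([\alpha_n]\cup\sg_1)$ and $T_2=[n]\setminus\sg_2$), the count of admissible $F$ for fixed $\sg_1,\sg_2$ is at most $|H_1\cap H_2|^{|T_1\cap T_2|}|H_1|^{|T_1\setminus T_2|}|H_2|^{|T_2\setminus T_1|}|G|^{n-|T_1\cup T_2|}$. Multiplying by $b_{H_1}^{\beta_n}b_{H_2}^{n-\beta_n}=(1-\epsilon)^n|H_1|^{-\beta_n}|H_2|^{-(n-\beta_n)}$, assuming WLOG $|H_1|\le|H_2|$ (so that $|H_1\cap H_2|\le|H_1|$), the bound after summing over $\sg_1,\sg_2$ becomes $(1-\epsilon)^n$ times a sub-exponential factor of order $e^{(2\gamma+\log|G|\,\ell(|G|)(\dt_1+\dt_2))n}$, whose exponent is strictly less than $\epsilon$ by the constant choices. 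The main obstacle is this third case: one has to treat the asymmetry between $H_1$ and $H_2$ carefully (invoking the counting philosophy of Lemma \ref{lem: function counting lemma}) and separately handle the subcase $\alpha_n+|\sg_1|<|\sg_2|$, which only arises when $\alpha_n=O(\dt_2 n)$ but still requires its own bookkeeping since then $|T_2\setminus T_1|>0$.
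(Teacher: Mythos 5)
Your case decomposition is genuinely different from the paper's: you split by which of $H_1, H_2$ equals $G$, while the paper splits by whether $|H_1| < |H_2|$ or $|H_1| \ge |H_2|$. The paper handles $|H_1| \ge |H_2|$ with a one-line trick it never gives up: since $H_2$ is then automatically proper and $b_{H_1} \le b_{H_2}$ by Remark~\ref{rem: bH inequality}, it collapses $|A_{H_1}^{(1)} \cap A_{H_2}^{(2)}|\,b_{H_1}^{\beta_n}b_{H_2}^{n-\beta_n} \le |A_{H_2}^{(2)}|\,b_{H_2}^n$, eliminating $\alpha_n$ from the estimate entirely, and then invokes \cite[Lemma 2.6]{Woo19}. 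Your Case~2 ($H_1=G$, $H_2$ proper) reproves this subcase from scratch via the incompatibility argument forcing $n-\alpha_n < \eta n$. That argument is correct, but it is considerably longer than necessary. The case $|H_1| < |H_2|$ (your Case~1 together with part of your Case~3) is where the paper delegates to the uniform-in-$k$ Lemmas~\ref{lem: special case1 for upper bound for all type2 case converges to 0 proposition} and~\ref{lem: special case2 for upper bound for all type2 case congverges to 0 proposition}, and there your Case~1 computation matches the spirit of Lemma~\ref{lem: special case2 for upper bound for all type2 case congverges to 0 proposition}.

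The genuine gap is the ``WLOG $|H_1|\le|H_2|$'' in Case~3. The two sides of the intersection are not symmetric (the constraint at level~1 only applies to indices in $[n]\setminus[\alpha_n]$, while at level~2 it applies to all of $[n]$), so the roles of $H_1$ and $H_2$ cannot simply be swapped. In fact, the subcase $|H_1| > |H_2|$ is the \emph{easy} one, precisely because $b_{H_1} < b_{H_2}$ there, and it is disposed of by the paper's one-liner $|A_{H_1}^{(1)}\cap A_{H_2}^{(2)}|\,b_{H_1}^{\beta_n}b_{H_2}^{n-\beta_n}\le |A_{H_2}^{(2)}|\,b_{H_2}^n$; but you do not say this, so as written Case~3 only treats $|H_1|\le|H_2|$. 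Conversely, the worry you flag about the subcase $\alpha_n+|\sg_1|<|\sg_2|$ is unfounded: using $|H_1\cap H_2|\le |H_1|$ and writing $z = n - |T_1\cup T_2|$, the ratio of the count to $|H_1|^{\beta_n}|H_2|^{n-\beta_n}$ reduces to $(|H_1|/|H_2|)^{e_n - |\sg_1|}[G:H_2]^z$ with $z\le|\sg_2|$, which is bounded by $|G|^{|\sg_1|+|\sg_2|}$ regardless of the sign of $e_n-|\sg_1|$ or of $\alpha_n+|\sg_1|-|\sg_2|$. So that bookkeeping is not actually needed; the missing piece is the other ordering of $|H_1|,|H_2|$.
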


\begin{proof}
When $|H_1| < |H_2|$, the assertion is just a special case of Lemma \ref{lem: special case1 for upper bound for all type2 case converges to 0 proposition} and Lemma \ref{lem: special case2 for upper bound for all type2 case congverges to 0 proposition}, which hold for an arbitrary positive integer $k$. 
Now Suppose that 
$$|H_1| \ge |H_2|.$$ 
Then $H_2$ is a proper subgroup of $G$ and $b_{H_1} \le b_{H_2}$ by Remark \ref{rem: bH inequality}. It follows that
$$
\left|A_{H_1}^{(1)} \bigcap A_{H_2}^{(2)} \right|
b_{H_1}^{\beta_n} b_{H_2}^{n- \beta_n} \le \left|A_{H_2}^{(2)}\right|b_{H_2}^n.
$$
Let $D = [G:H_2]$. Then by \cite[Lemma 2.6]{Woo19}, there exists a constant $C>0$ such that the following holds for sufficiently large $n$:
$$
\left|A_{H_2}^{(2)}\right|b_{H_2}^n \le C \binom{n}{\lceil \ell(D)\dt_2 n\rceil}|G|^n D^{-n + \ell(D)\dt_2 n} (1-\epsilon)^n\left(\frac{D}{|G|}\right)^n \le C e^{-\epsilon n}e^{\gamma n}D^{ \ell(D)\dt_2 n}, 
$$
where the right hand side converges to $0$ as $n \to \infty$ by the choice of the constants $\gamma, \dt_2$ as in Section \ref{The constants}. Therefore, the proposition follows. 
\end{proof}

\begin{lem}
\label{lem: AA for k=1 converges to 0}
Let $H_1$ and $H_2$ be subgroups of $G$ such that $H_1 \neq G$ or $H_2 \neq G$. Then
$$
\underset{n \to \infty}{\lim} \sum_{F \in A_{H_1}^{(1)} \cap A_{H_2}^{(2)}} \bP(FM = 0) = 0.
$$
\end{lem}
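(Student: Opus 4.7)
The plan is to combine Lemma \ref{lem: upper bound for all type2 case converges to 0 for k=1} (the counting estimate) with Lemma \ref{lem: AA upper bound} (the column-wise probability bound), and deduce the result by a straightforward union bound. Since the entries of $M$ are independent by assumption, for every $F \in \Sur(V,G)$ we have the factorization
$$
\bP(FM = 0) \;=\; \prod_{l=1}^{\beta_n} \bP(FM_l = 0) \;\cdot\; \prod_{l=\beta_n+1}^{n} \bP(FM_l = 0).
$$
The first product corresponds to columns where rows $1,\dots,\alpha_n$ are fixed to be zero and hence involves only the restriction $F_1 = F|_{V_1}$, while the second product involves the full $F = F_2$.

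Assume $F \in A_{H_1}^{(1)} \cap A_{H_2}^{(2)}$. Applying Lemma \ref{lem: AA upper bound}(1) to $F \in A_{H_1}^{(1)}$ with $m=1$ (so that the relevant column range is $\beta_n^{(0)}+1=1,\dots,\beta_n^{(1)}=\beta_n$), we obtain a constant $C_1 > 0$ independent of $F$ such that
$$
\prod_{l=1}^{\beta_n} \bP(FM_l = 0) \;\le\; C_1 \, b_{H_1}^{\beta_n}.
$$
Similarly, applying Lemma \ref{lem: AA upper bound}(1) to $F \in A_{H_2}^{(2)}$ with $m=2$ (the column range being $\beta_n+1,\dots,n$), we get a constant $C_2 > 0$ with
$$
\prod_{l=\beta_n+1}^{n} \bP(FM_l = 0) \;\le\; C_2 \, b_{H_2}^{n-\beta_n}.
$$
Multiplying these two bounds gives $\bP(FM=0) \le C_1 C_2 \, b_{H_1}^{\beta_n} b_{H_2}^{n-\beta_n}$ uniformly over $F \in A_{H_1}^{(1)} \cap A_{H_2}^{(2)}$.

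Summing over $F$ in this set and invoking Lemma \ref{lem: upper bound for all type2 case converges to 0 for k=1}, which says exactly that
$\bigl|A_{H_1}^{(1)} \cap A_{H_2}^{(2)}\bigr| \, b_{H_1}^{\beta_n} b_{H_2}^{n-\beta_n} \to 0$ as $n \to \infty$ under the assumption $H_1 \ne G$ or $H_2 \ne G$, yields the desired conclusion. There is no genuine obstacle here: the technical content has already been absorbed into Lemma \ref{lem: upper bound for all type2 case converges to 0 for k=1} (which handles the subtle case $|H_1| \ge |H_2|$ by dropping the $b_{H_1}^{\beta_n}$ factor and exploiting the $e^{-\epsilon n}$ savings coming from an $\epsilon$-balanced row entry) and Lemma \ref{lem: AA upper bound} (which converts $A_H^{(m)}$-membership into a per-column probability bound via Lemma \ref{lem: lem for code bound}). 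This lemma is simply the assembly of those two ingredients.
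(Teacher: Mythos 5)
Your proof is correct and follows essentially the same route as the paper: the paper likewise applies Lemma \ref{lem: AA upper bound}(1) to obtain the uniform bound $\bP(FM = 0) \le C\, b_{H_1}^{\beta_n} b_{H_2}^{n-\beta_n}$ and then invokes Lemma \ref{lem: upper bound for all type2 case converges to 0 for k=1}. You have merely spelled out the $m=1$ and $m=2$ column ranges explicitly, which the paper leaves implicit.
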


\begin{proof}
For $F  \in A_{H_1}^{(1)} \cap A_{H_2}^{(2)}$, it follows from Lemma \ref{lem: AA upper bound}(1) that
$$
\bP(FM = 0) = \prod_{i = 1}^n \bP(FM_i = 0) \le C 
b_{H_1}^{\beta_n} b_{H_2}^{n- \beta_n}
$$
for some constant $C>0$ which is independent of $n$ and $F$. 
Now the desired result follows by Lemma \ref{lem: upper bound for all type2 case converges to 0 for k=1}. 
\end{proof}

\begin{lem}
\label{lem: BA for k=1 converges to 0}
Suppose that for all large enough $n$, $n - \alpha_n \ge \eta n$. Suppose that  $H_1$ is a proper subgroup of $G$. Then 
$$
\underset{n \to \infty}{\lim} \sum_{F \in B_{H_1}^{(1)} \cap A_{H_2}^{(2)}} \bP(FM = 0) = 0.
$$
\end{lem}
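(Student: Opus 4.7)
First, by Lemma \ref{lem: right most case1 group subgroup} applied at $j = 1$ with the standing hypothesis $n - \al_n \ge \eta n$, if $B_{H_1}^{(1)} \cap A_{H_2}^{(2)}$ is non-empty then $H_1 \subseteq H_2$; otherwise the sum is trivially zero. I will assume this containment from now on.

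To estimate $\bP(FM=0)$ for a fixed $F$ in the intersection, I split the product $\prod_{l=1}^n \bP(FM_l = 0)$ at $l = \beta_n$. For $l \le \beta_n$, Lemma \ref{lem: case2 FM_j=0 probability}(2) applied to $F \in B_{H_1}^{(1)}$ combined with Lemma \ref{lem: exp bound lem} gives $\prod_{l \le \beta_n} \bP(FM_l = 0) \le |H_1|^{-\beta_n}(1 + o(1))$. For $l > \beta_n$, Lemma \ref{lem: AA upper bound}(1) applied to $F \in A_{H_2}^{(2)}$ yields $\prod_{l > \beta_n} \bP(FM_l = 0) \le C\, b_{H_2}^{n - \beta_n}$ for a constant $C > 0$ independent of $n$ and $F$.

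To bound $|B_{H_1}^{(1)} \cap A_{H_2}^{(2)}|$, I exploit the inclusion $H_1 \subseteq H_2$: the witnessing set $\sg$ in the definition of $A_{H_2}^{(2)}$ may be taken inside $[\al_n]$, so applying Lemma \ref{lem: function counting lemma} to the ordered pair of constraints ``$F(v_i) \in H_1$ for $i > \al_n$'' and ``$F(v_i) \in H_2$ for $i \in [\al_n] \setminus \sg$'', and summing over $\sg \subseteq [\al_n]$ of size $< S := \ell([G:H_2])\dt_2 n$, I obtain a bound of the form $e^{C'\gamma n}\, |H_2|^{\al_n}\, |H_1|^{n - \al_n}$ when $H_2 \ne G$ (using $\dt_2 \ell(|G|) \log|G| < \gamma$ from Section \ref{The constants}), and the naive $|G|^{\al_n} |H_1|^{n - \al_n}$ when $H_2 = G$.

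Combining the count with the probability estimate: when $H_2 = G$, the sum is bounded by $(|H_1|/|G|)^{n - \al_n - \beta_n}(1 + o(1))$, which tends to $0$ because $|H_1|/|G| \le 1/2$ and $n - \al_n - \beta_n \to \infty$. When $H_2 \ne G$, the sum is bounded by $e^{C'\gamma n} (|H_1|/|H_2|)^{n - \al_n - \beta_n}(1 - \epsilon)^{n - \beta_n}(1 + o(1))$. If $H_1 \subsetneq H_2$, the factor $(|H_1|/|H_2|)^{n - \al_n - \beta_n} \le 2^{-(n - \al_n - \beta_n)}$ together with $(1-\epsilon)^{n-\beta_n}$ drives the bound to $0$. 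The main obstacle is the borderline subcase $H_1 = H_2 \ne G$: the geometric gain $(|H_1|/|H_2|)^{n - \al_n - \beta_n}$ degenerates to $1$, and the crude estimates above do not suffice. To handle this, one stratifies the intersection by $s_F := |\{i \in [\al_n] : F(v_i) \notin H_1\}|$ and uses a sharper character-sum estimate $\bP(FM_l = 0) \le |G|^{-1}\bigl(1 + (|G/H_1| - 1)(1 - \epsilon)^{s_F}\bigr)$ for $l > \beta_n$; the geometric decay in $s_F$ dominates the binomial count $\binom{\al_n}{s_F}$ of each stratum, and a careful summation over $s_F$ yields the required vanishing.
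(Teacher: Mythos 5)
Your opening step (invoking Lemma~\ref{lem: right most case1 group subgroup} to get $H_1 \subseteq H_2$) matches the paper, and your treatment of the case $H_2 = G$ is correct. However, your argument for the case $H_2 \ne G$, $H_1 \subsetneq H_2$ has a genuine gap that you did not flag, in addition to the $H_1 = H_2$ obstacle you did flag. The problem is the exponential factor $e^{C'\gamma n}$ you incur from counting $|B_{H_1}^{(1)} \cap A_{H_2}^{(2)}|$ directly: this factor scales with $n$, while the decay you hope to use, namely $(|H_1|/|H_2|)^{n-\alpha_n-\beta_n} \le 2^{-(n-\alpha_n-\beta_n)}$ and $(1-\epsilon)^{n-\beta_n}$, scales only with $n - \alpha_n - \beta_n$ and $n - \beta_n$. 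Under the hypotheses of the lemma (namely $n - \alpha_n \ge \eta n$ and $n - \alpha_n - \beta_n \to \infty$), both $\alpha_n$ and $n - \alpha_n - \beta_n$ may be $o(n)$. For instance, take $\alpha_n = \lfloor\sqrt{n}\rfloor$ and $\beta_n = n - \lfloor\sqrt{n}\rfloor - \lfloor\log n\rfloor$; then $n - \beta_n \sim \sqrt{n}$ and $n - \alpha_n - \beta_n \sim \log n$, so your bound $e^{C'\gamma n}\,2^{-(n-\alpha_n-\beta_n)}(1-\epsilon)^{n-\beta_n}$ is $e^{C'\gamma n - O(\sqrt{n})}$, which diverges. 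Even a sharper count (replacing $e^{C'\gamma n}$ by the exact $(1 + |G|/|H_2|)^{\alpha_n}$, which is what Lemma~\ref{lem: function counting lemma} plus summation over $\sg$ actually gives when $\alpha_n$ is small) still blows up, because $(1 + |G|/|H_2|)(1-\epsilon) > 1$ when $\epsilon < 1/2$. So the strict-containment case is not actually closed by your argument.

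The root cause is methodological: the paper never counts $|B_{H_1}^{(1)} \cap A_{H_2}^{(2)}|$ at all. Instead it keeps Lemma~\ref{lem: case2 FM_j=0 probability}(1) in the form $\bP(FM_i=0) \lesssim |H_2|^{-1}\bP\bigl(\sum_{j\le\alpha_n} F(v_j)M_i(j) \in H_2\bigr)$, fibers the sum over $\Sur(V_{[\alpha_n]}, G/H_2)$ via Lemma~\ref{lem: special case fiber} (with fiber size $\le |H_1|^{n-\alpha_n}|H_2|^{\alpha_n}$), absorbs the columns $\beta_n < i \le n-\alpha_n$ using Lemma~\ref{lem: AA upper bound}(2) to pick up the extra $(1-\epsilon)^{n-\alpha_n-\beta_n}$, and then controls $\sum_{F\in\Sur(V_{[\alpha_n]},G/H_2)}\bP(FM''=0)$ by the uniform moment bound (Lemma~\ref{lem: submatrix bound}, from \cite[Theorem 2.9]{Woo19}). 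This yields $(|H_1|(1-\epsilon)/|H_2|)^{n-\alpha_n-\beta_n}\cdot O(1)$, which tends to $0$ regardless of how $\alpha_n$ and $n-\beta_n$ behave, and it handles $H_1 = H_2$ with no special case. Your sketched fix for $H_1 = H_2$ (stratify by $s_F = |\{i\le\alpha_n : F(v_i)\notin H_1\}|$ and use a Fourier-type bound on $\bP(FM_l = 0)$) would, if carried out, essentially reconstruct this fibering argument; the paper's Lemma~\ref{lem: special case fiber} together with the submatrix moment bound is precisely the bookkeeping that makes that stratification rigorous and also repairs the $H_1 \subsetneq H_2$ case.
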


\begin{proof}
Let 
$$
\mf{C} = B_{H_1}^{(1)} \bigcap A_{H_2}^{(2)}. 
$$
We may assume that $\mf{C}$ is non-empty. Then by Lemma \ref{lem: right most case1 group subgroup}, we have $H_1$ is a subgroup of $H_2$. For $\beta_n + 1 \le i \le n$ and for $F \in \mf{C}$, we have by Lemma \ref{lem: case2 FM_j=0 probability} that
\begin{align*}
\bP(FM_i = 0) & \le \left(\frac{1}{|H_2|} + e^{-\epsilon \dt_2n/a^2}\right)\bP\left(\sum_{j=1}^n F(v_j)M_i(j) \in H_2 \right) \\
& = \left(\frac{1}{|H_2|}+ e^{-\epsilon \dt_2n/a^2}\right)\bP\left(\sum_{j=1}^{\alpha_n} F(v_j)M_i(j) \in H_2 \right),    
\end{align*}
where the last equality follows since $F \in B_{H_1}^{(1)}$ implies that that $F(v_l) \in H_1 \le H_2$ for all $\alpha_n + 1\le l \le n$. 
Then it follows from Lemma \ref{lem: case2 FM_j=0 probability} and Lemma \ref{lem: exp bound lem} that there exist constants $C_1, C_2 >0$ such that the following holds:
\begin{align*}
  \sum_{F \in \mf{C}} \bP(FM = 0) &=    \sum_{F \in \mf{C}}\left(\prod_{i=1}^n \bP(FM_i = 0) \right) \\
&  \le C_1\sum_{F \in \mf{C}} \left(\frac{1}{|H_1|}\right)^{\beta_n} \left(\prod_{i=\beta_n+1}^n \bP(FM_i = 0)\right) \\
&  \le C_2\sum_{F \in \mf{C}} \left(\frac{1}{|H_1|}\right)^{\beta_n}\left(\prod_{i=\beta_n+1}^n \frac{1}{|H_2|} \bP\left(\sum_{j = 1}^{\alpha_n} 
F(v_j)M_i(j) \in  H_2\right) \right)  \\
&  \le C_2\left(\frac{|H_1|}{|H_2|}\right)^{n- \alpha_n- \beta_n} \sum_{F \in \Sur(V_{[\alpha_n]}, G/H_2)} \left(\prod_{i=\beta_n+1}^n \bP\left(\sum_{j = 1}^{\alpha_n} 
F(v_j)M_i(j) = 0 \text{ in $G/H_2$}\right) \right),
\end{align*}
where the last inequality is a consequence of Lemma \ref{lem: special case fiber}. If $H_2 = G$, then the right hand side converges to $0$, so the result follows. Now suppose that $H_2$ is a proper subgroup of $G$.
Let $M''$ be the upper right $\alpha_n \times \alpha_n$ submatrix of $M$. Then it follows by Lemma \ref{lem: AA upper bound}(2) that for sufficiently large $n$ (let $c = 1- \epsilon$)
\begin{align*}
  \sum_{F \in \mf{C}} \bP(FM = 0) & \le C_2\left(\frac{|H_1|c}{|H_2|}\right)^{n - \alpha_n - \beta_n}  \sum_{F \in \Sur(V_{[\alpha_n]}, G/H_2)}  \left(\prod_{i=n - \alpha_n + 1}^n \bP\left(\sum_{j = 1}^{\alpha_n} 
F(v_j)M_i(j) = 0 \text{ in $G/H_2$}\right) \right)  \\
& \le  C_2\left(\frac{|H_1|c}{|H_2|}\right)^{n - \alpha_n - \beta_n}\sum_{F \in \Sur(V_{[\alpha_n]}, G/H_2)} \bP(FM'' = 0).
\end{align*}
By \cite[Theorem 2.9]{Woo19}, there exists $C_3 > 1$ such that for any $n \in \bN$ (cf. the proof of Lemma \ref{lem: submatrix bound}),
$$
\sum_{F \in \Sur(V_{[\alpha_n]}, G/H_2)} \bP(FM'' = 0) \le C_3,
$$
and this completes the proof since $n - \alpha_n - \beta_n \to \infty$ as $n \to \infty$.
\end{proof}

\begin{lem}
\label{lem: special case fiber}
Let $H_1$ be a proper subgroup of $G$. Suppose that $n - \alpha_n \ge \eta n$. Then the image of the map
$$
\psi: B_{H_1}^{(1)} ~\bigcap~ A_{H_2}^{(2)} \to \Hom(V_{[\alpha_n]}, G/H_2)
$$
given by the composition of the restriction to $V_{[\alpha_n]} := \langle v_1, v_2, \ldots, v_{\alpha_n} \rangle$ with the projection $\gamma_{H_2} : G \to G/H_2$ is contained in $\Sur(V_{[\alpha_n]}, G/H_2)$. Moreover, each fiber has at most $|H_1|^{n - \alpha_n}|H_2|^{\alpha_n}$ elements. 
\end{lem}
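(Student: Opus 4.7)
The plan is to establish the two claims separately, both relying on the observation that $H_1 \subseteq H_2$ in this setting.

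First, I would apply Lemma \ref{lem: right most case1 group subgroup} with $j=1$ and $k=1$: since the intersection $B_{H_1}^{(1)} \cap A_{H_2}^{(2)}$ is nonempty (otherwise there is nothing to prove) and $n - \alpha_n \ge \eta n$, we get $H_1 \le H_2$. To verify surjectivity of $\psi(F)$, note that $V_2 = V$ (because $k=1$ forces $\alpha_n^{(2)} = 0$), so the condition $F \in A_{H_2}^{(2)}$ yields a set $\sg \subseteq [n]$ with $|\sg| < \ell([G:H_2])\dt_2 n$ and $F(V_{\backslash \sg}) = H_2$. Since $F \in B_{H_1}^{(1)}$ gives $F(v_i) \in H_1 \subseteq H_2$ for every $i \in [n]\setminus [\alpha_n]$, I can replace $\sg$ by $\sg' := \sg \cap [\alpha_n]$ without losing the property $F(V_{\backslash \sg'}) = H_2$: the extra generators $v_j$ (for $j \in \sg \setminus [\alpha_n]$) added back into $V_{\backslash \sg'}$ have images already in $H_2$.

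Now using $V = V_{\backslash \sg'} + \langle v_j : j \in \sg' \rangle$ together with $F(V) = G$, I get
\[
G = F(V_{\backslash \sg'}) + \langle F(v_j) : j \in \sg' \rangle = H_2 + \langle F(v_j) : j \in \sg' \rangle,
\]
so $G/H_2$ is generated by $\{\gamma_{H_2}(F(v_j)) : j \in \sg'\}$. Since $\sg' \subseteq [\alpha_n]$, these generators are precisely values of $\psi(F)$ on standard basis vectors, which establishes $\psi(F) \in \Sur(V_{[\alpha_n]}, G/H_2)$.

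For the fiber bound, fix $\varphi \in \Hom(V_{[\alpha_n]}, G/H_2)$ in the image, and let $F \in \psi^{-1}(\varphi)$. For each $i \in [\alpha_n]$, the value $F(v_i)$ must lie in the coset $\gamma_{H_2}^{-1}(\varphi(v_i))$, giving at most $|H_2|$ possibilities; for each $i \in [n]\setminus [\alpha_n]$, the condition $F \in B_{H_1}^{(1)}$ forces $F(v_i) \in H_1$, giving at most $|H_1|$ possibilities. Multiplying across all $n$ indices yields the bound $|H_2|^{\alpha_n} |H_1|^{n-\alpha_n}$, as claimed. The only nontrivial step is the surjectivity argument, and the key trick there is pushing the exceptional index set $\sg$ into $[\alpha_n]$ via the inclusion $H_1 \subseteq H_2$.
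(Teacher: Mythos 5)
Your proof is correct, and the fiber bound is identical to the paper's. The main difference lies in the surjectivity argument: the paper observes directly that $\gamma_{H_2} \circ F : V \to G/H_2$ is surjective (because $F$ is), and since $F(v_l) \in H_1 \le H_2$ for every $l > \alpha_n$, the image of $\gamma_{H_2} \circ F$ coincides with the image of its restriction to $V_{[\alpha_n]}$, which is exactly $\psi(F)$. You instead invoke the $\sg$-set from the definition of $A_{H_2}^{(2)}$, push $\sg$ into $[\alpha_n]$, and then write $G = H_2 + \langle F(v_j) : j \in \sg' \rangle$. This works, but it is an unnecessary detour — surjectivity of $\psi(F)$ uses only the $B_{H_1}^{(1)}$ condition plus $H_1 \le H_2$, not the specific structure of $A_{H_2}^{(2)}$ — and it silently assumes $H_2$ is a proper subgroup (for $H_2 = G$ no such $\sg$ exists, since $\ell([G:H_2]) = 0$). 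The lemma as stated allows $H_2 = G$, and in fact it is applied in Lemma~\ref{lem: BA for k=1 converges to 0} before the case split on whether $H_2 = G$. Your conclusion is of course trivially true there ($G/H_2 = 0$), but you should handle it explicitly or, better, drop the $\sg$-argument in favor of the direct one, which covers all cases uniformly.
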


\begin{proof}
Let 
$$
F \in B_{H_1}^{(1)} ~\bigcap~ A_{H_2}^{(2)}.
$$
Since $F \in \Sur(V,G)$, obviously $\gamma_{H_2} \circ F \in \Sur(V,G/H_2)$. The condition that $F \in B_{H_1}^{(1)}$ (and $n-\alpha_n \ge \eta n$) implies that for all $\alpha_n + 1 \le  l \le n$, we have
$F(v_l) \in H_1 \le H_2$ by Lemma \ref{lem: right most case1 group subgroup}, from which the first and second assertions follow. 
\end{proof}

\begin{proof}[Proof of Theorem \ref{thm: universality k=1 theorem}]
Note that
$$
 \sum_{F \in \Sur(V,G)} \bP(FM = 0) = \bE(\#\Sur(\cok(M), G)) = \bE(\#\Sur(\cok(M^T), G)) = \sum_{F \in \Sur(V,G)} \bP(FM^T = 0), 
$$
where the second equality follows by Lemma \ref{lem: Smith normal form}. Therefore,
we may assume that $\alpha_n \le \beta_n$ by taking the transpose if necessary. Then clearly, $n - \alpha_n \ge \eta n$ when $n$ is large enough as $n - \al_n - \beta_n \to \infty$. 
As we observed in Section \ref{The outline of the proof of the universality theorem}, if $F \in \Sur(V,G)$, then $F$ falls into one of the following three categories. 
\begin{enumerate}
\item 
$F$ is a code and $F_1$ is also a code. 
\item
$F \in A_{H_1}^{(1)} \cap  A_{H_2}^{(2)}$ for some subgroups $H_1, H_2$ of $G$ where at least one of them is a proper subgroup. 
\item  
$F \in B_{H_1}^{(1)} \cap A_{H_2}^{(2)}$ for some subgroups $H_1, H_2$ of $G$ such that $H_1$ is a proper subgroup.
\end{enumerate}
Then the assertion follows by combining Proposition \ref{prop: code sum to 1}, Lemma \ref{lem: AA for k=1 converges to 0} and Lemma \ref{lem: BA for k=1 converges to 0}. 
\end{proof}

Now we may remove the condition that $n - \alpha_n \ge \eta n$ in Lemma \ref{lem: BA for k=1 converges to 0}:

\begin{cor}
 Let $H_1, H_2$ be subgroups of $G$ such that $H_1$ is a proper subgroup. Then 
$$
\underset{n \to \infty}{\lim} \sum_{F \in B_{H_1}^{(1)} \cap A_{H_2}^{(2)}} \bP(FM = 0) = 0.
$$   
\end{cor}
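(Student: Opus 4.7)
The plan is to derive the corollary without any further probabilistic estimates, simply by combining two limit statements already established in the section. I would first invoke Theorem~\ref{thm: universality k=1 theorem}, which gives
$$\sum_{F \in \Sur(V,G)} \bP(FM = 0) \longrightarrow 1$$
without any restriction of the form $n - \alpha_n \ge \eta n$. I would then invoke Proposition~\ref{prop: code sum to 1} (in the $k = 1$ case), yielding $\sum_{F \in \mc{F}_1} \bP(FM = 0) \to 1$, where $\mc{F}_1 = A_G^{(1)} \cap A_G^{(2)}$. Subtracting these two limits and using non-negativity of every summand, one obtains $\sum_{F \in \Sur(V,G) \setminus \mc{F}_1} \bP(FM = 0) \to 0$.

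The crux is then to verify the set-theoretic inclusion $B_{H_1}^{(1)} \cap A_{H_2}^{(2)} \subseteq \Sur(V,G) \setminus \mc{F}_1$ whenever $H_1$ is a proper subgroup of $G$. For this I would observe that $A_G^{(1)}$ and $B_{H_1}^{(1)}$ are disjoint: any $F \in A_G^{(1)}$ has $F_1$ a code of distance $\dt_1(n - \alpha_n)$, so taking $\sigma = \varnothing$ in the definition of a code forces $F_1(V_1) = G$; but any $F \in B_{H_1}^{(1)}$ satisfies $F_1(V_1) = H_1 \ne G$. Since $\mc{F}_1 \subseteq A_G^{(1)}$, the inclusion follows, and hence
$$0 \le \sum_{F \in B_{H_1}^{(1)} \cap A_{H_2}^{(2)}} \bP(FM = 0) \le \sum_{F \in \Sur(V,G) \setminus \mc{F}_1} \bP(FM = 0) \longrightarrow 0,$$
which is the corollary.

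I do not anticipate a serious obstacle. The conceptual point is that the restriction $n - \alpha_n \ge \eta n$ in Lemma~\ref{lem: BA for k=1 converges to 0} was a technical device needed only in its direct proof; once Theorem~\ref{thm: universality k=1 theorem} has been established (which uses transposition via Lemma~\ref{lem: Smith normal form} to bypass precisely this regime issue), the corollary drops out by straightforward bookkeeping on subsets of $\Sur(V,G)$ and costs no additional estimates.
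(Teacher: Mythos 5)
Your proposal is correct and coincides with the paper's own argument: the paper invokes Theorem~\ref{thm: universality k=1 theorem} and Proposition~\ref{prop: code sum to 1} and then notes $\mc{F}_1 \cap \bigl(B_{H_1}^{(1)} \cap A_{H_2}^{(2)}\bigr) = \varnothing$, which is exactly the disjointness you verify via $F_1(V_1)=G$ for codes versus $F_1(V_1)=H_1\neq G$ for $B_{H_1}^{(1)}$. You have simply made explicit the one-line disjointness check that the paper states without elaboration.
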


\begin{proof}
By Theorem \ref{thm: universality k=1 theorem}, we have
$$
\underset{n \to \infty}{\lim}\sum_{F \in \Sur(V,G)} \bP(FM = 0) = 1.
$$
Since $H_1$ is a proper subgroup of $G$, we have
$$
\mc{F}_1 ~\bigcap~ \left( B_{H_1}^{(1)} \bigcap A_{H_2}^{(2)} \right ) = \varnothing.
$$
Then the desired result follows from Proposition \ref{prop: code sum to 1}.
\end{proof}

\section{The universality theorem for an arbitrary \texorpdfstring{$k$}{k}}\label{universality third section}

{\bf Induction hypothesis:} Now let $k \ge 2$ and suppose that \eqref{eq: universality main theorem goal} holds when $k$ is replaced by any positive integer less than $k$.

\subsection{Bounding the error terms for the moment (1)}

Recall that
$$\alpha_n^{(0)} = \beta_n^{(k+1)} = n \quad \text{and} \quad \alpha_n^{(k+1)} = \beta_n^{(0)} = 0.$$
Let $H_1, H_2, \ldots, H_{k+1}$ be subgroups of $G$.

\begin{prop}
\label{prop: AA for k converges to 0}
Suppose that $H_i \neq G$ for some $1\le i \le k+1$. Then
$$
\underset{n \to \infty}{\lim} \sum_{F \in \cap_{i=1}^{k+1} A_{H_i}^{(i)}} \bP(FM = 0) = 0. 
$$    
\end{prop}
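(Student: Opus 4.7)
The overall strategy mirrors the proof of Lemma \ref{lem: AA for k=1 converges to 0}, the $k=1$ version of this proposition.

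First, for any $F \in \bigcap_{i=1}^{k+1} A_{H_i}^{(i)}$, I would apply Lemma \ref{lem: AA upper bound}(1) to each block of columns $\beta_n^{(i-1)}+1 \le l \le \beta_n^{(i)}$ separately; on the $i$-th block the zero pattern of $M$ is governed by $V_i$ and $F|_{V_i}$ has $\dt_i$-depth $[G:H_i]$, so that block contributes at most $C_i\, b_{H_i}^{\beta_n^{(i)} - \beta_n^{(i-1)}}$. Multiplying through gives a universal constant $C > 0$, independent of $n$ and $F$, with
\[
\bP(FM = 0) = \prod_{i=1}^{k+1} \prod_{l=\beta_n^{(i-1)}+1}^{\beta_n^{(i)}} \bP(FM_l = 0) \; \le \; C \prod_{i=1}^{k+1} b_{H_i}^{\beta_n^{(i)} - \beta_n^{(i-1)}},
\]
reducing the proposition to the estimate
\[
\Big|\bigcap_{i=1}^{k+1} A_{H_i}^{(i)}\Big| \cdot \prod_{i=1}^{k+1} b_{H_i}^{\beta_n^{(i)} - \beta_n^{(i-1)}} \longrightarrow 0.
\]

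To control the cardinality, I would use that each $F$ in the intersection is determined (up to the choice of exception sets) by subsets $\sg_i \subseteq [n] \setminus [\al_n^{(i)}]$ with $|\sg_i| < \ell([G:H_i])\dt_i(n - \al_n^{(i)})$ together with the images $F(v_j)$, where $F(v_j) \in H_i$ for every $j \in [n] \setminus ([\al_n^{(i)}] \cup \sg_i)$. Because $V_1 \subseteq V_2 \subseteq \cdots \subseteq V_{k+1}$, the constraints coming from different levels stack: for $j \in (\al_n^{(l+1)}, \al_n^{(l)}]$, outside the exception sets one has $F(v_j) \in \bigcap_{i \ge l+1} H_i$. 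Summing over the possible exception sets produces a binomial factor bounded by $e^{\gamma n}$, using our choice of constants in Section \ref{The constants}. The resulting main term is estimated via Lemma \ref{lem: function counting lemma} after sorting the $H_i$'s by size; this is precisely the content of the forward-referenced Lemmas \ref{lem: special case1 for upper bound for all type2 case converges to 0 proposition} and \ref{lem: special case2 for upper bound for all type2 case congverges to 0 proposition}.

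Finally I would combine the two bounds. The hypothesis that $H_{i_0} \neq G$ for some $i_0$ produces the factor $b_{H_{i_0}} = (1-\epsilon)/|H_{i_0}|$, and after matching the $|H_i|^{-1}$ factors in $b_{H_i}$ against the $|H_i|$-dependent terms from the counting — invoking Remark \ref{rem: bH inequality} to upper bound $b_{H_i}$ by $b_{H_{i+1}}$ whenever $|H_i| \ge |H_{i+1}|$ and so permute the $b_{H_i}$'s into an ordering compatible with the $A_i := [n] \setminus [\al_n^{(i)}]$'s, exactly as in the $|H_1| \ge |H_2|$ case of Lemma \ref{lem: upper bound for all type2 case converges to 0 for k=1} — what remains is a factor of $(1-\epsilon)^{\Theta(n)}$ multiplied by $e^{\gamma n}$ and polynomial-in-$|G|$ corrections. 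The main obstacle is verifying that, after the sorting/swapping step, the effective exponent of $(1-\epsilon)$ is genuinely linear in $n$ (at least $n - \al_n^{(i_0)}$, which is $\omega(1)$ by hypothesis and in fact $\Theta(n)$ in the nontrivial regime), so that the $e^{\gamma n}$ loss from the exception-set binomials is strictly dominated. This matching is where the nested smallness of the $\dt_i$ and the choice $\gamma < \epsilon(1-2\eta)/(5(k+1))$ from Section \ref{The constants} are used; a complementary argument via $n - \al_n^{(i_0)}$ being small (handled analogously to the $N_j^c$ regime in Section \ref{The outline of the proof of the universality theorem}) rules out the boundary case.
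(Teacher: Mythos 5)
Your proposal follows the paper's strategy essentially step for step: Lemma \ref{lem: AA upper bound}(1) applied blockwise to reduce to the cardinality estimate of Proposition \ref{prop: upper bound for all type2 case converges to 0}, then the exception-set counting via Lemma \ref{lem: function counting lemma} in the strictly-increasing cases (Lemmas \ref{lem: special case1 for upper bound for all type2 case converges to 0 proposition} and \ref{lem: special case2 for upper bound for all type2 case congverges to 0 proposition}), and finally Remark \ref{rem: bH inequality} to handle $|H_j| \ge |H_{j+1}|$. One small imprecision: you describe the last step as \emph{permuting} the $b_{H_i}$'s into a compatible order, but since the constraints at distinct levels live on different, nested $A_i$'s they cannot actually be reordered --- what the paper does (and what the $|H_1| \ge |H_2|$ case of Lemma \ref{lem: upper bound for all type2 case converges to 0 for k=1} that you cite really does) is \emph{drop} the index $j$, merge its $\beta$-block into its neighbor via $b_{H_j}^{\cdot}b_{H_{j+1}}^{\cdot} \le b_{H_{j+1}}^{\cdot}$, and invoke the induction hypothesis on $k-1$ stairs; this merge-and-induct is the right way to make your sketch precise.
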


We deduce Proposition \ref{prop: AA for k converges to 0} by proving its upper bound converges to $0$.
Recall that for $H \le G$, 
$$
b_H = \begin{cases}
\frac{1}{|H|}(1-\epsilon) &\text{ if $H \neq G$} \\
\frac{1}{|G|} &\text{ if $H = G$}.
\end{cases}
$$

\begin{prop}
\label{prop: upper bound for all type2 case converges to 0}
Suppose that $H_i \neq G$ for some $1\le i \le k+1$.
Then
$$
\underset{n \to \infty}{\lim} \left|\bigcap_{i=1}^{k+1} A_{H_i}^{(i)}\right|
\left(\prod_{i=1}^{k+1} b_{H_i}^{\beta_n^{(i)} - \beta_n^{(i-1)}}\right) = 0. 
$$    
\end{prop}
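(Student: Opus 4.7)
The plan is to distinguish two cases based on the arrangement of $(|H_i|)_{i=1}^{k+1}$. Call the configuration \emph{monotone} if $(b_{H_i})_{i=1}^{k+1}$ is non-decreasing, otherwise \emph{non-monotone}. A short case analysis using $\epsilon<1/2$ shows that in a monotone configuration, once one encounters a proper $H_{i_0}$, every later $H_j$ (with $j\ge i_0$) must also be proper, and moreover $|H_{i_0}|\ge|H_{i_0+1}|\ge\cdots\ge|H_{k+1}|$; in particular, under the hypothesis that some $H_i$ is proper, $H_{k+1}$ itself is a proper subgroup of $G$.

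In the monotone case I would use $b_{H_i}\le b_{H_{k+1}}$ for all $i$ to bound
$$
\prod_{i=1}^{k+1} b_{H_i}^{\beta_n^{(i)} - \beta_n^{(i-1)}} \le b_{H_{k+1}}^{\,n},
$$
and then the inclusion $\bigcap_i A_{H_i}^{(i)}\subseteq A_{H_{k+1}}^{(k+1)}$ reduces the problem to showing that $|A_{H_{k+1}}^{(k+1)}|\,b_{H_{k+1}}^n\to 0$. Setting $D=[G:H_{k+1}]>1$ and applying \cite[Lemma 2.6]{Woo19} to $V_{k+1}=V$ (since $\alpha_n^{(k+1)}=0$), a short manipulation using $|G|^n/|H_{k+1}|^n=D^n$ gives
$$
|A_{H_{k+1}}^{(k+1)}|\,b_{H_{k+1}}^n \le C\,e^{\gamma n}\,e^{-\epsilon n}\,D^{\,\ell(D)\dt_{k+1}n},
$$
which tends to $0$ thanks to the choice of $\gamma$ and $\dt_{k+1}$ in Section \ref{The constants}. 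This is precisely the argument already carried out for the subcase $|H_1|\ge|H_2|$ in the proof of Lemma \ref{lem: upper bound for all type2 case converges to 0 for k=1}.

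In the non-monotone case, there exist $i<j$ with $b_{H_i}>b_{H_j}$; concretely this means either $H_i,H_j$ are both proper with $|H_i|<|H_j|$, or $H_i$ is proper and $H_j=G$. This is exactly the setting covered by the special-case Lemmas \ref{lem: special case1 for upper bound for all type2 case converges to 0 proposition} and \ref{lem: special case2 for upper bound for all type2 case congverges to 0 proposition} stated below, so the proposition follows by invoking them.

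The main obstacle lies in the non-monotone case. A direct count of $|\bigcap A_{H_i}^{(i)}|$, obtained by combining the definition of $A_H^{(i)}$ with Lemma \ref{lem: function counting lemma}, is naturally stratified by the $\alpha$-partition $[n]=B_1\sqcup\cdots\sqcup B_{k+1}$ with $B_m=\{\alpha_n^{(m)}+1,\ldots,\alpha_n^{(m-1)}\}$, whereas the factor $\prod_i b_{H_i}^{\beta_n^{(i)}-\beta_n^{(i-1)}}$ is stratified by the disjoint $\beta$-intervals. Reconciling these two stratifications is where the hypothesis $n-\alpha_n^{(i)}-\beta_n^{(i)}\to\infty$ is used: it provides the slack that, combined with the choice of $\dt_i$ small enough to absorb the entropy cost of an exceptional set of size $\lceil\ell([G:H_i])\dt_i(n-\alpha_n^{(i)})\rceil$, lets the special-case lemmas push the combined bound to zero.
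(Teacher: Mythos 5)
Your monotone case is correct and is actually a clean observation: when $(b_{H_i})_{i=1}^{k+1}$ is non-decreasing, the inclusion $\bigcap_i A_{H_i}^{(i)}\subseteq A_{H_{k+1}}^{(k+1)}$, the bound $\prod_i b_{H_i}^{\beta_n^{(i)}-\beta_n^{(i-1)}}\le b_{H_{k+1}}^n$, and the fact that $H_{k+1}$ is then forced to be a proper subgroup together reduce the whole quantity to $|A_{H_{k+1}}^{(k+1)}|\,b_{H_{k+1}}^n$, which \cite[Lemma 2.6]{Woo19} and the choice of $\gamma,\dt_{k+1}$ drive to zero. This compresses the paper's $|H_1|\ge|H_2|$ base-case reasoning into a single step valid for all $k$ at once.

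Your non-monotone case, however, contains a genuine gap. The existence of one pair $i<j$ with $b_{H_i}>b_{H_j}$ is much weaker than the hypotheses of Lemmas \ref{lem: special case1 for upper bound for all type2 case converges to 0 proposition} and \ref{lem: special case2 for upper bound for all type2 case congverges to 0 proposition}, which require the \emph{entire} chain $|H_1|<|H_2|<\cdots<|H_{k+1}|$ to be strictly increasing. For example, take $k=2$ with $H_1$ proper, $H_2=G$, $H_3$ proper: then $b_{H_1}>b_{H_2}$ so you call it non-monotone, but $|H_2|>|H_3|$, so neither special-case lemma applies; and since $b_{H_2}<b_{H_3}$, your monotone argument does not apply either. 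Your final paragraph gestures at the $\alpha$-versus-$\beta$ stratification mismatch and the role of $n-\alpha_n^{(i)}-\beta_n^{(i)}\to\infty$, but it does not supply a mechanism to handle such intermediate configurations.

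The paper bridges this gap by an induction on $k$, which is the key ingredient you omit. If $|H_j|\ge|H_{j+1}|$ for some $j$, the paper collapses the $j$-th and $(j+1)$-th steps: one deletes index $j$, defines $\hat H_i,\hat\alpha_n^{(i)},\hat\beta_n^{(i)},\hat\dt_i$ for a $(k-1)$-step problem, and observes that $\bigcap_{i=1}^{k+1}A_{H_i}^{(i)}\subseteq\bigcap_{i=1}^{k}A_{\hat H_i}^{(i)}$ while $\prod_i b_{H_i}^{\beta_n^{(i)}-\beta_n^{(i-1)}}\le\prod_i b_{\hat H_i}^{\hat\beta_n^{(i)}-\hat\beta_n^{(i-1)}}$ (because $b_{H_j}\le b_{H_{j+1}}$ by Remark \ref{rem: bH inequality}). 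The inductive hypothesis then applies; iterating the collapse terminates either at the strictly increasing chain (special-case lemmas) or at $k=1$ (your monotone argument). Without this collapse step, your case split is not exhaustive.
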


\begin{proof}[Proof of Proposition \ref{prop: AA for k converges to 0}]
Let 
$$F \in \bigcap_{i=1}^{k+1} A_{H_i}^{(i)}.$$ 
By Lemma \ref{lem: AA upper bound}, there exists a constant $C>0$ (independent of $F$ and $n$) such that
$$
\bP(FM = 0) \le C\prod_{i = 1}^{k+1} b_{H_i}^{\beta_n^{(i)} - \beta_n^{(i-1)}}.
$$
Thus, Proposition \ref{prop: AA for k converges to 0} follows from Proposition \ref{prop: upper bound for all type2 case converges to 0}. 
\end{proof}

Now it remains to prove Proposition \ref{prop: upper bound for all type2 case converges to 0}. We first prove Proposition \ref{prop: upper bound for all type2 case converges to 0} in some special cases and then derive Proposition \ref{prop: upper bound for all type2 case converges to 0} from them.  

\begin{lem}
\label{lem: special case1 for upper bound for all type2 case converges to 0 proposition}
Suppose that all $H_i$ are proper subgroups of $G$ and 
$$
|H_1| < |H_2| < \cdots < |H_{k+1}|. 
$$
Then 
$$
\underset{n \to \infty}{\lim} \left|\bigcap_{i=1}^{k+1} A_{H_i}^{(i)}\right|
\left(\prod_{i=1}^{k+1} b_{H_i}^{\beta_n^{(i)} - \beta_n^{(i-1)}}\right) = 0. 
$$    
\end{lem}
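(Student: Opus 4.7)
The strategy is to obtain a combinatorial upper bound on $\left|\bigcap_{i=1}^{k+1} A_{H_i}^{(i)}\right|$ and combine it with the product $\prod_i b_{H_i}^{\beta_n^{(i)} - \beta_n^{(i-1)}} = (1-\epsilon)^n \prod_i |H_i|^{-(\beta_n^{(i)}-\beta_n^{(i-1)})}$ (valid since all $H_i$ are proper) to produce exponential decay. For each $F$ in the intersection there exist witnesses $\sg_i \subseteq [n]\setminus [\al_n^{(i)}]$ with $|\sg_i| < c_i := \ell([G:H_i])\dt_i(n-\al_n^{(i)})$ such that $F(v_j) \in H_i$ for every $j \in T_i \setminus \sg_i$, where $T_i := [n]\setminus [\al_n^{(i)}]$. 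Summing over such witnesses reduces the task to counting, for fixed $\sg_i$'s, the number of $F \in \Hom(V, G)$ meeting all these constraints.

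The chain $T_1 \subset T_2 \subset \cdots \subset T_{k+1} = [n]$ partitions $[n]$ into blocks $U_m := T_m \setminus T_{m-1}$ of sizes $u_m = \al_n^{(m-1)} - \al_n^{(m)}$ (taking $T_0 = \varnothing$). For $j \in U_m$ the constraint $F(v_j) \in H_m$ (when $j \notin \sg_m$) is the tightest among the $F(v_j) \in H_i$ for $i \ge m$, by the hypothesis $|H_1| < \cdots < |H_{k+1}|$; for $j \in U_m \cap \sg_m$ I use only $F(v_j) \in G$. This yields $\prod_m |H_m|^{u_m} \cdot |G|^{\sum_m |\sg_m|}$ for fixed witnesses. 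Summing over $\sg_i$'s and using $\binom{n}{\lfloor \ell(|G|)\dt_1 n\rfloor} \le e^{\gamma n}$ from the choice of $\dt_1$, I obtain
\[
\left|\bigcap_{i=1}^{k+1} A_{H_i}^{(i)}\right| \cdot \prod_{i=1}^{k+1} b_{H_i}^{\beta_n^{(i)} - \beta_n^{(i-1)}} \le (1-\epsilon)^n \cdot e^{(k+1)(\gamma + \ell(|G|)\dt_1 \log|G|) n + o(n)} \cdot \prod_{i=1}^{k+1} |H_i|^{e_i},
\]
where $e_i := u_i - (\beta_n^{(i)} - \beta_n^{(i-1)})$ satisfies $\sum_i e_i = 0$.

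The crucial step is the Abel summation identity (verifiable directly using $\al_n^{(0)} = \beta_n^{(k+1)} = n$ and $\al_n^{(k+1)} = \beta_n^{(0)} = 0$):
\[
\sum_{i=1}^{k+1} e_i \log|H_i| = -\sum_{i=1}^{k} \bigl(n - \al_n^{(i)} - \beta_n^{(i)}\bigr) \log(|H_{i+1}|/|H_i|).
\]
Since $n - \al_n^{(i)} - \beta_n^{(i)} \ge 1$ and $\log(|H_{i+1}|/|H_i|) > 0$ by the strict chain hypothesis, the right-hand side is $\le 0$, so $\prod_i |H_i|^{e_i} \le 1$. Combined with $(1-\epsilon)^n \le e^{-\epsilon n}$ and the bounds $(k+1)\gamma < \epsilon/5$ and $(k+1)\ell(|G|)\dt_1\log|G| < \epsilon/5$ from Section~\ref{The constants}, the whole expression is bounded by $e^{-3\epsilon n/5 + o(n)}$, which tends to $0$.

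The main obstacle is the Abel summation identity above: this is the one place where the strict chain $|H_1| < \cdots < |H_{k+1}|$ and the staircase geometry of $M$ (encoded in the $\al_n^{(i)}$ and $\beta_n^{(i)}$) pair up exactly to produce a non-positive quantity large enough to absorb the entropy cost $e^{(k+1)(\gamma+\ell(|G|)\dt_1\log|G|)n}$ incurred from summing over the witnesses $\sg_i$.
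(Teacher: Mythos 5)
Your proof is correct and follows essentially the same strategy as the paper's: for each $F$ in the intersection one chooses witness sets $\sg_i$, counts homomorphisms constrained on the nested sets $T_i = [n]\setminus[\al_n^{(i)}]$ block by block, pays an entropy cost $e^{O(\gamma+\dt_1)n}$ for the witnesses, and then rearranges the $|H_i|$ exponents into factors $\bigl(|H_i|/|H_{i+1}|\bigr)^{n-\al_n^{(i)}-\beta_n^{(i)}} \le 1$, leaving $(1-\epsilon)^n$ to win. The paper packages the block-by-block count as Lemma~\ref{lem: function counting lemma} and arrives at the telescoping factors directly; your Abel summation identity
$\sum_i e_i\log|H_i| = -\sum_{i=1}^k (n-\al_n^{(i)}-\beta_n^{(i)})\log(|H_{i+1}|/|H_i|)$
is a clean and transparent way of stating the same rearrangement, making explicit how the strict chain $|H_1|<\cdots<|H_{k+1}|$ and the staircase parameters conspire to give a nonpositive exponent.
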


\begin{proof}
Let $D_i = [G:H_i]$ and $$F \in \bigcap_{i=1}^{k+1} A_{H_i}^{(i)}.$$
Then for every $1 \le i \le k+1$, there exists $\sg_i \subseteq [n]\backslash [\alpha_n^{(i)}]$ with $|\sg_i| = \lceil \dt_i \ell(D_i)(n - \alpha_n^{(i)}) \rceil -1$ such that $F((V_i)_{\backslash \sg_i}) = H_i$ and $[F(V_i):H_i] > 1$. Let $A_i = [n] \backslash ([\alpha_n^{(i)}] \cup \sg_i)$ and
$$
a_i = |A_i| = n-\alpha_n^{(i)} - \left(\lceil \ell(D_i)\dt_i(n - \alpha_n^{(i)})\rceil -1\right).
$$
Then by our choices of $\dt_i$, we have
$$
a_1 \le a_2 \le \cdots \le a_{k+1}. 
$$
Put $a_0 = 0$.
It follows from Lemma \ref{lem: function counting lemma} that 
$$
\left|\bigcap_{i=1}^{k+1} A_{H_i}^{(i)}\right| \le  |G|^{n-a_{k+1}}\prod_{i=1}^{k+1}|H_i|^{a_i - a_{i-1}}\prod_{i=1}^{k+1} \binom{n- \alpha_n^{(i)}}{a_i},  
$$
where the binomial term represents the number of ways to choose $\sg_i$. 
There exists a constant $C>0$ such that for all sufficiently large $n$, the following inequality holds. (see Section \ref{The constants} for $\gamma$)
$$
\left|\bigcap_{i=1}^{k+1} A_{H_i}^{(i)}\right|\le C|G|^{\ell(D_{k+1})\dt_{k+1}n}\left(\prod_{i=1}^{k+1}|H_i|^{(n - \alpha_n^{(i)})(1 - \ell(D_{i})\dt_{i}) - (n - \alpha_n^{(i-1)})(1 - \ell(D_{i-1})\dt_{i-1})}\right) e^{(k+1) \gamma n}.  
$$
Then we have for all large enough $n$,
$$
\left|\bigcap_{i=1}^{k+1} A_{H_i}^{(i)}\right|
\left(\prod_{i=1}^{k+1} b_{H_i}^{\beta_n^{(i)} - \beta_n^{(i-1)}}\right)
$$
is bounded above by
\begin{align*}
& C(1-\epsilon)^n |G|^{\ell(D_{k+1})\dt_{k+1}n}\left(\prod_{i=1}^k \left(\frac{|H_i|}{|H_{i+1}|}\right)^{n- \alpha_n^{(i)} - \beta_n^{(i)}}|H_{i+1}|^{\ell(|G|)\dt_{i}n}\right) e^{(k+1) \gamma n}  \\
& \le Ce^{-\epsilon n} e^{(k+1)\gamma n}|G|^{\ell(|G|)\dt_1(k+1)n},
\end{align*}
where the right hand side converges to $0$ by our choice of the constants as in Section \ref{The constants}. 
\end{proof}

\begin{lem}
\label{lem: special case2 for upper bound for all type2 case congverges to 0 proposition}
Suppose that $H_{k+1} = G$ and 
$$
|H_1| < |H_2| < \cdots < |H_{k+1}|. 
$$
Then 
$$
\underset{n \to \infty}{\lim} \left|\bigcap_{i=1}^{k+1} A_{H_i}^{(i)}\right|
\left(\prod_{i=1}^{k+1} b_{H_i}^{\beta_n^{(i)} - \beta_n^{(i-1)}}\right) = 0. 
$$    
\end{lem}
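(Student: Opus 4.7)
The plan is to closely mimic the proof of Lemma \ref{lem: special case1 for upper bound for all type2 case converges to 0 proposition}, accounting for the modifications forced by $H_{k+1}=G$: namely $D_{k+1}=1$ and $b_{H_{k+1}}=1/|G|$, so the overall $(1-\epsilon)$ exponent drops from $n$ down to $\beta_n^{(k)}$. For each $F\in\bigcap_{i=1}^{k+1}A_{H_i}^{(i)}$ and $1\le i\le k$, I would pick $\sigma_i\subseteq[n]\setminus[\alpha_n^{(i)}]$ of size $\lceil\ell(D_i)\dt_i(n-\alpha_n^{(i)})\rceil-1$ with $F((V_i)_{\backslash\sigma_i})=H_i$, while for $i=k+1$ I simply set $\sigma_{k+1}=\varnothing$ and $a_{k+1}=n$, since the condition $F\in A_G^{(k+1)}$ imposes no subset constraint. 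With $a_i=n-\alpha_n^{(i)}-|\sigma_i|$ and $a_0=0$, Lemma \ref{lem: function counting lemma} gives
$$\left|\bigcap_{i=1}^{k+1}A_{H_i}^{(i)}\right|\le|G|^{n-a_k}\prod_{i=1}^k|H_i|^{a_i-a_{i-1}}\prod_{i=1}^k\binom{n-\alpha_n^{(i)}}{|\sigma_i|}.$$

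Next, I would multiply by $\prod_{i=1}^{k+1}b_{H_i}^{\beta_n^{(i)}-\beta_n^{(i-1)}}=(1-\epsilon)^{\beta_n^{(k)}}\left(\prod_{i=1}^k|H_i|^{-(\beta_n^{(i)}-\beta_n^{(i-1)})}\right)|G|^{-(n-\beta_n^{(k)})}$ and apply the same Abel-summation telescoping as in the proof of Lemma \ref{lem: special case1 for upper bound for all type2 case converges to 0 proposition}, this time with $H_{k+1}:=G$ playing the role of the final subgroup. Writing $c_i=n-\alpha_n^{(i)}-\beta_n^{(i)}$ and $d_i=|\sigma_i|$, the telescoping converts the expression into
$$C\,(1-\epsilon)^{\beta_n^{(k)}}\prod_{i=1}^k\left(\frac{|H_i|}{|H_{i+1}|}\right)^{c_i}\prod_{i=1}^k\left(\frac{|H_{i+1}|}{|H_i|}\right)^{d_i}\prod_{i=1}^k\binom{n-\alpha_n^{(i)}}{|\sigma_i|}.$$

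The crucial refinement over the proof of Lemma \ref{lem: special case1 for upper bound for all type2 case converges to 0 proposition} is that I would bound each binomial by $\binom{n-\alpha_n^{(i)}}{|\sigma_i|}\le e^{\gamma(n-\alpha_n^{(i)})}$ and each growth factor by $(|H_{i+1}|/|H_i|)^{d_i}\le|G|^{\ell(|G|)\dt_i(n-\alpha_n^{(i)})}$, using the sharper exponent $n-\alpha_n^{(i)}\le c_k+\beta_n^{(k)}$ instead of $n$. By the choice of $\gamma$ and $\dt_1$ in Section \ref{The constants}, the total error factor is at most $\exp\!\bigl((2\epsilon/5)(c_k+\beta_n^{(k)})\bigr)$. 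For the main term, since $|H_k|$ is a proper divisor of $|G|$, Lagrange's theorem gives $|H_k|/|G|\le 1/2$, and bounding $(|H_i|/|H_{i+1}|)^{c_i}\le 1$ for $i<k$ leaves the single factor $(|H_k|/|G|)^{c_k}\le 2^{-c_k}$. Combining these estimates yields
$$\left|\bigcap_{i=1}^{k+1}A_{H_i}^{(i)}\right|\prod_{i=1}^{k+1}b_{H_i}^{\beta_n^{(i)}-\beta_n^{(i-1)}}\le C\exp\!\left(-\frac{3\epsilon}{5}\beta_n^{(k)}-\left(\log 2-\frac{2\epsilon}{5}\right)c_k\right),$$
which tends to $0$ because $\epsilon<1/2$ forces $\log 2-2\epsilon/5>0$ and $c_k\to\infty$ by hypothesis.

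The main technical obstacle is to replace the cruder $e^{\gamma n}$ and $|G|^{\ell(|G|)\dt_1 n}$ bounds used in the proof of Lemma \ref{lem: special case1 for upper bound for all type2 case converges to 0 proposition} with their sharper $(n-\alpha_n^{(i)})$-dependent analogues; this is necessary because the weaker factor $(1-\epsilon)^{\beta_n^{(k)}}$ alone cannot dominate an error of size $n$ when $\beta_n^{(k)}$ is sublinear, so the compensating decay must come entirely from the telescoped factor $(|H_k|/|G|)^{c_k}$.
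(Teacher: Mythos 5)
Your proof is correct and mirrors the paper's: you bound $\bigl|\bigcap_i A_{H_i}^{(i)}\bigr|$ via Lemma \ref{lem: function counting lemma}, telescope against $\prod_i b_{H_i}^{\beta_n^{(i)}-\beta_n^{(i-1)}}$, and control the binomial/growth factors by the sharper $(n-\alpha_n^{(i)})$-dependent bounds, exactly as the paper does in this lemma (the cruder $e^{\gamma n}$ bound you mention appears only in Lemma \ref{lem: special case1 for upper bound for all type2 case converges to 0 proposition}). The only difference is at the end: the paper splits into the cases $\beta_n^{(k)}\ge(n-\alpha_n^{(k)})/2$ and $\beta_n^{(k)}<(n-\alpha_n^{(k)})/2$, extracting decay from $(1-\epsilon)^{\beta_n^{(k)}}$ in the first and from $(|H_k|/|G|)^{c_k}$ in the second, whereas you retain both decay factors simultaneously and combine them into a single exponential bound in $\beta_n^{(k)}$ and $c_k$, which makes the case split unnecessary.
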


\begin{proof}
Let 
$$
F \in \bigcap_{i=1}^{k+1} A_{H_i}^{(i)},
$$
For every $1 \le i \le k$, choose $\sg_i$ and define $A_i,~a_i,~D_i$ as in the proof of Lemma \ref{lem: special case1 for upper bound for all type2 case converges to 0 proposition}. Then we have 
$$
a_1 \le a_2 \le \cdots \le a_k. 
$$
As in the proof of Lemma \ref{lem: special case1 for upper bound for all type2 case converges to 0 proposition}, it follows from Lemma \ref{lem: function counting lemma} that 
$$
\left|\bigcap_{i=1}^{k+1} A_{H_i}^{(i)}\right|\le |G|^{n - a_k}\prod_{i=1}^{k}|H_i|^{a_i - a_{i-1}} \prod_{i=1}^{k} \binom{n- \alpha_n^{(i)}}{a_i}. 
$$
By the choice of $\dt_i,~ \gamma$ as in Section \ref{The constants}, we see that the following holds for all large enough $n$: 
$$
\left|\bigcap_{i=1}^{k+1} A_{H_i}^{(i)}\right|\le C |G|^{\alpha_n^{(k)}+\ell(D_k)\dt_k(n - \alpha_n^{(k)})}\left(\prod_{i=1}^{k}|H_i|^{(n - \alpha_n^{(i)})(1 - \ell(D_{i})\dt_{i}) - (n - \alpha_n^{(i-1)})(1 - \ell(D_{i-1})\dt_{i-1})}\right)e^{k\gamma(n - \alpha_n^{(k)})}.  
$$
Moreover, we have
$$
\prod_{i=1}^{k+1} b_{H_i}^{\beta_n^{(i)} - \beta_n^{(i-1)}} = \left(\frac{1}{|G|}\right)^{n- \beta_n^{(k)}}(1-\epsilon)^{\beta_n^{(k)}}\prod_{i=1}^k\left(\frac{1}{|H_{i}|}\right)^{\beta_n^{(i)} - \beta_n^{(i-1)}}. 
$$
Then for all large enough $n$, 
$$
\left|\bigcap_{i=1}^{k+1} A_{H_i}^{(i)}\right|
\left(\prod_{i=1}^{k+1} b_{H_i}^{\beta_n^{(i)} - \beta_n^{(i-1)}}\right)
$$
is bounded above by
\begin{equation}\label{eq: must be bounded}
C (1-\epsilon)^{\beta_n^{(k)}} \left(\prod_{i=1}^{k}\left(\frac{|H_i|}{|H_{i+1}|}\right)^{n - \alpha_n^{(i)} - \beta_n^{(i)}}\right)|G|^{k\ell(|G|)\dt_1(n-\alpha_n^{(k)})}e^{k\gamma(n - \alpha_n^{(k)})}
\end{equation}
If $\beta_n^{(k)} \ge (n- \alpha_n^{(k)})/2$, then for sufficiently large $n$, \eqref{eq: must be bounded} is bounded above by
$$
C(1-\epsilon)^{\beta_n^{(k)}}|G|^{k\ell(|G|)\dt_1(n-\alpha_n^{(k)})}e^{k\gamma(n - \alpha_n^{(k)})} \le Ce^{-\epsilon (n - \alpha_n^{(k)})/2}|G|^{k\ell(|G|)\dt_1(n-\alpha_n^{(k)})}e^{k\gamma(n - \alpha_n^{(k)})},
$$
which converges to zero as $n \to \infty$ by our choice of the constants as in Section \ref{The constants}.
If $\beta_n^{(k)} < (n- \alpha_n^{(k)})/2$, then for sufficiently large $n$, \eqref{eq: must be bounded} is bounded above by
$$
C\left(\frac{|H_k|}{|G|}\right)^{n - \alpha_n^{(k)} - \beta_n^{(k)}}|G|^{k\ell(|G|)\dt_1(n-\alpha_n^{(k)})}e^{k\gamma(n - \alpha_n^{(k)})} \le C \left(\frac{1}{2}\right)^{(n - \alpha_n^{(k)})/2} |G|^{k\ell(|G|)\dt_1(n-\alpha_n^{(k)})}e^{k\gamma(n - \alpha_n^{(k)})} 
$$
which also converges to zero as $n \to \infty$ by by our choice of the constants as in Section \ref{The constants}.
\end{proof}

\begin{proof}[Proof of Proposition \ref{prop: upper bound for all type2 case converges to 0}]
We use induction on $k$. When $k=1$, the assertion is Lemma \ref{lem: upper bound for all type2 case converges to 0 for k=1}.
Now suppose that $k \ge 2$ and that the assertion is true for $k-1$. If we have
$$
|H_1| < |H_2| < \cdots < |H_{k+1}|,
$$
then the assertion follows from Lemma \ref{lem: special case1 for upper bound for all type2 case converges to 0 proposition} and Lemma \ref{lem: special case2 for upper bound for all type2 case congverges to 0 proposition}. Otherwise, there exists a positive integer $1 \le j \le k$ such that $|H_j| \ge |H_{j+1}|$. For every $1\le i \le k$, define
$$
\hat{H}_i:= \begin{cases}
H_i ~&\text{if $i < j$} \\
H_{i+1} ~ &\text{if $i \ge j$},
\end{cases}
$$
and 
$$
\hat{\alpha}_n^{(i)}:= \begin{cases}
\alpha_n^{(i)} ~&\text{if $i < j$} \\
\alpha_n^{(i+1)} ~&\text{if $i \ge j$},     
\end{cases}
$$
$$
\hat{\dt}_i:= \begin{cases}
\dt_i ~&\text{if $i < j$} \\
\dt_{i+1} ~&\text{if $i \ge j$},     
\end{cases}
$$
and
$$
\hat{\beta}_n^{(i)}:= \begin{cases}
\beta_n^{(i)} ~&\text{if $i < j$} \\
\beta_n^{(i+1)} ~&\text{if $i \ge j$}.     
\end{cases}
$$
Since $|H_j| \ge |H_{j+1}| = |\hat{H}_j|$, we have $b_{H_j} \le b_{H_{j+1}} = b_{\hat{H}_j}$ and 
$$
b_{H_j}^{\beta_n^{(j)} - \beta_n^{(j-1)}}b_{H_{j+1}}^{\beta_n^{(j+1)} - \beta_n^{(j)}} \le b_{\hat{H}_j}^{\hat{\beta}_n^{(j)} - \hat{\beta}_n^{(j-1)}},
$$
so it follows that
$$
\prod_{i=1}^{k+1} b_{H_i}^{\beta_n^{(i)} - \beta_n^{(i-1)}} \le \prod_{i=1}^{k} b_{\hat{H}_i}^{\hat{\beta}_n^{(i)} - \hat{\beta}_n^{(i-1)}}.
$$
Define $A_{\hat{H}_i}^{(i)}$ similarly as $A_{H_i}^{(i)}$ by replacing $\alpha_n^{(i)}, H_i, \dt_i$ with $\hat{\alpha}_n^{(i)}, \hat{H}_i, \hat{\dt}_i$, respectively in the definition of $A_{H_i}^{(i)}$.
Then we have
$$
A_{\hat{H}_i}^{(i)} = \begin{cases}
A_{H_i}^{(i)} ~&\text{if $i <j$} \\
A_{H_{i+1}}^{(i+1)} ~&\text{if $i \ge j$},
\end{cases}
$$
so it is clear that
$$
\left|\bigcap_{i=1}^{k+1} A_{H_i}^{(i)}\right| \le \left|\bigcap_{i=1}^{k} A_{\hat{H}_i}^{(i)}\right|. 
$$
Now the proposition follows by the induction hypothesis.
\end{proof}

\subsection{Bounding the error terms for the moment (2)}
In this subsection we bound the sum of $\bP(FM= 0)$ over the set 
$$
R_j := R_j(H_j, \ldots, H_{k+1}) := B_{H_j}^{(j)} ~\bigcap~ \left(\bigcap_{i=j+1}^{k+1} A_{H_i}^{(i)}\right),
$$
for the positive integers $n$ in the following set:
$$N_j = \{m \in \bN : m - \alpha_m^{(j)} \ge \eta m\}.$$ 
Here, the constant $\eta$ is fixed in Section \ref{The constants}. 
We start with the special case where $j=1$ and $H_1 = \{0\}$. In this special case, we do not require that $n-\alpha_n^{(1)}\ge \eta n$.

\begin{lem}
\label{lem: right most case1 j=1 and H1 trivial}
If $H_1 = \{0\}$, then
$$
\underset{n \to \infty}{\lim} \sum_{F \in R_1} \bP(FM = 0) = 0. 
$$    
\end{lem}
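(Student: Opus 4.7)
The key observation is that $F \in B_{\{0\}}^{(1)}$ forces $F|_{V_1} = 0$, i.e., $F(v_i) = 0$ for every $i > \alpha_n^{(1)}$. Combined with the stair hypothesis $M_{i,l} = 0$ for $i \le \alpha_n^{(1)}$ and $l \le \beta_n^{(1)}$, this yields $FM_l = 0$ automatically whenever $l \le \beta_n^{(1)}$. Writing $F_0 := F|_{V_{[\alpha_n^{(1)}]}} \in \Sur(V_{[\alpha_n^{(1)}]}, G)$ and letting $\bar M$ denote the $\alpha_n^{(1)} \times (n - \beta_n^{(1)})$ upper-right submatrix of $M$ (rows $1,\ldots,\alpha_n^{(1)}$, columns $\beta_n^{(1)}+1,\ldots,n$), we have $\bP(FM = 0) = \bP(F_0 \bar M = 0)$ and
\begin{equation*}
\sum_{F \in R_1} \bP(FM = 0) \le \sum_{F_0 \in \Sur(V_{[\alpha_n^{(1)}]}, G)} \bP(F_0 \bar M = 0).
\end{equation*}

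The crucial feature of $\bar M$ is that it has $t := n - \alpha_n^{(1)} - \beta_n^{(1)} \to \infty$ more columns than rows, while its inherited $(k-1)$-step stair structure still satisfies $(n - \beta_n^{(1)}) - \alpha_n^{(m)} - (\beta_n^{(m)} - \beta_n^{(1)}) = n - \alpha_n^{(m)} - \beta_n^{(m)} \to \infty$ for each $2 \le m \le k$. The plan is to show the right-hand side vanishes by splitting the sum over $F_0$ using the code/positive-depth dichotomy that underlies the proof of Theorem \ref{thm: universality moment}. Concretely, for each column $l \in (\beta_n^{(m-1)}, \beta_n^{(m)}]$ (with $2 \le m \le k+1$), the value $F_0 \bar M_l$ depends only on the restriction $F_0|_{W_m}$, where $W_m := \langle v_i : \alpha_n^{(m)} < i \le \alpha_n^{(1)} \rangle$. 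When $F_0$ is a code of sufficiently large distance on $V_{[\alpha_n^{(1)}]}$, the nested inclusions $W_2 \subseteq W_3 \subseteq \cdots \subseteq W_{k+1} = V_{[\alpha_n^{(1)}]}$ imply that each $F_0|_{W_m}$ remains a code of sufficient distance, so Lemma \ref{lem: lem for code bound} yields $\bP(F_0 \bar M_l = 0) \le |G|^{-1} + o(1)$ on every column; summing over the at most $|G|^{\alpha_n^{(1)}}$ such $F_0$ produces a total of order $|G|^{-t} \to 0$.

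For the remaining $F_0$ with positive $\dt$-depth on some $W_m$, the argument parallels Proposition \ref{prop: upper bound for all type2 case converges to 0}: the depth determines a proper subgroup $H_m < G$, Lemma \ref{lem: AA upper bound} bounds $\bP(F_0 \bar M_l = 0)$ by powers of $b_{H_m}$ over the $m$-th stair range, and Lemma \ref{lem: function counting lemma} bounds the count of such $F_0$. The main obstacle will be the bookkeeping when the depths differ across the nested $W_m$'s: one must pair each stair range with the subgroup prescribed by the depth of $F_0|_{W_m}$ and exploit the excess $t \to \infty$ of columns over rows to absorb the resulting counting bound into a uniform $o(1)$ total.
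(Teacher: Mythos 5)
Your reduction to the $\alpha_n^{(1)} \times (n-\beta_n^{(1)})$ rectangular submatrix $\bar M$ is correct, and you identify correctly that $F \in B_{\{0\}}^{(1)}$ forces $F(v_i)=0$ for $i>\alpha_n^{(1)}$ and that the first $\beta_n^{(1)}$ columns of $M$ contribute factors of $1$. But the plan you sketch for the rest has a genuine gap. You assert that if $F_0$ is a code of sufficient distance on $V_{[\alpha_n^{(1)}]}$ then each restriction $F_0|_{W_m}$ (with $W_m=\langle v_i : \alpha_n^{(m)}<i\le\alpha_n^{(1)}\rangle$) is again a code of sufficient distance. That is false. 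Being a code of distance $\delta\alpha_n^{(1)}$ controls what happens after deleting \emph{fewer than} $\delta\alpha_n^{(1)}$ coordinates, but $W_m$ is obtained from $V_{[\alpha_n^{(1)}]}$ by deleting the first $\alpha_n^{(m)}$ coordinates, which is typically of order $n$ and far exceeds any code distance; $F_0|_{W_m}$ need not even be surjective. So the code/depth dichotomy must be applied independently at each $W_m$, which produces a tuple of subgroups $(H_2,\dots,H_{k+1})$ and essentially forces you to reprove a $(k-1)$-step version of Theorem \ref{thm: universality moments with t} with variable excess $t=n-\alpha_n^{(1)}-\beta_n^{(1)}\to\infty$. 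The ``bookkeeping when depths differ across the nested $W_m$'s'' that you defer is the entire content of Sections 8--9; without carrying it out, the argument is incomplete.

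The paper avoids all of this with a cleaner factorization. Since $n-\alpha_n^{(1)}-\beta_n^{(1)}>0$, it isolates the middle block of columns $l\in(\beta_n^{(1)},\,n-\alpha_n^{(1)}]$ and bounds each such column factor by $1-\epsilon$ via Lemma \ref{lem: AA upper bound}(2), giving a global factor $(1-\epsilon)^{n-\alpha_n^{(1)}-\beta_n^{(1)}}\to 0$. The contribution of the last $\alpha_n^{(1)}$ columns is then bounded by $\bE(\#\Sur(\cok(M''),G))$ for the \emph{square} $\alpha_n^{(1)}\times\alpha_n^{(1)}$ upper-right submatrix $M''$, which Lemma \ref{lem: submatrix bound} shows is uniformly bounded (using the induction hypothesis on $k$, since $M''$ has at most $k-1$ step stairs). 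No fine-grained code/depth analysis on $\bar M$ is needed. To salvage your approach, you would either need to import Lemma \ref{lem: submatrix bound} anyway, or else first split off the middle columns exactly as the paper does before touching the remaining square block.
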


\begin{proof}
If $R_1 = \varnothing$, then clearly
$$
\sum_{F \in R_1} \bP(FM = 0) = 0. 
$$
Now suppose that $R_1$ is nonempty and let $F \in R_1$.
Since $H_1 = \{0\}$ and $F \in B_{H_1}^{(1)}$, we have $F(v_i) = 0$ for all $i \in [n] \backslash [\alpha_n^{(1)}]$, so it follows that 
$$
\bP(FM_l = 0)=1~\text{for all}~ 1 \le l \le \beta_n^{(1)}.
$$ 
Recall that we are assuming that $n$ is large enough so that $n - \alpha_n^{(1)} - \beta_n^{(1)} > 0$. 
Then we have for sufficiently large $n$, 
\begin{align*}
    \sum_{F \in R_1} \bP(FM = 0) &= \sum_{F \in R_1} \left(\prod_{l=1}^n \bP(FM_l = 0)\right) \\
&=  \sum_{F \in R_1} \left(\prod_{l=\beta_n^{(1)}+1}^{n-\alpha_n^{(1)}} \bP(FM_l = 0)\right) \left(\prod_{l = n-\alpha_n^{(1)}+1}^n \bP(FM_l = 0)\right) \\
& \le \sum_{F \in R_1}  (1-\epsilon)^{n - \alpha_n^{(1)} - \beta_n^{(1)}}  \left(\prod_{l = n-\alpha_n^{(1)}+1}^n \bP(FM_l = 0)\right),
\end{align*}
where the inequality is a consequence of Lemma \ref{lem: AA upper bound}(2). 
Let $M''$ be the upper right $\alpha_n^{(1)} \times \alpha_n^{(1)}$ submatrix of $M$. Write $\alpha_n = \alpha_n^{(1)}$. Note that
$$
FM_l = \sum_{i=1}^n F(v_i)M_l(i) = \sum_{i=1}^{\alpha_n} F(v_i)M_l(i). 
$$
Then we have (recall $V_{[\alpha_n]} = \langle v_1, v_2, \ldots, v_{\alpha_n}\rangle$)
$$
\sum_{F \in R_1}\left(\prod_{l = n-\alpha_n+1}^n \bP(FM_l = 0)\right) \le \sum_{F \in\Sur\left(V_{[\alpha_n]}, G\right)} \bP(F M'' = 0) = \bE\left(\# \Sur\left(\cok(M''), G\right)\right),
$$
where $M''$ is the upper right $\alpha_n \times \alpha_n$ submatrix of $M$, so the result follows from Lemma \ref{lem: submatrix bound}. 
\end{proof}

\begin{lem}
\label{lem: submatrix bound}
Let $M''$ be the upper right $\alpha_n^{(1)} \times \alpha_n^{(1)}$ submatrix of $M$. Let $1\le i \le k$ and let $M'$ be the lower left $(n-\alpha_n^{(i)}) \times (n-\alpha_n^{(i)})$ submatrix of $M$. Then there exist constants $C_1,C_2>0$ such that for all $n>0$,
\begin{align*}
  &\bE\left(\# \Sur\left(\cok(M''), G\right)\right) \le C_1 \\
  &\bE\left(\# \Sur\left(\cok(M'), G\right)\right) \le C_2. 
\end{align*}
\end{lem}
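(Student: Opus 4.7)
The plan is to show that $M'$ and $M''$ are themselves $\epsilon$-balanced random matrices with at most $k-1$ stairs of zeros inherited from $M$, and then to invoke either the induction hypothesis on $k$ (Theorem \ref{thm: universality moment} for $k' < k$) or Wood's Theorem 2.9 of \cite{Woo19} to obtain uniform bounds.

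For $M'$, I first read off its zero pattern. An entry $M_{r,c}$ inside $M'$ (so $r > \alpha_n^{(i)}$ and $c \le n - \alpha_n^{(i)}$) is fixed to be zero iff $r \le \alpha_n^{(l)}$ and $c \le \beta_n^{(l)}$ for some $l$; combined with $r > \alpha_n^{(i)}$, this forces $l < i$, so only the levels $1, \ldots, i-1$ contribute. Shifting the row index by $-\alpha_n^{(i)}$ then exhibits $M'$ as an $\epsilon$-balanced random matrix with $(i-1)$-step stairs of parameters $\tilde{\alpha}_n^{(l)} = \alpha_n^{(l)} - \alpha_n^{(i)}$ and $\tilde{\beta}_n^{(l)} = \beta_n^{(l)}$. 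The shrinking condition transfers as
\[
(n - \alpha_n^{(i)}) - \tilde{\alpha}_n^{(l)} - \tilde{\beta}_n^{(l)} = n - \alpha_n^{(l)} - \beta_n^{(l)} \longrightarrow \infty.
\]
When $i = 1$, $M'$ has no stairs at all, and \cite[Theorem 2.9]{Woo19} directly produces a uniform constant $C_2$. When $i \ge 2$, the induction hypothesis applied to $k' = i - 1 < k$ yields $\bE(\#\Sur(\cok(M'), G)) \to 1$, hence a uniform bound.

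For $M''$, I argue in parallel. An entry at $(r, c)$ with $r \in [\alpha_n^{(1)}]$ and $c \in \{n - \alpha_n^{(1)} + 1, \ldots, n\}$ is fixed to be zero iff $r \le \alpha_n^{(l)}$ and $c \le \beta_n^{(l)}$ for some $l$, and the second inequality forces $\beta_n^{(l)} > n - \alpha_n^{(1)}$. Since $\alpha_n^{(1)} + \beta_n^{(1)} < n$ is ensured by our standing assumption, the level $l = 1$ is never active, so only $l \ge 2$ can contribute. Shifting the column index by $-(n - \alpha_n^{(1)})$ then presents $M''$ as an $\epsilon$-balanced $\alpha_n^{(1)} \times \alpha_n^{(1)}$ matrix with $k''(n) \le k - 1$ stairs of parameters $\tilde{\alpha}_n^{(l)} = \alpha_n^{(l)}$ and $\tilde{\beta}_n^{(l)} = \beta_n^{(l)} - (n - \alpha_n^{(1)})$ over the active levels, and the shrinking condition again collapses to $n - \alpha_n^{(l)} - \beta_n^{(l)} \to \infty$.

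The main subtlety I anticipate is that $k''(n)$ may fluctuate with $n$, so I cannot apply the induction hypothesis to the whole sequence at once. My plan to handle this is to partition $\bN = \bigsqcup_{k'' = 0}^{k-1} N_{k''}$ with $N_{k''} = \{n : k''(n) = k''\}$ and bound $\bE(\#\Sur(\cok(M''), G))$ uniformly along each piece. Along any subsequence in $N_{k''}$ on which $\alpha_n^{(1)}$ stays bounded, the trivial estimate $\bE(\#\Sur(\cok(M''), G)) \le |G|^{\alpha_n^{(1)}}$ already suffices; along any subsequence with $\alpha_n^{(1)} \to \infty$, either \cite[Theorem 2.9]{Woo19} (if $k'' = 0$) or the induction hypothesis (if $1 \le k'' \le k - 1$) gives convergence to $1$, hence boundedness. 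Taking the maximum over the finitely many pieces produces the desired $C_1$, completing the proof.
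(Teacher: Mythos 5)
Your proposal follows essentially the same route as the paper: read off the inherited zero pattern, observe the shrinking condition transfers, for $M'$ identify the number of active steps as the fixed quantity $i-1$ and invoke either Wood's bound or the induction hypothesis, and for $M''$ partition $\bN$ by the fluctuating number of active steps $k''(n)$ and argue piece by piece. The verification that the first step never contributes to $M''$ (because $\alpha_n^{(1)} + \beta_n^{(1)} < n$) and the computation that the new shrinking quantity equals the old $n - \alpha_n^{(l)} - \beta_n^{(l)}$ are both correct and match the paper.

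The one place your argument is not airtight is the case analysis on a fixed piece $N_{k''}$ with $k'' \ge 1$: the dichotomy ``subsequence on which $\alpha_n^{(1)}$ stays bounded'' versus ``subsequence on which $\alpha_n^{(1)} \to \infty$'' does not cover all infinite subsequences (the sequence of $\alpha_n^{(1)}$ could oscillate), so the uniform bound on $N_{k''}$ does not follow directly as stated. One can patch this with a routine compactness/contradiction step, but the paper instead makes the cleaner observation that for $n \in N_{k''}$ with $k'' \ge 1$, if $l_0 = k - k'' + 1$ is the smallest active level then $\beta_n^{(l_0)} > n - \alpha_n^{(1)}$, hence
\[
\alpha_n^{(1)} > n - \beta_n^{(l_0)} \ge n - \alpha_n^{(l_0)} - \beta_n^{(l_0)} \longrightarrow \infty ,
\]
so $\alpha_n^{(1)} \to \infty$ automatically along that piece and the ``bounded'' branch contributes only finitely many $n$ (handled by the trivial estimate $|G|^{\alpha_n^{(1)}}$). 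Adding this one-line estimate turns your sketch into the paper's proof. One further small inaccuracy: for $k''=0$ you say \cite[Theorem 2.9]{Woo19} ``gives convergence to $1$''; as the paper uses it, it supplies a dimension-uniform bound $t$ on the moment, which is exactly what is needed and in fact stronger than convergence for the boundedness conclusion.
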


\begin{proof}
If $n$ is large enough so that $n-\alpha_n^{(1)} > \beta_{n}^{(1)}$, then $M''$ does not intersect with the ``first step'' of the $k$ step stairs of $0$ of $M$. For this reason, $M''$ may only have $i$ step stairs of $0$ for $0 \le i \le k-1$. For $0 \le i \le k-1$, define
$$
S_i = \{n \in \bN : \text{$M''$ has $i$ step stairs of $0$}\}. 
$$
In other words,
$$
S_i = \{n \in \bN : \beta_n^{(k-i)} \le n- \alpha_n^{(1)} < \beta_n^{(k-i+1)} \}.
$$

If $i \ge 1$ and $n \in S_i$ , then $\alpha_n^{(1)} > n - \beta_n^{(k-i+1)} > n - \alpha_n^{(k-i+1)} - \beta_n^{(k-i+1)}$. Therefore, if $i \ge 1$ and $S_i$ is infinite,
$$
\lim_{\substack{n \in S_i\\ n \to \infty}} \alpha_n^{(1)} \to \infty. 
$$
Then it follows by induction hypothesis on $k$ that if $i \ge 1$ and $S_i$ is infinite,
$$
\lim_{\substack{n \in S_i\\ n \to \infty}}\bE\left(\# \Sur\left(\cok(M''), G\right)\right)  = 1. 
$$
This implies that there exists a positive integer $N$ such that for any $n > N$ with $n \in S_i$ for some $1 \le i \le k-1$
$$
\bE\left(\# \Sur\left(\cok(M''), G\right)\right)  < 2.  
$$
Also, for $n \le N$, we have
$$
\bE\left(\# \Sur\left(\cok(M''), G\right)\right) \le \bE\left(\# \Sur\left(R^{\alpha_n^{(1)}}, G\right)\right) \le |G|^{\alpha_n^{(1)}} \le |G|^N. 
$$
Moreover, by \cite[Theorem 2.9]{Woo19} there exists a constant $t>1$ such that for any $n \in S_0$, 
$$
\bE\left(\# \Sur\left(\cok(M''), G\right)\right) < t.
$$
Now we may take $C_1 = \max(2, |G|^N, t)$, then the assertion for $M''$ follows. 
The assertion for $M'$ is proved similarly (and are even simpler in this case). 
\end{proof}

\begin{lem}
\label{lem: right most case1 1}
Suppose that $N_1 = \{n \in \bN : n-\alpha_n^{(1)} \ge \eta n\}$ is an infinite set.  
Let $H_1$ be a proper subgroup of $G$. Then
$$
\lim_{\substack{ n \in N_1 \\n \to \infty }} \sum_{F \in R_1} \bP(FM = 0) = 0. 
$$    
\end{lem}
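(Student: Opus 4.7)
The plan is to generalize the proof of Lemma~\ref{lem: BA for k=1 converges to 0}. Assuming $n \in N_1$, we have $n - \alpha_n^{(1)} \ge \eta n$, and after discarding the trivial case $R_1 = \varnothing$, Lemma~\ref{lem: right most case1 group subgroup} gives $H_1 \subseteq H_i$ for every $2 \le i \le k+1$. In particular, any $F \in R_1$ satisfies $F(v_j) \in H_1 \subseteq H_i$ for all $j > \alpha_n^{(1)}$, which is the key structural input used throughout.

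We estimate $\bP(FM_l=0)$ column-by-column. For $l \in [1,\beta_n^{(1)}]$, Lemma~\ref{lem: case2 FM_j=0 probability}(2) with $m=1$ gives $\bP(FM_l=0) \le \tfrac{1}{|H_1|} + e^{-\epsilon\dt_1(n-\alpha_n^{(1)})/a^2}$. For $l \in (\beta_n^{(i-1)},\beta_n^{(i)}]$ with $i \ge 2$, Lemma~\ref{lem: case2 FM_j=0 probability}(1), combined with the fact that $\sum_{j>\alpha_n^{(1)}} F(v_j)M_l(j) \in H_1 \subseteq H_i$ almost surely, yields
\[
\bP(FM_l=0) \le \Bigl(\tfrac{1}{|H_i|}+e^{-\epsilon\dt_i(n-\alpha_n^{(i)})/a^2}\Bigr)\,\bP\Bigl(\sum_{j=1}^{\alpha_n^{(1)}} F(v_j)M_l(j) \in H_i\Bigr),
\]
an event depending only on the restriction $\tilde F := F|_{V_{[\alpha_n^{(1)}]}}$ and on the top $\alpha_n^{(1)}$ entries of column $l$.

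Taking the product over $l$ and using Lemma~\ref{lem: exp bound lem} to absorb the errors, we sum over $F \in R_1$ after reparametrizing via $\bar F := \pi_{H_1}\tilde F \in \Sur(V_{[\alpha_n^{(1)}]},G/H_1)$, the fibers of which have size at most $|H_1|^n$. On the middle columns $l \in (\beta_n^{(1)}, n-\alpha_n^{(1)}]$, we invoke Lemma~\ref{lem: AA upper bound}(2) to extract an additional factor of $(1-\epsilon)$ per column with $H_{i(l)} \ne G$; on the rightmost $\alpha_n^{(1)}$ columns we reduce the remaining product of probabilities to a cokernel moment of the upper-right $\alpha_n^{(1)} \times \alpha_n^{(1)}$ submatrix $M''$, whose moment is uniformly bounded by Lemma~\ref{lem: submatrix bound}. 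These manipulations collapse to a bound of the shape
\[
\sum_{F \in R_1}\bP(FM=0) \le C \prod_{i=2}^{k+1}\Bigl(\frac{|H_1|\,c_i}{|H_i|}\Bigr)^{\gamma_i},
\]
where $\gamma_i := \beta_n^{(i)}-\beta_n^{(i-1)}$, $c_i = 1-\epsilon$ if $H_i \ne G$ and $c_i = 1$ otherwise.

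Since $H_1$ is a proper subgroup of $G$ with $H_1 \subseteq H_i$, each factor $|H_1|c_i/|H_i|$ is strictly less than $1$: if $H_i = G$ then $|H_1|/|G| < 1$, while if $H_i \ne G$ then $c_i = 1-\epsilon < 1$ and $|H_1|/|H_i| \le 1$. Setting $\rho := \max_i |H_1|c_i/|H_i| \in (0,1)$, we obtain $\sum_{F \in R_1}\bP(FM=0) \le C\rho^{\sum_i\gamma_i} = C\rho^{n-\beta_n^{(1)}} \to 0$ as $n \to \infty$ in $N_1$, since $n-\beta_n^{(1)} \ge (n-\alpha_n^{(1)}-\beta_n^{(1)}) + \alpha_n^{(1)} \to \infty$. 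The main technical obstacle is the varying target subgroups $H_i/H_1$ across column ranges, which obstructs the single-target cokernel-moment reduction available in the $k=1$ case; this is resolved by the column-by-column combination of Lemmas~\ref{lem: case2 FM_j=0 probability} and~\ref{lem: AA upper bound}, so that the disparate per-column contributions collapse into the uniform geometric decay rate $\rho < 1$ above, with Lemma~\ref{lem: submatrix bound} handling the residual rightmost block.
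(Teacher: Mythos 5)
Your overall strategy — reduce to the quotient $\bar F = \pi_{H_1}\circ F|_{V_{[\alpha_n^{(1)}]}} \in \Sur(V_{[\alpha_n^{(1)}]},G/H_1)$, using $H_1\le H_i$ from Lemma~\ref{lem: right most case1 group subgroup} — is the same quotient-by-$H_1$ idea the paper uses, but your execution has a genuine gap in how you close the estimate. After the reparametrization, the fiber size $|H_1|^n$ cancels only the factor $\prod_i |H_1|^{\gamma_i}$ in the $1/|H_i|$ terms, leaving
\[
\sum_{F\in R_1}\bP(FM=0)\lesssim \prod_{i=2}^{k+1}\Bigl(\tfrac{|H_1|}{|H_i|}\Bigr)^{\gamma_i}\cdot\sum_{\bar F}\;\prod_{l>\beta_n^{(1)}}\bP\Bigl(\textstyle\sum_{j\le\alpha_n^{(1)}}\bar F(v_j)M_l(j)\in \bar H_{i(l)}\Bigr),
\]
and the remaining sum over $\bar F$ carries an entropy factor of order $|G/H_1|^{\alpha_n^{(1)}}$ which your final bound $C\prod_i(|H_1|c_i/|H_i|)^{\gamma_i}$ does not absorb. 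Bounding each rightmost-column factor by $1$ (or even by $1-\epsilon$ when $H_{i(l)}\ne G$) is not enough: e.g. when $H_2=\dots=H_{k+1}=H_1\ne G$ the prefactor is $1$ and you would need $(1-\epsilon)^{n-\beta_n^{(1)}}$ to beat $|G/H_1|^{\alpha_n^{(1)}}$, which fails when $\alpha_n^{(1)}$ is of order $n$ and $n-\alpha_n^{(1)}-\beta_n^{(1)}$ grows slowly, both of which are permitted in $N_1$. The point of the cokernel-moment reduction is precisely to balance this entropy against the decay of $\prod_l\bP(\cdot)$, and it is not available to you: on the rightmost block the targets $\bar H_{i(l)}=H_{i(l)}/H_1$ vary with $l$ and are not uniformly $\{0\}$, so $\prod_{l\text{ right}}\bP(\cdots\in\bar H_{i(l)})$ is not $\bP(\bar F M''=0)$, and in fact the inequality between them runs the wrong way. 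The paper avoids this by a two-step reduction: first pass to $\bar G=G/H_1$ so that $\bar H_1=\{0\}$ (matching $\sum_{F\in R_1}\bP(FM=0)$ against $\sum_{\bar F\in\bar R_1}\bP(\bar F M=0)$ term-by-term using both parts of Lemma~\ref{lem: case2 FM_j=0 probability} and cancellation of $|H_1|^n$), and then invoke Lemma~\ref{lem: right most case1 j=1 and H1 trivial}, whose proof for the trivial case uses the \emph{unfactored} probability $\bP(FM_l=0)$ on the rightmost columns (possible since $F$ vanishes outside $V_{[\alpha_n^{(1)}]}$), so those columns genuinely produce $\bE\#\Sur(\cok(M''),\bar G)\le C_1$ via Lemma~\ref{lem: submatrix bound}, while the middle columns supply $(1-\epsilon)^{n-\alpha_n^{(1)}-\beta_n^{(1)}}$ from Lemma~\ref{lem: AA upper bound}(2). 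Your proposal conflates the $k=1$ argument of Lemma~\ref{lem: BA for k=1 converges to 0} (which quotients by $H_2$, making the single rightmost target $\{0\}$) with the general-$k$ case, where no single further quotient trivializes all rightmost targets; the "collapse into a uniform geometric rate $\rho<1$" is stated but not justified, and as shown above it is not true.
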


\begin{proof}
We may assume that $R_1 \neq \varnothing$ for infinitely many $n \in N_1$ (note that $R_1$ depends on $n$) since otherwise the assertion clearly holds. Let $n$ be such a positive integer.
Then by Lemma \ref{lem: right most case1 group subgroup}, we have 
$$H_1 \le H_2, H_3, \ldots, H_{k+1}.$$
Now for every $1\le i \le k+1$, let
$$
\bar{H}_i = H_i/H_1. 
$$
In particular, $\bar{H}_1 = 0$. We define $\bar{F} = \gamma_{H_1}\circ F$, where
$$
\gamma_{H_1}: G \to G/H_1
$$
is the projection map. Also, define $\bar{B}_{\bar{H}_i}^{(i)}$, $\bar{A}_{\bar{H}_i}^{(i)}$ similarly as $B_{H_i}^{(i)}$, $A_{H_i}^{(i)}$ for $\bar{G}=G/H_1$ in place of $G$. 
Note that for every $2\le i \le k+1$ and for every $F \in B_{H_1}^{(1)}$ with $\sg_i \subseteq [n]\backslash [\alpha_n^{(i)}]$ such that $|\sg_i| < \ell(|G|)\dt_i(n-\alpha_n^{(i)})$, we have
$$
|\sg_i| < \ell([G:H_1])\dt_1(n- \alpha_n^{(1)}),
$$
so it follows 
$$
F((V_i)_{\backslash \sg_i}) \supseteq F\left((V_1)_{\backslash (\sg_i \cap [n]\backslash [\alpha_n^{(1)}])}\right) = H_1. 
$$
Then it is straightforward to check that for $2\le i \le k+1$ and for $F \in B_{H_1}^{(1)},$ we have
$$
F \in A_{H_i}^{(i)} \Longleftrightarrow \bar{F} \in \bar{A}_{\bar{H}_i}^{(i)}. 
$$
Let 
$$
\bar{R}_1 := \bar{B}_{\bar{H}_1}^{(1)} ~\bigcap~ \left(\bigcap_{i=2}^{k+1} \bar{A}_{\bar{H}_i}^{(i)}\right).
$$
Then the above equivalence implies that for $F \in B_{H_1}^{(1)}$,
$$
F \in R_1 \Longleftrightarrow \bar{F} \in \bar{R}_1.
$$
Note that by Lemma \ref{lem: case2 FM_j=0 probability}(1) and Lemma \ref{lem: exp bound lem} there exists a constant $C>0$ such that for large enough $n$,
$$
\sum_{F \in R_1}  \bP(FM=0) \le C\sum_{F \in R_1} \left(\prod_{l=1}^{k+1}\left(\left(\frac{1}{|H_l|}\right)^{\beta_n^{(l)}-\beta_n^{(l-1)}}\prod_{j=\beta_n^{(l-1)} + 1}^{\beta_n^{(l)}}\bP\left(\sum_{i = \alpha_n^{(l)}+1}^{n} F(v_i)M_j(i) \in H_l\right)\right)\right). 
$$
Also, by Lemma \ref{lem: case2 FM_j=0 probability}(1) and Lemma \ref{lem: exp bound lem}, there exists a constant $\bar{C}>0$ such that for large enough $n \in N_1$,
\begin{align*}
\sum_{\mf{F} \in \bar{R}_1}  \bP(\mf{F}M=0) & \ge \bar{C} \sum_{\mf{F}\in \bar{R}_1}   \left(\prod_{l=1}^{k+1}\left(\left(\frac{1}{|\bar{H}_l|}\right)^{\beta_n^{(l)}-\beta_n^{(l-1)}}\prod_{j=\beta_n^{(l-1)} + 1}^{\beta_n^{(l)}}\bP\left(\sum_{i = \alpha_n^{(l)}+1}^{n} \mf{F}(v_i)M_j(i) \in \bar{H}_l\right)\right)\right) \\
& =  \bar{C}  \sum_{\mf{F}\in \bar{R}_1}  \left( \prod_{l=1}^{k+1}\left(\left(\frac{|H_1|}{|H_l|}\right)^{\beta_n^{(l)}-\beta_n^{(l-1)}}\prod_{j=\beta_n^{(l-1)} + 1}^{\beta_n^{(l)}}\bP\left(\sum_{i = \alpha_n^{(l)}+1}^{n} \mf{F}(v_i)M_j(i) \in \bar{H}_l)\right)\right)\right) \\
& =  \bar{C} |H_1|^n \sum_{\mf{F}\in \bar{R}_1}   \left(\prod_{l=1}^{k+1}\left(\left(\frac{1}{|H_l|}\right)^{\beta_n^{(l)}-\beta_n^{(l-1)}}\prod_{j=\beta_n^{(l-1)} + 1}^{\beta_n^{(l)}}\bP\left(\sum_{i = \alpha_n^{(l)}+1}^{n} \mf{F}(v_i)M_j(i) \in \bar{H}_l\right)\right)\right).
\end{align*}
Note that for $\mf{F} \in \bar{R}_1$, the number of $F \in R_1$  with $\mf{F} = \bar{F}$ is at most $|H_1|^n$. Also, since $H_1 \le H_l$ for $1 \le l \le k+1$, it is clear that for $F \in R_1$,  
$$
\bP\left(\sum_{i = \alpha_n^{(l)}+1}^{n} F(v_i)M_j(i) \in H_l\right) \Longleftrightarrow \bP\left(\sum_{i = \alpha_n^{(l)}+1}^{n} \bar{F}(v_i)M_j(i) \in \bar{H}_l\right). 
$$
Then we have the following inequality holds for all sufficiently large $n$:
$$
\sum_{F \in R_1}\bP(FM=0) \le \frac{C}{\bar{C}}\sum_{\mf{F} \in \bar{R}_1}\bP(\mf{F}M=0).
$$
The right hand side converges to $0$ as $n \to \infty$ by Lemma \ref{lem: right most case1 j=1 and H1 trivial}, and this completes the proof. 
\end{proof}

\begin{prop}
\label{prop: BA for k converges to 0}
Let $1\le j_1 < j_2 < \cdots < j_m  \le k$ be positive integers. Suppose that $H_i$ is a proper subgroup of $G$ for all $i \in \{j_1, j_2, \ldots, j_m\}$.
Let $j = j_m$ and suppose that $N_j = \{n \in \bN : n -\alpha_n^{(j)} \ge \eta n\}$ is an infinite set. Let
$$
\mf{D} = \mf{D}_{(j_1, \ldots, j_m)} := \left(\bigcap_{l =1}^m B_{H_{j_l}}^{(j_l)}\right)~ \bigcap~ \left(\bigcap_{\substack{i=1 \\ i \neq j_1, \ldots, j_m}}^{k+1} A_{H_i}^{(i)}\right). 
$$
Then we have
$$
\lim_{\substack{n \in N_j \\n \to \infty}} \sum_{F \in \mf{D}} \bP(FM=0) = 0. 
$$
\end{prop}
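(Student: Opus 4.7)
The plan is to mimic the proof of Lemma \ref{lem: right most case1 1}, generalizing from a single $B$-constraint at index $1$ to multiple $B$-constraints at indices $j_1 < \cdots < j_m = j$. The key observation is that quotienting by $H_{j_m}$ will collapse all the $B$-conditions into the single condition $\bar{F}(V_{j_m}) = 0$, reducing the problem to (a mild generalization of) the case treated in Lemma \ref{lem: right most case1 j=1 and H1 trivial}.

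First I would dispose of the trivial case in which $\mf{D} = \varnothing$ for almost every $n \in N_j$. Otherwise, fixing $F \in \mf{D}$ and $n \in N_j$, Lemma \ref{lem: right most case1 group subgroup} (applicable since $n - \alpha_n^{(j_m)} \ge \eta n$) yields $H_{j_m} \le H_i$ for every $i > j_m$, while the inclusion $V_i \subseteq V_{j_m}$ combined with $F(V_{j_m}) = H_{j_m}$ forces $H_i \le H_{j_m}$ for every $i \le j_m$: for $i = j_l$ with $l < m$ this holds since $H_{j_l} = F(V_{j_l}) \subseteq F(V_{j_m}) = H_{j_m}$, and for $i \notin J$ with $i < j_m$ it holds since $H_i = F((V_i)_{\backslash \sigma_i}) \subseteq F(V_i) \subseteq H_{j_m}$. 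Setting $\bar{G} := G/H_{j_m}$, $\bar{F} := \gamma_{H_{j_m}} \circ F$, and $\bar{H}_i := H_i/H_{j_m}$ for $i > j_m$, I would verify that $\bar{F}(V_i) = 0$ for every $i \le j_m$ and that $\bar{F} \in \bar{A}_{\bar{H}_i}^{(i)}$ for every $i > j_m$, following the translation argument used in Lemma \ref{lem: right most case1 1}.

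Next I would upper-bound each factor $\bP(FM_l = 0)$ via Lemma \ref{lem: case2 FM_j=0 probability}(1), lower-bound the corresponding $\bP(\bar{F}M_l = 0)$ analogously, absorb the resulting error terms using Lemma \ref{lem: exp bound lem} (which applies block by block, since $n - \alpha_n^{(p)} \ge n - \alpha_n^{(p)} - \beta_n^{(p)} + 1 \to \infty$ for each $p$), and compare the sums $\sum_{F \in \mf{D}} \bP(FM = 0)$ and $\sum_{\bar{F} \in \bar{\mf{D}}} \bP(\bar{F}M = 0)$ through a lift-count calculation as in the proof of Lemma \ref{lem: right most case1 1}. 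This yields an inequality of the form $\sum_{F \in \mf{D}} \bP(FM = 0) \le C \cdot \Phi_n \cdot \sum_{\bar{F} \in \bar{\mf{D}}} \bP(\bar{F}M = 0)$ for an explicit multiplicative factor $\Phi_n$.

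Finally, the quotient sum will be controlled by a direct generalization of Lemma \ref{lem: right most case1 j=1 and H1 trivial} to arbitrary $j = j_m$: since $\bar{F}(v_s) = 0$ for every $s > \alpha_n^{(j_m)}$, the equation $\bar{F}M_l = 0$ holds deterministically for $l \le \beta_n^{(j_m)}$; for the middle range $\beta_n^{(j_m)} < l \le n - \alpha_n^{(j_m)}$, Lemma \ref{lem: AA upper bound}(2) gives $\bP(\bar{F}M_l = 0) \le 1 - \epsilon$; and for $l > n - \alpha_n^{(j_m)}$ the moment of the upper right $\alpha_n^{(j_m)} \times \alpha_n^{(j_m)}$ submatrix of $M$ is bounded via Lemma \ref{lem: submatrix bound}. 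This produces a bound of the form $C'(1-\epsilon)^{n - \alpha_n^{(j_m)} - \beta_n^{(j_m)}}$, which tends to $0$ as $n \in N_j \to \infty$ by hypothesis. The main obstacle will be the careful bookkeeping of the factor $\Phi_n$ so that it remains bounded by $1$ (ensuring that the $(1-\epsilon)$-decay survives); this may require quotienting by a subgroup larger than $H_{j_m}$ itself, in analogy with the proof of Lemma \ref{lem: right most case1 1}, where one quotients by $H_2 \ge H_1$ rather than by $H_1$ and thereby obtains the crucial ratio $(|H_1|/|H_2|)^{n - \alpha_n - \beta_n} \le 1$.
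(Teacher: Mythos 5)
Your high-level plan — quotient by $H_{j_m}$, collapse the $B$-conditions, and reduce to a version of Lemma~\ref{lem: right most case1 j=1 and H1 trivial} with the $B$-condition at position $j_m$ — is a reasonable first guess, but it has a genuine gap that your own closing paragraph foreshadows but does not resolve. The problem is the $A$-conditions at indices $i < j_m$. For such an index $i$ (with $i \notin \{j_1,\dots,j_m\}$) you correctly note $H_i \subseteq H_{j_m}$, but the $A$-condition in fact forces $H_i \lneq F(V_i) \subseteq H_{j_m}$, so the containment is \emph{strict}. After quotienting by $H_{j_m}$ the probability factor for a column $l$ in block $i \le j_m$ becomes trivially $\bP(\bar F M_l=0)=1$, whereas on the original side Lemma~\ref{lem: case2 FM_j=0 probability} gives only $\bP(FM_l = 0) \lesssim 1/|H_i|$; meanwhile the lift-count contributes $|H_{j_m}|$ per row below $\alpha_n^{(j_m)}$. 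A direct bookkeeping in the simplest nontrivial case $k=2$, $\mathfrak{D}=A_{H_1}^{(1)}\cap B_{H_2}^{(2)}\cap A_{G}^{(3)}$ gives $\Phi_n = (|H_2|/|H_1|)^{\beta_n^{(1)}}$, which grows exponentially in $\beta_n^{(1)}$ whenever $H_1 \lneq H_2$ and $\beta_n^{(1)} \gg 1$ (this is consistent with all the hypotheses: one can have $\beta_n^{(1)} \asymp n$ while $n - \alpha_n^{(2)}-\beta_n^{(2)}$ grows arbitrarily slowly). The $(1-\epsilon)^{n-\alpha_n^{(j_m)}-\beta_n^{(j_m)}}$ decay from the quotient sum is then overwhelmed, so the product $\Phi_n \cdot (\text{quotient sum})$ need not tend to $0$. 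The remedy you gesture at — quotient by a larger subgroup, ``in analogy with'' Lemma~\ref{lem: right most case1 1} — is misplaced: that lemma actually quotients by the $B$-subgroup $H_1$ itself, and the reason $\Phi_n$ is controlled there is that \emph{no} $A$-conditions sit to the left of the $B$-condition, i.e. in $R_1$ every relevant $H_i$ contains $H_1$, so the ratio telescopes. (You are thinking of the $k=1$ Lemma~\ref{lem: BA for k=1 converges to 0}, where one quotients by $H_2 \supseteq H_1$; but in the general setting the $H_i$ with $i>j_m$ have no common larger subgroup to quotient by.)

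The paper's proof avoids this entirely by a different factorization. It introduces the equivalence relation ``$F\sim F'$ iff they agree on $[\alpha_n^{(j)}]$'' and the wedge operation $F\wedge K$, which splits the sum into a product of a sum over restrictions $K=F|_{V_j}$ (living in $\Sur(V_j,H_j)$) and a sum over representatives $\mathfrak{B}$. The sum over $K$ is controlled by the \emph{induction hypothesis on $k$} applied to the lower-left $(n-\alpha_n^{(j)})\times(n-\alpha_n^{(j)})$ submatrix $M'$, which absorbs all the conditions at indices $\le j$ (including your problematic $A$-conditions at $i < j_m$) into a bounded moment, yielding the crucial factor $|H_j|^{n-\alpha_n^{(j)}-\beta_n^{(j)}}$. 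The remaining sum over representatives is then matched against the analogous decomposition for a fresh matrix $\hat M$ with $k-j+1$ step stairs and a \emph{single} $B$-condition at position $1$, so that $\sum_{F\in\mathfrak{D}}\bP(FM=0) \le C\sum_{F\in\hat{\mathfrak{D}}}\bP(F\hat M=0)$. Only at that point is the quotient-by-$H_1$ trick applied, via Lemma~\ref{lem: right most case1 1}, in the setting where every $H_i$ sits above the $B$-subgroup and $\Phi_n$ is therefore harmless. In short: the quotienting idea is correct but only works at $j=1$; getting from general $j$ down to $j=1$ is the hard part, and it is done by induction on $k$, which your proposal does not invoke.
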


\begin{proof}
We may assume that $\mf{D} \neq \varnothing$ for infinitely many $n \in N_j$. Let $n$ be such a positive integer.
Define an equivalence relation $\sim$ on $\mf{D}$ such that for $F, F' \in \mf{D}$,
$$
F \sim F' \Longleftrightarrow F(v_i) = F'(v_i) \text{ for all $i \in [\alpha_n^{(j)}]$}. 
$$
Let
$$
\mf{D}_j =  \left(\bigcap_{l =1}^m B_{H_{j_l}}^{(j_l)}\right)~ \bigcap~ \left(\bigcap_{\substack{i=1 \\ i \neq j_1, \ldots, j_m}}^{j} A_{H_i}^{(i)}\right).
$$
Note that the intersection in the second parentheses is up to $i=j$ while that for $\mf{D}$ is up to $i=k+1$. Note that $F \in \mf{D}_j$ implies that $F(V_j) = H_j$. Define
$$
\mf{A} := \{F|_{(V_j, H_j)} : F \in \mf{D}_j\} \subseteq \Sur(V_j, H_j).
$$
Here and later in the proof, $F|_{(V_j, H_j)}$ denotes the map from $V_j$ to $H_j$ defined by restricting the domain and codomain of $F$ to $V_j$ and $H_j$, respectively. 
For every $K \in  \mf{A}$ and for every $F \in \mf{D}$, define
$$
(F \wedge K)(v_i) = \begin{cases}
F(v_i) \quad &\text{if $i \in [\alpha_n^{(j)}]$} \\
K(v_i) \quad &\text{if $i \in [n] \backslash [\alpha_n^{(j)}]$}.
\end{cases}
$$
Then we have that
\begin{equation}
\label{eq: closed under wedge product}
F \wedge K \in \mf{D}.  
\end{equation}
To see this, note that for any positive integer $i$ with $j<i\le k+1$ and for any $\sg \subseteq [n]\backslash [\alpha_n^{(i)}]$ with
$$
|\sg| < \ell(|G|)\dt_i(n-\alpha_n^{(i)}),
$$
we have
$ | \sg  | <\ell([G:H_j])\dt_j(n-\alpha_n^{(j)})$. 
By the definition of $F \wedge K$ and the fact that $F, K \in B_{H_j}^{(j)}$, we have
$$
(F \wedge K)\left((V_j)_{\backslash \sg}\right) = K\left((V_j)_{\backslash \sg}\right) = H_j = F\left((V_j)_{\backslash \sg}\right).
$$
Also, we have $(F \wedge K)(v_l)=F(v_l)$ for all $l \in [\alpha_n^{(j)}]$, so it follows that
$$
(F\wedge K)\left((V_i)_{\backslash \sg}\right) = F\left((V_i)_{\backslash \sg}\right).
$$
This implies that
$$
F \in A_{H_i}^{(i)} \Longleftrightarrow F \wedge K \in A_{H_i}^{(i)},
$$
so \eqref{eq: closed under wedge product} holds. 
Moreover, if $F \sim F'$ on $\mf{D}$, letting $K \in \mf{A}$ be such that $K = F'_{(V_j, H_j)}$, we have
$$
F' = F  \wedge K.
$$
Therefore, it follows that for $F,F' \in \mf{D}$, 
$$
F \sim F' \Longleftrightarrow F' = F \wedge K \text{ for some $K \in \mf{A}$}. 
$$
Let $\mf{B}$ be a complete set of representatives for $\mf{D}/\hspace{-0.3em}\sim$. 
Let $b$ be a positive integer satisfying $j+1 \le b \le k+1$. By Lemma \ref{lem: right most case1 group subgroup}, we have $H_j \le H_b$. 
Hence, it follows that for $F \sim F'$ and for $\beta_{n}^{(b-1)} < l \le \beta_n^{(b)}$, we have
\begin{align*}
\bP\left(\sum_{i \in [n]\backslash [\alpha_n^{(b)}]} F(v_i)M_{l}(i) \in H_{b}\right) & = \bP\left(\sum_{i \in [\alpha_n^{(j)}]\backslash [\alpha_n^{(b)}]} F(v_i)M_{l}(i) \in H_{b}\right)  \\
& = \bP\left(\sum_{i \in [\alpha_n^{(j)}]\backslash [\alpha_n^{(b)}]} F'(v_i)M_{l}(i) \in H_{b}\right) \\
& = \bP\left(\sum_{i \in [n]\backslash [\alpha_n^{(b)}]} F'(v_i)M_{l}(i) \in H_{b}\right). 
\end{align*}
Then it follows from Lemma \ref{lem: case2 FM_j=0 probability} that for $\beta_{n}^{(b-1)} < l \le \beta_n^{(b)}$ and for sufficiently large $n$,
$$
\bP(F'M_l = 0) \le \bP(F M_l = 0)\frac{1 + |H_b|e^{-\epsilon\dt_b (n-\alpha_n^{(b)})/a^2}}{1 - |H_b|e^{-\epsilon\dt_b (n-\alpha_n^{(b)})/a^2}}.
$$
Then by Lemma \ref{lem: exp bound lem} there exists a constant $C>0$ such that for any $F \sim F'$ in $\mf{D}$ and for all sufficiently large $n$, 
$$
\prod_{l=\beta_n^{(j)}+1}^n \bP(F'M_l = 0) \le C \prod_{l=\beta_n^{(j)}+1}^n \bP(FM_l = 0).
$$

Let $M'$ be the lower left $(n-\alpha_n^{(j)}) \times (n-\alpha_n^{(j)})$ submatrix of $M$. 
Then $M'$ has $j-1$ step stairs of $0$ with respect to $\alpha_n'^{(i)}$ and $\beta_n'^{(i)}$, where $\alpha_n'^{(i)} = \alpha_n^{(i)}- \alpha_n^{(j)}$ and $\beta_n'^{(i)} = \beta_n^{(i)}$. 
Then by the induction hypothesis we see the following holds for large enough $n$:
$$
\sum_{K \in \mf{A}} \left(\prod_{i=1}^{n-\alpha_n^{(j)}} \bP(KM'_i = 0)\right) = \sum_{K \in \mf{A}}\bP(KM' =0) \le \sum_{K \in \Sur(V_j, H_j)}\bP(KM' =0) < 2. 
$$
If $K \in \mf{A} \subseteq \Sur(V_j, H_j)$, then $K$ is a code of distance of $\dt_j(n-\alpha_n^{(j)})$, so Lemma \ref{lem: lem for code bound} implies that for $\beta_n^{(j)}  < l \le n - \alpha_n^{(j)}$ 
$$
\bP(KM'_l = 0) \ge 1/|H_j| - e^{-\epsilon \dt_j (n - \alpha_n^{(j)})/a^2}.
$$
Using Lemma \ref{lem: exp bound lem}, we see that there exists a constant $C'>0$ such that for large enough $n$,
$$
\sum_{K \in \mf{A}} \left(\prod_{i=1}^{\beta_n^{(j)}} \bP(KM'_i = 0)\right) \le C'|H_j|^{n- \alpha_n^{(j)} - \beta_n^{(j)}}.
$$
Therefore it follows that for large enough $n$:
\begin{align*}
    \sum_{F \in \mf{D}} \bP(FM= 0) & = \sum_{K\in \mf{A}} \left(\left(\prod_{i=1}^{\beta_n^{(j)}} \bP(KM'_i = 0)\right)\sum_{F \in \mf{B}} \left(\prod_{i=\beta_n^{(j)}+1}^n \bP((F \wedge K)M_i = 0)\right)\right)\\
  & \le C \sum_{K\in \mf{A}} \left(\left(\prod_{i=1}^{\beta_n^{(j)}} \bP(KM'_i = 0)\right)\sum_{F \in \mf{B}} \left(\prod_{i=\beta_n^{(j)}+1}^n \bP(FM_i = 0)\right)\right) \\
    & \le CC'|H_j|^{n- \alpha_n^{(j)} - \beta_n^{(j)}} \sum_{F \in \mf{B}} \left(\prod_{i=\beta_n^{(j)}+1}^n \bP(FM_i = 0)\right).
\end{align*}

For $ 1\le i \le k-j+2$, define
$$
\hat{H}_i = H_{j-1+i}
$$
and
\begin{align*}
&\hat{\alpha}_n^{(i)} = \alpha_n^{(j-1+i)} \\
&\hat{\beta}_n^{(i)} = \beta_n^{(j-1+i)} \\
&\hat{\dt}_i = \dt_{j-1+i}.   
\end{align*}
Now define $B_{\hat{H}_i}^{(i)}, A_{\hat{H}_i}^{(i)}$ similarly as $B_{H_i}^{(i)}, A_{H_i}^{(i)}$ by replacing $\alpha_n^{(i)}, H_i, \dt_i$ with $\hat{\alpha}_n^{(i)},  \hat{H}_i, \hat{\dt}_i$, respectively in the definition of $B_{H_i}^{(i)}, A_{H_i}^{(i)}$. 
Similarly as above define an equivalence relation $\approx$ on 
$$
\hat{\mf{D}} := B_{\hat{H}_1}^{(1)} ~\bigcap~ \left(\bigcap_{i=2}^{k-j+2} A_{\hat{H}_i}^{(i)}\right)
$$
by letting for $F, F' \in \hat{\mf{D}}$
$$
F \approx F' \Longleftrightarrow F(v_i) = F'(v_i) \text{ for all $i \in [\hat{\alpha}_n^{(1)}] = [\alpha_n^{(j)}]$}. 
$$
Let $\hat{M}$ be an $\epsilon$-balanced random matrix with $k-j+1$ step stairs of $0$ with respect to $\hat{\alpha}_n^{(i)}$ and $\hat{\beta}_n^{(i)}$.
Similarly as above, define
\begin{align*}
 &\hat{\mf{D}}_1 := B_{\hat{H}_1}^{(1)} = B_{H_j}^{(j)}  \\
&\hat{\mf{A}} := \{F|_{(V_j, \hat{H}_1)} : F \in \hat{\mf{D}}_1\} \subseteq \Sur(V_j,  \hat{H}_1), 
\end{align*}
and let $\hat{\mf{B}}$
be a complete set of representatives for $\hat{\mf{D}}/\hspace{-0.3em}\approx$. Note that since $\mf{D} \subseteq \hat{\mf{D}}$ we may choose $\hat{\mf{B}}$ so that 
$$\mf{B} \subseteq \hat{\mf{B}},$$ 
and we assume this. 
Let $\hat{M}'$ be the lower left $(n-\alpha_n^{(j)}) \times (n-\alpha_n^{(j)})$ submatrix of $\hat{M}$. 
Note that $K \in \hat{\mf{A}}$ implies that $K \in \Sur(V_j, \hat{H}_1)$ is a code of distance $\dt_j(n-\alpha_n^{(j)})$. 
Then similarly as above it follows from Lemma \ref{lem: lem for code bound}, Lemma \ref{lem: exp bound lem} and Lemma \ref{lem: cardinality of mathcal A} that there exist positive constants $\hat{C}$ and $\hat{C}'$ such that the following holds for sufficiently large $n$:
\begin{align*}
    \sum_{F \in \hat{\mf{D}}} \bP(F\hat{M}= 0) & \ge \hat{C} \sum_{K\in \hat{\mf{A}}} \left(\prod_{i=1}^{\beta_n^{(j)}} \bP(K\hat{M}'_i = 0)\right)\sum_{F \in \hat{\mf{B}}} \left(\prod_{i=\beta_n^{(j)}+1}^n \bP(FM_i = 0)\right) \\
    & \ge \hat{C} |\hat{\mf{A}}|\left(\frac{1}{|\hat{H}_1|} - e^{-\epsilon \dt_j (n-\alpha_n^{(j)})}\right)^{\beta_n^{(j)}} \sum_{F \in \hat{\mf{B}}} \left(\prod_{i=\beta_n^{(j)}+1}^n \bP(FM_i = 0)\right) \\
    & \ge \hat{C}\hat{C}' |\hat{H}_1|^{n- \hat{\alpha}_n^{(1)} - \hat{\beta}_n^{(1)}}\sum_{F \in \hat{\mf{B}}} \left(\prod_{i=\beta_n^{(j)}+1}^n \bP(FM_i = 0)\right).
\end{align*}
Then we have
$$
 \sum_{F \in \mf{D}} \bP(FM= 0) \le \frac{CC'}{\hat{C}\hat{C}'} \sum_{F \in \hat{\mf{D}}} \bP(F\hat{M}= 0),
$$
and the result follows from Lemma \ref{lem: right most case1 1}. 
\end{proof}

\begin{lem}
\label{lem: cardinality of mathcal A}
Let $\hat{H_1}, \hat{\alpha}_n^{(1)}, \hat{\mf{A}}, \hat{\dt}_1$ be as in the proof of Proposition \ref{prop: BA for k converges to 0}. Then 
$$
\underset{n \to \infty}{\lim} \frac{|\hat{\mf{A}}|}{|\hat{H}_1|^{n-\hat{\alpha}_n^{(1)}}} = 1.
$$  
\end{lem}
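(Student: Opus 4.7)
The plan is to show that $\hat{\mf{A}}$ fills up a $(1-o(1))$-fraction of $\Hom(V_j, \hat{H}_1)$. Since $V_j$ is free of rank $n-\hat{\al}_n^{(1)}$ and $\hat{\mf{A}} \subseteq \Hom(V_j, \hat{H}_1)$, the trivial bound $|\hat{\mf{A}}| \le |\hat{H}_1|^{n-\hat{\al}_n^{(1)}}$ handles the $\limsup$ direction. For the $\liminf$, I would characterize membership in $\hat{\mf{A}}$ intrinsically: $K \in \Hom(V_j, H_j)$ lies in $\hat{\mf{A}}$ if and only if (i) $K$, viewed as a map to $G$, has $\dt_j$-depth exactly $[G:H_j]$, and (ii) $K$ admits an extension to some $F \in \Sur(V, G)$. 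Condition (i) automatically forces $K(V_j) = H_j$ (take $\sg = \varnothing$ in the depth definition); condition (ii) is equivalent to $\al_n^{(j)} \ge d(G/H_j)$, the minimal number of generators of $G/H_j$. This holds in the application because the proof of Proposition \ref{prop: BA for k converges to 0} restricts to $n$ with $\mf{D} \ne \varnothing$, forcing $\hat{\mf{D}}_1 \ne \varnothing$ and hence $\al_n^{(j)} \ge d(G/H_j)$.

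Given the characterization, the next step is to bound the complement: $K$ fails (i) precisely when there exist a proper subgroup $K_0 \lneq H_j$ and a subset $\sg \subseteq [n]\backslash[\al_n^{(j)}]$ with $|\sg| < \ell([G:K_0]) \dt_j (n - \al_n^{(j)})$ such that $K((V_j)_{\backslash \sg}) \subseteq K_0$. A union bound (in the spirit of the argument used in the proof of Proposition \ref{prop: code sum to 1} via \cite[Lemma 2.6]{Woo19}) gives
\begin{equation*}
\#(\Hom(V_j,H_j) \backslash \hat{\mf{A}}) \le \sum_{K_0 \lneq H_j} \sum_{s=0}^{\lceil \ell([G:K_0])\dt_j(n-\al_n^{(j)})\rceil - 1} \binom{n-\al_n^{(j)}}{s} |H_j|^s |K_0|^{n-\al_n^{(j)}-s}.
\end{equation*}

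Using $[H_j:K_0] \ge 2$, $\ell([G:K_0]) \le \ell(|G|)$, $\dt_j \le \dt_1$, and the binomial bound $\binom{n-\al_n^{(j)}}{s} \le e^{\gamma(n-\al_n^{(j)})}$ for $s \le \ell(|G|)\dt_1(n-\al_n^{(j)})$ from Section \ref{The constants}, dividing by $|H_j|^{n-\al_n^{(j)}}$ yields a bound of the form
\begin{equation*}
C \cdot \exp\bigl((n-\al_n^{(j)})(\gamma + \ell(|G|)\dt_1\log|G| - \log 2)\bigr)
\end{equation*}
for some constant $C$ depending only on $G$. The choices of $\gamma$ and $\dt_1$ in Section \ref{The constants} make the exponent strictly negative, and $n-\al_n^{(j)} \to \infty$ follows from $n - \al_n^{(j)} - \beta_n^{(j)} \to \infty$ together with $\beta_n^{(j)} \ge 1$. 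Hence the ratio tends to zero, giving $|\hat{\mf{A}}| \ge (1-o(1))|H_j|^{n-\al_n^{(j)}}$, which combined with the trivial upper bound yields the lemma.

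The main obstacle to watch for is condition (ii): without $\al_n^{(j)} \ge d(G/H_j)$, the set $\hat{\mf{A}}$ would be empty and the limit would be $0$ rather than $1$. This is a genuine requirement rather than a defect of the approach, and it is automatically satisfied in the setting where the lemma is invoked, because the nonemptiness of $\hat{\mf{D}}_1$ is already assumed in the proof of Proposition \ref{prop: BA for k converges to 0}.
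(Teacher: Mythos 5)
Your proof is correct and follows essentially the same route as the paper: both compare $|\hat{\mf{A}}|$ against the full homomorphism/surjection count and bound the complement by a union bound over proper subgroups $K_0 \lneq \hat{H}_1$ together with the binomial estimate and constant choices from Section \ref{The constants} (the paper invokes the analogue of \cite[Lemma 2.6]{Woo19} where you carry out the union bound directly). Your explicit handling of the extendability condition (ii) --- which the paper leaves implicit, relying on $\mf{D}\ne\varnothing$ in the ambient argument to guarantee $\alpha_n^{(j)}\ge d(G/H_j)$ --- is a useful clarification rather than a different method.
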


\begin{proof}

Note first that (e.g. see the proof of Proposition \ref{prop: code sum to 1})
$$
\underset{n \to \infty}{\lim}\frac{|\Sur(V_j, \hat{H}_1)|}{|\hat{H}_1|^{n-\hat{\alpha}_n^{(1)}}} = 1. 
$$
Similar to the proof of \cite[Lemma 2.6]{Woo19}, it follows that for some constant $C>0$,
$$
|\hat{\mf{A}|} \ge |\Sur(V_j, \hat{H}_1)| - \sum_{1< D|\#\hat{H}_1} C \binom{n- \hat{\alpha}_n^{(1)}}{\lceil \ell(D[G:\hat{H}_1])\hat{\dt}_1 (n- \hat{\alpha}_n^{(1)}) \rceil -1}|\hat{H}_1|^{n-\hat{\alpha}_n^{(1)}}D^{-(n- \hat{\alpha}_n^{(1)})(1-\ell(D[G:\hat{H}_1])\hat{\dt}_1)}.
$$
Now the result follows by our choice of the constants in Section \ref{The constants}.
\end{proof}

\begin{rmk}
In the proof of Proposition \ref{prop: BA for k converges to 0}, if
$\mathfrak{D} \neq \varnothing$ for some $n \in N_j$, we have by Lemma \ref{lem: right most case1 group subgroup} that
\begin{equation}
\label{eq: subgroup relation}  
H_j \le H_{j+1}, \ldots, H_{k+1}.
\end{equation}
This relation is pivotal for the inductive argument employed in the proof. 
Indeed, if we assume \eqref{eq: subgroup relation}, the same reasoning shows that (even without the $n \in N_j$ condition in the limit)
$$
\underset{n \to \infty}{\lim} \sum_{F \in \mf{D}} \bP(FM = 0) = 0. 
$$
However, in the lemma, the condition that $n \in N_j$ in the limit is essential because there is a possibility that $\mf{D} = \varnothing$ for all $n\in N_j$ and $N_j^c$ is an infinite set.
In this case, we cannot guarantee that \eqref{eq: subgroup relation} holds, hence a different argument is needed, which will be given in the next subsection.
\end{rmk}

Recall that
$$
R_j =  R_j(H_j, \ldots, H_{k+1}) = B_{H_j}^{(j)} ~\bigcap~ \left(\bigcap_{i= j+1}^{k+1} A_{H_{i}}^{(i)} \right).
$$

\begin{prop}
\label{prop: BA for k converges to 0 cor}
Let $j$ be a positive integer such that $1\le j \le k$. Suppose that $H_j$ is a proper subgroup of $G$ and suppose that $N_j = \{ n \in \bN : n - \alpha_n^{(j)} \ge \eta n\}$ is an infinite set. Then
$$
\lim_{\substack{n \in N_j \\ n \to \infty}}\sum_{F \in R_j}\bP(FM = 0) = 0. 
$$
\end{prop}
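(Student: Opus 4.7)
The plan is to reduce Proposition \ref{prop: BA for k converges to 0 cor} to Proposition \ref{prop: BA for k converges to 0} by decomposing $R_j$ as a finite union of sets of the form $\mathfrak{D}_{(j_1, \ldots, j_m)}$ treated there.

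First I would observe that the conditions defining $R_j$ only constrain the behavior of $F$ at indices $i \in \{j, j+1, \ldots, k+1\}$, leaving $F_1, \ldots, F_{j-1}$ completely unconstrained. For each $F \in \Sur(V, G)$ and each $i \in \{1, \ldots, j-1\}$, the restriction $F_i$ falls into exactly one of three cases: $F_i$ is a code of distance $\dt_i(n - \alpha_n^{(i)})$ (equivalently $F \in A_G^{(i)}$); or $F_i$ has $\dt_i$-depth some $D > 1$ with $F_i(V_i)$ itself equal to a witnessing subgroup $K$ of index $D$ (so $F \in B_K^{(i)}$); or $F_i$ has $\dt_i$-depth $D > 1$ with a witnessing subgroup $K$ strictly contained in $F_i(V_i)$ (so $F \in A_K^{(i)}$). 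Combining this with the conditions imposed by $R_j$ places every $F \in R_j$ inside some set of the form
\[
\mathfrak{D}_{(j_1, \ldots, j_{m-1}, j)}(K_{j_1}, \ldots, K_{j_{m-1}}, H_j, H_{j+1}, \ldots, H_{k+1}),
\]
where $j_1 < \cdots < j_{m-1}$ enumerates the $B$-type indices from $\{1, \ldots, j-1\}$, the $K_{j_l}$'s are proper subgroups of $G$, and each remaining $A$-type index $i \in \{1, \ldots, j-1\} \setminus \{j_1, \ldots, j_{m-1}\}$ is equipped with an ambient subgroup $K_i \le G$ (possibly $G$ itself).

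Next, since there are only finitely many subsets of $\{1, \ldots, j-1\}$ and only finitely many subgroups of $G$, the collection of such decompositions is finite, and
\[
\sum_{F \in R_j} \bP(FM = 0) \le \sum_{(j_1, \ldots, j_{m-1};\, K_\bullet)} \sum_{F \in \mathfrak{D}} \bP(FM = 0).
\]
I would then apply Proposition \ref{prop: BA for k converges to 0} to each inner sum. The key observation is that in every decomposition the largest $B$-index is exactly our $j$, because by definition of $R_j$ all indices $i > j$ are $A$-type. Thus the role of $j_m$ in Proposition \ref{prop: BA for k converges to 0} is played by our $j$, and since $\alpha_n^{(j_m)} = \alpha_n^{(j)}$, the hypothesis "$N_{j_m}$ is infinite" matches our assumption that $N_j$ is infinite. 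The proper-subgroup hypothesis on $H_{j_l}$ for $l < m$ is built into the decomposition, and $H_j = H_{j_m}$ is proper by the hypothesis of this corollary, so Proposition \ref{prop: BA for k converges to 0} applies and yields that each inner sum vanishes along $n \in N_j$. Summing over finitely many decompositions finishes the proof.

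The main obstacle is almost entirely bookkeeping, since the heavy analytic work has been carried out in Proposition \ref{prop: BA for k converges to 0}. The only minor subtlety worth noting is that a given $F$ may lie in several sets $A_K^{(i)}$ simultaneously (when several subgroups of the same index witness the depth), so strictly speaking the union above is a cover rather than a disjoint partition; this only inflates the upper bound by a bounded multiplicative factor, which is harmless for the limit argument.
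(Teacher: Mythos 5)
Your decomposition of $R_j$ as a finite cover by sets $\mathfrak{D}_{(j_1,\ldots,j_{m-1},j)}$ with varying subgroups for the unconstrained indices $i < j$, followed by an application of Proposition \ref{prop: BA for k converges to 0} to each piece using that $j_m = j$ so $N_{j_m} = N_j$, is exactly the paper's argument. The observation that the cover need not be disjoint is a correct (and useful) clarification of a point the paper glosses over, but it does not change the route.
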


\begin{proof}
Note that $R_j$ is a union of $\mf{D}_{(j_1, \ldots, j_m)}$ in Proposition \ref{prop: BA for k converges to 0}, where $(j_1, \ldots, j_m)$  runs over all tuples such that $1\le j_1 < j_2 < \cdots < j_m = j$ and also $H_1, H_2, \ldots, H_{j-1}$ run over all subgroups of $G$ (while $H_j, H_{j+1}, \ldots, H_{k+1}$ are fixed). Then Proposition \ref{prop: BA for k converges to 0} yields the desired result. 
\end{proof}

\begin{thm}
\label{thm: universality main theorem with restriction}
Suppose that $N_1 = \{ n \in \bN : n - \alpha_n^{(1)} \ge \eta n\}$ is an infinite set. 
Then 
$$
\lim_{\substack{n \in N_1 \\ n \to \infty}}\bE(\#\Sur(\cok(M), G)) =  \lim_{\substack{n \in N_1 \\ n \to \infty}}\sum_{F \in \Sur(V,G)}\bP(FM = 0) = 1. 
$$
\end{thm}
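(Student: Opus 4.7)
The plan is to execute the three-way decomposition sketched in Section \ref{The outline of the proof of the universality theorem}. Every $F \in \Sur(V,G)$ has each restriction $F_i$ either a code of distance $\dt_i(n-\alpha_n^{(i)})$ or of $\dt_i$-depth $D_i > 1$; in the latter case $F \in A_H^{(i)} \cup B_H^{(i)}$ for some proper subgroup $H \le G$ of index $D_i$, and $B_H^{(k+1)} = \varnothing$ for all proper $H$ (since $F(V)=G$). Grouping $F$'s according to the smallest index $j$ at which $F_j$ fails to be a code and falls into a $B_{H_j}^{(j)}$, I partition $\Sur(V,G)$ into:
\begin{enumerate}
\item the ``all code'' set $\mc{F}_1 = \bigcap_{i=1}^{k+1} A_G^{(i)}$;
\item $\bigcup_{(H_1,\dots,H_{k+1})}\bigcap_{i=1}^{k+1} A_{H_i}^{(i)}$, taken over tuples of subgroups with at least one $H_i \ne G$;
\item $\bigcup_{j=1}^{k}\bigcup_{(H_j,\dots,H_{k+1})}\Bigl(B_{H_j}^{(j)} \cap \bigcap_{i=j+1}^{k+1} A_{H_i}^{(i)}\Bigr)$, taken over $j$ and over tuples with $H_j$ a proper subgroup of $G$.
\end{enumerate}

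For category (1), Proposition \ref{prop: code sum to 1} gives $\lim_{n\to\infty}\sum_{F \in \mc{F}_1}\bP(FM=0)=1$, with no restriction to $N_1$. For category (2), Proposition \ref{prop: AA for k converges to 0} shows that for each fixed tuple $(H_1,\dots,H_{k+1})$ with some $H_i \ne G$ the partial sum $\sum_{F \in \cap_i A_{H_i}^{(i)}}\bP(FM=0)$ tends to $0$; summing over the finite collection of such tuples gives $0$.

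For category (3), the key observation is the monotonicity $\alpha_n^{(1)} > \alpha_n^{(2)} > \cdots > \alpha_n^{(k)}$, which yields $n - \alpha_n^{(j)} \ge n - \alpha_n^{(1)} \ge \eta n$ whenever $n \in N_1$; hence $N_1 \subseteq N_j$ for every $1 \le j \le k$. For each fixed $j$ and each fixed tuple $(H_j,\dots,H_{k+1})$ with $H_j$ a proper subgroup of $G$, Proposition \ref{prop: BA for k converges to 0 cor} then applies and delivers
\[
\lim_{\substack{n \in N_1 \\ n \to \infty}}\sum_{F \in R_j(H_j,\dots,H_{k+1})}\bP(FM=0) = 0.
\]
Summing over the finitely many choices of $j$ and tuple, the category (3) contribution along $N_1$ also vanishes. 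Combining the three bounds yields the theorem.

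This argument is essentially bookkeeping: all of the genuine work is packaged into Propositions \ref{prop: code sum to 1}, \ref{prop: AA for k converges to 0}, and \ref{prop: BA for k converges to 0 cor}. The only subtlety is checking that the ``$N_1$'' constraint suffices to invoke the $N_j$-constrained Proposition \ref{prop: BA for k converges to 0 cor} for every $j$, and this is immediate from the monotonicity of $\alpha_n^{(j)}$ in $j$. Consequently I do not anticipate a real obstacle in this step.
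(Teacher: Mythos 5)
Your proof is correct and follows the same route as the paper's: the identical three-way partition of $\Sur(V,G)$ together with Propositions \ref{prop: code sum to 1}, \ref{prop: AA for k converges to 0}, and \ref{prop: BA for k converges to 0 cor}, with the same observation that $\alpha_n^{(j)} \le \alpha_n^{(1)}$ gives $N_1 \subseteq N_j$ so the $N_j$-restricted proposition applies along $N_1$. (One small slip in your prose: to land in $R_j$ you should take $j$ to be the \emph{largest} index with $F \in B_{H_j}^{(j)}$, not the smallest, so that $F \in A_{H_i}^{(i)}$ for all $i > j$; your displayed union over all $j$ and tuples is unaffected.)
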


\begin{proof}
Let $F \in \Sur(V,G)$. Then $F$ falls into one of the following three categories.
\begin{enumerate}
\item
For all $1\le i \le k+1$, $F_i$ is a code of distance $\dt_i(n- \alpha_n^{(i)})$, i.e., 
$$F \in \mc{F}_1.$$
\item 
For $H_1, H_2, \ldots, H_{k+1}$ subgroups of $G$ at least one of them being proper,
$$F \in \bigcap_{i=1}^{k+1} A_{H_i}^{(i)}.$$ 
\item 
For some $1\le j \le k$ with $H_j$ a proper subgroup of $G$ and $H_{j+1}, \ldots, H_{k+1}$ subgroups of $G$, 
$$F \in R_j = R_j(H_j, \ldots, H_{k+1}) = B_{H_j}^{(j)} ~\bigcap~ \left(\bigcap_{i= j+1}^{k+1} A_{H_{i}}^{(i)} \right).$$     
\end{enumerate}

Note that the condition that $n- \alpha_n^{(1)} \ge \eta n$ clearly implies $n - \alpha_n^{(j)} \ge \eta n$.
Now the theorem follows from Proposition \ref{prop: code sum to 1}, Proposition \ref{prop: AA for k converges to 0}, and Proposition \ref{prop: BA for k converges to 0 cor}. 
\end{proof}

\begin{cor}
\label{cor: changing row and column by transpose}
Suppose that $N_1'= \{n \in \bN : n - \beta_n^{(k)} \ge \eta n\}$ is an infinite set. Then
\begin{enumerate}
\item 
$$
\lim_{\substack{n \in N_1' \\ n \to \infty}}\bE(\#\Sur(\cok(M),  G)) =  \lim_{\substack{n \in N_1' \\ n \to \infty}}\sum_{F \in \Sur(V,G)}\bP(FM = 0) = 1. 
$$
\item 
For some $1\le j \le k$ with $H_j$ a proper subgroup of $G$ and $H_{j+1}, \ldots, H_{k+1}$ subgroups of $G$, 
$$
\lim_{\substack{n \in N_1' \\ n \to \infty}} \sum_{F \in R_j} \bP(FM = 0) = 0. 
$$   
\end{enumerate}
\end{cor}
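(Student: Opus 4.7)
The plan is to derive both parts by transposing $M$ and reducing to the previously established results. Since $\cok(M) \cong \cok(M^T)$ by Lemma \ref{lem: Smith normal form} and the entries of $M^T$ are just the entries of $M$ re-indexed, $M^T$ is again an $\epsilon$-balanced random matrix. The bulk of the work is bookkeeping: I have to identify the stair parameters of $M^T$ and verify that they satisfy the hypotheses of Theorem \ref{thm: universality main theorem with restriction}.

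For part (1), I would set $\tilde{\alpha}_n^{(i)} := \beta_n^{(k+1-i)}$ and $\tilde{\beta}_n^{(i)} := \alpha_n^{(k+1-i)}$ for $1 \le i \le k$. The descending chain $\beta_n^{(k)} > \cdots > \beta_n^{(1)}$ then becomes the required ascending chain $\tilde{\alpha}_n^{(k)} < \cdots < \tilde{\alpha}_n^{(1)}$, and the ordering of the $\tilde{\beta}$'s follows analogously. A one-line check gives $(M^T)_{i,j} = 0$ whenever $1 \le i \le \tilde{\alpha}_n^{(l)}$ and $1 \le j \le \tilde{\beta}_n^{(l)}$, so $M^T$ has $k$-step stairs of $0$ with respect to $\tilde{\alpha}_n^{(i)}$ and $\tilde{\beta}_n^{(i)}$. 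The assumption $n - \alpha_n^{(i)} - \beta_n^{(i)} \to \infty$ transfers verbatim because the sum of paired stair parameters is preserved by the index swap, and the condition $n \in N_1'$ rewrites as $n - \tilde{\alpha}_n^{(1)} \ge \eta n$. Applying Theorem \ref{thm: universality main theorem with restriction} to $M^T$ and invoking $\cok(M) \cong \cok(M^T)$ would then yield part (1).

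For part (2), I would combine part (1) with Proposition \ref{prop: code sum to 1}. Any $F \in R_j$ lies in $B_{H_j}^{(j)}$, so $F_j$ has $\dt_j$-depth $[G:H_j] > 1$ and therefore cannot be a code of distance $\dt_j(n - \alpha_n^{(j)})$; in particular $R_j \cap \mc{F}_1 = \varnothing$. Since all the summands are nonnegative,
$$
\sum_{F \in R_j} \bP(FM = 0) \le \sum_{F \in \Sur(V, G)} \bP(FM = 0) - \sum_{F \in \mc{F}_1} \bP(FM = 0).
$$
By part (1) the first term on the right tends to $1$ along $N_1'$, and by Proposition \ref{prop: code sum to 1} the second term tends to $1$ as $n \to \infty$ without any restriction, so the left-hand side tends to $0$ along $N_1'$.

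The only mildly delicate step is the index reversal $l \mapsto k+1-l$ in the transposition argument; once the parameters of $M^T$ are correctly identified, Theorem \ref{thm: universality main theorem with restriction} applies directly. I do not anticipate any substantial obstacle beyond this bookkeeping.
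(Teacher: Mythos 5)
Your proposal is correct and follows essentially the same route as the paper: transpose $M$, invoke Lemma \ref{lem: Smith normal form} to get $\cok(M)\cong\cok(M^T)$, apply Theorem \ref{thm: universality main theorem with restriction} to $M^T$ for part (1), and then deduce part (2) from part (1), Proposition \ref{prop: code sum to 1}, and the disjointness $R_j\cap\mc{F}_1=\varnothing$. The only difference is that you spell out the reindexing $\tilde{\alpha}_n^{(i)}=\beta_n^{(k+1-i)}$, $\tilde{\beta}_n^{(i)}=\alpha_n^{(k+1-i)}$ explicitly, which the paper leaves implicit.
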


\begin{proof} 
By Lemma \ref{lem: Smith normal form}, we have
$$
\cok(M) \cong \cok(M^T). 
$$
Then we have
$$
1 = \lim_{\substack{n \in N_1' \\ n \to \infty}}\bE(\#\Sur(\cok(M^T),  G)) = \lim_{\substack{n \in N_1' \\ n \to \infty}}\bE(\#\Sur(\cok(M),  G)) = \lim_{\substack{n \in N_1' \\ n \to \infty}}\sum_{F \in \Sur(V,G)}\bP(FM = 0),  
$$
where the first equality is a consequence of Theorem \ref{thm: universality main theorem with restriction}.
Therefore, (1) follows. 
The second assertion (2) follows from (1) and Proposition \ref{prop: code sum to 1} by noting that $R_j \cap \mc{F}_1 = \varnothing$.
\end{proof}

\subsection{Bounding the error terms for the moment (3)}
In this subsection, let $1 \le j \le k$ be a positive integer and assume that $H_j$ is a proper subgroup of $G$. 
Recall that 
$$
N_j^c = \{n \in \bN: n - \alpha_n^{(j)} < \eta n\}. 
$$
The goal of this subsection is to show
\begin{equation}\label{eq: last goal}
\underset{n \to \infty}{\lim} \sum_{F \in R_j} \bP(FM = 0) = 0,
\end{equation}
thereby finishing the proof of Theorem \ref{thm: universality moment}.
If $N_j^c$ is a finite set, this is a consequence of Proposition \ref{prop: BA for k converges to 0 cor}. So, we assume $N_j^c$ is an infinite set from now on. 
For a positive integer $m$ such that $j \le m \le k$, define
$$
N_j^c(m) := N_j^c \cap \{n : n-\beta_n^{(m)} < \eta n\}. 
$$

\begin{lem}
\label{lem: everything less than eta n}
Let $H_{k+1} = G$ and let $m$ be the largest positive integer such that $j \le m \le k$ and $H_m$ is a proper subgroup of $G$, i.e., $H_m \neq G$ and 
$$
H_{m+1} = \cdots = H_{k+1} = G. 
$$
Suppose that $N_j^c(m)$ is an infinite set. 
Then
$$
\lim_{\substack{n \in N_j^c(m) \\ n \to \infty}} \left|\bigcap_{i=j+1}^{k+1} A_{H_i}^{(i)}\right|
\left(\prod_{i=j+1}^{k+1} b_{H_i}^{\beta_n^{(i)} - \beta_n^{(i-1)}}\right) = 0. 
$$      
\end{lem}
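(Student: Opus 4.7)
The plan is to adapt the strategy of Lemma \ref{lem: special case2 for upper bound for all type2 case congverges to 0 proposition} to the restricted index range $\{j+1,\ldots,k+1\}$ and the regime $N_j^c(m)$. The key geometric input is: on $N_j^c(m)$ we have $\alpha_n^{(j)} > (1-\eta)n$ and $\beta_n^{(m)} > (1-\eta)n$; combined with the standing assumption $n - \alpha_n^{(j)} - \beta_n^{(j)} \ge 1$, this forces $\beta_n^{(j)} < \eta n$ and hence $\beta_n^{(m)} - \beta_n^{(j)} > (1-2\eta)n$. If $m = j$, then $\beta_n^{(j)} > (1-\eta)n$ and $\beta_n^{(j)} < \eta n$ contradict each other for $\eta < 1/2$, so $N_j^c(m)$ is finite and the conclusion is vacuous; hence I will assume $m > j$.

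First, since $H_i = G$ for $i > m$, the condition $F \in A_G^{(i)}$ does not restrict the image of $F$, so I drop these conditions to obtain $|\bigcap_{i=j+1}^{k+1} A_{H_i}^{(i)}| \le |\bigcap_{i=j+1}^m A_{H_i}^{(i)}|$. Following the merging technique of Proposition \ref{prop: upper bound for all type2 case converges to 0} (using $b_{H_i} \le b_{H_{i+1}}$ whenever $|H_i| \ge |H_{i+1}|$), after finitely many index removals I may assume $|H_{j+1}| \le \cdots \le |H_m|$ on the surviving indices, while the natural decreasing order of the $\alpha_n^{(i)}$'s is preserved so that the induced $a_i$'s are monotone. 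I then apply Lemma \ref{lem: function counting lemma} with $D_i = [G:H_i]$, $a_i = n - \alpha_n^{(i)} - (\lceil \ell(D_i)\delta_i(n-\alpha_n^{(i)})\rceil - 1)$, $a_j = 0$, and bound the binomial factors by $e^{\gamma(n-\alpha_n^{(i)})}$ via the choice of $\gamma$. Multiplying by
$$\prod_{i=j+1}^{k+1} b_{H_i}^{\beta_n^{(i)}-\beta_n^{(i-1)}} = |G|^{-(n-\beta_n^{(m)})} (1-\epsilon)^{\beta_n^{(m)}-\beta_n^{(j)}} \prod_{i=j+1}^m |H_i|^{-(\beta_n^{(i)}-\beta_n^{(i-1)})}$$
and telescoping the $|H_i|$-exponents (using $a_j = 0$) yields an upper bound of the form
$$C \cdot \biggl(\frac{|G|}{|H_m|}\biggr)^{\beta_n^{(m)}-a_m} \cdot |H_{j+1}|^{\beta_n^{(j)}} \cdot (1-\epsilon)^{\beta_n^{(m)}-\beta_n^{(j)}} \cdot \prod_{i=j+1}^{m-1} \biggl(\frac{|H_{i+1}|}{|H_i|}\biggr)^{\beta_n^{(i)}-a_i} \cdot e^{(m-j)\gamma(n-\alpha_n^{(m)})}.$$

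Since $\beta_n^{(i)} - a_i = \beta_n^{(i)} + \alpha_n^{(i)} + |\sigma_i| - n \le |\sigma_i| < \ell(D_i)\delta_i(n-\alpha_n^{(i)})$ and $|H_{i+1}|/|H_i| \ge 1$, the first factor together with the telescoping product contributes at most $|G|^{(m-j)\ell(|G|)\delta_1 n}$. Using $\beta_n^{(j)} < \eta n$ gives $|H_{j+1}|^{\beta_n^{(j)}} \le |G|^{\eta n}$, and $\beta_n^{(m)}-\beta_n^{(j)} > (1-2\eta)n$ gives $(1-\epsilon)^{\beta_n^{(m)}-\beta_n^{(j)}} \le e^{-\epsilon(1-2\eta)n}$. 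Taking logs, the overall bound is at most
$$\log C + n\bigl(-\epsilon(1-2\eta) + \eta\log|G| + (m-j)\ell(|G|)\delta_1\log|G| + (m-j)\gamma\bigr).$$
By the choices of $\eta$, $\gamma$, $\delta_1$ in Section \ref{The constants}, each of the three non-leading terms is at most $\epsilon(1-2\eta)/5$, so the linear coefficient of $n$ is at most $-2\epsilon(1-2\eta)/5 < 0$, and the bound tends to zero as desired. The main delicate point is the merging step: one must check that after reducing to the case $|H_{j+1}| \le \cdots \le |H_m|$, the corresponding $a_i$'s are still monotone so that Lemma \ref{lem: function counting lemma} applies. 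This holds because merging removes indices without disturbing the decreasing order of the surviving $\alpha_n^{(i)}$'s, exactly as in the proof of Proposition \ref{prop: upper bound for all type2 case converges to 0}.
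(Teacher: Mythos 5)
Your proof is correct and takes essentially the same route as the paper. The paper organizes the argument as a separate special case (Lemma \ref{lem: everything less than eta n spcial case}, where $|H_{j+1}|<\cdots<|H_m|$) followed by an induction on $m-j$ that merges one out-of-order pair at a time; you collapse the induction into a single preprocessing step, performing all merges upfront and then running the counting argument once. You correctly observe that the merging preserves the decreasing order of the surviving $\alpha_n^{(i)}$'s, the separation condition $\dt_{i+1}<\dt_i\eta/\ell(|G|)$, and the membership $n\in N_j^c(m)$ (with the new $m$), so Lemma \ref{lem: function counting lemma} applies after merging. Your telescoping form of the bound, $C\,(|G|/|H_m|)^{\beta_n^{(m)}-a_m}\,|H_{j+1}|^{\beta_n^{(j)}}\,(1-\epsilon)^{\beta_n^{(m)}-\beta_n^{(j)}}\prod_{i=j+1}^{m-1}(|H_{i+1}|/|H_i|)^{\beta_n^{(i)}-a_i}\,e^{(m-j)\gamma(n-\alpha_n^{(m)})}$, is algebraically equivalent to the paper's $C(1-\epsilon)^{\beta_n^{(m)}-\beta_n^{(j)}}|G|^{kn\ell(|G|)\dt_1}\bigl(\prod_{i=j+1}^m(|H_i|/|H_{i+1}|)^{n-\alpha_n^{(i)}-\beta_n^{(i)}}\bigr)|H_{j+1}|^{\beta_n^{(j)}}e^{k\gamma n}$: the two differ only by the identity $\beta_n^{(i)}-a_i=-(n-\alpha_n^{(i)}-\beta_n^{(i)})+|\sigma_i|$, i.e.\ you leave the correction term $|\sigma_i|$ inside the exponent while the paper extracts it into the $|G|^{kn\ell(|G|)\dt_1}$ factor. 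The final negativity check on the linear coefficient in $n$ uses exactly the same budget from Section \ref{The constants}. In short, the proof is sound and conceptually identical; the only improvement is a mildly more streamlined presentation of the merging step. One minor phrasing caveat: $A_G^{(i)}$ does restrict $F$ (it imposes the code condition on $F_i$), so ``does not restrict the image of $F$'' is slightly imprecise, but the inequality you derive from it (dropping constraints enlarges the intersection) is of course valid and is what the paper implicitly does as well.
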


We prove a special case of Lemma \ref{lem: everything less than eta n} first. 

\begin{lem}
\label{lem: everything less than eta n spcial case}
Assume all the conditions in Lemma \ref{lem: everything less than eta n}.
Suppose further that
$$
|H_{j+1}| < |H_{j+2}| < \cdots < |H_{m}| < |H_{m+1}| = |G|. 
$$
Then
$$
\lim_{\substack{n \in N_j^c(m) \\ n \to \infty}} \left|\bigcap_{i=j+1}^{k+1} A_{H_i}^{(i)}\right|
\left(\prod_{i=j+1}^{k+1} b_{H_i}^{\beta_n^{(i)} - \beta_n^{(i-1)}}\right) = 0. 
$$   
\end{lem}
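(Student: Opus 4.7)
The plan is to adapt the strategy used in the proof of Lemma~\ref{lem: special case2 for upper bound for all type2 case congverges to 0 proposition} to the ``truncated'' index range $j+1 \le i \le k+1$. The essential new input is that the condition $n \in N_j^c(m)$ simultaneously forces $\beta_n^{(j)} < \eta n$ (from $n-\alpha_n^{(j)} < \eta n$ together with the standing bound $\alpha_n^{(j)}+\beta_n^{(j)} \le n-1$) and $\beta_n^{(m)} > (1-\eta)n$, so that the factor $(1-\epsilon)^{\beta_n^{(m)}-\beta_n^{(j)}}$ decays like $(1-\epsilon)^{(1-2\eta)n}$ and dominates everything else.

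First I would apply Lemma~\ref{lem: function counting lemma} to bound $\lt|\bigcap_{i=j+1}^{k+1} A_{H_i}^{(i)}\rt|$. For each $F$ in the intersection and each $j+1 \le i \le m$, pick $\sg_i \sub [n]\backslash [\alpha_n^{(i)}]$ of size $\lceil \dt_i \ell(D_i)(n-\alpha_n^{(i)})\rceil - 1$ with $F((V_i)_{\backslash \sg_i}) = H_i$, and put $A_i = [n]\backslash ([\alpha_n^{(i)}] \cup \sg_i)$; for $m+1 \le i \le k+1$ set $A_i = [n]\backslash [\alpha_n^{(i)}]$ (using that $F_i$ is a code and $H_i = G$). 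The strict increase $|H_{j+1}| < \cdots < |H_{k+1}|$ combined with the smallness of the $\dt_i$ guarantees $|A_{j+1}| \le \cdots \le |A_{k+1}|$, so Lemma~\ref{lem: function counting lemma} applies with $a_j := 0$; bounding each $\binom{n-\alpha_n^{(i)}}{|\sg_i|}$ by $e^{\gamma(n-\alpha_n^{(m)})}$ via our choice of $\dt_i$, I obtain a counting bound of the shape
\begin{equation*}
\lt|\bigcap_{i=j+1}^{k+1} A_{H_i}^{(i)}\rt| \le C\,|G|^{\alpha_n^{(m)}+\ell(D_m)\dt_m(n-\alpha_n^{(m)})} \prod_{i=j+1}^{m} |H_i|^{E_i}\, e^{(k-j)\gamma(n-\alpha_n^{(m)})},
\end{equation*}
where $E_i = (n-\alpha_n^{(i)})(1-\ell(D_i)\dt_i) - (n-\alpha_n^{(i-1)})(1-\ell(D_{i-1})\dt_{i-1})$, with the convention that the subtracted term for $i = j+1$ is replaced by $0$.

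Next, using the factorization
\begin{equation*}
\prod_{i=j+1}^{k+1} b_{H_i}^{\beta_n^{(i)} - \beta_n^{(i-1)}} = (1-\epsilon)^{\beta_n^{(m)}-\beta_n^{(j)}}\lt(\tfrac{1}{|G|}\rt)^{n-\beta_n^{(m)}} \prod_{i=j+1}^m \lt(\tfrac{1}{|H_i|}\rt)^{\beta_n^{(i)} - \beta_n^{(i-1)}},
\end{equation*}
and performing the Abel summation analogous to the one implicit in the proof of Lemma~\ref{lem: special case2 for upper bound for all type2 case congverges to 0 proposition} (combining $|G|^{\alpha_n^{(m)}} \cdot |G|^{-(n-\beta_n^{(m)})} = |G|^{-(n-\alpha_n^{(m)}-\beta_n^{(m)})} \le 1$ and absorbing each $(|H_{i+1}|/|H_i|)^{(n-\alpha_n^{(i)})\ell(D_i)\dt_i}$ into a global $|G|^{(k+2)\ell(|G|)\dt_1 n}$), I would arrive at
\begin{equation*}
\lt|\bigcap_{i=j+1}^{k+1} A_{H_i}^{(i)}\rt| \prod_{i=j+1}^{k+1} b_{H_i}^{\beta_n^{(i)}-\beta_n^{(i-1)}} \le C\,(1-\epsilon)^{\beta_n^{(m)}-\beta_n^{(j)}}\,|H_{j+1}|^{\beta_n^{(j)}} \prod_{i=j+1}^{m}\lt(\tfrac{|H_i|}{|H_{i+1}|}\rt)^{n-\alpha_n^{(i)}-\beta_n^{(i)}} |G|^{(k+2)\ell(|G|)\dt_1 n}\, e^{(k+1)\gamma n}.
\end{equation*}
Each factor $|H_i|/|H_{i+1}| \le 1$ has a nonnegative exponent, so those drop out. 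The ``new'' term $|H_{j+1}|^{\beta_n^{(j)}}$, which has no counterpart in Lemma~\ref{lem: special case2 for upper bound for all type2 case congverges to 0 proposition}, arises from the boundary value $u_j := a_j - \beta_n^{(j)} = -\beta_n^{(j)}$ in the Abel summation.

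Finally, $\beta_n^{(j)} < \eta n$ gives $|H_{j+1}|^{\beta_n^{(j)}} \le |G|^{\eta n}$, and the bound becomes
\begin{equation*}
\le C\,e^{-\epsilon(1-2\eta)n}\cdot |G|^{(\eta + (k+2)\ell(|G|)\dt_1)n}\cdot e^{(k+1)\gamma n}.
\end{equation*}
The choice of constants in Section~\ref{The constants}, in particular the inequality $\eta\log|G| < \epsilon/5$ stemming from $\eta < \epsilon/(2\epsilon + 5\log|G|)$, makes the total exponent at most $-cn$ for some constant $c > 0$ and all large $n$, which completes the proof. The main obstacle is the Abel summation in the middle step; it is essentially the same computation used in Lemma~\ref{lem: special case2 for upper bound for all type2 case congverges to 0 proposition} restricted to the sub-range $[j+1, m]$, with the extra left-endpoint contribution $|H_{j+1}|^{\beta_n^{(j)}}$ harmlessly absorbed via the bound $\beta_n^{(j)} < \eta n$.
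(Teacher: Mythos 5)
Your proposal is correct and takes essentially the same route as the paper: apply Lemma~\ref{lem: function counting lemma} to bound the intersection, factor the $b_{H_i}$ product, extract the dominant decay $(1-\epsilon)^{\beta_n^{(m)}-\beta_n^{(j)}}$ from $\beta_n^{(m)}-\beta_n^{(j)} > (1-2\eta)n$, absorb the boundary term $|H_{j+1}|^{\beta_n^{(j)}} \le |G|^{\eta n}$ via $\beta_n^{(j)} < \eta n$, and invoke the constant choices in Section~\ref{The constants}. The only cosmetic differences are slightly larger (but still harmless) constants in the exponents of $e^{\gamma n}$ and $|G|^{\ell(|G|)\delta_1 n}$, which do not affect the conclusion.
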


\begin{proof}
Let $n \in N_j^c(m)$. Recall that we are assuming $n$ is large enough so that $n-\alpha_n^{(j)} > \beta_n^{(j)}$. Then the condition that 
$$
\beta_n^{(j)} < n - \alpha_n^{(j)} < \eta n < n - \eta n < \beta_n^{(m)}
$$
implies that $j + 1 \le m$ and $\beta_n^{(m)}- \beta_n^{(j)} > n(1-2\eta)$. 
We adopt the notation as in the proof of Lemma \ref{lem: special case1 for upper bound for all type2 case converges to 0 proposition}. We have for $j+1 \le i \le m$
$$
a_i = n-\alpha_n^{(i)} - \left(\lceil \ell(D_i)\dt_i(n - \alpha_n^{(i)})\rceil -1\right),
$$
and
$$
a_{j+1} \le a_{j+2} \le \cdots \le a_{m}.
$$
As in the proof of Lemma \ref{lem: special case2 for upper bound for all type2 case congverges to 0 proposition}, we have for sufficiently large $n$,
$$
\left|\bigcap_{i=j+1}^{k+1} A_{H_i}^{(i)}\right| \le |H_{j+1}|^{a_{j+1}}\left(\prod_{i = j+2}^m |H_i|^{a_i - a_{i-1}} \right)|G|^{n - a_m}e^{k \gamma n}.
$$
Then there exists a constant $C>0$ such that the following holds for sufficiently large $n$:
\begin{align*}
 \left|\bigcap_{i=j+1}^{k+1} A_{H_i}^{(i)}\right| & 
\left(\prod_{i=j+1}^{k+1} b_{H_i}^{\beta_n^{(i)} - \beta_n^{(i-1)}}\right) \\
&\le C(1-\epsilon)^{\beta_n^{(m)}- \beta_n^{(j)}} |G|^{kn\ell(|G|)\dt_{1}}\left(\prod_{i=j+1}^m\left(\frac{|H_i|}{|H_{i+1}|}\right)^{n- \alpha_n^{(i)} - \beta_n^{(i)}}\right)|H_{j+1}|^{\beta_n^{(j)}} e^{k\gamma n} \\
& \le Ce^{-\epsilon (1-2\eta) n} e^{k\gamma n}|G|^{kn \ell(|G|)\dt_1} |H_{j+1}|^{n - \alpha_n^{(j)}} \\
& \le C e^{-\epsilon (1-2\eta) n} e^{k\gamma n}|G|^{kn \ell(|G|)\dt_1} |G|^{\eta n}. 
\end{align*}
By our choice of the constants in \ref{The constants}, the right hand side converges to $0$, so the result follows.
\end{proof}

Now we give a proof of Lemma \ref{lem: everything less than eta n}.

\begin{proof}[Proof of Lemma \ref{lem: everything less than eta n}]
As noted in the proof of Lemma \ref{lem: everything less than eta n spcial case}, we must have $j+1 \le m$. We use induction on $m-j$. When $m-j = 1$, the assertion follows from Lemma \ref{lem: everything less than eta n spcial case}. 
Let $l$ be a positive integer such that $2\le l \le k-j$. Now we assume that the assertion holds when $m-j < l$. Suppose that $m-j = l$. 
If we have
$$
|H_{j+1}| < \cdots < |H_{m}|,
$$
we are done by Lemma \ref{lem: everything less than eta n spcial case}. Otherwise, there exists a positive integer $t$ such that $j+1 \le t \le m-1$ and $|H_{t}| \ge |H_{t+1}|$. 
Now we argue as in the proof of Proposition \ref{prop: upper bound for all type2 case converges to 0}. 
For every $1\le i \le k$, define
$$
\hat{H}_i:= \begin{cases}
H_i ~&\text{if $i < t$} \\
H_{i+1} ~ &\text{if $i \ge t$}
\end{cases}
$$
and 
$$
\hat{\alpha}_n^{(i)}:= \begin{cases}
\alpha_n^{(i)} ~&\text{if $i < t$} \\
\alpha_n^{(i+1)} ~&\text{if $i \ge t$}    
\end{cases}
$$
and
$$
\hat{\dt}_i :=  \begin{cases}
\dt_i ~&\text{if $i < t$} \\
\dt_{i+1} ~&\text{if $i \ge t$}   
\end{cases}
$$
and
$$
\hat{\beta}_n^{(i)}:= \begin{cases}
\beta_n^{(i)} ~&\text{if $i < t$} \\
\beta_n^{(i+1)} ~&\text{if $i \ge t$}
\end{cases}.
$$
As in the proof of Proposition \ref{prop: upper bound for all type2 case converges to 0}, it follows that
$$
\prod_{i=j+1}^{k+1} b_{H_i}^{\beta_n^{(i)} - \beta_n^{(i-1)}} \le \prod_{i=j+1}^{k} b_{\hat{H}_i}^{\hat{\beta}_n^{(i)} - \hat{\beta}_n^{(i-1)}}.
$$
Since we have
$$
A_{\hat{H}_i}^{(i)} = \begin{cases}
A_{H_i}^{(i)} ~&\text{if $i <t$} \\
A_{H_{i+1}}^{(i+1)} ~&\text{if $i \ge t$},
\end{cases}
$$
it is clear that
$$
\left|\bigcap_{i=j+1}^{k+1} A_{H_i}^{(i)}\right| \le \left|\bigcap_{i=j+1}^{k} A_{\hat{H}_i}^{(i)}\right|. 
$$
Now the lemma follows by the induction hypothesis.
\end{proof}

\begin{lem}
\label{lem: less than eta n proper subgroup}
Let $1 \le j \le k$ be a positive integer. Suppose that $N_j^c$ is an infinite set. Suppose that $H_{k+1}$ is a proper subgroup. Then
$$
\lim_{\substack{n \in N_j^c \\n \to \infty}} \left|\bigcap_{i=j+1}^{k+1} A_{H_i}^{(i)}\right|
\left(\prod_{i=j+1}^{k+1} b_{H_i}^{\beta_n^{(i)} - \beta_n^{(i-1)}}\right) = 0. 
$$  
\end{lem}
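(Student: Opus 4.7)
The plan is to follow the structure of Lemma \ref{lem: everything less than eta n}: first establish the special case where $|H_{j+1}| \le |H_{j+2}| \le \cdots \le |H_{k+1}|$, then reduce the general case to it via the substitution argument used in Proposition \ref{prop: upper bound for all type2 case converges to 0}, induction being on $k-j$.

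For the special case, the key observation is that since $H_{k+1}$ is proper and $|H_i| \le |H_{k+1}| < |G|$ for all $j+1 \le i \le k+1$, every such $H_i$ is a proper subgroup of $G$, so $b_{H_i} = (1-\epsilon)/|H_i|$. Hence
$$
\prod_{i=j+1}^{k+1} b_{H_i}^{\beta_n^{(i)} - \beta_n^{(i-1)}} = (1-\epsilon)^{n - \beta_n^{(j)}} \prod_{i=j+1}^{k+1} |H_i|^{-(\beta_n^{(i)} - \beta_n^{(i-1)})}.
$$
Since $n \in N_j^c$ and $\beta_n^{(j)} \le n - \alpha_n^{(j)} - 1 < \eta n$ (using the standing assumption from Section \ref{universality first section}), we have $n - \beta_n^{(j)} > (1-\eta) n$, giving $(1-\epsilon)^{n - \beta_n^{(j)}} \le e^{-\epsilon(1-\eta)n}$. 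Bound $|\bigcap_{i=j+1}^{k+1} A_{H_i}^{(i)}|$ by Lemma \ref{lem: function counting lemma}, mimicking the start of the proof of Lemma \ref{lem: everything less than eta n spcial case} with $m = k+1$; because $\alpha_n^{(k+1)}=0$, we now get $n - a_{k+1} \le \ell(D_{k+1})\delta_{k+1} n + O(1)$ (instead of $n-a_m$ being controlled by $n-\beta_n^{(m)}$ as before). Telescoping the product $\prod |H_i|^{a_i - a_{i-1}} / \prod |H_i|^{\beta_n^{(i)} - \beta_n^{(i-1)}}$ by Abel summation produces a leftover factor $|H_{k+1}|^{a_{k+1} - n}$ (which combines with the $|G|^{n-a_{k+1}}$ from the function-counting bound into $(|G|/|H_{k+1}|)^{n - a_{k+1}} \le |G|^{\ell(|G|)\delta_{k+1} n}$), a factor $|H_{j+1}|^{\beta_n^{(j)}} \le |G|^{\eta n}$, and ratios $(|H_i|/|H_{i+1}|)^{a_i - \beta_n^{(i)}}$ for $j+1 \le i \le k$. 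Since $|H_i| \le |H_{i+1}|$, only the indices with $a_i < \beta_n^{(i)}$ contribute positively, and in that case $|\beta_n^{(i)} - a_i| \le \ell(|G|)\delta_1 n + O(1)$ (from $a_i = (n-\alpha_n^{(i)})(1-\ell(D_i)\delta_i) + O(1)$ and $\beta_n^{(i)} \le n - \alpha_n^{(i)} - 1$), so each contributes at most $|G|^{\ell(|G|)\delta_1 n}$. Collecting everything, the whole quantity is bounded by
$$
C\, e^{-\epsilon(1-\eta) n}\, |G|^{(k\,\ell(|G|)\delta_1 + \ell(|G|)\delta_{k+1} + \eta)\, n}\, e^{(k-j+1)\gamma n},
$$
which tends to $0$: by the choices in Section \ref{The constants}, the positive contributions to the exponent total less than $4\epsilon(1-2\eta)/5$, which is strictly less than $\epsilon(1-\eta)$.

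For the general case, argue by induction on $k - j$. If $|H_{j+1}| \le \cdots \le |H_{k+1}|$, we are done by the special case. Otherwise there exists $j+1 \le t \le k$ with $|H_t| \ge |H_{t+1}|$; define $\hat{H}_i := H_i$ for $i < t$ and $\hat{H}_i := H_{i+1}$ for $t \le i \le k$, with analogous definitions for $\hat\alpha_n^{(i)}, \hat\beta_n^{(i)}, \hat\delta_i$. Crucially $\hat{H}_k = H_{k+1}$ remains proper, so the inductive hypothesis applies to the $(k-1)$-step stair system with the same $j$. The inequalities
$$
\left|\bigcap_{i=j+1}^{k+1} A_{H_i}^{(i)}\right| \le \left|\bigcap_{i=j+1}^{k} A_{\hat{H}_i}^{(i)}\right|, \qquad \prod_{i=j+1}^{k+1} b_{H_i}^{\beta_n^{(i)} - \beta_n^{(i-1)}} \le \prod_{i=j+1}^{k} b_{\hat{H}_i}^{\hat\beta_n^{(i)} - \hat\beta_n^{(i-1)}}
$$
follow from $b_{H_t} \le b_{H_{t+1}}$ (Remark \ref{rem: bH inequality}) and the definitions of the $A$-sets, exactly as in the substitution step of Proposition \ref{prop: upper bound for all type2 case converges to 0}.

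The main obstacle lies in the bookkeeping for the special case. Unlike Lemma \ref{lem: everything less than eta n spcial case}, we cannot use a constraint of the form $n - \beta_n^{(m)} < \eta n$ (there is no analogue of $N_j^c(m)$ available here), and there is no cancellation $|G|^{n - a_{k+1}} \cdot (1/|G|)^{n - \beta_n^{(k+1)}}$ to kill the top-end $|G|$-powers, since $H_{k+1} \ne G$ turns that factor into $|H_{k+1}|^{n - \beta_n^{(k+1)}}$ instead. Both losses must be absorbed by the single factor $(1-\epsilon)^{n - \beta_n^{(j)}} \le e^{-\epsilon(1-\eta) n}$, which works precisely because $\eta, \gamma, \delta_1, \delta_{k+1}$ were pinned down in Section \ref{The constants} so that $\epsilon(1-\eta)$ strictly exceeds the residual $|G|^{O(\delta_1 + \delta_{k+1} + \eta) n} \cdot e^{O(\gamma) n}$ error.
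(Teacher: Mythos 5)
Your proposal is correct and follows the same two-step plan as the paper's (very terse) proof: first the monotone special case via Lemma \ref{lem: function counting lemma} and Abel summation, exactly as in Lemma \ref{lem: everything less than eta n spcial case} but using $\alpha_n^{(k+1)}=0$ and the uniform bound $(1-\epsilon)^{n-\beta_n^{(j)}} \le e^{-\epsilon(1-\eta)n}$ in place of the $(|G|/|G|)$-cancellation that was available when $H_{k+1}=G$; then the reduction on $k+1-j$ by deleting an index $t$ with $|H_t|\ge|H_{t+1}|$. The paper explicitly writes out only the special-case bound and ``leaves the detail to the reader'' for the reduction, so your filling in of the telescoping (tracking the three leftover factors $(|G|/|H_{k+1}|)^{n-a_{k+1}}$, $|H_{j+1}|^{\beta_n^{(j)}}$, $(|H_i|/|H_{i+1}|)^{a_i-\beta_n^{(i)}}$), and your explicit observation that $\hat H_k = H_{k+1}$ remains proper so the inductive hypothesis applies, is a genuine and correct completion of the argument rather than a deviation from it. One minor stylistic remark: you phrase the special case with $\le$ while the paper uses strict $<$, so in the reduction step the ``otherwise'' branch should really read ``there exists $t$ with $|H_t|>|H_{t+1}|$'' (which of course implies $\ge$, so nothing breaks); this is cosmetic.
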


\begin{proof}
Let $n \in N_j$. Let us first consider a special case where
$$
|H_{j+1}| < |H_{j+2}| < \cdots < |H_{k+1}|.
$$
Similarly as in the proof of Lemma \ref{lem: everything less than eta n spcial case}, we have
$$
\left|\bigcap_{i=j+1}^{k+1} A_{H_i}^{(i)}\right|
\left(\prod_{i=j+1}^{k+1} b_{H_i}^{\beta_n^{(i)} - \beta_n^{(i-1)}}\right)
$$
is bounded above by
\begin{align*}
& C(1-\epsilon)^{n- \eta n} |G|^{n\ell(D_{k+1})\dt_{k+1}}\left(\prod_{i=j+1}^k \left(\frac{|H_i|}{|H_{i+1}|}\right)^{n- \alpha_n^{(i)} - \beta_n^{(i)}}|H_{i+1}|^{n\ell(|G|)\dt_{i}}\right)|H_{j+1}|^{\beta_n^{(j)}} e^{(k+1) \gamma n}  \\
& \le Ce^{-\epsilon n(1-\eta)} e^{(k+1)\gamma n}|G|^{n \ell(|G|)\dt_1 k}|G|^{\eta n},
\end{align*}
which converges to $0$ as $n \to \infty$ by the choice of the constants in \ref{The constants}. 
Now one can argue as in the proof of Lemma \ref{lem: everything less than eta n} (using induction on $k+1-j$) to complete the proof. 
We leave the detail to the reader. 
\end{proof}

Recall that
$$
R_j = B_{H_j}^{(j)} ~\bigcap~ \left(\bigcap_{i= j+1}^{k+1} A_{H_{i}}^{(i)} \right). 
$$

\begin{prop}
\label{prop: universality last prop}
Let $1\le j \le k$ be a positive integer. Let $H_j$ is a proper subgroup of $G$ and suppose that $N_j^c$ is an infinite set. Then
$$
\lim_{\substack{n \in N_j^c \\n \to \infty}} \sum_{F \in R_j} \bP(FM = 0) = 0.
$$
\end{prop}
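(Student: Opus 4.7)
I would follow the general strategy of Lemma~\ref{lem: BA for k=1 converges to 0} and Proposition~\ref{prop: BA for k converges to 0}, combining per-column probability estimates with the cardinality bounds provided by Lemmas~\ref{lem: everything less than eta n} and~\ref{lem: less than eta n proper subgroup}, followed by a case analysis on the subgroups $H_{j+1},\ldots,H_{k+1}$. The standing hypothesis $n-\alpha_n^{(j)}-\beta_n^{(j)}\to\infty$ immediately gives $n-\alpha_n^{(j)}\to\infty$, so every error term of the form $e^{-\epsilon\dt_j(n-\alpha_n^{(j)})/a^2}$ is $o(1)$, and by Lemma~\ref{lem: exp bound lem} the contribution of any product of at most $\beta_n^{(j)}<n-\alpha_n^{(j)}$ such factors collapses into a bounded multiplicative constant. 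This is what makes the estimates here parallel to (though cruder than) those used in $N_j$.

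\textbf{Per-column estimate.} For each $F\in R_j$, I would bound $\bP(FM_l=0)$ according to the range of $l$: Lemma~\ref{lem: case2 FM_j=0 probability}(2) on $\beta_n^{(j-1)}<l\le\beta_n^{(j)}$ (yielding $\le 1/|H_j|+o(1)$ since $F\in B_{H_j}^{(j)}$); the trivial bound $\le 1$ on $l\le\beta_n^{(j-1)}$; and Lemma~\ref{lem: AA upper bound}(1) on each $(\beta_n^{(i-1)},\beta_n^{(i)}]$ for $i>j$ (since $F\in A_{H_i}^{(i)}$). Summing over $F\in R_j\subseteq\bigcap_{i>j}A_{H_i}^{(i)}$ and using $(1/|H_j|)^{\beta_n^{(j)}-\beta_n^{(j-1)}}\le 1$, I would obtain
$$\sum_{F\in R_j}\bP(FM=0)\le C\,\Big|\bigcap_{i=j+1}^{k+1}A_{H_i}^{(i)}\Big|\prod_{i=j+1}^{k+1}b_{H_i}^{\beta_n^{(i)}-\beta_n^{(i-1)}}.$$

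\textbf{Case analysis.} The right-hand side is then addressed by the cardinality lemmas. If $H_{k+1}$ is a proper subgroup of $G$, Lemma~\ref{lem: less than eta n proper subgroup} yields the conclusion uniformly on $N_j^c$. If $H_{k+1}=G$ and at least one $H_i$ with $j<i\le k$ is proper, let $m$ be the largest such index; Lemma~\ref{lem: everything less than eta n} disposes of $n\in N_j^c(m)$, while on the complement $N_j^c\setminus N_j^c(m)$ one has $n-\beta_n^{(m)}\ge\eta n$: when $m=k$ this coincides with the set $N_1'$ of Corollary~\ref{cor: changing row and column by transpose}(2), and when $m<k$ one reduces to a submatrix problem using the induction on the step count $k$ (which is available since all subsequent indices carry $H_i=G$). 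Finally, the case $H_{j+1}=\cdots=H_{k+1}=G$ is not covered by Lemmas~\ref{lem: everything less than eta n} and~\ref{lem: less than eta n proper subgroup} and must be treated by hand: the code condition on each $F_i$ ($i>j$) together with Lemma~\ref{lem: lem for code bound} gives $\bP(FM_l=0)\le 1/|G|+o(1)$ for $l>\beta_n^{(j)}$; splitting $R_j$ into a ``generic'' part where $F_i$ is also a code for every $i<j$ (so that Lemma~\ref{lem: lem for code bound} applies on $l\le\beta_n^{(j-1)}$ as well, producing $(|H_j|/|G|)^{n-\alpha_n^{(j)}-\beta_n^{(j)}+\beta_n^{(j-1)}}\to 0$), and a ``rare'' complementary part bounded by the Proposition~\ref{prop: code sum to 1}-style count $c_n^{(i)}$, yields the desired decay.

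\textbf{Main obstacle.} The hardest parts are the subcase $m<k$ in the middle case, where one must invoke the induction hypothesis on $k$ after a suitable reduction, and the fine decomposition of $R_j$ in the all-$G$ case, where one has to balance the exponential gains coming from the codes at every level against the exponential counting losses, including those arising from the uncontrolled levels $i<j$.
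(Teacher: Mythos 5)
Your per-column estimate, the split on whether $H_{k+1}$ is proper or equals $G$, and your invocation of Lemmas~\ref{lem: everything less than eta n} and~\ref{lem: less than eta n proper subgroup} all match the paper. The divergence is in how the case $H_{k+1}=G$ is organized. The paper sets $m$ to be the largest index in $\{j,\ldots,k\}$ with $H_m$ proper (so $m=j$ is allowed), treats $N_j^c(m)$ by Lemma~\ref{lem: everything less than eta n}, and on $N_j^c\setminus N_j^c(m)$ truncates $M$ to the $m$-step matrix $\tilde{M}$, uses Lemma~\ref{lem: lem for code bound} on columns $l>\beta_n^{(m)}$ to bound $\bP(FM=0)\le C\,\bP(F\tilde{M}=0)$, and then applies Corollary~\ref{cor: changing row and column by transpose}(2) to $\tilde{M}$ (the induction hypothesis on step count when $m<k$). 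This works verbatim for $m=j$ because for $n\in N_j^c$ one automatically has $n-\beta_n^{(j)}>n-\eta n\ge\eta n$, so $N_j^c(j)$ is eventually empty and the transpose corollary covers all of $N_j^c$ without any extra argument. You instead carve off $H_{j+1}=\cdots=H_{k+1}=G$ as a separate by-hand case; nothing forces this split, and it is exactly here that your sketch develops a gap.

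Concretely, your ``rare'' sub-case in the by-hand treatment does not close. The set of $F\in R_j$ with $F_i$ not a code for some $i<j$ has size at most $\sum_{i<j}c_n^{(i)}$, with $c_n^{(i)}\lesssim |G|^n e^{-(\log 2-1/5)(n-\alpha_n^{(i)})}$ from the proof of Proposition~\ref{prop: code sum to 1}. Your only per-column control on such $F$ comes from the code conditions at levels $>j$ (and the $B_{H_j}^{(j)}$ condition), giving $\bP(FM=0)\lesssim (1/|G|)^{n-\beta_n^{(j)}}$ with the trivial bound $\le 1$ on columns $l\le\beta_n^{(j)}$. The product is $\lesssim |G|^{\beta_n^{(j)}}e^{-(\log 2-1/5)(n-\alpha_n^{(i)})}$, and there is no reason $\beta_n^{(j)}\log|G|$ should be dominated by $(\log 2-1/5)(n-\alpha_n^{(i)})$; for $i<j$ one has $n-\alpha_n^{(i)}<n-\alpha_n^{(j)}$ while $\beta_n^{(j)}$ can be comparable to $n-\alpha_n^{(j)}$, so for $|G|>2$ this quantity can diverge. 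Making the rare part work would require the full hierarchical decomposition at levels $<j$ — which is precisely what the paper's $\tilde{M}$ reduction obtains for free by handing the problem to the already-established $m$-step theory on the regime $n-\beta_n^{(m)}\ge\eta n$. If you allow $m=j$ in your definition of $m$ and run the same submatrix/transpose argument you already describe for $j<m<k$, the separate by-hand case disappears and the proof closes.
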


\begin{proof}
Let $n \in N_j^c$. If $F \in R_j$, by Lemma \ref{lem: AA upper bound}(1) there exists a constant $C>0$ such that the following holds:
$$
\bP(FM= 0) \le \prod_{l = \beta_n^{(j)}+1}^n \bP(FM_l = 0) \le C\prod_{i=j+1}^{k+1} b_{H_i}^{\beta_n^{(i)} - \beta_n^{(i-1)}}.
$$
If $H_{k+1}$ is a proper subgroup of $G$, then the result follows from Lemma \ref{lem: less than eta n proper subgroup}. Therefore, for the rest of the proof we assume that $H_{k+1} = G$. Then there exists a positive integer $m$ such that $H_{m}$ is a proper subgroup of $G$ and
$$
H_{m+1} = \cdots = H_{k+1} = G. 
$$
Necessarily we have $j \le m \le k$. If $N_j^c\backslash N_j^c(m)$ is a finite set, then the desired result is a consequence of Lemma \ref{lem: everything less than eta n}.  Finally, suppose that $N_j^c\backslash N_j^c(m)$ is an infinite set. Let $n \in N_j^c\backslash N_j^c(m)$, i.e., $n - \beta_n^{(m)} \ge \eta n$ and $n - \alpha_n^{(j)} < \eta n$.
Define
\begin{align*}
 \tilde{H}_i &:= \begin{cases}
H_i ~&\text{if $i < m+1$} \\
G ~ &\text{if $i = m+1$}. 
\end{cases}   \\
\tilde{\dt}_i &:=\begin{cases}
\dt_i ~&\text{if $i < m+1$} \\
\dt_{k+1} ~ &\text{if $i = m+1$}. 
\end{cases}
\end{align*}
and define for $1 \le i \le m$
\begin{align*}
\tilde{\alpha}_n^{(i)} & := \alpha_n^{(i)} \\    
\tilde{\beta}_n^{(i)} & := \beta_n^{(i)}
\end{align*}
Let $\tilde{M}$ be a random $n\times n$ matrix having $m$ steps stairs of $0$ with respect to $\tilde{\alpha}_n^{(i)}$ and $\tilde{\beta}_n^{(i)}$. In other words, the random $n\times n$ matrix $\tilde{M}$ is defined by taking the first $m$-step stairs of $0$ of $M$ as the step stairs of $0$ of $\tilde{M}$. In particular, if $m = k$, then $M = \tilde{M}$. 
We define $B_{\tilde{H}_i}^{(i)}, A_{\tilde{H}_i}^{(i)}$ similarly as $B_{H_i}^{(i)}, A_{H_i}^{(i)}$ by replacing $\alpha_n^{(i)}, H_i, \dt_i$ with $\tilde{\alpha}_n^{(i)},  \tilde{H}_i, \tilde{\dt}_i$, respectively in the definition of $B_{H_i}^{(i)}, A_{H_i}^{(i)}$. 
We then have
$$
B_{H_j}^{(j)} ~\bigcap~ \left(\bigcap_{i = j+1}^{k+1} A_{H_i}^{(i)}\right) \subseteq B_{\tilde{H}_j}^{(j)} ~\bigcap~ \left(\bigcap_{i = j+1}^{m+1} A_{{\tilde{H}_i}}^{(i)}\right) =: \tilde{R}_j.
$$
Note that by Lemma \ref{lem: lem for code bound} and Lemma \ref{lem: exp bound lem} there exists a constant $C_1>0$ such that for all $F \in R_j$, the following holds for all large enough $n$:
$$
\prod_{i= \beta_n^{(m)} + 1}^n \bP(FM_i = 0) \le \prod_{i=m+1}^{k+1}\left(\frac{1}{|G|} + e^{-\epsilon \dt_i (n- \alpha_n^{(i)})/a^2} \right)^{\beta_n^{(i)} - \beta_n^{(i-1)}} \le C_1 \left(\frac{1}{|G|}\right)^{n - \beta_{n}^{(m)}}.
$$
Similarly there exists a constant $C_2>0$ such that for all $F \in R_j \subseteq \tilde{R}_j$ the following holds for sufficiently large $n$:
$$
\prod_{i= \beta_n^{(m)} + 1}^n \bP(F\tilde{M}_i = 0) \ge \left(\frac{1}{|G|} - e^{-\epsilon \dt_{k+1}n/a^2} \right)^{n - \beta_n^{(m)}} \ge C_2 \left(\frac{1}{|G|}\right)^{n - \beta_{n}^{(m)}}. 
$$
It follows that there exists a constant $C > 0$ such that for all $F \in R_j$ the following inequality holds for large enough $n$:

$$
\bP(FM = 0) \le C \bP(F\tilde{M} = 0). 
$$
Then we have that
$$
\sum_{F \in R_j} \bP(FM = 0) \le C \sum_{F \in \tilde{R}_j} \bP(F\tilde{M} = 0),
$$
so it is enough to show that the latter sum converges to zero. 
Note that
$$
n - \tilde{\beta}_n^{(m)} = n - \beta_n^{(m)} \ge \eta n.
$$
Then Corollary \ref{cor: changing row and column by transpose}(2) tells us that
$$
\lim_{\substack{n \in N_j^c\backslash N_j^c(m) \\n \to \infty }} \sum_{F \in \tilde{R}_j} \bP(F\tilde{M} = 0) = 0.
$$
Together with Lemma \ref{lem: everything less than eta n}, this implies that
$$
\lim_{\substack{n \in N_j^c\\n \to \infty }} \sum_{F \in \tilde{R}_j} \bP(F\tilde{M} = 0) = 0, 
$$
and this completes the proof. 
\end{proof}

\begin{proof}[Proof of Theorem \ref{thm: universality moment}]
Note again that $F \in \Sur(V,G)$ falls into one of the following three categories. 
\begin{enumerate}
\item
$F \in \mc{F}_1.$
\item 
At least one of $H_i$ is a proper subgroups of $G$ and
$$F \in \bigcap_{i=1}^{k+1} A_{H_i}^{(i)}.$$ 
\item 
For some $1\le j \le k$ with $H_j$ a proper subgroup of $G$
$$F \in R_j = B_{H_j}^{(j)} ~\bigcap~ \left(\bigcap_{i= j+1}^{k+1} A_{H_{i}}^{(i)} \right).$$     
\end{enumerate}
By Proposition \ref{prop: BA for k converges to 0 cor} and Proposition \ref{prop: universality last prop}, it follows that
\begin{equation}
\label{eq: Rj sum 0}
\underset{n \to \infty}{\lim} \sum_{F \in R_j} \bP(FM = 0) = 0.   
\end{equation}
Then the theorem follows by combining Proposition \ref{prop: code sum to 1} and Proposition \ref{prop: AA for k converges to 0}. 
\end{proof}

\section{The universality theorems for a random \texorpdfstring{$n \times (n+t)$}{n(n+u)} matrix} \label{Sec10}
Let $t$ be a non-negative integer. In this section, we first consider an $\epsilon$-balanced random $n \times (n+t)$ matrix over $R$ having $k$-step stairs of $0$.  
Let $k$ be a positive integer, and let $1 \le \alpha_n^{(k)} < \alpha_n^{(k-1)} < \cdots < \alpha_n^{(1)} \le n$ and $n+t \ge \beta_n^{(k)} > \beta_n^{(k-1)} > \cdots > \beta_n^{(1)} \ge 1$ be positive integers. In fact, Theorem \ref{thm: universality moment} can be generalized as follows:

\begin{thm}
\label{thm: universality moments with t}
Let $\mc{M}$ be an $\epsilon$-balanced random $n \times (n+t)$ matrix over $R$ having $k$-step stairs of $0$ with respect to $\alpha_n^{(i)}$ and $\beta_n^{(i)}$. If for every $1\le i \le k$
$$
\underset{n \to \infty}{\lim} (n - \alpha_n^{(i)} - \beta_n^{(i)}) = \infty,
$$
then for every finite abelian group $G$ whose exponent divides $a$, we have
$$
\underset{n \to \infty}{\lim}  \bE(\#\Sur(\cok(\mc{M}),G)) = \frac{1}{|G|^t}.
$$    
\end{thm}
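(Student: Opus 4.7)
The plan is to reduce Theorem~\ref{thm: universality moments with t} to Theorem~\ref{thm: universality moment} by peeling off the last $t$ columns. The hypothesis $\underset{n \to \infty}{\lim}(n-\alpha_n^{(i)}-\beta_n^{(i)})=\infty$ together with $\alpha_n^{(k)} \ge 1$ forces $\beta_n^{(k)} \le n-2$ for all sufficiently large $n$. Hence, for large $n$, all $k$ stairs of zeros of $\mc{M}$ lie within its first $n$ columns, and we may write $\mc{M} = [\mc{M}_1 \mid \mc{M}_2]$, where $\mc{M}_1$ is an $n \times n$ matrix with the same $k$-step stairs of $0$ with respect to $\alpha_n^{(i)}$ and $\beta_n^{(i)}$, and $\mc{M}_2$ is an $n \times t$ matrix whose entries are all independent $\epsilon$-balanced random variables in $R$.

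Since every surjection $\cok(\mc{M}) \twoheadrightarrow G$ lifts uniquely to an element of $\Sur(R^n, G)$, and the columns of $\mc{M}$ are independent,
$$
\bE(\#\Sur(\cok(\mc{M}),G)) = \sum_{F \in \Sur(R^n,G)} \bP(F\mc{M}_1=0)\,\bP(F\mc{M}_2=0).
$$
For the main term, I would restrict the sum to $F \in \mc{F}_1$. Any such $F$ is a code of distance $\dt_{k+1} n$ (since $\alpha_n^{(k+1)}=0$ and $V_{k+1}=R^n$), so Lemma~\ref{lem: lem for code bound} applied to each of the $t$ unconstrained columns of $\mc{M}_2$ yields
$$
\left|\bP(F(\mc{M}_2)_i = 0) - \tfrac{1}{|G|}\right| \le e^{-\epsilon \dt_{k+1} n/a^2}
$$
uniformly in $F \in \mc{F}_1$. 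Since $t$ is a fixed constant, multiplying over the $t$ columns gives $\bP(F\mc{M}_2=0) = |G|^{-t} + o(1)$ uniformly in $F \in \mc{F}_1$. Combined with Proposition~\ref{prop: code sum to 1} applied to $\mc{M}_1$, this yields
$$
\sum_{F \in \mc{F}_1} \bP(F\mc{M}_1=0)\bP(F\mc{M}_2=0) \longrightarrow |G|^{-t}.
$$

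For the error terms (i.e., $F \notin \mc{F}_1$), I would simply use the trivial bound $\bP(F\mc{M}_2=0) \le 1$, giving
$$
\sum_{F \notin \mc{F}_1} \bP(F\mc{M}_1=0)\bP(F\mc{M}_2=0) \le \sum_{F \in \Sur(R^n,G)} \bP(F\mc{M}_1=0) - \sum_{F \in \mc{F}_1} \bP(F\mc{M}_1=0).
$$
By Theorem~\ref{thm: universality moment} applied to $\mc{M}_1$ and Proposition~\ref{prop: code sum to 1}, both sums on the right tend to $1$, so the error tends to $0$. Combining both contributions gives the desired limit $|G|^{-t}$.

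As for obstacles, this reduction is essentially routine once the split $\mc{M} = [\mc{M}_1 \mid \mc{M}_2]$ is justified. The only point requiring any care is the uniform bound on $\bP(F\mc{M}_2=0)$ over codes in $\mc{F}_1$; but since $t$ is fixed and the single-column error in Lemma~\ref{lem: lem for code bound} is exponentially small and independent of the particular code, this uniform convergence is immediate.
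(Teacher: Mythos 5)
Your proof is correct and essentially coincides with the paper's: both split off the unconstrained final $t$ columns of $\mc{M}$ (valid since $n-\alpha_n^{(k)}-\beta_n^{(k)}\to\infty$ forces $\beta_n^{(k)}<n$ eventually), use Lemma~\ref{lem: lem for code bound} to show that codes in $\mc{F}_1$ contribute $|G|^{-t}$, and control the remaining $F$ via the bound $\bP(F\mc{M}_2=0)\le 1$. Your bookkeeping for the non-code error term is marginally more economical --- you obtain $\sum_{F\notin\mc{F}_1}\bP(F\mc{M}_1=0)\to 0$ directly as the difference between Theorem~\ref{thm: universality moment} and Proposition~\ref{prop: code sum to 1} applied to $\mc{M}_1$, whereas the paper re-cites Proposition~\ref{prop: AA for k converges to 0} and \eqref{eq: Rj sum 0} case by case --- but this is a reorganization of the same argument, not a different route.
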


\begin{proof}
If $t=0$, this is Theorem \ref{thm: universality moment}. Now let $t \ge 1$. 
Since we have
$$
\bE(\#\Sur(\cok(\mc{M}),G)) = \sum_{F \in \Sur(V,G)} \bP(F\mc{M} = 0), 
$$
it is enough to show that
$$
\underset{n \to \infty}{\lim}  \sum_{F \in \Sur(V,G)} \bP(F\mc{M} = 0) =  \frac{1}{|G|^t}.
$$
By $n - \alpha_n^{(k)} - \beta_n^{(k)} \to \infty$, we may assume that $\beta_n^{(k)} \le n$ when $n$ is large enough. Let $M$ be the $n \times n$ submatrix of $\mc{M}$ which consists of the first $n$ columns of $\mc{M}$. Then we can make use of the results in the previous three sections for $M$. As noted before, $F \in \Sur(V,G)$ falls into one of the following three categories. 
\begin{enumerate}
\item
$F \in \mc{F}_1.$
\item 
At least one of $H_i$ is a proper subgroups of $G$ and
$$F \in \bigcap_{i=1}^{k+1} A_{H_i}^{(i)}.$$ 
\item 
For some $1\le j \le k$ with $H_j$ a proper subgroup of $G$
$$F \in R_j = B_{H_j}^{(j)} ~\bigcap~ \left(\bigcap_{i= j+1}^{k+1} A_{H_{i}}^{(i)} \right).$$     
\end{enumerate}
Noting that the upper bound for the index $l$ of the following product is $n+t$ and not $n$
$$
\bP(F\mc{M} = 0) = \prod_{l=1}^{n+t} \bP(F\mc{M}_l = 0),
$$
we see that the proof of Proposition \ref{prop: code sum to 1} implies that
$$
\underset{n \to \infty}{\lim} \sum_{F \in \mc{F}_1} \bP(F\mc{M} = 0) = \frac{1}{|G|^t}. 
$$
Moreover, if $F \in \Sur(V,G)$, then 
$$
\bP(F\mc{M} = 0) = \prod_{l = 1}^{n+t} \bP(F\mc{M}_l = 0) = \bP(FM = 0) \prod_{l = n+1}^{n+t}\bP(F\mc{M}_l = 0) \le \bP(FM = 0). 
$$
Hence, Proposition \ref{prop: AA for k converges to 0} yields that if $H_i \neq G$ for some $H_i$,
$$
\underset{n \to \infty}{\lim}  \sum_{F \in \cap_{i=1}^{k+1} A_{H_i}^{(i)}} \bP(F\mc{M} = 0) = 0. 
$$
Similarly by \eqref{eq: Rj sum 0}, we have
$$
\underset{n \to \infty}{\lim}  \sum_{F \in R_j} \bP(F\mc{M} = 0) = 0.
$$
This completes the proof of the theorem. 
\end{proof}

\begin{rmk}
Recall that we used a ``transpose'' argument for bounding the error terms for the moments in the case of square matrix ($t =0$). This works because if $\mc{M}$ is a square matrix over $R$, we have (Lemma \ref{lem: Smith normal form})
$$
\cok(\mc{M}) \cong \cok(\mc{M}^T),
$$
which fails if $t$ is a positive integer. This is the main reason why we were unable to work directly with $n \times (n+t)$ matrix.
\end{rmk}

The following two theorems are consequences of Theorem \ref{thm: universality moments with t}, \cite[Theorem 3.1]{Woo19} and \cite[Lemma 3.2]{Woo19}.

\begin{thm}
\label{thm: universality main theorem with t}
Let $t$ be a non-negative integer and $M$ be an $\epsilon$-balanced random $n \times (n+t)$ matrix over $\Z_p$ having $k$-step stairs of zeros with respect to $\alpha_n^{(i)}$ and $\beta_n^{(i)}$. Suppose that for every $1 \le i \le k$, 
$$
\underset{n \to \infty}{\lim} (n - \alpha_n^{(i)} - \beta_n^{(i)}) = \infty.
$$
Then for every finite abelian $p$-group $G$, we have
$$
\underset{n \to \infty}{\lim}\bP(\cok(M) \cong G) = \frac{1}{|\Aut(G)||G|^t}\prod_{i = 1}^\infty (1- p^{-i-t}).
$$
\end{thm}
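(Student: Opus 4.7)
The plan is to deduce Theorem~\ref{thm: universality main theorem with t} from Theorem~\ref{thm: universality moments with t} together with the moment-to-distribution machinery of Wood, namely~\cite[Theorem 3.1 and Lemma 3.2]{Woo19}. The content of the theorem is essentially entirely concentrated in the moment computation that has already been carried out; what remains is a formal passage from moments to distribution.

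First I would fix a finite abelian $p$-group $G$ of exponent $p^r$ and reduce the $\Z_p$-moment of $\cok(M)$ to a $\Z/p^r\Z$-moment. Since $p^r$ annihilates $G$, every homomorphism $\cok(M) \to G$ factors through $\cok(M) \otimes_{\Z_p} \Z/p^r\Z \cong \cok(M_r)$, where $M_r$ denotes the reduction of $M$ modulo $p^r$. Consequently
\[
\bE(\#\Sur(\cok(M), G)) = \bE(\#\Sur(\cok(M_r), G)).
\]
Now $M_r$ is an $\epsilon$-balanced random $n \times (n+t)$ matrix over $R = \Z/p^r\Z$ with the same $k$-step stairs of zeros with respect to $\alpha_n^{(i)}$ and $\beta_n^{(i)}$, so Theorem~\ref{thm: universality moments with t} applies with $a = p^r$, yielding
\[
\lim_{n \to \infty} \bE(\#\Sur(\cok(M), G)) = \frac{1}{|G|^t}
\]
for every finite abelian $p$-group $G$.

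Second, having established that the $\#\Sur(\cdot, G)$-moments converge to $|G|^{-t}$ for every $G$, I would invoke~\cite[Theorem 3.1]{Woo19}, which states that when such $G$-moments converge to values bounded in a suitable way (here uniformly by $1$), the distribution of $\cok(M)$ converges to the unique probability distribution on finite abelian $p$-groups having those moments. By~\cite[Lemma 3.2]{Woo19}, the distribution whose $G$-moment equals $|G|^{-t}$ for every finite abelian $p$-group $G$ is precisely
\[
\nu(G) = \frac{1}{|\Aut(G)| \, |G|^t} \prod_{i=1}^{\infty}\bigl(1 - p^{-i-t}\bigr),
\]
which is the formula in the statement. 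The main obstacle was the moment computation itself, which is Theorem~\ref{thm: universality moments with t} and has already been proved; the present deduction is a routine application of Wood's uniqueness framework applied to the limiting values $|G|^{-t}$, so no new technical difficulty appears at this final step.
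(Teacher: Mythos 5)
Your proposal is correct and takes essentially the same route as the paper: after Theorem~\ref{thm: universality moments with t} is established, the paper simply states that Theorem~\ref{thm: universality main theorem with t} is a consequence of it together with \cite[Theorem 3.1]{Woo19} and \cite[Lemma 3.2]{Woo19}, which is precisely the reduction-to-$\Z/p^r\Z$ plus moment-to-distribution argument you spell out.
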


For a finite abelian group $G$ and a prime $p$, we write $G(p)$ for the Sylow $p$-subgroup of $G$. 

\begin{thm}
Let $t$ be a non-negative integer and $M$ be an $\epsilon$-balanced random $n \times (n+t)$ matrix over $\Z$ having $k$-step stairs of zeros with respect to $\alpha_n^{(i)}$ and $\beta_n^{(i)}$. Suppose that for every $1 \le i \le k$, 
$$
\underset{n \to \infty}{\lim} (n - \alpha_n^{(i)} - \beta_n^{(i)}) = \infty.
$$
Let $G$ be a finite abelian group and $T$ be a finite set of primes containing all prime divisors of $|G|$. Then we have
$$
\underset{n \to \infty}{\lim}\bP(\cok(M)(p) \cong G(p) \text{ for all $p \in T$}) = \frac{1}{|\Aut(G)||G|^t}\prod_{p \in T}\prod_{i = 1}^\infty (1- p^{-i-t}).
$$
\end{thm}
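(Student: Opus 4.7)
The plan is to derive this theorem as a corollary of Theorem \ref{thm: universality moments with t} combined with the moment-to-distribution machinery of Wood \cite[Theorem 3.1 and Lemma 3.2]{Woo19}. The first step is to show that for every finite abelian group $H$ whose prime divisors all lie in $T$, the moment
$$\underset{n \to \infty}{\lim} \bE(\#\Sur(\cok(M), H)) = \frac{1}{|H|^t}.$$
To see this, let $a$ be the exponent of $H$, so that $a$ is a product of powers of primes in $T$. Any surjection $\cok(M) \twoheadrightarrow H$ factors through $\cok(M) \otimes \Z/a\Z$, which is itself the cokernel (taken over $R = \Z/a\Z$) of the reduction of $M$ modulo $a$. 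That reduced matrix is an $\epsilon$-balanced random matrix over $R$ having $k$-step stairs of zeros with respect to the same $\alpha_n^{(i)}$ and $\beta_n^{(i)}$, so Theorem \ref{thm: universality moments with t} applies and yields the displayed limit.

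The second step is to unpack the moment into a joint moment over Sylow subgroups. For any $H$ with prime divisors contained in $T$, the natural isomorphism $H \cong \prod_{p \in T} H(p)$ gives
$$\#\Sur(\cok(M), H) = \prod_{p \in T} \#\Sur\bigl(\cok(M)(p), H(p)\bigr),$$
since a homomorphism into a finite abelian group splits as a product of its Sylow components, and surjectivity is detected prime-by-prime. Taking expectations, the limiting joint mixed moment of the tuple $(\cok(M)(p))_{p \in T}$ evaluated at $(H(p))_{p \in T}$ is $\prod_{p \in T} 1/|H(p)|^t$, which factors exactly as the product of the expected moments of independent samples from the distributions $\mu_{p,t}$ defined by $\mu_{p,t}(A) = \frac{1}{|\Aut(A)||A|^t}\prod_{i \ge 1}(1-p^{-i-t})$ for each finite abelian $p$-group $A$.

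The third and final step is to invoke Wood's moment-uniqueness framework. Theorem 3.1 of \cite{Woo19} (together with Lemma 3.2) shows that a sequence of distributions on finite abelian $p$-groups whose $H$-moments converge to $1/|H|^t$ for every finite abelian $p$-group $H$ converges to $\mu_{p,t}$. Applying this to each $p \in T$ individually, and using that the joint moments factor as above, the joint distribution of $(\cok(M)(p))_{p \in T}$ converges to the product measure $\prod_{p \in T} \mu_{p,t}$. Evaluating the product measure at $(G(p))_{p \in T}$ produces exactly the right-hand side of the claimed formula.

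The only nontrivial aspect is verifying that factored moments really do give a factored limit distribution over the finite index set $T$; this is routine given Wood's results, since one can apply the single-prime convergence to each factor after marginalizing. Once that is in place, the theorem is a direct combination of Theorem \ref{thm: universality moments with t} and \cite[Theorem 3.1, Lemma 3.2]{Woo19}, with no further computation needed. In particular, the case $T = \{p\}$ and $G$ a $p$-group recovers Theorem \ref{thm: universality main theorem with t}.
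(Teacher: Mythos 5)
Your proposal is correct and follows the same route as the paper, which deduces this theorem directly from Theorem~\ref{thm: universality moments with t} together with \cite[Theorem 3.1]{Woo19} and \cite[Lemma 3.2]{Woo19}. One small caution: the claimed identity $\#\Sur(\cok(M),H)=\prod_{p\in T}\#\Sur(\cok(M)(p),H(p))$ holds only when $\cok(M)$ is a torsion group, so it is not a pointwise identity of random variables; but this is exactly the sort of technicality that Wood's Lemma~3.2 is designed to absorb, so the argument goes through.
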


\section*{Acknowledgments}
The authors are grateful to the anonymous referees for their helpful comments and suggestions.
We are very grateful to Gilyoung Cheong for many helpful discussions and valuable comments on the exposition of this paper.
We thank András Mészáros for pointing out a gap in the proof of Theorem 4.1 in the previous version and suggesting to use the tail distribution. 
We also thank Joonkyung Lee for helpful comments. 

Dong Yeap Kang was supported by Institute for Basic Science (IBS-R029-Y6).
Jungin Lee was supported by the National Research Foundation of Korea (NRF) grant funded by the Korea government (MSIT) (No. RS-2024-00334558 and No. RS-2025-02262988).
Myungjun Yu was supported by the National Research Foundation of Korea (NRF) grant funded by the Korea government (MSIT) (No. 2020R1C1C1A01007604) and by Yonsei University Research Fund (2024-22-0146).


\end{document}